\def\newaliasedtheorem#1[#2]#3{
	\newaliascnt{#1@alt}{#2}
	\newtheorem{#1}[#1@alt]{#3}
	\expandafter\newcommand\csname #1@altname\endcsname{#3}
}
\numberwithin{equation}{section}
\newtheoremstyle{slanted}{\topsep}{\topsep}{\slshape}{}{\bfseries}{.}{.5em}{}
\theoremstyle{plain}
\newtheorem{theorem}{Theorem}[section]
\theoremstyle{definition}
\theoremstyle{remark}
\newcommand{\N}{\mathbb{N}}
\newcommand{\setR}{\mathbb{R}}
\newcommand{\R}{\mathbb{R}}
\newcommand{\Z}{\mathbb{Z}}
\newcommand{\eps}{\varepsilon}
\renewcommand{\epsilon}{\varepsilon}
\renewcommand{\phi}{\varphi}
\renewcommand{\bar}{\overline}
\renewcommand{\hat}{\widehat}
\renewcommand{\tilde}{\widetilde}
\DeclareMathOperator{\Hess}{Hess}
\newcommand{\di}{\mathop{}\!\mathrm{d}}
\newcommand{\diam}{{\rm diam}}
\newcommand{\Ch}{{\sf Ch}}
\newcommand{\bb}[1]{\mathbb{#1}}
\newcommand{\ssf}[1]{\mathsf{#1}}
\newcommand{\sd}{\mathsf{d}}
\newcommand{\m}{\mathfrak{m}}
\newcommand{\length}{\mathrm{length}}
\newcommand{\sL}{\mathsf{L}}
\newcommand{\W}{\mathsf{W}}
\newcommand{\aH}{\mathscr{H}}
\newcommand{\Haus}{\mathscr{H}}
\newcommand{\mres}{\mathbin{\vrule height 1.6ex depth 0pt width
0.13ex\vrule height 0.13ex depth 0pt width 1.3ex}}
\newcommand{\norm}[1]{\left\lVert#1\right\rVert}
\DeclareMathOperator{\Lip}{Lip}
\DeclareMathOperator{\lip}{lip} 
\DeclareMathOperator{\Ric}{Ric}
\newcommand{\haus}{\mathscr{H}}
\newcommand{\Leb}{\mathscr{L}}
\newcommand{\dist}{\mathsf{d}}
\newcommand{\meas}{\mathfrak{m}}
\DeclareMathOperator{\CD}{CD}
\DeclareMathOperator{\RCD}{RCD}
\newfont{\tmpf}{cmsy10 scaled 2500}
\def\XXint#1#2#3{{\setbox0=\hbox{$#1{#2#3}{\int}$ }
		\vcenter{\hbox{$#2#3$ }}\kern-.6\wd0}}
\begin{document}

\title[]{New topological restrictions for spaces with nonnegative Ricci curvature}

\author[]{Alessandro Cucinotta, Mattia Magnabosco, and Daniele Semola}

\address{\parbox{\linewidth}{University of Oxford, Mathematical Institute\\
	 Woodstock Rd, Oxford OX2 6GG--UK\\[-4pt]\phantom{a}}}
\email{alessandro.cucinotta@maths.ox.ac.uk}

\address{\parbox{\linewidth}{University of Oxford, Mathematical Institute\\
	 Woodstock Rd, Oxford OX2 6GG--UK\\[-4pt]\phantom{a}}}
\email{mattia.magnabosco@maths.ox.ac.uk}

\address{\parbox{\linewidth}{University of Vienna, Faculty of Mathematics\\
	 Oskar-Morgenstern-Platz 1,
    1090 Wien -- Austria\\[-4pt]\phantom{a}}}
\email{daniele.semola@univie.ac.at}

\begin{abstract}
We obtain new topological restrictions for complete Riemannian manifolds with nonnegative Ricci curvature and $\RCD(0,n)$ spaces. Our main results are a Betti number rigidity theorem which answers a question open since work of M.-T.~Anderson in 1990, and a vanishing theorem for the simplicial volume generalizing a theorem of M.~Gromov from 1982. Combining such results we obtain a new proof of the classification of noncompact $3$-manifolds with nonnegative Ricci curvature, originally due to G.~Liu in 2011, which extends to the synthetic setting.
\end{abstract}

\maketitle


\section{Introduction}\label{sec:Intro}

The topology of complete noncompact Riemannian manifolds with nonnegative Ricci curvature has been studied extensively since the earliest developments of geometric analysis. In dimension $2$, where $\Ric\ge 0$ is equivalent to nonnegative sectional curvature, a complete classification up to diffeomorphism was obtained by S.~Cohn-Vossen in 1935, see \cite{Cohn-Vossen}. In dimension $3$, the classification was achieved much more recently in 2011 by G.~Liu, see \cite{Liu3d}. The strategy in \cite{Liu3d} is based on minimal surfaces methods. It develops in a highly nontrivial way some ideas introduced by R.~Schoen and S.-T.~Yau in \cite{SchoenYau} along their proof that a complete $(M^3,g)$ with $\Ric>0$ must be diffeomorphic to $\R^3$. For $n\ge 4$, a classification up to diffeomorphism seems far too ambitious. We note that even the classification up to homeomorphism of complete Ricci-flat $4$-manifolds is a longstanding open question. At the same time, several families of complete $(M^n,g)$ with $\Ric\ge 0$ and infinite topological type have been constructed in the past decades, see for instance \cite{AndersonKronheimerLeBrun,ShaYangII,Menguyinftop}.

\subsection{Main results}
In this paper we present a new proof of the classification of $3$-manifolds with $\Ric\ge 0$ up to diffeomorphism which:
\begin{itemize}
    \item[i)] is based on two key tools whose validity extends to all dimensions;
    \item[ii)] generalizes to the singular setting, thus yielding a classification of $\RCD(0,3)$ spaces without boundary up to homeomorphism.
\end{itemize}

The first step towards the classification of $3$-manifolds with $\Ric\ge 0$ in \cite{Liu3d} is proving that any such $(M^3,g)$ is either simply connected or its universal cover splits a line isometrically. Thanks to \cite[Lemma 2]{SchoenYau}, which shows that $\pi_2(M)$ is trivial unless the universal cover of $(M,g)$ splits a line isometrically, it suffices to deal with the case when $\pi_1(M)$ is torsion free.
Thus the key ingredient is the following:

\begin{theorem}[\cite{Liu3d}]\label{thm:Liugamma_intro}
Let $(M,g)$ be a complete Riemannian $3$-manifold with $\Ric\ge 0$. If $\pi_1(M)$ contains a subgroup isomorphic to $\Z$, then the universal cover $(\overline{M},\overline{g})$ of $(M,g)$ splits a line isometrically.     
\end{theorem}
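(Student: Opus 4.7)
The plan is to reduce to showing that a complete Riemannian $3$-manifold $\hat M$ with $\Ric\ge 0$ and $\pi_1(\hat M)\cong\Z$ must contain an isometric copy of $\R$, and then to invoke the Cheeger--Gromoll splitting theorem on its universal cover, which coincides with $\bar M$. For the reduction, let $\gamma\in\pi_1(M)$ generate the given $\Z$ subgroup and set $\hat M:=\bar M/\langle\gamma\rangle$; this is again a complete Riemannian $3$-manifold with $\Ric\ge 0$, has universal cover $\bar M$ and fundamental group $\Z$. Any line in $\hat M$ lifts to a line in $\bar M$ because the covering map is a local isometry, so the inequality $\dist_{\bar M}(\tilde\alpha(s),\tilde\alpha(t))\ge \dist_{\hat M}(\alpha(s),\alpha(t))=|s-t|$ is sandwiched against the length bound and becomes an equality. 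Thus it suffices to produce a line in $\hat M$.

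To produce the line, I would first try an axis argument on the deck transformation $\gamma\colon\bar M\to\bar M$. If the translation length $\ell(\gamma):=\inf_{x\in\bar M}\dist(x,\gamma x)$ is positive and attained at some $x_0$, a classical Busemann construction yields a closed geodesic of length $\ell(\gamma)$ in $\hat M$ representing the generator of $\pi_1(\hat M)$, whose two-sided iteration is the desired line. When the infimum is not realized, one instead picks a minimizing sequence $p_n\in\bar M$, joins $p_n$ to $\gamma p_n$ by minimizing segments and extracts a pointed Gromov--Hausdorff limit, aiming either at a line in $\hat M$ itself or at one in an asymptotic limit that can be transferred back by $\RCD$ stability.

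The main obstacle is precisely the case when the infimum is not attained, and here the Betti number rigidity theorem of the paper is intended to play the decisive role: since $\pi_1(\hat M)\cong\Z$ forces $b_1(\hat M)\ge 1$, the rigidity should upgrade this cohomological datum to the required geometric splitting in dimension three, thereby settling the portion of Anderson's 1990 question that we need. The vanishing of the simplicial volume should enter as a complementary topological constraint, excluding hyperbolic-type topologies that would be compatible with $\pi_1\cong\Z$ but incompatible with a line splitting, so that $\hat M\cong N^2\times\R$ emerges as the only remaining possibility. Cheeger--Gromoll (in the $\RCD$ form due to Gigli, for the singular extension) then concludes.
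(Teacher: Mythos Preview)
Your proposal has a genuine gap at its core. The axis construction you describe---that if $\ell(\gamma)=\inf_x\dist(x,\gamma x)$ is attained at $x_0$ then the $\gamma$-iterates of a minimizing segment from $x_0$ to $\gamma x_0$ concatenate into a line in $\bar M$---is a CAT(0) phenomenon, not a nonnegative-Ricci one. Even when the displacement infimum is realized, nothing forces the concatenation $\sigma\cup\gamma\sigma\cup\gamma^2\sigma\cup\cdots$ to be globally minimizing. Your argument, if it worked, would be dimension-independent; but Nabonnand's example (a complete $(M^4,g)$ with $\Ric>0$ and $\pi_1\cong\Z$, see Remark~1.3) shows that a $\Z$ in $\pi_1$ does \emph{not} force a splitting of the universal cover in dimension $4$. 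So the axis approach cannot be repaired by soft arguments.

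Your fallback---invoking ``the Betti number rigidity theorem of the paper''---is circular: that theorem is precisely the $n$-dimensional generalization of the statement you are proving, and its $n=3$ case \emph{is} Theorem~\ref{thm:Liugamma_intro}. Deferring to it is not a proof. The simplicial volume vanishing is also misplaced: in the paper it is used to recognize contractible $3$-manifolds with $\Ric\ge0$ as $\R^3$, not to manufacture lines in manifolds with infinite $\pi_1$.

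The paper's actual route runs in the opposite direction from yours. Rather than looking for a line directly in $\bar M$, one first shows that every equivariant blow-down of $(\bar M,\bar g,\Z)$ splits off an $\R$ factor on which the limit group acts by translations (Proposition~\ref{prop:blowdownsplit}, via the splitting-by-$\R^k$-action Theorem~\ref{T|splittingkesimo}). One then builds an equivariant harmonic function of almost linear growth (Proposition~\ref{P|HarmonicHuang}) and uses the Cheeger--Naber slicing theorem together with a non-degeneracy estimate along the $\Z$-orbit (Proposition~\ref{P|slicing}) to propagate the asymptotic splitting back to a fixed ball around the basepoint. The hard content is exactly this back-propagation: as the paper notes, splitting of all blow-downs does not in general imply splitting of the manifold itself, so the equivariance and the slicing are doing essential work that your limiting sketch does not supply.
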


Theorem \ref{thm:Liugamma_intro} strengthened an earlier result of Schoen and Yau who proved that if $(M^3,g)$ has $\Ric>0$ then it is simply connected, see \cite[Lemma 3]{SchoenYau}.
\medskip

Our first main result is a generalization of Theorem \ref{thm:Liugamma_intro} to all dimensions:

\begin{theorem}\label{thm:bettirigid_intro}
Let $(M,g)$ be a complete Riemannian $n$-manifold with $\Ric\ge 0$ for $n\ge 3$. If $\pi_1(M)$ contains a subgroup isomorphic to $\Z^{n-2}$, then the universal cover $(\overline{M},\overline{g})$ of $(M,g)$ splits a factor $\R^{n-2}$ isometrically.     
\end{theorem}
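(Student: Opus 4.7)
The plan is to apply the Cheeger--Gromoll splitting theorem (extended to $\RCD(0,n)$ by Gigli) to the universal cover $(\bar M,\bar g)$. Write it maximally as $\bar M = \R^k \times N$, where $N$ is an $\RCD(0,n-k)$ space containing no line; by the Cheeger--Gromoll rigidity for such splittings, every isometry of $\bar M$ respects the decomposition, so each $h$ in $H\cong\Z^{n-2}$ acts as $(\rho_1(h),\rho_2(h))\in\mathrm{Isom}(\R^k)\times\mathrm{Isom}(N)$. The goal is to show $k\ge n-2$.

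The first step would be to prove that $\rho_1$ is injective. If some nontrivial $h\in\ker\rho_1$ existed, then $\langle h\rangle$ would act freely, properly discontinuously and isometrically on $N$. Its stable translation length on $N$ must vanish, for otherwise minimizing segments between $h^{-j}y$ and $h^jy$ would limit (by the standard axis construction) to a line in $N$, contradicting the maximality of the splitting. But with zero stable translation length the orbit $\{h^jy\}_{j\in\Z}$ accumulates in every sufficiently large ball of $N$, violating proper discontinuity.

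Having $\rho_1:H\hookrightarrow\mathrm{Isom}(\R^k)=O(k)\ltimes\R^k$, the second step would be to produce a lattice of rank $n-2$ inside $\R^k$. Let $H_0\le H$ be the kernel of the rotational projection $\rho_1^{\mathrm{rot}}:H\to O(k)$. The plan is to show $H_0$ has finite index in $H$ and that $\rho_1(H_0)$ is a genuine lattice of $\R^k$, again via axis extraction: any sequence $(h_j)$ in $H$ for which $\rho_1(h_j)$ accumulates nontrivially in $\mathrm{Isom}(\R^k)$ must force $\rho_2(h_j)y_0\to\infty$ in $N$ by proper discontinuity of the $H$-action on $\bar M$, and then minimizing geodesics between $y_0$ and $\rho_2(h_j)y_0$, together with their inverses, are to converge to a line in $N$ and yield a contradiction. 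The outcome is $\rho_1(H_0)\cong\Z^{n-2}$ embedded as a lattice in $\R^k$, forcing $k\ge n-2$.

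The main obstacle lies precisely in this last step. Unlike in the compact case, the $H$-action on $\R^k\times N$ is properly discontinuous while the induced actions on the two factors separately need not be: translations along $\R^k$ may be balanced by large displacements in $N$. Converting the abstract injectivity of $\rho_1$ into actual discreteness of $\rho_1(H)$ in $\mathrm{Isom}(\R^k)$ hinges on the \emph{no-line} property of $N$ coming from the maximality of the Cheeger--Gromoll splitting, and must be exploited through careful axis constructions that upgrade mere divergence of $\rho_2(h_j)y_0$ in $N$ into the production of an actual line. This is the technical heart of the argument.
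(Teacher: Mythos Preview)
Your argument has a genuine gap that cannot be repaired within this framework. In Step~1 you claim that if the stable translation length of $h$ on $N$ vanishes then the orbit $\{h^j y\}$ is bounded, contradicting proper discontinuity. This is false: $\tau(h)=0$ only says $d(y,h^j y)=o(j)$, and the orbit can perfectly well escape to infinity sublinearly while the action remains free and properly discontinuous. Equally problematic is the other branch: in $\Ric\ge 0$ there is no axis theorem, so $\tau(h)>0$ does not directly produce a line in $N$ --- the standard extraction of a line from segments $[h^{-j}y,h^j y]$ requires either nonpositive curvature or a cocompact action to keep the midpoints from drifting off, neither of which you have.

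There is a decisive test: nothing in your outline uses that the rank of $H$ is $n-2$ rather than $n-3$. If Step~1 worked as written it would show that any single $\Z$ in $\pi_1$ forces a line in the universal cover, but Nabonnand's example (and B\'erard-Bergery's generalizations) give complete $(M^n,g)$ with $\Ric>0$ and $\pi_1(M)\cong\Z^{n-3}$, for which $\bar M$ splits no line at all. In those examples $k=0$, $\rho_1$ is trivial, and every nontrivial $h\in H$ acts on $N=\bar M$ with zero stable translation length yet unbounded orbits. Any valid argument must somewhere exploit the precise codimension~$2$, and the soft splitting/axis dichotomy cannot see it.

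The paper's route is entirely different. It first shows that every blow-down of $\bar M$ splits $\R^{n-2}$; this is where the value $n-2$ enters, through a splitting theorem (Theorem~\ref{T|splittingkesimo_intro}) for $\RCD(0,N)$ spaces admitting a closed $\R^{n-2}$-action under a dimension and volume-growth constraint that is sharp exactly at codimension~$2$. Then, using equivariant harmonic functions of almost linear growth together with the Cheeger--Naber slicing theorem, the splitting is propagated from infinity back to finite scales. The passage from blow-down splitting to actual splitting is itself delicate --- counterexamples of Kasue--Washio and Anderson show it fails in general --- and succeeds here only because of the equivariance of the harmonic map with respect to the $\Z^{n-2}$-action.
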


\begin{remark}
In \cite{Nabonnand}, P.~Nabonnand constructed a complete $(M^4,g)$ with $\Ric>0$ and $\pi_1(M^4)\simeq\Z$. L.~B\'erard Bergery generalized such construction later in \cite{BerardBergery} by building examples of $(M^n,g)$ with $\Ric>0$ and $\pi_1(M^n)\simeq \Z^{n-3}$. These examples show that Theorem \ref{thm:bettirigid_intro} is sharp, in the sense that its conclusion might fail if we just assume that $\pi_1(M^n)$ has a subgroup isomorphic to $\Z^{n-3}$. 
\end{remark}

\begin{remark}
Schoen and Yau conjectured in \cite[p. 211]{SchoenYau} that for every $n\ge 4$ and every complete Riemannian $(M^n,g)$ with $\Ric>0$ the rank of a free abelian subgroup of $\pi_1(M^n)$ should be less than or equal to $n-3$. Their conjecture was settled by M.-T.~Anderson in \cite{Anderson}. In the same paper, Anderson obtained Theorem \ref{thm:bettirigid_intro} under the additional assumption that $(M^n,g)$ has bounded sectional curvature, after settling the $3$-dimensional case jointly with L.~Rodriguez in \cite[Theorem 2]{AndersonRodriguez}. The validity of Theorem \ref{thm:Liugamma_intro} in all dimensions without further assumptions had remained an open question since then. Very recently, it was stated explicitly as a conjecture by Z.~Ye in \cite[Conjecture 1]{YeJGA} (see also the more recent \cite[Question 1.6]{PanYe}). 
\end{remark}

\begin{remark}
While we were writing up the paper, the interesting preprint \cite{HuangHuangBetti} by H.~Huang and X.-T.~Huang appeared on arXiv. One of their main results establishes Theorem \ref{thm:bettirigid_intro} under the additional assumption that the universal cover $(\overline{M},\overline{g})$ has maximal volume growth. Although there is an overlap between the two results (and the methods of the proof) they were obtained independently.  
\end{remark}

\begin{remark}
Theorem \ref{thm:bettirigid_intro} generalizes to $\RCD(0,n)$ metric measure spaces $(X,\dist,\haus^n)$, see Theorem \ref{thm:Bettirigid} below for the precise statement. Such generalization is crucial for the topological classification of $\RCD(0,3)$ spaces without boundary.
\end{remark}

In the second part of the proof in \cite{Liu3d} the author argues that a contractible $(M^3,g)$ with $\Ric\ge 0$ is simply connected at infinity. This is sufficient to complete the classification up to diffeomorphism in view of the previous considerations, \cite[Theorem 2]{HuschPrice}, \cite{Moise}, and G.~Perelman's solution of the Poincar\'e conjecture. A similar strategy was pursued earlier by Schoen and Yau in \cite{SchoenYau}, though they avoided relying on the Poincar\'e conjecture by proving directly that $M$ is irreducible, based again on minimal surfaces methods.
We do not know whether a contractible $(M^n,g)$ with $\Ric\ge 0$ must be simply connected at infinity when $n\ge 4$. To the best of our knowledge the question is open even for $n=4$ and $\Ric\equiv 0$.
\medskip

The second main result of this paper is a new topological restriction on manifolds supporting a (complete) metric with $\Ric\ge 0$ based on the notion of simplicial volume. The simplicial volume of a connected (possibly noncompact) manifold $M$ is a proper homotopy invariant denoted by $||M||\in [0,\infty]$. It was introduced by M.~Gromov in \cite{GromovVolbCoho}. Here we obtain the following:

\begin{theorem}\label{thm:simpvolintro}
   Let $n\ge 2$. If $(M^n,g)$ is a smooth, complete, oriented Riemannian manifold with $\Ric\ge 0$, then $||M||=0$. 
\end{theorem}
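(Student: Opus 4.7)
\bigskip
\noindent\textit{Proof plan for Theorem \ref{thm:simpvolintro}.}

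The plan is to combine Gromov's volume entropy inequality for the simplicial norm with Bishop--Gromov volume comparison, and then to bootstrap the closed case to the complete noncompact setting via a cutoff/exhaustion argument at the level of locally finite chains.

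For a closed Riemannian $n$-manifold $(N,g)$ with $\Ric\ge -(n-1)$, the fundamental inequality of Gromov from \cite{GromovVolbCoho} reads
\begin{equation*}
\|N\|\,\le\, C(n)\,h_{\mathrm{vol}}(\bar N,\bar g)^{n}\,\mathrm{Vol}(N,g),
\end{equation*}
where $h_{\mathrm{vol}}$ denotes the volume entropy of the Riemannian universal cover. In our hypothesis $\Ric\ge 0$, Bishop--Gromov gives $\mathrm{Vol}(B_R(p))\le \omega_n R^n$ both in $(M,g)$ and in its universal cover, which forces $h_{\mathrm{vol}}=0$. Thus in the compact case one reads off $\|M\|=0$ at once. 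This takes care of \emph{closed} $M$; note that even without passing to $\RCD$, this argument already settles, say, the closed oriented case over $\R$.

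For the complete noncompact case, where $\|M\|$ stands for the locally finite simplicial volume, the strategy is to localize the volume entropy inequality. Fix an exhaustion $K_j\Subset K_{j+1}\Subset M$ by compact domains with smooth boundary, and consider the relative simplicial norm $\|M,M\setminus K_j\|$ of the image of the fundamental class in $H_n(M,M\setminus K_j;\R)$. I would construct relative fundamental cycles by applying Gromov's straightening procedure in a large ball $B_R\supset K_j$, producing chains supported in $B_R$ with controlled $\ell^1$ norm on $K_j$ and a boundary term supported in $B_R\setminus K_j$. By the closed-case estimate applied on $B_R$ (more precisely on a doubling or a capped approximation), the $\ell^1$ mass on $K_j$ is bounded by $C(n)\,h_{\mathrm{vol}}(B_R)^n\,\mathrm{Vol}(B_R)$, while the polynomial growth $\mathrm{Vol}(B_R)\lesssim R^n$ makes the entropy of each finite window vanish as $R\to\infty$. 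Passing to a limit in a suitable sense yields $\|M\|_{\ell f}=0$.

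The main obstacle is precisely this localization step: Gromov's inequality is genuinely global, and in the noncompact setting both sides become infinite in a naive reading. To make the argument honest one has to (i) choose a good family of capped/doubled approximations of $B_R$ that admit closed fundamental classes comparable, via cycles, to the relative fundamental class on $K_j$; (ii) control the boundary contribution produced by Gromov's straightening on a scale where the volume entropy is measured; and (iii) check that this procedure is compatible with the $\ell^1$ topology on locally finite chains. An alternative, which one might prefer in the $\RCD$ generalization hinted at in the introduction, is to use Cheeger--Gromoll splitting (or its $\RCD$ analogue) to reduce the end behavior to a product with $\R^k$ and then exploit the vanishing of simplicial volume for bundles with amenable fiber, combined with the entropy argument on a compact core; but since the $\pi_1$ of $M$ may well be infinite without the universal cover splitting a line, the entropy/localization route seems the most robust.
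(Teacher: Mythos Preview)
The closed case is standard and correct; it follows from Gromov's entropy inequality with $h_{\mathrm{vol}}=0$, or directly from his Isolation Theorem (Theorem~\ref{thm:IsolationThmGromov}) by rescaling. The substance of the theorem is the noncompact case, and there your proposal does not give a proof --- as you yourself flag, the localization step is the whole difficulty, and the sketch does not resolve it.

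Concretely: doubling or capping $B_R$ destroys the Ricci lower bound near the glued region, so Bishop--Gromov does not apply to the universal cover of the closed approximant and its volume entropy is uncontrolled (it is governed by $\pi_1(B_R)$, not by the polynomial growth of balls in $M$ or in $\tilde M$). More fundamentally, even if each relative norm $\|M,M\setminus K_j\|$ were shown to be small, this would not bound the locally finite simplicial volume: $\|M\|$ requires a \emph{single} locally finite cycle of small $\ell^1$ norm, and small relative cycles over an exhaustion do not in general assemble into one. This is not a technicality --- there exist contractible open $3$-manifolds with $\|M\|>0$ (this is precisely the Bargagnati--Frigerio theorem invoked in Section~5), so any scheme that only sees finite windows and does not address the global assembly must fail. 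The paper's proof is entirely different: it uses the Frigerio--Moraschini vanishing criterion (Theorem~\ref{T|Frig}), which asks for an open cover by relatively compact sets that is amenable, amenable at infinity, and of multiplicity at most $n$. Such a cover is built geometrically in two regimes. Under Euclidean volume growth the cover is essentially by metric annuli, and finiteness of the relevant relative fundamental groups is extracted from the Cheeger--Colding cone structure at infinity (Theorem~\ref{T|ZhouAnelli}). Under sub-Euclidean growth one performs a conformal rescaling and applies Papasoglu's Urysohn width estimate to produce a cover of multiplicity $\le n$ outside a compact set; amenability is then supplied by the Kapovitch--Wilking Margulis lemma. Neither half uses volume entropy or straightening.
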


Despite the simplicial volume being a topological invariant, its interplay with (Ricci) curvature became clear since the earliest developments of the theory. In particular, Gromov in \cite[p. 14]{GromovVolbCoho} obtained the following \emph{Isolation Theorem} for the simplicial volume under lower Ricci curvature bounds:

\begin{theorem}[\cite{GromovVolbCoho}]\label{thm:IsolationThmGromov}
For every $n\in\N$ there exists $\eps(n)>0$ such that the following holds. Let $(M^n,g)$ be a complete Riemannian manifold with $\Ric\ge -(n-1)$. If $\mathrm{vol}(B_1(p))\le \eps(n)$ for every $p\in M$, then $||M||=0$. 
\end{theorem}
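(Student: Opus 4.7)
The plan is to combine three ingredients: Gromov's vanishing theorem for simplicial volume via amenable covers, a Margulis-type lemma under lower Ricci bounds, and a Vitali-type covering argument based on the Bishop--Gromov comparison.

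Recall that Gromov's vanishing criterion asserts that if $M^n$ admits a locally finite open cover $\{U_i\}$ of multiplicity at most $n$ such that each $U_i$ is \emph{amenable in $M$} (i.e., the image of $\pi_1(U_i) \to \pi_1(M)$ is amenable), then $||M|| = 0$. Under $\Ric \ge -(n-1)$, a Margulis-type lemma -- originally achieved by Gromov via an almost-flat manifold approximation, and later sharpened to pure Ricci bounds by Kapovitch--Wilking -- produces a uniform scale $\rho_0 = \rho_0(n) > 0$ such that $\pi_1(B_{\rho_0}(p)) \to \pi_1(M)$ has virtually nilpotent (hence amenable) image for every $p \in M$. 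A Vitali maximal-separation argument, together with Bishop--Gromov volume comparison, then yields a locally finite open cover $\{B_{\rho_0}(x_i)\}$ of $M$ whose multiplicity is bounded by some dimensional constant $N(n)$.

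The main obstacle, and the point where the hypothesis $\mathrm{vol}(B_1(p)) \le \eps(n)$ plays its essential role, is reducing the multiplicity of this amenable cover from $N(n)$ down to exactly $n$, so that the nerve has dimension at most $n-1$. Heuristically, uniformly small unit-ball volumes force $M$ to be \emph{collapsed} at unit scale, so its local structure is effectively lower-dimensional; quantitatively, one refines the cover by combining it with a partition of unity adapted to the volume deficit, using Bishop--Gromov to compare volumes at different scales and to keep the refinement locally finite. The threshold $\eps(n)$ is chosen precisely to ensure that this reduction is possible. Once such an $n$-fold amenable cover is produced, Gromov's vanishing criterion gives $||M|| = 0$.

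The hard part is the last step: the multiplicity drop from an abstract dimensional constant $N(n)$ down to exactly $n$ is what distinguishes $||M||=0$ from a mere quantitative bound $||M||\le C(n)\,\mathrm{vol}(M)$, and it is the specific role played by the smallness threshold $\eps(n)$.
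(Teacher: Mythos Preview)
The paper does not prove this theorem; it is quoted from \cite{GromovVolbCoho} as background. Nonetheless, your outline can be compared with the closely related argument the paper \emph{does} carry out in Section~4.2 (Theorem~\ref{T|Noverlap}), which is essentially the same isolation phenomenon under a weaker asymptotic hypothesis.

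Your ingredients (1)--(3) are correct and are exactly what the paper uses: the Frigerio--Moraschini/Gromov vanishing criterion (Theorem~\ref{T|Frig}), the Margulis lemma (Theorem~\ref{thm:margulis}), and Bishop--Gromov. The genuine gap is step (4). You describe the multiplicity reduction from $N(n)$ to $n$ as ``refining the cover by combining it with a partition of unity adapted to the volume deficit.'' This is not how the argument works, and it is not clear how any partition-of-unity manipulation of an existing ball cover would lower its multiplicity below a doubling-dependent constant. The mechanism is different and geometric: uniformly small ball volumes, together with a coarea-type inequality, force the $(n-1)$-dimensional Hausdorff content of metric spheres $\partial B_r(x)$ to be small. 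This is exactly the hypothesis of a width theorem (Gromov's original filling argument, or in the paper Papasoglu's Theorem~\ref{thm:papasoglu}), which produces a continuous map $\pi:M\to Y$ to an $(n-1)$-dimensional simplicial complex with uniformly small fibers. One then pulls back a refinement of the star cover of $Y$; since $\dim Y\le n-1$, this cover has multiplicity $\le n$ automatically, and the smallness of fibers together with the Margulis scale makes each preimage amenable in $M$. The threshold $\eps(n)$ enters in verifying the content hypothesis of the width theorem, not in any refinement procedure on the ball cover.

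In short: you correctly isolate the hard step, but the proposed mechanism for it is wrong. The missing idea is the passage through $(n-1)$-Uryson width via a coarea/content estimate, exactly as in the paper's Proposition~\ref{prop:margulis} and the surrounding argument.
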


An immediate corollary of Theorem \ref{thm:IsolationThmGromov} is that the simplicial volume of any closed manifold with $\Ric\ge 0$ vanishes. Thanks to Theorem \ref{thm:simpvolintro}, the closedness assumption can be dropped.

\begin{remark}
Under an additional, most likely technical, condition, Theorem \ref{thm:simpvolintro} generalizes to $\RCD(0,n)$ spaces which are topological $n$-manifolds, see Theorem \ref{thm:simplicialvolthm} below for the precise statement. 
\end{remark}

While Theorem \ref{thm:simpvolintro} yields new topological restrictions to the existence of complete metrics with $\Ric\ge 0$ in all dimensions, see Proposition \ref{prop:applsimplvol} below, our most direct application is to the $3$-dimensional case. G.~Bargagnati and R.~Frigerio proved in \cite{BargagnatiFrigerio} that a contractible $3$-manifold with vanishing simplicial volume must be homeomorphic to $\R^3$. Combining their result with Theorem \ref{thm:simpvolintro} and Theorem \ref{thm:bettirigid_intro}, we obtain a new proof of the topological classification of open $3$-manifolds with $\Ric\ge 0$. Such proof is independent of minimal surfaces methods. More generally, we can classify $\RCD(0,3)$ spaces whose underlying space is a topological $3$-manifold up to homeomorphism.

\begin{theorem}\label{thm:classRCD03intro}
    Let $(X,\dist,\haus^3)$ be a non-compact $\RCD(0,3)$ space and assume that $X$ is a topological manifold. Then either $X$ is homeomorphic to $\R^3$ or its universal cover splits a line isometrically. In the second case, $X$ admits a complete smooth metric with nonnegative sectional curvature.
\end{theorem}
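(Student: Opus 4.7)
The plan is to prove the dichotomy by contraposition: assuming the universal cover $\overline{X}$ does not split a line isometrically, deduce $X\cong\R^{3}$. The first step is to reduce to $\pi_{1}(X)$ trivial. By a Milnor--Gromov type argument available in the synthetic setting, any finitely generated subgroup of $\pi_{1}(X)$ has polynomial growth, hence is virtually nilpotent, and in particular is either finite or contains a subgroup isomorphic to $\Z$. The latter case is excluded by the $\RCD$ version of Theorem \ref{thm:bettirigid_intro}, namely Theorem \ref{thm:Bettirigid}, because a $\Z$-subgroup of $\pi_{1}(X)$ would force $\overline{X}$ to split an isometric line, contradicting our standing assumption. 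So $\pi_{1}(X)$ is finite.

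Next I would show that the universal cover $\overline{X}$---a simply connected noncompact $\RCD(0,3)$ topological $3$-manifold---is homeomorphic to $\R^{3}$; once this is proved, no nontrivial finite group can act freely on $\R^{3}$ with manifold quotient (because $\R^{3}/G$ would be a finite-dimensional $K(G,1)$, whereas finite groups have infinite cohomological dimension), hence $\pi_{1}(X)$ is trivial and $X\cong\R^{3}$. To identify $\overline{X}$ with $\R^{3}$, I would apply the Bargagnati--Frigerio theorem, which requires $\overline{X}$ contractible together with $\|\overline{X}\|=0$. The simplicial volume vanishing is supplied by Theorem \ref{thm:simplicialvolthm}, the $\RCD$ version of Theorem \ref{thm:simpvolintro}, under its technical hypothesis. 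Contractibility reduces to $\pi_{2}(\overline{X})=0$: combined with simple connectivity and $H_{3}(\overline{X})=0$ (noncompact $3$-manifold without boundary), iterated Hurewicz yields $\pi_{k}(\overline{X})=0$ for all $k$, so $\overline{X}$ is weakly, and hence genuinely, contractible.

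For the complementary conclusion, $\overline{X}=\R\times Y$ isometrically, where $Y$ is a simply connected $\RCD(0,2)$ topological $2$-manifold. By the classification of $2$-dimensional nonnegatively curved Alexandrov spaces, $Y$ is homeomorphic to $\R^{2}$ or $S^{2}$, so $\overline{X}$ is topologically $\R^{3}$ or $\R\times S^{2}$. The free properly discontinuous isometric action of $\pi_{1}(X)$ on $\overline{X}$ respects the line splitting, leaving a short list of possible topological types for $X$ (flat space forms, $S^{2}$-bundles over $S^{1}$, and their standard variants), each of which carries an explicit smooth metric of nonnegative sectional curvature, obtained by pushing down a flat or product metric from the cover.

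The main obstacle is establishing the vanishing $\pi_{2}(\overline{X})=0$, i.e.\ a synthetic replacement for the Schoen--Yau sphere lemma. Minimal surface methods, which power the smooth proof, are unavailable in the $\RCD$ category. A natural alternative would be to invoke the classical $3$-manifold sphere theorem to produce an embedded essential $2$-sphere from any nontrivial element of $\pi_{2}(\overline{X})$, and then to exploit $\RCD$ splitting and rigidity results around such an essential sphere to force an isometric $\R$-factor in $\overline{X}$, contradicting our working hypothesis.
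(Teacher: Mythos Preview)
Your proposal is essentially correct and converges to the paper's argument, but you misidentify where the difficulty lies and take an unnecessary detour.

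You flag $\pi_{2}(\overline{X})=0$ as the ``main obstacle'' and suggest the sphere theorem as an alternative to minimal surfaces. This is a misconception: even in Schoen--Yau, the $\pi_{2}$ vanishing (their Lemma~2) is proved precisely via the sphere theorem, not via minimal surfaces; minimal surfaces enter only in their Lemma~3 (the statement that a $\Z\le\pi_{1}$ forces a splitting), which is exactly what the present paper replaces by Theorem~\ref{thm:Bettirigid}. The sphere-theorem route you sketch at the end is Lemma~\ref{lemma:trivialpi_2} of the paper, and it works verbatim in the $\RCD$ setting: an embedded essential $S^{2}$ separates the simply connected $\overline{X}$ into two unbounded components (linking-number argument), so $\overline{X}$ has two ends and splits a line by Gigli's splitting theorem; the cross-section is a compact $\RCD(0,2)$ surface, hence Alexandrov by Lytchak--Stadler. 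There is no obstacle here.

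With $\pi_{2}=0$ in hand, the paper's order is cleaner than yours: contractibility of $\overline{X}$ follows immediately, then Smith theory forces $\pi_{1}(X)$ torsion-free, so any nontrivial element would generate a copy of $\Z$, which Theorem~\ref{thm:Bettirigid} excludes. Your preliminary Milnor--Gromov step is therefore unnecessary, and your inference ``hence $\pi_{1}(X)$ is finite'' from ``every finitely generated subgroup is finite'' is not justified as written (it becomes moot once you apply the cohomological-dimension argument, but the paper's route avoids the issue entirely). For the complementary case, your claim that the deck action respects the $\R$-factor is not valid in general (it fails when $\overline{X}\cong\R^{3}$, where no splitting direction is canonical); the paper instead observes that once $\overline{X}$ splits, $X$ is itself an Alexandrov space of nonnegative curvature and invokes the Shioya--Yamaguchi classification of open $3$-dimensional Alexandrov spaces to produce the smooth metric.
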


Theorem \ref{thm:classRCD03intro} answers a question raised by G.~Besson around seven years ago and advertised in various occasions since then. See, for instance, \cite[Question 4.1]{Besson3mfld}, \cite[Question 2.10]{BessonGallotscalRic} and \cite[Section 6.5.2]{WangPhD}. The result is original also in the case of noncollapsed Ricci limit spaces $(M^3_i,g_i,p_i)\to (X,\dist)$ with $\Ric_i\ge -1/i$ and $\mathrm{vol}(B_1(p_i))>v>0$ for every $i\ge 1$ and some $v>0$.

We expect that the ideas that we introduce for the proof of Theorem \ref{thm:classRCD03intro} will find applications in the study of collapsing $3$-manifolds under a lower Ricci curvature bound. For the sake of motivation, we note that the classification of open $3$-dimensional Alexandrov spaces with nonnegative curvature plays a key role in T.~Shioya and T.~Yamaguchi's study of collapsing $3$-manifolds under a lower sectional curvature bound \cite{ShioyaYamaguchi}. See also \cite{CaoGecollapsed,Bessierescollapsed,KleinerLottcollapse,MorganTiancollapse}.

Combining Theorem \ref{thm:classRCD03intro} with the recent works on orientability of $\RCD$ spaces \cite{Brenaorientability,QinDenRCD}, one obtains the full classification of non-collapsed $\RCD(0,3)$ spaces without boundary:

\begin{theorem} \label{T|classification}
    Let $(X,\sd,\aH^3)$ be an $\RCD(0,3)$ space without boundary. Then, either $(X,\dist)$ is an Alexandrov space of nonnegative sectional curvature, or $X$ is homeomorphic to one among: $\bb{R}^3$, $C(\bb{RP}^2)$, a spherical 3-manifold, a spherical suspension over $\bb{RP}^2$.
\end{theorem}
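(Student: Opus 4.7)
The plan is to distinguish whether $X$ is a topological $3$-manifold. If $X$ is a topological manifold and non-compact, Theorem \ref{thm:classRCD03intro} immediately yields the dichotomy: either $X\cong\R^3$, or $X$ admits a smooth complete metric of nonnegative sectional curvature and hence $(X,\sd)$ is an Alexandrov space of nonnegative sectional curvature. If $X$ is a closed topological $3$-manifold, I would distinguish according to $\pi_1(X)$. When $\pi_1(X)$ is finite, Hamilton's classification of closed Riemannian $3$-manifolds with nonnegative Ricci curvature via Ricci flow identifies $X$ with a spherical space form. When $\pi_1(X)$ is infinite, it has polynomial growth by Bishop--Gromov and is virtually nilpotent (the Milnor conjecture for $\RCD(0,N)$ spaces), so it contains a copy of $\Z$; the $\RCD$ analogue of Theorem \ref{thm:bettirigid_intro} applied with $n=3$ then gives that the universal cover splits a line, and the argument behind Theorem \ref{thm:classRCD03intro}, adapted to the compact setting, yields that $X$ is Alexandrov of nonnegative sectional curvature.

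For the case where $X$ is not a topological manifold, I would reduce to the manifold case via an orientation double cover. Using the orientability results for noncollapsed $\RCD$ spaces \cite{Brenaorientability,QinDenRCD} together with the classification of closed $2$-dimensional Alexandrov spaces of curvature $\geq 1$ without boundary (namely only $\mathbb S^2$ and $\R P^2$), the non-manifold points of $X$ form a discrete set, each with unique tangent cone $C(\R P^2)$. I would then construct a branched double cover $\pi\colon \tilde X\to X$, branched precisely over this discrete set, such that $\tilde X$ is a noncollapsed $\RCD(0,3)$ space and above each non-manifold point of $X$ lies a single point whose tangent cone is $C(\mathbb S^2)\cong\R^3$. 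Hence $\tilde X$ is a topological $3$-manifold to which the manifold case of the classification applies.

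The theorem then follows by analysing the orientation-reversing involution $\tau\colon\tilde X\to\tilde X$ with $X=\tilde X/\langle\tau\rangle$. If $\tilde X\cong\R^3$, then $\tau$ has a single fixed point, locally conjugate to $-\Id$, and the quotient is homeomorphic to $C(\R P^2)$. If $\tilde X$ is a spherical $3$-manifold, an examination of the fixed-point set of $\tau$ (which must be discrete since $X$ has no boundary) together with standard facts on orientation-reversing involutions of spherical $3$-manifolds with isolated fixed points identifies $X$ with the spherical suspension of $\R P^2$, and forces $\tilde X=\mathbb S^3$. If instead $\tilde X$ is Alexandrov of nonnegative sectional curvature, then so is the quotient $X=\tilde X/\langle\tau\rangle$ by the isometric $\Z_2$-action, placing $(X,\sd)$ in the first alternative.

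I expect the main obstacle to be the careful construction of the orientation double cover in the non-manifold setting --- verifying that it desingularizes all non-manifold points, yields a topological $3$-manifold, and remains an $\RCD(0,3)$ space --- together with the rigidity step excluding quotients of spherical $3$-manifolds other than $\mathbb S^3$, which requires combining topological constraints with rigidity phenomena specific to the $\RCD$ condition.
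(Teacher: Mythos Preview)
Your overall architecture matches the paper's closely: split into manifold versus non-manifold, and in the non-manifold case pass to a ramified orientation double cover built via \cite{Brenaorientability,QinDenRCD}. Two steps, however, contain genuine gaps.

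In the non-compact manifold case you deduce that $(X,\sd)$ is Alexandrov from the fact that $X$ admits \emph{some} smooth metric of nonnegative sectional curvature. This does not follow: being Alexandrov is a property of the specific metric $\sd$, not of the underlying manifold. The correct argument uses the \emph{isometric} splitting of the universal cover given by Theorem~\ref{thm:classRCD03intro}: the cross-section is $\RCD(0,2)$, hence Alexandrov by \cite{LytchakStadler}, and so are $(\overline X,\overline\sd)$ and its quotient $(X,\sd)$. Similarly, in the closed manifold case with finite $\pi_1$ you invoke Hamilton's Ricci flow classification, but that requires a smooth Riemannian metric with $\Ric\ge 0$, which the $\RCD$ condition does not provide. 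The right tool is purely topological: by Perelman's resolution of the Geometrization Conjecture, any closed $3$-manifold with finite $\pi_1$ is a spherical manifold. (For infinite $\pi_1$ the paper takes a shortcut via \cite[Theorem~1.3]{MondinoWei}, bypassing Theorem~\ref{thm:Bettirigid}.)

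For $\tilde X\cong\R^3$, your assertion that $\tau$ has a single fixed point is the crux and needs proof. The paper compactifies to $S^3\cong\R^3\cup\{\infty\}$, extends the involution to fix $\infty$, and applies Smith theory to conclude the extension has exactly two fixed points; the quotient $S^3/\langle\tau\rangle$ is then identified with the suspension of $\bb{RP}^2$, hence $X\cong C(\bb{RP}^2)$.
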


\begin{remark}
Regarding the more general case of $\CD(0,3)$ manifolds, when we drop the infinitesimally Hilbertian assumption, the classification remains open, although we can obtain partial progress with our methods, in particular in the contractible  case. The classification remains an open question also for $\RCD(0,3)$ spaces with nonempty boundary.
\end{remark}

\subsection{Discussion of the key new ideas}

The proof of Theorem \ref{thm:bettirigid_intro} hinges on two main steps. The first step is a blow-down argument. It is not difficult to check that the every equivariant Gromov-Hausdorff blow-down of $(\overline{M},\overline{g},\overline{p},\Z^{n-2})$ is an $\RCD(0,n)$ space whose isomorphisms group contains a closed $\R^{n-2}$ subgroup. Here $(\overline{M},\overline{g},\overline{p},\Z^{n-2})$ is the universal cover of $(M,g)$ endowed with the action by deck transformations of $\Z^{n-2}<\pi_1(M)$ and $\overline{p}\in\overline{M}$ is any reference point. See Lemma \ref{L|Gruppilimite} below for the precise statement, which is essentially \cite[Lemma 3.1]{PanYe}. Our first key ingredient is the following splitting result:

\begin{proposition}\label{prop:splitwithaction_intro}
  Let $(\widetilde{X},\widetilde{\dist},\widetilde{\meas},\widetilde{q},G)$ denote any equivariant blow-down of $(\overline{M},\overline{g},\overline{p},\Z^{n-2})$. Then $(\widetilde{X},\widetilde{\dist},\widetilde{\meas},\widetilde{q},G)$ splits a factor $\R^{n-2}$ isometrically and $\R^{n-2}<G$ acts by translations on this Euclidean factor.
\end{proposition}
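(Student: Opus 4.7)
\emph{Plan.} By stability of the $\RCD(0,n)$ condition under pointed measured Gromov--Hausdorff convergence, $(\widetilde{X},\widetilde{\dist},\widetilde{\meas})$ is itself an $\RCD(0,n)$ space. The closed $\R^{n-2}$-subgroup of $G$ acts on $\widetilde{X}$ by measure-preserving isometries through a continuous homomorphism $\Phi\colon \R^{n-2}\to G$. By the construction of the equivariant blow-down (cf.\ Lemma~\ref{L|Gruppilimite} and its proof), the orbit map
\begin{equation}
F\colon \R^{n-2}\longrightarrow \widetilde{X},\qquad F(v):=\Phi(v)(\widetilde{q}),
\end{equation}
is an isometric embedding of $(\R^{n-2},\|\cdot\|)$ into $(\widetilde{X},\widetilde{\dist})$, where $\|v\|:=\widetilde{\dist}(F(v),\widetilde{q})$. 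Subadditivity and symmetry of $\|\cdot\|$ are immediate from the group action; the crucial $1$-homogeneity $\|tv\|=|t|\,\|v\|$ (the first delicate step) follows from identifying $\|\cdot\|$ with the rescaled limit of the stable translation length of $\Z^{n-2}$ acting on $(\overline{M},\overline{g})$. An immediate consequence is that for every nonzero $v\in\R^{n-2}$, the curve $t\mapsto F(tv)$ is a line in $\widetilde{X}$.

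Next, let $\widetilde{X}=\R^m\times Z$ be the maximal Euclidean de~Rham decomposition of $\widetilde{X}$, obtained by iterating the Cheeger--Gromoll--Gigli splitting theorem, with $Z$ an $\RCD(0,n-m)$ space containing no line. Every measure-preserving isometry of $\widetilde{X}$ preserves this splitting, so the action decomposes as $\Phi(v)=\Phi_1(v)\times \Phi_2(v)$ with $\Phi_1(v)\in\mathrm{Isom}(\R^m)$ and $\Phi_2(v)\in\mathrm{Isom}(Z)$. Since a line in a product metric space decomposes into constant-speed line or constant components in the two factors, and $Z$ contains no line, the $Z$-component of each line $F(\R v)$ must be constant. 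Writing $\widetilde{q}=(0,q_0)$, this forces $\Phi_2(v)(q_0)=q_0$ for all $v\in\R^{n-2}$, so that $F(v)=(t(v),q_0)$ with $t(v):=\Phi_1(v)(0)\in\R^m$.

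Writing $\Phi_1(v)(x)=R(v)x+t(v)$ via the identification $\mathrm{Isom}(\R^m)=O(m)\ltimes\R^m$, distance-preservation of $F$ translates into $\|v_1-v_2\|=|t(v_1)-t(v_2)|$ for all $v_1,v_2\in\R^{n-2}$. Hence $t\colon (\R^{n-2},\|\cdot\|)\to(\R^m,|\cdot|)$ is an isometric embedding of normed spaces; by the Mazur--Ulam theorem it is affine, and in fact linear because $t(0)=0$. Consequently $\|\cdot\|$ is a Euclidean norm (the pullback via $t$ of the Euclidean norm), and $L:=t(\R^{n-2})$ is an $(n-2)$-dimensional linear subspace of $\R^m$. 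Combining the linearity of $t$ with the cocycle identity $t(v_1+v_2)=R(v_1)t(v_2)+t(v_1)$, forced by $\Phi_1$ being a homomorphism, yields $R(v_1)t(v_2)=t(v_2)$ for all $v_1,v_2$, i.e., $R(v)$ fixes $L$ pointwise.

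Finally, since $R$ is orthogonal and $L$ is $R$-invariant, so is $L^\perp$; setting $Y:=L^\perp\times Z$ we rewrite
\begin{equation}
\widetilde{X}\cong L\times(L^\perp\times Z)\cong \R^{n-2}\times Y,
\end{equation}
and by construction the $\R^{n-2}$-subgroup of $G$ acts on the first factor by the translations $x\mapsto x+t(v)$, which is precisely the desired conclusion. The two main technical obstacles are the facts quoted without proof above: the $1$-homogeneity of $\|\cdot\|$, which requires careful analysis of the equivariant blow-down procedure, and the preservation of the maximal Euclidean factor by the limiting group action, which should follow from established de~Rham-type structure results in the $\RCD$ setting.
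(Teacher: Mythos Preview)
Your argument hinges on the claim that the orbit map $F(v)=\Phi(v)(\widetilde q)$ is an isometric embedding of a \emph{normed} space, and in particular that each one-parameter orbit $t\mapsto F(tv)$ is a line in $\widetilde X$. This is precisely the step that fails. It is true that $F$ is distance-preserving for the translation-invariant metric $\|v_1-v_2\|:=\widetilde{\sd}(\Phi(v_1)\widetilde q,\Phi(v_2)\widetilde q)$, and that $\|\cdot\|$ is subadditive and symmetric. But $1$-homogeneity does \emph{not} follow from the existence of a closed $\R^{n-2}$-subgroup of $G$: a screw motion on $\R^3$ already gives a closed $\R$-subgroup of $\mathrm{Iso}(\R^3)$ whose displacement function is $t\mapsto\sqrt{t^2+2-2\cos t}$, which is not homogeneous. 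Your proposed fix --- identifying $\|\cdot\|$ with the stable norm of the $\Z^{n-2}$-action on $\overline M$ --- only works for the \emph{natural} parametrization $\Phi(v)=\lim_i\gamma^{\lfloor r_i v\rfloor}$. With that choice $\|\cdot\|$ is indeed the stable norm, hence homogeneous, but you then need the stable norm to be \emph{nondegenerate}; otherwise $\Phi$ is not injective, its image need not be the closed $\R^{n-2}$-subgroup supplied by Lemma~\ref{L|Gruppilimite}, and you produce strictly fewer than $n-2$ independent lines. You give no argument for nondegeneracy, and in fact nondegeneracy is essentially equivalent to the splitting you are trying to prove.

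The paper's proof goes by an entirely different route and uses exactly the structural constraints your argument ignores. In the Euclidean-volume-growth case the blow-down is a metric cone and cone-splitting applies (the $\R^{n-2}$-orbit lies in the set of tips, no line argument needed). In the sub-Euclidean case the essential dimension of the blow-down drops to at most $n-1$, so the $\R^{n-2}$-action has codimension at most one; the paper then shows (Theorem~\ref{T|splittingkesimo}) that an $\RCD(0,N)$ space with a closed $\R^k$-action, essential dimension $\le k+1$, and the appropriate volume-growth condition must split $\R^k$. The base case $k=1$ is the substantial part: one first proves that such a space is a measured warped product $\R_+\times_\phi\R$ (Proposition~\ref{prop:measuredwarped}), and then that the distributional lower Ricci bound together with the volume growth forces the warping function $\phi$ to be constant. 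The paper's Remark immediately after Proposition~\ref{prop:splitwithaction_intro} records that this dimension/volume input is genuinely necessary: the Pan--Wei examples are $\RCD(0,N)$ spaces of rectifiable dimension $2$ with a closed $\R$-subgroup of isometries that do \emph{not} split any line, so the implication ``closed $\R^{n-2}$-action $\Rightarrow$ line orbits'' is false in general.
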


Proposition \ref{prop:splitwithaction_intro} will follow from a more general splitting theorem via $\R^k$ action, valid for any $\RCD(0,N)$ space:

\begin{theorem} \label{T|splittingkesimo_intro}
    Let $(X,\sd,\m)$ be an $\RCD(0,N)$ space satisfying the following conditions for some integer $1\le k\le N$:
    \begin{enumerate}
        \item $\mathrm{dim}_{\mathrm{e}}(X) \leq k+1$, where $\mathrm{dim}_{\mathrm{e}}(X)$ denotes the essential (also known as rectifiable) dimension of $X$.
        \item $\int_1^{+\infty} \frac{t^{k+1}}{\m(B_t(p))} \, dt =+\infty$ for some $p \in X$.
        \item There exists a closed subgroup $H <\mathrm{Iso}(X)$ isomorphic to $\bb{R}^k$.
    \end{enumerate} Then, $X \cong Y \times \bb{R}^k$ as metric measure space for some $\RCD(0,N-k)$ space $(Y,\dist_Y,\meas_Y)$.
    \end{theorem}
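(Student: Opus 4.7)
The plan is to argue by induction on $k\ge 1$, using Gigli's Cheeger--Gromoll splitting theorem for $\RCD(0,N)$ spaces to peel off one factor of $\R$ at each step. The crucial task, at each step, is to produce under hypotheses (1)--(3) at least one line in $X$ along the orbit of some one-parameter subgroup of $H$.

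To produce the line, I would fix a one-parameter subgroup $\{\phi_t\}_{t\in\R}\subset H$ and consider the displacement function $d(x):=\sd(x,\phi_1 x)$, which is continuous and $\{\phi_s\}$-invariant. If $d$ attains a positive minimum at some $x_0$, a standard axis/convexity argument for semiconvex displacement functions on $\RCD(0,N)$ spaces yields that the orbit $t\mapsto \phi_t(x_0)$ is a line. If $d$ fails to attain its infimum, I would perform an equivariant pointed-measured Gromov--Hausdorff limit of $(X,\sd,\m,p_j,H)$ along basepoints $p_j$ with $d(p_j)\to \inf d$; by equivariant stability of $\RCD(0,N)$ (cf.\ Lemma~\ref{L|Gruppilimite}) the limit is an $\RCD(0,N)$ space still carrying a closed $\R^k$-action, and the hypotheses (1) and (2) persist (the former because the essential dimension can only drop under measured limits, the latter because the integral condition is calibrated to survive basepoint changes and blow-down-type rescalings). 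In the limit the infimum of the displacement is attained, producing an axis there; this axis can then be transferred back to an affine function on $X$ itself via the approximation of splitting functions under measured Gromov--Hausdorff convergence (Mondino--Naber, Bru\'e--Naber--Semola), and hence to a line in $X$ via Gigli's splitting.

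Once a line is in hand, Gigli's splitting gives $X\cong X'\times \R$, and the full group $H\cong \R^k$ decomposes (through the splitting of the isometry group of an $\RCD$ product) as a subgroup of $\mathrm{Iso}(X')\times \R$; modulo the $\R$-translations of the new factor, one recovers an $\R^{k-1}$-subgroup of $\mathrm{Iso}(X')$ satisfying the inductive hypotheses on $X'$. Indeed $\dim_{e}(X')\le k$ (essential dimension being additive under metric products), and hypothesis (2) descends to the analogous integral with exponent $t^{k}$ for $X'$ via $\m\asymp \m_{X'}\otimes \Leb^1$, which gives $\m(B_t^X)\asymp t\cdot \m_{X'}(B_t^{X'})$ asymptotically, so the change of exponent from $t^{k+1}$ to $t^{k}$ is absorbed exactly by the extra factor $t$ in the denominator. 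Iterating the procedure $k$ times produces $X\cong Y\times \R^k$ with $Y$ an $\RCD(0,N-k)$ space.

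The main obstacle is the production of the first line, specifically the descent from the limit splitting to a splitting of $X$, and the verification that no pathological orbit structure can survive the hypotheses. Here the exponent $t^{k+1}$ in (2) is calibrated to rule out relatively compact orbit structures: a $T^k$-orbit pattern would force $\m(B_t)\lesssim t^{N-k}\le t$ by (1), making the integral in (2) convergent, contradicting the assumption; and (1) ensures that the transverse direction to the orbits is essentially one-dimensional, so that Euclidean-cone arguments close cleanly in the blow-down and yield genuine affine functions on $X$. The technically most delicate point is this equivariant descent of splitting functions, for which the parabolicity-type nature of (2) and the tight dimensional bound (1) are both essential.
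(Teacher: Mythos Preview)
Your inductive framework and the treatment of the inductive step are in the right spirit, and indeed the paper also proceeds by induction on $k$. However, the heart of the argument---the production of a line in the base case $k=1$---has a genuine gap, and the paper's route is substantially different from what you sketch.

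The displacement-function/axis argument you invoke is a feature of \emph{nonpositive} curvature (CAT(0) or Busemann convexity), not of nonnegative Ricci. There is no ``standard'' semiconvexity of $x\mapsto \sd(x,\phi_1 x)$ in $\RCD(0,N)$ spaces, and no mechanism by which a minimum of the displacement would force the orbit to be a geodesic line. Concretely, the paper recalls (in a Remark following Theorem~\ref{T|splittingkesimo_intro}) the constructions of Kasue--Washio and Pan--Wei: $\RCD(0,n)$ spaces carrying a closed $\R$-subgroup of isometries which do \emph{not} split any line. These examples fail the hypotheses of the theorem only through the delicate interplay of (1) and (2), not through any orbit-geometry obstruction your argument would detect. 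Likewise, your fallback---passing to an equivariant limit, splitting there, and then ``transferring back'' an affine function via Mondino--Naber type approximation---does not work: almost-splitting maps do not upgrade to exact splittings without additional rigidity, and the paper explicitly notes (another Remark in the introduction) that splitting of all blow-downs does not imply splitting of the space itself, with Kasue--Washio and the Schwarzschild metrics as counterexamples.

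What the paper actually does in the base case is quite different. After an elementary reduction (Lemma~\ref{P|alternative}) one may assume $X/H\cong\R_+$. The substantial work is then to show that $(X,\sd,\m)$ is a \emph{measured warped product} $\R_+\times_\phi\R$ with measure $h(r)\,dr\,dx$ (Proposition~\ref{prop:measuredwarped}); this is delicate because regularity of the warping function $\phi$ and the weight $h$ must be established simultaneously with the structure, via a harmonicity argument for the orbit coordinate (Lemma~\ref{T|harmonicity}) and a blow-up analysis. Once the warped-product form is in hand, the $\RCD(0,N)$ condition is converted, via the Mondino--Ryborz equivalence (Theorem~\ref{T|VanessaMondino}), into a distributional differential inequality that says $\phi$ is superharmonic on the weighted half-line $([0,\infty),F\,d\Leb^1)$. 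Hypothesis (2) then enters precisely as a parabolicity condition on this weighted half-line, forcing the superharmonic $\phi$ to be constant (Step~3 of the proof of Theorem~\ref{T|keystepsplitting}, via the Yau-type inequality of Lemma~\ref{L|intbypartsyau}). This is where the exponent $t^{k+1}$ is used sharply; your heuristic about ruling out toral orbit patterns is not the mechanism.

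For the inductive step, the paper does not split $X$ first and then identify a residual $\R^{k-1}$ acting on the cofactor (your claim that $\mathrm{Iso}(X'\times\R)$ decomposes requires $X'$ to contain no line, which is not yet known). Instead it quotients by a one-parameter subgroup $H''\cong\R$, verifies that the quotient $(X/H'',\sd'',\m'')$ satisfies the hypotheses for $k-1$ (Lemma~\ref{L|inductivestep}; here the exponent drops correctly via a fundamental-domain volume estimate), applies the inductive hypothesis to split $X/H''\cong\R^{k-1}\times Y$, and then \emph{lifts} this splitting to $X$ (Lemma~\ref{L|liftsplit}), with $H''$ acting on the transverse factor, to which the base case applies.
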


\begin{remark}
There is no analogue of Proposition \ref{prop:splitwithaction_intro} under the weaker assumption that there is an effective action of $\R^{n-3}$ by measure preserving isometries, even in the case of complete Riemannian $n$-manifolds with $\Ric\ge 0$. Indeed, in \cite[p. 913]{KasueWashio} A.~Kasue and T.~Washio constructed a complete $(M^4,g)$ with $\Ric\ge 0$ which does not split any line isometrically, although its isometry group contains a closed $\R$ subgroup. More recently, J.~Pan and G.~Wei constructed examples of $\RCD(0,n)$ spaces with rectifiable dimension $2$ which do not split any line isometrically although the group of measure preserving isometries contains a closed $\R$ subgroup; see \cite{PanWeiGAFA} and their subsequent joint work with X.~Dai and S.~Honda where the construction was made more explicit. Their examples illustrate the role of the $\RCD$ dimension as opposed to the rectifiable dimension in Proposition \ref{prop:splitwithaction_intro}.  
\end{remark}

The proof of Proposition \ref{prop:splitwithaction_intro} relies on some recently developed $\RCD$ technology. The most delicate case is when the blow-down is homeomorphic to $\R^{n-2}\times\R_+$ and has rectifiable dimension $n-1$. We are going to argue that $(\widetilde{X},\widetilde{\dist},\widetilde{\meas})$ is a measured warped product, i.e., ignoring the possible nonsmoothness, the metric has the form $\di r^2+f^2(r)g_{\R^{n-2}}$ and $\overline{\meas}=h(r)\di r\di \Leb^{n-2}$, for some functions $f,h:\R_+\to\R_+$. The $\RCD(0,n)$ condition in this setting is equivalent to a lower bound on the weighted Ricci tensor interpreted in the sense of distributions, thanks to \cite{MondinoRyborz}. Such distributional lower Ricci bound yields some restrictions on $f$ and $h$ which, eventually, imply that $f$ must be constant. Hence $\widetilde{X}$ splits a factor $\R^{n-2}$.

\begin{remark}
In general, the splitting of all blow-downs of a complete manifold with $\Ric\ge 0$ does not imply the splitting of the original manifold. Kasue and Washio's construction from \cite[p. 913-914]{KasueWashio} was the first counterexample. Later Anderson observed that the $n$-dimensional Schwarzschild metrics on $\R^{2}\times S^{n-2}$ for $n\ge 4$ are Ricci-flat counterexamples, cf. with \cite[p. 71]{AndersonDuke}. Counterexamples can be constructed in dimension $3$ as well by using the methods developed by T.-H.~Colding and A.~Naber in \cite{ColdingNabercones}.
\end{remark}

\begin{remark}
For the sake of comparison, we note that the splitting of blow-downs, which is a key step of the proof in the general case, is an immediate consequence of the cone-splitting principle under the assumption that the universal cover has Euclidean volume growth. This is the case considered in the recent preprint \cite{HuangHuangBetti}.    
\end{remark}

That we can obtain the splitting of the universal cover starting from the splitting of the blow-downs is very much special to our situation.
Thanks to the Cheeger-Colding theory, see in particular \cite{ChCo1,ChCo3}, the following holds. If a blow-down of a complete $(M^n,g)$ with $\Ric\ge 0$ splits a factor $\R^k$ isometrically for some $0\le k\le n-1$, then there exist two sequences $\eps_i\to 0$, $R_i\to\infty$ and points $p_i\in M$ such that
\begin{equation}
    \dist_{\mathrm{GH}}\left(B_r(p_i),B^{\R^k\times Y_{r,i}}_r((0,q_{r,i}))\right)<\eps_ir\, ,\,\,  \text{for every}\,\,  i\in\N\,\, \text{and }\,\, 0<r<R_i\, ,
\end{equation}
where $Y_{r,i}$ denotes some complete metric space, $q_{r,i}\in Y_{r,i}$, and $\dist_{\mathrm{GH}}$ denotes the Gromov-Hausdorff distance. We will argue that the points $p_i$ can be chosen to remain at a uniformly bounded distance from some fixed point $p\in M$ as $i\to\infty$ and this is clearly enough to complete the proof of Theorem \ref{thm:bettirigid_intro}.
Ruling out the possibility that the points $p_i$ escape to infinity requires two new ideas. The first one is the use of J.~Cheeger and Naber's slicing theorem, see \cite[Theorem 1.23]{CheegerNaberCodim4}. Such tool crucially refines the general picture that we sketched above under the assumption that $k=n-2$. Very imprecisely, it guarantees that if the points $p_i$ escape to infinity, this can only happen in the directions of the (approximate) $(n-2)$-splitting. To deal with this scenario, the second key tool is a careful study of the behaviour of $(n-2)$ linearly independent harmonic functions with almost linear growth under compositions with the isometric action by $\Z^{n-2}$. The existence of such functions follows from Proposition \ref{prop:splitwithaction_intro} together with \cite[Theorem 1.10]{HuangHuangtransformation}, obtained by H.~Huang and X.-T.~Huang by developing the methods previously introduced by Cheeger, W.~Jiang, and Naber in \cite{CheegerJiangNaber}.

\begin{remark}
The proof of Theorem \ref{thm:bettirigid_intro} goes in the \emph{opposite direction} with respect to the proof of the analogous result in dimension $3$ in \cite{Liu3d}. Indeed, in \cite{Liu3d} an infinitesimal splitting originally obtained along a stable minimal hypersurface is propagated through the whole manifold. In our case we first obtain the splitting at infinity, i.e., at the level of blow-downs, and then upgrade such information to a splitting at finite scales.
\end{remark}

The proof of Theorem \ref{thm:simpvolintro} hinges on a general vanishing theorem for the simplicial volume of noncompact manifolds originally stated by Gromov in \cite[p. 58]{GromovVolbCoho} with a sketch of proof, and proved in all details more recently by Frigerio and M.~Moraschini in \cite[Theorem 9, Corollary 11]{FrigerioMoraschini}. Their result, stated below as Theorem \ref{T|Frig}, guarantees that the simplicial volume of an open triangulable $n$-manifold $M$ vanishes if one can construct a countable open cover of $M$ with relatively compact domains which is amenable at infinity (see Definition \ref{def:amenableatinfinity} below for the relevant terminology) and has multiplicity less than or equal to $n$. To construct such cover in our setting we distinguish two cases. If the volume growth of $(M^n,g)$ is Euclidean, then the cover basically consists of metric annuli. To obtain amenability at infinity we show that the relative fundamental group of a large metric annulus inside a slightly larger one is always finite in this setup, see Theorem \ref{T|ZhouAnelli} below for the precise statement. The proof relies on a contradiction argument based on the Cheeger-Colding theory. When the volume growth is not maximal the construction of the sought cover is more delicate. We are going to combine a conformal deformation argument together with a general result due to P.~Papasoglu in \cite{PapasogluGAFA} (motivated by some earlier work of L.~Guth \cite{GuthlargeballsAnnals,GuthballsJTA} and Gromov's \cite{GromovVolbCoho}) to show that $(M^n,g)$ has small $(n-1)$-Uryson width. This helps us to build a geometrically well-behaved cover of $M$ with multiplicity $\le n$. The amenability at infinity of such cover will follow from V.~Kapovitch and B.~Wilking's Margulis lemma for manifolds with nonnegative Ricci curvature from \cite{KapovitchWilking}.

The extension of Theorem \ref{thm:simpvolintro} to triangulable $\RCD(0,n)$ manifolds follows from the same strategy. The only point one has to be careful about is the application of the Margulis lemma. While a Margulis lemma for $\RCD$ spaces has been recently obtained in \cite{DengSantosZamoraZhao}, the local version corresponding to \cite[Theorem 1]{KapovitchWilking} is not yet available in the literature. In Appendix \ref{appendixA} we obtain a local Margulis lemma for $\RCD(K,N)$ spaces (and, more generally, for essentially nonbranching and semi-locally simply connected $\CD(K,N)$ spaces) where the bound on the index of the nilpotent subgroup is not effective. Despite being weaker than the statement obtained in the smooth case in \cite{KapovitchWilking}, our result, whose proof relies on \cite{BreuillardGreenTao}, is sufficient for the purposes of the present paper.

\medskip

\textbf{Acknowledgements:}

The last author is grateful to Andrea Mondino for bringing Besson's question about the classification of $\RCD(0,3)$ manifolds to his attention a few years ago, to Elia Bruè for useful conversations on the topics of the paper, and to Qin Deng and Sergio Zamora for useful a correspondence about \cite{DengSantosZamoraZhao}.

Part of the work on this project was carried out while the authors were attending the trimester program ``Metric Analysis'' at the Hausdorff Institute for Mathematics in Bonn, funded by the Deutsche Forschungsgemeinschaft (DFG, German Research Foundation) under Germany's Excellence Strategy - EXC-2047/1 - 390685813. The authors are grateful to the organizers of the trimester and to the staff of the HIM for the invitation and the excellent working conditions.

The second author acknowledges support from the Royal Society through the Newton International Fellowship (award number: NIF$\backslash$R1$\backslash$231659).

\section{Preliminaries}

In a metric space $(X,\sd)$, we denote by 
$\mathsf{Lip}(X)$ the space of all Lipschitz functions. Moreover, for a given Lipschitz function $\phi$, we denote by $\sL(\phi)$ the global Lipschitz constant and by $\lip(\phi)(\cdot)$ the local Lipschitz constant
\begin{equation}
    \lip(\phi)(x):=\limsup_{y \to x} \frac{|\phi(x)-\phi(y)|}{\sd(x,y)}\, .
\end{equation}
In the following, $\mathrm{Iso}(X)$ denotes the topological group of isometries of a metric space $X$. When working on a metric measure space $(X,\sd,\m)$, the notation $\mathrm{Iso}(X)$ will denote the collection of \emph{measure-preserving} isometries of $X$.

\subsection{Equivariant Gromov Hausdorff convergence}
We recall some basic facts about equivariant pointed Gromov Hausdorff convergence, the main references being \cite{FukayaConvergence, FukayaYamaguchi}.
In the next definition, a quadruple $(Y,\sd,G,p)$ consists of a pointed metric space $(Y,\sd,p)$ (which is assumed to be a complete length space), and a closed subgroup $G \subset \mathrm{Iso}(Y)$. We denote $G(R):=\{g \in G: \sd(p,gp) \leq R\}$.

\begin{definition} \label{D|epGH}[Equivariant Gromov-Hausdorff convergence]
Two spaces $(Y_1,\sd_1,y_1,G_1)$ and $(Y_2,\sd_2,y_2,G_2)$ are $\epsilon$-close in the \emph{equivariant pointed Gromov-Hausdorff} sense, if there exists a triple $(f,\psi,\phi)$ of maps
\[
f:B_{1/\epsilon}(y_1) \to B_{1/\epsilon}(y_2)\, , \,\, \phi:G_1(1/\epsilon) \to G_2(1/\epsilon)\, , \,\, 
\psi:G_2(1/\epsilon) \to G_1(1/\epsilon)\, ,
\]
with the following properties:
\begin{enumerate}
    \item $f(y_1)=y_2$;
    \item $f(B_{1/\epsilon}(y_1))$ is an $\epsilon$-net in $B_{1/\epsilon}(y_2)$;
    \item $|\sd_2(f(a),f(b))-\sd_1(a,b)| < \epsilon$ for every $a,b \in B_{1/\epsilon}(y_1)$;
    \item If $g \in G_1(1/\epsilon)$, and $x,gx \in B_{1/\epsilon}(y_1)$, then $\sd_2(f(gx),\phi(g)f(x)) < \epsilon$;
    \item \label{item5DefepGH}If $h \in G_2(1/\epsilon)$, and $x,\psi(h)x \in B_{1/\epsilon}(y_1)$, then $\sd_2(f(\psi(h)x),hf(x)) < \epsilon$.
\end{enumerate}
\end{definition}

\begin{lemma} \label{L|convergenceelementsequi}
    Let $(X_i,\sd_i,G_i,x_i)$ be a sequence converging in equivariant pGH sense to $(X,\sd,x,G)$. Let $\gamma_i \in G_i$ be such that $\gamma_i \cdot x_i$ converges to $y \in X$ in the space realizing the pGH convergence. Then there exists $\gamma \in G$ such that for every $y \in X$ and every sequence $y_i \to y$, it holds $\gamma_i \cdot y_i \to \gamma \cdot y$. In this case we say that $\gamma_i \to \gamma$.
\end{lemma}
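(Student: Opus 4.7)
The plan is to transport each $\gamma_i$ to an isometry of the limit space via the map $\phi_i$, extract a subsequential limit $\gamma \in G$ by Arzelà–Ascoli, and then verify that $\gamma$ acts correctly on arbitrary convergent sequences. Strictly speaking the existence of such a $\gamma$ requires passing to a (non-relabelled) subsequence; this is the natural interpretation, since the notation $\gamma_i \to \gamma$ is being introduced by this very lemma (if one fixes $\gamma_i$ to be, for example, arbitrary rotations around $x_i$ in a Euclidean factor, the sequence $\gamma_i \cdot x_i$ is trivially convergent without $\gamma_i$ converging as isometries).

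First, fix triples $(f_i,\phi_i,\psi_i)$ realizing the equivariant pGH convergence with errors $\epsilon_i \downarrow 0$, as in Definition \ref{D|epGH}. Since $\gamma_i\cdot x_i$ converges to $y$, the distances $\sd_i(x_i,\gamma_i x_i)$ are eventually bounded by some constant $R$, hence $\gamma_i \in G_i(R+1)$ for all $i$ large and $\tilde\gamma_i := \phi_i(\gamma_i)$ lies in $G(R+1+\epsilon_i)$. Because $X$ is a proper length space (automatic in all the settings of interest in the paper) and $G$ is a closed subgroup of $\mathrm{Iso}(X)$, the set $G(R+2)$ is compact in the compact-open topology by Arzelà–Ascoli. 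Passing to a subsequence, $\tilde\gamma_i \to \gamma$ for some $\gamma \in G$. Property~(4) of Definition \ref{D|epGH} applied at the base point, together with $f_i(x_i)=x$, gives $\sd(f_i(\gamma_i x_i),\tilde\gamma_i\cdot x) < \epsilon_i$, which combined with $f_i(\gamma_i x_i)\to y$ forces $\gamma\cdot x=y$.

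To verify the convergence property, fix $z \in X$ and $z_i \in X_i$ with $z_i \to z$. For $i$ large both $z_i$ and $\gamma_i z_i$ belong to $B_{1/\epsilon_i}(x_i)$, so property~(4) of the definition provides
\begin{equation}
\sd\bigl(f_i(\gamma_i z_i),\, \tilde\gamma_i \cdot f_i(z_i)\bigr) < \epsilon_i.
\end{equation}
Since $f_i(z_i)\to z$ in $X$ and $\tilde\gamma_i\to\gamma$ uniformly on compact subsets, the right-hand side converges to $\gamma\cdot z$, and hence $f_i(\gamma_i z_i) \to \gamma\cdot z$, i.e.\ $\gamma_i\cdot z_i\to\gamma\cdot z$ in the space realizing the pGH convergence. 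The only real technical point is the Arzelà–Ascoli compactness of $G(R+2)$ inside $\mathrm{Iso}(X)$, which rests on the properness of the limit $X$; this is standard for $\RCD(0,N)$ spaces and (non-collapsed) Ricci limits via Bishop–Gromov, but it is worth flagging since without properness a subsequential isometry limit need not exist at all.
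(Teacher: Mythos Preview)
The paper states this lemma without proof, treating it as a standard preliminary fact about equivariant pointed Gromov--Hausdorff convergence (cf.\ \cite{FukayaConvergence,FukayaYamaguchi}). Your argument is the standard one and is correct: transport $\gamma_i$ via $\phi_i$ to isometries of $X$, use Arzel\`a--Ascoli on the proper limit space to extract a limit $\gamma\in G$, and verify the action on general convergent sequences using property~(4) of Definition~\ref{D|epGH}.

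Your caveat about passing to a subsequence is well taken and is indeed how the lemma is used in the paper (e.g.\ in the proof of Proposition~\ref{P|HarmonicHuang}, where one first fixes a blow-down subsequence). The rotation example you give is exactly right: the hypothesis $\gamma_i\cdot x_i\to y$ alone cannot pin down $\gamma$ without a further subsequence. One cosmetic point: when you write ``the right-hand side converges to $\gamma\cdot z$'' you mean the second argument $\tilde\gamma_i\cdot f_i(z_i)$ of the distance, not $\epsilon_i$; this is clear from context but worth rephrasing.
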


\begin{theorem} \label{T|EquivaraintConvergence}
    Let $(Y_i,\sd_i,y_i)$ be a sequence of metric spaces converging in pGH sense to $(X,\sd,x)$. Let $G_i$ be closed subgroups of $\mathrm{Iso}(Y_i)$. Then, up to a not relabelled subsequence, there exists a closed subgroup $G<\mathrm{Iso}(X)$ such that
    \[
    (Y_i,\sd_i,G_i,y_i) \to (X,\sd,G,x)
    \]
    in equivariant pGH sense. Moreover,
    \[
    (Y_i/G_i,{\sd_i}_{/\sim},[y_i]) \to (X/G,{\sd}_{/\sim},[x])
    \]
    in pGH sense.
\end{theorem}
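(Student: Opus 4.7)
The plan is to first extract the limit subgroup $G$ via an Arzelà--Ascoli / diagonal argument, then verify the equivariant closeness conditions, and finally derive the convergence of quotients as a corollary. I would begin by using the pGH convergence to embed $Y_i, X$ isometrically into a common proper metric space $Z$ with $y_i \to x$; interpret convergence of elements and of maps in this ambient space. Since $Y_i$ is a proper length space and $G_i$ is a closed subgroup of $\mathrm{Iso}(Y_i)$, for every $R > 0$ the slice $G_i(R)$ is compact in the pointed compact-open topology: its elements are $1$-Lipschitz and $G_i(R)\cdot y_i\subset B_R(y_i)$, which is relatively compact. Hence any sequence $g_i \in G_i(R)$ has, by Arzelà--Ascoli inside $Z$, a subsequence converging uniformly on bounded sets to a $1$-Lipschitz map $g_\infty:X\to X$; applying the same reasoning to $g_i^{-1}$ and passing to a further subsequence shows $g_\infty\in \mathrm{Iso}(X)$ with $\sd(x,g_\infty x)\le R$.

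Next, for each integer $k\ge 1$ pick a countable dense subset of $G_i(k)$ and run a standard diagonal argument to extract a subsequence (not relabelled) along which \emph{every} chosen element converges. Define $G\subset\mathrm{Iso}(X)$ as the closure of the set of all such limits. Then $G$ is a subgroup: if $g,h\in G$ are the limits of $g_i,h_i\in G_i$ respectively, then $g_ih_i\in G_i$ converges to $gh$ and $g_i^{-1}$ converges to $g^{-1}$ (both via the common embedding $Z$). Closedness in $\mathrm{Iso}(X)$ is built into the definition, and a further diagonal extraction allows one to show that in fact \emph{every} sequence $g_i\in G_i(R)$ has a subsequence converging to some element of $G$ with displacement at most $R$.

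To verify the equivariant pGH closeness, fix $\epsilon > 0$ and let $f_i$ be an $\epsilon/2$-pGH approximation as in items (1)--(3) of Definition \ref{D|epGH}. Since $G(1/\epsilon + 1)$ is compact, pick a finite $\epsilon$-net $\{h_1,\dots,h_N\}\subset G(1/\epsilon+1)$ in the sup-distance on $B_{2/\epsilon}(x)$. For $i$ large, the contrapositive of the extraction statement above shows that every $g\in G_i(1/\epsilon)$ is within sup-distance $\epsilon$ of some $h_{j(g)}$, after transport through $f_i$; set $\phi_i(g):=h_{j(g)}$. This gives item (4). Symmetrically, using a finite $\epsilon$-net in each $G_i(1/\epsilon+1)$ (which exist by compactness, uniformly in $i$ up to the same contradiction argument), define $\psi_i$ to satisfy item (5). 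The pGH convergence of quotients is then essentially immediate: an $\epsilon$-net of $f_i(B_{1/\epsilon}(y_i))$ in $B_{1/\epsilon}(x)$ descends to an $\epsilon$-net of equivalence classes, while items (4)--(5) translate into an $O(\epsilon)$-control of orbit distances $\inf_{g\in G_i}\sd_i(a,gb)$ versus $\inf_{h\in G}\sd(f_i(a), h f_i(b))$.

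The main obstacle I expect is the bookkeeping in the last step: one has to juggle three scales (the ball radius $1/\epsilon$, the displacement budget in $G_i$, and the enlargement $1/\epsilon + 1$ needed to accommodate the fact that a limit element may have displacement slightly exceeding $1/\epsilon$) and ensure that $\phi_i,\psi_i$ are \emph{maps} rather than multi-valued correspondences, which forces the use of a fixed finite net and a deterministic choice function. Care is also required because $\mathrm{Iso}(Y_i)$ is only proper when $Y_i$ is proper; this is where we use that each $Y_i$ is a complete locally compact length space (implicit in the pGH convergence to a proper limit). The rest of the argument is a clean application of Arzelà--Ascoli and diagonalization.
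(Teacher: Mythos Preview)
The paper does not prove this theorem; it is stated in the preliminaries as a known result, with \cite{FukayaConvergence, FukayaYamaguchi} given as the main references. Your sketch is essentially the standard Fukaya--Yamaguchi argument, and the overall strategy (embedding in a common proper ambient space, Arzel\`a--Ascoli on $1$-Lipschitz maps with bounded displacement, diagonalization, then constructing the approximating maps $\phi_i,\psi_i$ via finite nets) is correct.

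One imprecision worth tightening: the sentence ``for each integer $k\ge 1$ pick a countable dense subset of $G_i(k)$ and run a standard diagonal argument to extract a subsequence along which every chosen element converges'' does not parse as written, because the dense subsets live in different groups $G_i$ and so there is no single sequence of ``chosen elements'' to track as $i$ varies. What one actually diagonalizes over is the parameter $k$: for each fixed $k$, the sequence of compact sets $G_i(k)$, viewed inside the compact metric space of $1$-Lipschitz self-maps of $\bar B_{k}(x)\subset Z$ with displacement at most $k$, has a Hausdorff-convergent subsequence; a diagonal over $k$ yields a single subsequence along which $G_i(k)$ Hausdorff-converges for every $k$, and $G$ is then the (automatically closed) subgroup consisting of all isometries arising as such limits. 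With this correction, the rest of your argument (closure under products and inverses, construction of $\phi_i$ and $\psi_i$, and the descent to quotients) goes through as you describe.
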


In the next definition, a quintuple $(X,\sd,\m,G,p)$ consists of a pointed metric measure space $(X,\sd,\m,p)$ and a closed subgroup $G \subset \mathrm{Iso}(X)$. 

\begin{definition} [Equivariant measured Gromov-Hausdorff convergence]
Two metric measure spaces $(Y_1,\sd_1,\m_1,G_1,y_1)$ and $(Y_2,\sd_2,\m_2,G_2,y_2)$ are $\epsilon$-close in the \emph{equivariant ponted measured Gromov-Hausdorff} sense, if there exists a triple $(f,\psi,\phi)$ of maps
\[
f:B_{1/\epsilon}(y_1) \to B_{1/\epsilon}(y_2), \,\,  \phi:G_1(1/\epsilon) \to G_2(1/\epsilon), \,\,  
\psi:G_2(1/\epsilon) \to G_1(1/\epsilon)\, ,
\]
satisfying the properties of Definition \ref{D|epGH} and such that $f_{\#}(\m_1 \mres B_{1/\epsilon}(y_1))$ is $\epsilon$-close to $\m_2 \mres B_{1/\epsilon}(y_2)$ w.r.t. to weak convergence in $Y_2$. We stress that, in this case, the groups $G_1$ and $G_2$ are required to act by measure-preserving isometries.
\end{definition}

Combining Gromov's Precompactness Theorem, the stability of the $\RCD(K,N)$ condition under pGH convergence \cite{GMS13}, and a simple argument which relies on Theorem \ref{T|EquivaraintConvergence}, one obtains the following.

\begin{proposition} \label{P|empGHconvergence}
    Let $(Y_i,\sd_i,\m_i,y_i)$ be a sequence of normalized $\RCD(K,N)$ spaces, and let $G_i$ be closed subgroups of $\mathrm{Iso}(Y_i)$. Then, up to passing to a subsequence, there exists an $\RCD(K,N)$ space $(X,\sd,\m,x)$, and a closed subgroup $G<\mathrm{Iso}(X)$, such that
    \[
    (Y_i,\sd_i,\m_i,G_i,y_i) \to (X,\sd,\m,G,x)
    \]
    in equivariant pmGH sense. 
\end{proposition}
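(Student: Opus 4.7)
\medskip

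The plan is to combine three ingredients: (a) the stability of the $\RCD(K,N)$ condition under pointed measured Gromov--Hausdorff convergence from \cite{GMS13}, together with Gromov's precompactness, to produce a pmGH limit of the measured structures; (b) Theorem \ref{T|EquivaraintConvergence} applied to the underlying metric structures with their group actions, to extract an equivariant pGH limit; (c) a compatibility argument showing that the two limits can be realized by a common sequence of approximations, and that the limit group acts by measure-preserving isometries.

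More precisely, the first step is to note that the normalization and Bishop--Gromov give uniform doubling on balls of bounded radius around $y_i$, so Gromov precompactness applies to the pointed metric measure spaces. By \cite{GMS13}, the class of $\RCD(K,N)$ spaces is closed under pmGH convergence. Passing to a subsequence we thus obtain an $\RCD(K,N)$ space $(X,\sd,\m,x)$ with
\begin{equation}
    (Y_i,\sd_i,\m_i,y_i)\to(X,\sd,\m,x)
\end{equation}
in pmGH sense. The second step is to apply Theorem \ref{T|EquivaraintConvergence} to the sequence of pointed metric spaces $(Y_i,\sd_i,y_i)$ equipped with the closed subgroups $G_i<\mathrm{Iso}(Y_i)$: up to a further not relabelled subsequence, we obtain a closed subgroup $G<\mathrm{Iso}(X)$ so that $(Y_i,\sd_i,G_i,y_i)\to(X,\sd,G,x)$ in equivariant pGH sense.

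The third step is the compatibility between the two modes of convergence. Both pmGH convergence and equivariant pGH convergence can be realized via a common ambient metric space into which the spaces $Y_i$ and $X$ embed isometrically (Fukaya's extrinsic picture), or equivalently via choices of $\eps_i$-GH approximations $f_i:B_{1/\eps_i}(y_i)\to B_{1/\eps_i}(x)$ with $\eps_i\to 0$. Since the limit space $(X,\sd,\m)$ is fixed by Step 1, one can choose the maps $f_i$ that witness the conclusion of Theorem \ref{T|EquivaraintConvergence} so that simultaneously $(f_i)_\#(\m_i\mres B_{1/\eps_i}(y_i))$ is $\eps_i$-close to $\m\mres B_{1/\eps_i}(x)$ in the weak sense; this follows because the pmGH convergence of Step 1 is characterized by the existence of some sequence of approximations satisfying this, and any two sequences of $\eps$-GH approximations with the same pGH limit can be made to agree up to an error tending to zero in both senses.

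Finally, one has to verify that $G$ acts on $(X,\sd,\m)$ by measure-preserving isometries. Fix $\gamma\in G$; by Lemma \ref{L|convergenceelementsequi}, there exist $\gamma_i\in G_i$ with $\gamma_i\to\gamma$. Since each $\gamma_i$ is measure-preserving, $(\gamma_i)_\#\m_i=\m_i$. Passing to the limit along the equivariant pmGH convergence, for every $\phi\in\Cb(X)$ with bounded support one has $\int \phi\circ\gamma_i\,\di\m_i\to\int\phi\circ\gamma\,\di\m$ and simultaneously $\int\phi\,\di\m_i\to\int\phi\,\di\m$, yielding $\gamma_\#\m=\m$. The main obstacle is Step 3, namely ensuring that a single sequence of approximations realizes both the equivariant structure and the weak convergence of the measures; this is a bookkeeping issue essentially already present in the literature on equivariant pmGH convergence, and the uniform doubling coming from $\RCD(K,N)$ makes the selection of such approximations straightforward.
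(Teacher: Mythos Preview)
Your proposal is correct and follows exactly the approach the paper indicates: the paper does not give a detailed proof but only remarks before the statement that it follows from Gromov's precompactness, the stability result \cite{GMS13}, and ``a simple argument which relies on Theorem \ref{T|EquivaraintConvergence}''; your Steps 1--3 and the final verification are precisely an expansion of that sketch. One small point: in the last paragraph you invoke Lemma \ref{L|convergenceelementsequi} to find $\gamma_i\to\gamma$ given $\gamma\in G$, but that lemma goes in the other direction; the existence of such approximants is instead guaranteed directly by item \ref{item5DefepGH} of Definition \ref{D|epGH} (the maps $\psi_i$), so the argument still goes through.
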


In the next lemmas,  when we say that a subgroup $G < \mathrm{Iso}(X)$ is isomorphic to $\bb{Z}$ (or $\bb{R}$), we implicitly mean that $G$ is equipped with the topology induced by $\mathrm{Iso}(X)$, that $\bb{Z}$ (or $\bb{R}$) has its natural topology, and that there exists an isomorphism of topological groups $\phi:G \to \bb{Z}$ (or $\phi:G \to \bb{R}$). We denote this by writing $G \cong \bb{Z}$ (or $G \cong \bb{R}$).

\begin{lemma} \label{L|Romeo}
    Let $(X,\sd,\m)$ be an $\RCD(K,N)$. Let $\bb{R}^k \cong G < \mathrm{Iso}(X)$ be a closed subgroup. Then, the orbit $Gp \subset X$ is homeomorphic to $\bb{R}^k$.
    \begin{proof}
        Let $\phi:\bb{R}^k \to G$ be the isomorphism of topological groups given by the statement. Since $G$ acts freely, the map $\psi:G \to Gp \subset X$ is an homeomorphism, so that $\psi \circ \phi:\bb{R}^k \to Gp$ is a homeomorphism as well.
    \end{proof}
\end{lemma}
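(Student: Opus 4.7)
The plan is to use the topological group isomorphism $\phi\colon\mathbb{R}^k \to G$ to transport questions about the orbit $Gp$ to questions about $G$ itself, and then to show that the orbit map $\psi\colon G \to Gp$, $\psi(g):=gp$, is a homeomorphism. The composition $\psi\circ\phi\colon\mathbb{R}^k\to Gp$ will then provide the desired homeomorphism. Concretely, the two facts I need are: (i) $\psi$ is a bijection, i.e., the $G$-action is free at $p$; (ii) $\psi$ is a closed map with continuous inverse.

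For (i), observe that any $\RCD(K,N)$ space is proper (indeed doubling, by Bishop--Gromov) and complete, so by the classical Dantzig--van Dantzig--Van der Waerden type argument $\mathrm{Iso}(X)$ is a locally compact topological group acting properly on $X$. Since $G$ is closed in $\mathrm{Iso}(X)$, it inherits a proper action on $X$; in particular the stabilizer $G_p=\{g\in G : gp=p\}$ is a compact subgroup of $G$. Pulling back through $\phi$, this yields a compact subgroup of $\mathbb{R}^k$, which is necessarily trivial. Hence $G_p=\{e\}$, so $\psi$ is injective, and surjectivity onto $Gp$ is automatic.

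For (ii), continuity of $\psi$ is built into the definition of a continuous action. Properness of the $G$-action also implies that $\psi$ is a proper map: if $K\subset X$ is compact, then $\psi^{-1}(K)=\{g\in G: gp\in K\}$ is compact by the very definition of proper action. A continuous, injective, proper map between locally compact Hausdorff spaces is a closed embedding, hence a homeomorphism onto its image. Therefore $\psi\colon G\to Gp$ is a homeomorphism, and combining with $\phi$ gives $Gp\cong \mathbb{R}^k$.

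The only step that might require care is the properness of the $\mathrm{Iso}(X)$-action, but this is standard once one knows $(X,\sd)$ is proper, which is a well-known consequence of $\RCD(K,N)$; everything else is soft topology. Consequently the lemma reduces to these two standard inputs (properness of $X$, plus the elementary fact that $\mathbb{R}^k$ has no nontrivial compact subgroups), without needing any finer $\RCD$ structure.
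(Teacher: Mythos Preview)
Your proof is correct and follows essentially the same route as the paper: compose the topological group isomorphism $\phi\colon\mathbb{R}^k\to G$ with the orbit map $\psi\colon G\to Gp$. The paper's proof is terser, simply asserting that ``$G$ acts freely'' and that $\psi$ is therefore a homeomorphism; you unpack both of these by invoking properness of the $\mathrm{Iso}(X)$-action on a proper space (hence compact stabilizers, which in $\mathbb{R}^k$ are trivial) and by noting that a proper continuous bijection between locally compact Hausdorff spaces is a homeomorphism. Your additional detail is sound and fills in exactly the justification the paper leaves implicit.
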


\begin{lemma} \label{L|Gruppilimite}
     Let $(X_i,\sd_i,\m_i,x_i)$ be a sequence of normalized $\RCD(K,N)$ spaces. Let $\bb{Z}^k \cong G_i< \mathrm{Iso}(X_i)$ be closed subgroups with generators $\{\gamma_{i,j}\}_{j=1}^k$ such that $\sd_i(x_i,\gamma_{i,j} \cdot x_i) \to 0$ as $i \uparrow \infty$, for every $j \in \{1,\cdots,k\}$. Then, up to passing to a subsequence, one has equivariant pmGH convergence
    \[
    (X,\sd_i,\m_i,G_i,x_i) \to (X,\sd,\m,G,x)
    \]
    for a closed subgroup $G<\mathrm{Iso}(X)$ such that $G$ contains a closed subgroup isomorphic to $\bb{R}^k$.
    \begin{proof}
        The proof is the same as \cite[Lemma 3.1]{PanYe} using that $\mathrm{Iso}(X)$ is a Lie Group by \cite{IsoLieRCD} or \cite{SosaLieRCD}.
    \end{proof}
\end{lemma}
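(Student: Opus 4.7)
My plan is to follow the strategy of \cite[Lemma 3.1]{PanYe}, the only modification being that the Lie group structure of $\mathrm{Iso}(X)$ in the limit comes from \cite{IsoLieRCD,SosaLieRCD}. First, I would apply Proposition \ref{P|empGHconvergence} to extract, along a not relabelled subsequence, an equivariant pmGH limit $(X,\sd,\m,G,x)$ with $G<\mathrm{Iso}(X)$ closed; being a closed subgroup of a Lie group, $G$ is itself Lie.

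Next, for each $j\in\{1,\ldots,k\}$ I would construct a continuous one-parameter subgroup $\eta_j:\bb{R}\to G$. Setting $\ell_{i,j}:=\sd_i(x_i,\gamma_{i,j}\cdot x_i)\to 0$ and $n_{i,j}(t):=\lfloor t/\ell_{i,j}\rfloor$ for $t\ge 0$, the iterate $\gamma_{i,j}^{n_{i,j}(t)}$ displaces $x_i$ by at most $t$, hence its orbit stays in a bounded set along the sequence. A diagonal argument in $t$, combined with Lemma \ref{L|convergenceelementsequi}, extracts a limit isometry $\eta_j(t)\in G$ for every $t$ in a countable dense subset of $\bb{R}_{\ge 0}$. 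The elementary bound $|n_{i,j}(t+s)-n_{i,j}(t)-n_{i,j}(s)|\le 1$, together with $\ell_{i,j}\to 0$, gives the additivity $\eta_j(t+s)=\eta_j(t)\eta_j(s)$; continuity follows from the Lie group structure of $G$ and the uniform displacement bound $\sd(x,\eta_j(t)x)\le t$, and one extends to $\bb{R}$ by $\eta_j(-t):=\eta_j(t)^{-1}$. Since the $\gamma_{i,j}$'s pairwise commute, so do the $\eta_j(\bb{R})$'s in $G$, and the map $\Phi:\bb{R}^k\to G$, $(t_1,\ldots,t_k)\mapsto \eta_1(t_1)\cdots\eta_k(t_k)$, is a continuous homomorphism into a connected abelian Lie subgroup of $G$.

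The delicate point is to guarantee that the image of $\Phi$ contains a closed subgroup isomorphic to $\bb{R}^k$: if the displacements $\ell_{i,j}$ decay at very different rates, the naive one-parameter subgroups coming from the given generators could, \emph{a priori}, collapse onto a lower-dimensional Lie subalgebra of $\mathrm{Iso}(X)$. The remedy, as in \cite{PanYe}, is to work in the Lie algebra of $\mathrm{Iso}(X_i)$ near the identity, writing $\gamma_{i,j}=\exp(v_{i,j})$ for large $i$ with $|v_{i,j}|\to 0$, and to replace the original generators by a $\bb{Z}$-basis of $G_i$ adapted to the lattice $\sum_j \bb{Z} v_{i,j}$; along this adapted basis one can choose integer iterates whose exponentiated vectors converge, as $i\to\infty$, to $k$ linearly independent vectors in the Lie algebra of $\mathrm{Iso}(X)$. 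Exponentiating these yields the desired closed $\bb{R}^k<G$.
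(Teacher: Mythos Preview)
Your outline through the construction of $\Phi:\bb{R}^k\to G$ follows the intended route, and since the paper's own proof is nothing more than a pointer to \cite[Lemma~3.1]{PanYe} together with the Lie group structure of $\mathrm{Iso}(X)$ from \cite{IsoLieRCD,SosaLieRCD}, you are in the same spirit.

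There is, however, a genuine gap in your final paragraph. You propose to write $\gamma_{i,j}=\exp(v_{i,j})$ in the Lie algebra of $\mathrm{Iso}(X_i)$ with $|v_{i,j}|\to 0$. This cannot be done: the $\gamma_{i,j}$ are non-identity elements of the \emph{discrete} closed subgroup $G_i\cong\bb{Z}^k<\mathrm{Iso}(X_i)$, and are therefore bounded away from the identity in the Lie group topology of $\mathrm{Iso}(X_i)$. The hypothesis $\sd_i(x_i,\gamma_{i,j}\cdot x_i)\to 0$ only says that the displacement of the \emph{basepoint} is small, not that $\gamma_{i,j}$ is close to the identity as an isometry of $X_i$. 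Indeed, in the paper's principal application the $X_i$ are rescalings of a fixed universal cover and the $\gamma_{i,j}$ are \emph{fixed} deck transformations for all $i$, manifestly not tending to the identity. So the passage through the Lie algebra of $\mathrm{Iso}(X_i)$, as written, does not get off the ground.

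Your instinct to pass to an adapted $\bb{Z}$-basis of $G_i$ is the right one, but the bookkeeping lives in $G_i\cong\bb{Z}^k$ equipped with the sub-additive displacement norm $g\mapsto\sd_i(x_i,g\cdot x_i)$, not in any Lie algebra at level $i$. One chooses, for each $i$, a reduced (short) basis of this normed lattice; the analysis establishing that the resulting one-parameter subgroups are $k$-dimensionally independent and generate a closed $\bb{R}^k$ then takes place entirely in the \emph{limit} group $G$, which is a closed abelian Lie subgroup of $\mathrm{Iso}(X)$. These are precisely the details supplied by \cite[Lemma~3.1]{PanYe}.
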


\subsection{Some properties of \texorpdfstring{$\RCD$}{RCD} spaces}

We recall some specific properties of $\RCD(K,N)$ spaces that we will use later on in the paper.

\begin{definition}[Tangent spaces to an $\RCD(K,N)$ space]
    Let $(X,\sd,\m)$ be an $\RCD(K,N)$ space, and consider a sequence $r_i \downarrow 0$. For every $x \in X$, by the compactness of $\RCD(K,N)$ spaces w.r.t. pmGH convergence, up to a subsequence, the spaces $(X,\ssf{d}/r_i,\m(B_{r_i}(x))^{-1}\m,x)$ converge in pmGH sense to a limiting $\RCD(0,N)$ space $(X_\infty,\sd_\infty,\m_\infty,x_\infty)$. Such $X_\infty$ is called a \emph{tangent space to $X$ in $x$}. The set of tangent spaces of $X$ in $x$ is denoted $\mathrm{Tan}_x(X)$
\end{definition}

\begin{definition}[Tangent cones at infinity of an $\RCD(0,N)$ space]
    Let $(X,\sd,\m,x)$ be a pointed $\RCD(0,N)$ space, and consider a sequence $r_i \uparrow + \infty$. Up to a subsequence, the spaces $(X,\ssf{d}/r_i,\m(B_{r_i}(x))^{-1}\m,x)$ converge in pmGH sense to a limiting $\RCD(0,N)$ space $(X_\infty,\sd_\infty,\m_\infty,x_\infty)$. Such $X_\infty$ is called a \emph{tangent cone at infinity} (or \emph{blow-down}) of $X$.
\end{definition}

The proof of the next lemma is the same as \cite[Lemma 2.1]{LytchakStadler}.

\begin{lemma} \label{L|tangentconesconnected}
    Let $(X,\sd,\m)$ be an $\RCD(0,n)$ space. Then, the set of its tangent cones at infinity is connected in the pointed Gromov-Hausdorff topology. 
    
    Similarly, given a pointed $\RCD(K,N)$ space $(X,\sd,\m,x)$, the set $\mathrm{Tan}_x(X)$ is connected in the pointed Gromov-Hausdorff topology.
\end{lemma}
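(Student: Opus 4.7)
The plan is to adapt the classical topological argument for connectedness of tangent sets: express the set of tangent cones at infinity as a nested intersection of compact connected subsets of a Hausdorff space, then invoke the standard fact that such an intersection is connected. I will focus on the case of blow-downs, since the argument for $\mathrm{Tan}_x(X)$ is entirely analogous after replacing $r\uparrow \infty$ with $r\downarrow 0$ and $(R,\infty)$ with $(0,\delta)$.

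First, I would work inside the space $\mathcal{M}$ of isomorphism classes of pointed metric spaces endowed with the pointed Gromov--Hausdorff topology. The Bishop--Gromov inequality on $\RCD(0,n)$ spaces furnishes a uniform doubling constant for the rescaled spaces, so Gromov's precompactness theorem ensures that the family $\{[(X,\sd/r,x)]:r\ge 1\}$ is relatively compact in $\mathcal{M}$. Next, I would verify that the rescaling map $\Phi:(0,\infty)\to \mathcal{M}$ defined by $\Phi(r):=[(X,\sd/r,x)]$ is continuous. This should be elementary: for $r_n\to r>0$ and any fixed $R>0$, the identity on $X$ provides an $\eps_n$-Gromov--Hausdorff approximation between the $R$-balls around $x$ in $(X,\sd/r_n)$ and $(X,\sd/r)$, with $\eps_n\to 0$, since on any region of $\sd$-diameter at most $D$ one has $|\sd/r_n-\sd/r|\le D\,|1/r_n-1/r|$.

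With continuity in hand, for each $R>0$ the set $K_R:=\overline{\Phi((R,\infty))}$ is compact and connected in $\mathcal{M}$, being the closure in a Hausdorff space of the continuous image of the connected interval $(R,\infty)$. The set of tangent cones at infinity coincides with $\bigcap_{R>0}K_R$, because $Y\in\mathcal{M}$ is a limit of some sequence $\Phi(r_i)$ with $r_i\uparrow\infty$ if and only if $Y\in K_R$ for every $R>0$. Invoking the classical fact that a decreasing intersection of compact connected subsets of a Hausdorff space is connected, the conclusion follows. I do not foresee any genuine obstacle here; the only mildly delicate point is the continuity of $\Phi$, and this is precisely what allows the nested-intersection argument to go through.
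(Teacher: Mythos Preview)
Your argument is correct and is precisely the standard nested-intersection argument; the paper does not give its own proof but simply refers to \cite[Lemma 2.1]{LytchakStadler}, whose proof is exactly along these lines. The only cosmetic point is that you should take $\mathcal{M}$ to be the space of isometry classes of pointed \emph{proper} metric spaces (so that the pointed Gromov--Hausdorff topology is genuinely Hausdorff and metrizable), but this is automatic here since all rescalings of $(X,\sd)$ and their limits are proper.
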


The next definition is given in \cite{Cheeger} for PI spaces.

\begin{definition} \label{D|CheegerDiff}
    Let $(X,\sd,\m)$ be an $\RCD(K,N)$ space, and let $x \in X$ be a point with $\mathrm{Tan}_x(X)=\{\bb{R}^d\}$. A function $f \in \Lip(X)$ is differentiable in $x$ if the following happens. Let $r_i \downarrow 0$, $\delta_i \downarrow 0$, and let
    \[
    \psi_i:(\bar{B}_{r_i}^X(x),\sd/r_i) \to (\bar{B}_1^{\bb{R}^d}(0),\sd_{eu})\, , \quad \quad \phi_i: (\bar{B}_1^{\bb{R}^d}(0),\sd_{eu}) \to (\bar{B}_{r_i}^X(x),\sd/r_i) 
    \]
    be $\delta_i$-GH approximations. Then, up to passing to a subsequence, there exists a linear function $f_\infty:\bb{R}^d \to \bb{R}$ with $\lip(f_\infty)=\lip(f)(x)$, such that
    \[
    \Big\|\frac{f-f(x)}{r_i}-f_\infty \circ \psi_i \Big\|_{\infty,\bar{B}^X_{r_i}(x)} +
    \Big\|\frac{f \circ \phi_i-f(x)}{r_i}-f_\infty  \Big\|_{\infty,\bar{B}^{\bb{R}^d}_{1}(0)} \to 0\, .
    \]
\end{definition}

The next theorem follows from \cite[Theorem 10.2]{Cheeger}.

\begin{theorem} \label{T|CheegerDiff}
    Let $(X,\sd,\m)$ be an $\RCD(K,N)$ space, and let $f \in \Lip(X)$. Then, $f$ is differentiable at $\m$-a.e. $x \in X$.
\end{theorem}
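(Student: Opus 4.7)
The plan is to reduce this statement to Cheeger's original a.e.\ differentiability theorem for Lipschitz functions on PI spaces, combined with the fact that $\m$-a.e.\ point of an $\RCD(K,N)$ space admits a unique Euclidean tangent of the essential dimension.

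First I would observe that every $\RCD(K,N)$ space is a PI space: the Bishop--Gromov inequality gives the (local) doubling property of $\m$, while a local $(1,1)$-Poincaré inequality with constants depending only on $K$, $N$ and the radius is available (Rajala). Hence Cheeger's framework applies. Next, I would invoke the rectifiability theorem of Mondino--Naber together with the constancy of the essential dimension of Brué--Semola, which yield the existence of an integer $1\le n\le N$ such that
\begin{equation}
    \mathrm{Tan}_x(X) = \{(\bb{R}^n,\mathsf{d}_{eu},c_n \mathscr{L}^n,0)\}\quad \text{for } \m\text{-a.e. } x\in X.
\end{equation}
Cheeger's Theorem 10.2 then provides a countable measurable differentiable structure on $X$, consisting of bi-Lipschitz charts $\phi_\alpha : U_\alpha \to \bb{R}^{n_\alpha}$, with respect to which any $f\in\Lip(X)$ is differentiable at $\m$-a.e.\ $x\in U_\alpha$: there exists a unique $\xi=\xi(x)\in\bb{R}^{n_\alpha}$ with $f(y)-f(x) = \xi\cdot(\phi_\alpha(y)-\phi_\alpha(x)) + o(\mathsf{d}(x,y))$ as $y\to x$. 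At points with a unique Euclidean tangent $\bb{R}^n$, uniqueness forces $n_\alpha = n$ and $\phi_\alpha$ itself is differentiable at $x$ in the sense of Definition \ref{D|CheegerDiff}, with differential a linear isomorphism $L:\bb{R}^n\to\bb{R}^n$. Setting $f_\infty := \xi\cdot L(\,\cdot\,):\bb{R}^n\to\bb{R}$ and exploiting the bi-Lipschitz nature of the chart yields both convergence statements of Definition \ref{D|CheegerDiff} at $\m$-a.e.\ $x$.

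The main obstacle I expect is verifying the slope identity $\lip(f_\infty) = \lip(f)(x)$. This would follow from the $\m$-a.e.\ coincidence of the local slope with the minimal weak upper gradient, $\lip(f)(x)=|\nabla f|_w(x)$, which holds in PI spaces by Cheeger (and in $\RCD(K,N)$ spaces by Ambrosio--Gigli--Savaré), combined with the blow-up identification $|\nabla f|_w(x)=\lip(f_\infty)$ at a point of unique Euclidean tangent: the former is read off from the uniform convergence $(f-f(x))/r_i\to f_\infty\circ\psi_i$ by letting the scale go to zero, while the reverse inequality uses the linearity of $f_\infty$ together with the existence, at the Euclidean regular point $x$, of GH approximations realizing arbitrary unit directions. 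An alternative route is to argue directly inside Cheeger's construction, where the chart differential of $\phi_\alpha$ is an isometry with respect to the norm dual to $|\nabla\,\cdot\,|_w(x)$, automatically yielding the slope identity.
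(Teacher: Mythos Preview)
Your proposal is correct and takes essentially the same approach as the paper, which simply records that the statement follows from \cite[Theorem 10.2]{Cheeger} without further detail. Your plan correctly supplies the bridge from Cheeger's PI-space framework to Definition~\ref{D|CheegerDiff} via the PI structure of $\RCD(K,N)$ spaces, the $\m$-a.e.\ uniqueness of Euclidean tangents, and the slope identity $\lip(f)=|\nabla f|_w$.
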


The following theorem follows from the proof of  \cite[Theorem 1.3]{MondinoWei}.

\begin{theorem} \label{T|compactsplittingcovering2}
    Let $(X,\sd,\m)$ be a $\RCD(0,n)$ space such that $\mathrm{Iso}(X)$ contains a closed subgroup isomorphic to $\bb{Z}^k$ acting cocompactly. Then, $X$ splits off a factor $\R^k$.
\end{theorem}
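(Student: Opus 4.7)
The proof proceeds by induction on $n$, following the strategy of Mondino–Wei.

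\textbf{Splitting step.} Let $\gamma \in G$ be any nontrivial element. The displacement function $d_\gamma(x) := \sd(x,\gamma x)$ is continuous and $G$-invariant by abelianness of $G$, hence descends to the compact quotient $X/G$ and attains its infimum $m>0$ at some $x_0 \in X$. Positivity of $m$ uses the fact that $G$ is a closed torsion-free discrete subgroup of the Lie group $\mathrm{Iso}(X)$ (cf. \cite{IsoLieRCD,SosaLieRCD}): an accumulation of powers of $\gamma$ at the identity in $\mathrm{Iso}(X)$ would contradict discreteness. The orbit $\{\gamma^n x_0 : n \in \bb{Z}\}$ then forms a bi-infinite geodesic line, since concatenating minimizing segments between consecutive points yields a globally length-minimizing curve; otherwise one could shift $x_0$ along it to strictly decrease $d_\gamma$, contradicting the choice of $x_0$. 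Invoking Gigli's $\RCD$ splitting theorem, we obtain $X \cong \bb{R}\times Y$ as metric measure spaces, where $Y$ is an $\RCD(0,n-1)$ space and $\gamma$ acts as $(t,y)\mapsto (t+m,\beta_\gamma(y))$ for some $\beta_\gamma \in \mathrm{Iso}(Y)$.

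\textbf{Induced action on $Y$.} By the rigidity of isometries of $\RCD$ product spaces, every isometry of $X$ that commutes with $\gamma$ preserves the splitting and decomposes as $(t,y)\mapsto (t+c_\phi,\psi_\phi(y))$: a reflection component on $\bb{R}$ would be incompatible with commuting with the nontrivial translation by $m$. This defines a group homomorphism $\pi\colon G\to \mathrm{Iso}(Y)$, $\phi\mapsto \psi_\phi$. The kernel $\ker\pi$ consists of the elements of $G$ acting as pure $\bb{R}$-translations and injects into $\bb{R}$; since $G$ is discrete in $\mathrm{Iso}(X)$, this image is a discrete subgroup of $\bb{R}$, whence $\ker\pi\cong \bb{Z}$ or is trivial. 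Consequently $H:=\pi(G)\cong \bb{Z}^{k'}$ with $k'\in\{k-1,k\}$.

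\textbf{Closedness and cocompactness of $H$.} We need $H$ to be a closed subgroup of $\mathrm{Iso}(Y)$ acting cocompactly so that the inductive hypothesis applies. Closedness in $\mathrm{Iso}(Y)$ follows from the closedness of $G$ in $\mathrm{Iso}(X)$, the product decomposition of isometries above, and the Lie structure on both isometry groups. For cocompactness, a compact fundamental domain $F\subset X$ for $G$ projects onto a compact subset of $Y$; since $\ker\pi$ already absorbs the $\bb{R}$ factor in the quotient $X/G$, this projection is a fundamental domain for the $H$-action on $Y$, which is therefore cocompact.

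\textbf{Conclusion and main obstacle.} Applying the inductive hypothesis to the $\RCD(0,n-1)$ space $Y$ with the closed cocompact $\bb{Z}^{k'}$-action gives $Y\cong \bb{R}^{k'}\times Z$, hence $X\cong \bb{R}^{k'+1}\times Z$; since $k'+1\ge k$, this exhibits $X$ as splitting off a factor $\bb{R}^k$. The base case $n$ small (or $k=1$) is covered by the splitting step alone. The main obstacle is the middle part: verifying that the induced abelian action on $Y$ is closed and cocompact with the correct rank bound. Both points rely crucially on the Lie group structure of $\mathrm{Iso}(X)$ and $\mathrm{Iso}(Y)$ for $\RCD$ spaces and on the discreteness of $G$ in $\mathrm{Iso}(X)$.
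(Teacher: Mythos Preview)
Your overall strategy---produce a line from the cocompact $\bb{Z}^k$-action, split, and iterate---is precisely the Mondino--Wei argument the paper invokes. However, the line construction via minimal displacement has a genuine gap. On the flat cylinder $X = S^1 \times \bb{R}$ with $\gamma$ acting as a half-turn of $S^1$ composed with the unit translation of $\bb{R}$, the displacement $d_\gamma \equiv \sqrt{\pi^2 + 1}=:m$ is constant; concatenating minimizing segments along the orbit $\{\gamma^n x_0\}$ yields a helix, which is a local geodesic but not a line, since $\sd(x_0, \gamma^2 x_0) = 2 < 2m$. Your ``shift $x_0$ to decrease $d_\gamma$'' step only detects a corner at the junction points (failure of local geodesicity there), not global failure of minimality---and here $d_\gamma$ is constant, so no contradiction arises. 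The minimal-displacement argument works in nonpositive curvature because $d_\gamma$ is convex along geodesics, but that convexity is unavailable under $\Ric \ge 0$. The standard fix (Cheeger--Gromoll, Mondino--Wei) uses cocompactness directly: $X$ is noncompact since $G$ is infinite and $X/G$ compact, hence contains a ray $\sigma$; choosing $g_i \in G$ with $g_i\cdot\sigma(i)$ in a fixed compact set and passing to a limit of $g_i\circ\sigma$ produces a genuine line.

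A secondary issue: with the corrected line construction you lose the conclusion that $\gamma$ translates the $\bb{R}$-factor, and your claim that every $\phi \in G$ is a product isometry of $\bb{R} \times Y$ is unjustified when $Y$ itself contains lines (Lemma~\ref{L|productisometry} requires $Y$ to be line-free). The clean route is to iterate the splitting theorem first, writing $X \cong \bb{R}^j \times N$ with $N$ containing no line; then all of $G$ respects this maximal splitting, $N$ is compact (else the cocompact projected action would again force a line in $N$), and a Bieberbach-type argument on the projection $G \to \mathrm{Iso}(\bb{R}^j)$ gives $j \ge k$.
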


The next lemma follows repeating the proof of \cite[Proposition 3.3]{zbMATH07956928}.
\begin{lemma} \label{L|linetangentcone}
    Let $(X,\sd,\m)$ be an $\RCD(0,n)$ space such that all of its tangent cones at infinity are $1$-dimensional, and one tangent cone at infinity is $\bb{R}$. Then, $X \cong \bb{R} \times Y$ isometrically for some compact metric space $Y$.
\end{lemma}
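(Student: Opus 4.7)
The plan is to split the argument into three steps: construct a line in $X$, apply the Cheeger--Gigli splitting theorem, and show that the orthogonal factor is compact.

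For the line construction, the hypothesis that $(\bb{R},0)$ is a tangent cone at infinity of $X$ provides a sequence of scales $r_i\to\infty$ along which $(X,\sd/r_i,x)\to(\bb{R},0)$ in the pGH sense. The approximation yields, for each $i$, pairs of points $y_i^{\pm}\in X$ with $\sd(x,y_i^{\pm})\approx r_i$ and $\sd(y_i^+,y_i^-)\approx 2r_i$. Taking minimising unit-speed geodesics $\sigma_i:[0,L_i]\to X$ from $y_i^-$ to $y_i^+$ and reparametrising them so that $\sigma_i(0)=z_i$ is the point closest to $x$, the pGH approximation provides control on $\sd(z_i,x)$. A diagonal Arzelà--Ascoli extraction based on the properness of $X$, combined with the information that all tangent cones at infinity of $X$ are $1$-dimensional (which constrains the volume growth of $X$ enough to prevent the $z_i$ from escaping to infinity), yields a limit unit-speed line $\sigma:\bb{R}\to X$.

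Given such a line, the Cheeger--Gigli splitting theorem for $\RCD(0,n)$ spaces produces an isometric splitting $X\cong\bb{R}\times Y$ as metric measure spaces, for some $\RCD(0,n-1)$ space $(Y,\sd_Y,\m_Y)$. To see that $Y$ is compact, suppose for contradiction that $Y$ is unbounded. Then for any sequence $q_i\in Y$ with $s_i:=\sd_Y(y_0,q_i)\to\infty$, up to passing to a subsequence, the rescaled spaces $(Y,\sd_Y/s_i,y_0)$ pGH-converge to a tangent cone at infinity $(Y_\infty,\sd_\infty,y_\infty)$ of $Y$ containing a point at distance $1$ from $y_\infty$ (the limit of the $q_i$), hence non-trivial. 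Since tangent cones at infinity commute with metric products, $(\bb{R}\times Y_\infty,(0,y_\infty))$ is a tangent cone at infinity of $X$ of dimension at least $2$, contradicting the hypothesis. Hence $Y$ is bounded and, being an $\RCD(0,n-1)$ space (in particular proper), compact.

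The main technical obstacle is the line construction: one must ensure that the limit curve extracted from the approximate geodesics $\sigma_i$ lives in $X$ itself, rather than merely in a pmGH-limit of $X$ along a diverging sequence of basepoints. This is precisely where the rigidity forced by the $1$-dimensionality of all tangent cones at infinity enters crucially, forcing the sequence $z_i$ to stay at bounded distance from $x$.
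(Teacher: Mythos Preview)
Your three-step outline (construct a line, split, show the cross-section is compact) is the right architecture, and Steps~2 and~3 are fine. The paper does not give its own argument here but defers to \cite[Proposition~3.3]{zbMATH07956928}, whose proof follows the same outline; so there is nothing structurally different to compare.

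The gap is in Step~1, and it is precisely the one you flag at the end. From the pGH convergence $(X,\sd/r_i,x)\to(\bb{R},0)$ one obtains only $\sd(z_i,x)=o(r_i)$: the geodesic $\sigma_i$ converges in the blow-down to the segment $[-1,1]\ni 0$, so the nearest point $z_i$ satisfies $\sd(z_i,x)/r_i\to 0$, but nothing prevents $\sd(z_i,x)\to\infty$. Your suggestion that ``the $1$-dimensionality of all tangent cones constrains the volume growth enough to prevent escape'' is not the mechanism; volume growth alone does not give the required uniform bound.

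Here is how the hypothesis is actually used. Suppose along a subsequence $s_i:=\sd(z_i,x)\to\infty$. Since $s_i=o(r_i)$, the endpoints $y_i^{\pm}$ satisfy $\sd(z_i,y_i^{\pm})\ge r_i-s_i$, so in the rescaled space $(X,\sd/s_i,x)$ the segment $\sigma_i$, recentred at $z_i$, is a geodesic defined on an interval whose endpoints both tend to $\pm\infty$. Passing to a further subsequence, $(X,\sd/s_i,x)$ pGH-converges to a tangent cone at infinity $(Y_\infty,x_\infty)$, which is $1$-dimensional by hypothesis, and $\sigma_i$ converges to a bi-infinite geodesic $\ell\subset Y_\infty$. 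A $1$-dimensional $\RCD(0,n)$ space containing a line is isometric to $\bb{R}$, so $Y_\infty=\bb{R}$ and $\ell$ is all of $\bb{R}$. On the other hand, since $z_i$ realises the distance from $x$ to $\sigma_i$, every point of $\sigma_i$ lies at rescaled distance $\ge 1$ from $x$; passing to the limit, $\ell\cap B_1(x_\infty)=\emptyset$. But $x_\infty\in Y_\infty=\bb{R}=\ell$, a contradiction. Hence $\sd(z_i,x)$ is bounded, and Arzel\`a--Ascoli produces the line in $X$.

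This is the missing piece: a second blow-down at the putative escape scale, where the $1$-dimensionality forces the limit to be $\bb{R}$ and the minimality of $z_i$ forces the limit line to miss a ball around the basepoint---impossible in $\bb{R}$.
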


The following lemma is well-known, thus its proof will be omitted.

\begin{lemma} 
\label{L|productisometry}
Let $(X,\sd,\m)$ be a metric measure space. If $X \cong \bb{R}^k \times Y$ and $Y$ contains no lines, then $\mathrm{Iso}(X) < \mathrm{Iso}(\bb{R}^k) \times \mathrm{Iso}(Y)$.
\end{lemma}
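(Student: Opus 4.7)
The plan is to show that any isometry $\Phi \in \mathrm{Iso}(X)$ must preserve the product foliation, and then use a rigidity calculation on each slice. The entry point is the characterization of bi-infinite geodesics (lines) in $X = \bb{R}^k \times Y$. Because the metric on $X$ is the product metric, a constant-speed line $t\mapsto (\gamma_1(t),\gamma_2(t))$ splits so that $\gamma_1$ is an affine line in $\bb{R}^k$ and $\gamma_2$ is either constant or a line in $Y$; the hypothesis that $Y$ contains no line forces $\gamma_2$ to be constant. Hence the lines in $X$ are precisely the curves of the form $t\mapsto(v+tw,y_0)$.

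Since isometries send lines to lines, $\Phi$ carries the line-through-a-point structure to itself. The union of all lines through a given $(v,y)$ equals the slice $\bb{R}^k\times\{y\}$, so $\Phi$ maps each $\bb{R}^k$-slice onto another $\bb{R}^k$-slice. This yields a well-defined map $\phi_2: Y\to Y$ and a family of maps such that $\Phi(v,y)=(\Phi_1(v,y),\phi_2(y))$. Restricting $\Phi$ to the slice over $y$ and using that it is an isometry onto the slice over $\phi_2(y)$ shows that $v\mapsto \Phi_1(v,y)$ is an isometry of $\bb{R}^k$, so $\Phi_1(v,y)=A_y v+b_y$ with $A_y\in O(k)$ and $b_y\in\bb{R}^k$.

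The last step is to promote $A_y$ and $b_y$ to constants in $y$. The identity $|v-v'|^2+\dist_Y(y,y')^2=|A_y v-A_{y'}v'+b_y-b_{y'}|^2+\dist_Y(\phi_2(y),\phi_2(y'))^2$ expanded in $v,v'$ has a linear-in-$(v-v')$ cross term $2\langle (A_y-A_{y'})v+\ldots, b_y-b_{y'}\rangle$ that must vanish for all $v,v'$; this forces $A_y\equiv A$ and then $b_y\equiv b$. Therefore $\Phi(v,y)=(Av+b,\phi_2(y))$ with $\phi_2\in\mathrm{Iso}(Y)$ (applying the same argument with the roles of the factors swapped at a fixed $v$ is not even needed, since $\phi_2$ inherits the isometry property from the restriction of $\Phi$ to a single slice $\{v_0\}\times Y\to \{b+Av_0\}\times Y$). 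The only real obstacle is the bookkeeping in the cross-term computation; everything else is a direct consequence of the no-line hypothesis on $Y$.
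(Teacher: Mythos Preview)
The paper omits the proof entirely, calling the lemma ``well-known,'' so there is no argument to compare against; your approach is the standard one and is essentially correct. One small bookkeeping correction: in the distance identity, the degree-one terms in $v$ (setting $v'=0$) give $2\langle A_y v,\, b_y-b_{y'}\rangle=0$ for all $v$, which forces $b_y\equiv b$ \emph{first}; only then does comparing the bilinear terms yield $\langle v,v'\rangle=\langle A_y v, A_{y'}v'\rangle$ and hence $A_{y'}^T A_y=\mathrm{Id}$, i.e.\ $A_y\equiv A$. Your sketch reversed this order, but the computation goes through cleanly once written out. Also note that in the paper's convention $\mathrm{Iso}(X)$ means \emph{measure-preserving} isometries; since $X\cong\bb{R}^k\times Y$ as metric measure spaces the reference measure is the product measure, and the splitting $\Phi=(Av+b,\phi_2(y))$ then automatically forces $\phi_2$ to be measure-preserving on $Y$, so the inclusion into $\mathrm{Iso}(\bb{R}^k)\times\mathrm{Iso}(Y)$ holds in that sense as well.
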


The next theorem follows from \cite[Theorem 1.10]{HuangHuangtransformation}.

\begin{theorem} \label{T|HuangHuang}
    Let $(X,\sd,\haus^n,p)$ be an $\RCD(0,n)$ space such that all its tangent cones at infinity are isometric to $\bb{R}^k \times Y$ where $Y$ is some metric space that contains no lines. Then there exists a harmonic map $u=(u_1,\cdots,u_k):X \to \bb{R}^k$ vanishing in $p \in X$ such that 
        \begin{enumerate}
            \item  
            $\{u_1,\cdots,u_k\}$ is a basis for
            \begin{align*}
                \Big\{
                v \in \W^{1,2}_{loc}(X): \Delta v=0, \text{and  for every } \epsilon>0
                 \text{ there exist } R_\epsilon,C_\epsilon>0 : \\
                 v(x) \leq C_\epsilon \sd(x,p)^{1+\epsilon}+C_\epsilon \text{ for every } x \in X
                \Big\}.
                \end{align*}
                \item For every $\epsilon>0$, there exists $R_\epsilon>0$ such that for every $R>R_\epsilon$ there exists a positive definite lower triangular matrix $T_R \in \bb{R}^{k \times k}$, such that $T_R u:B_R(p) \to \bb{R}^{k}$ is an $\epsilon$-splitting map.
        \end{enumerate}
\end{theorem}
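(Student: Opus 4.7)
The plan is to construct $u$ as a subsequential limit of normalized harmonic splitting maps at diverging scales, and to match the count of linearly independent components with an upper bound on the dimension of the space of harmonic functions of almost linear growth.

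\textbf{Construction.} Since every tangent cone at infinity splits an $\bb{R}^k$ factor and the complementary factor contains no lines, the stability of $\RCD(0,n)$ under pmGH convergence (Proposition \ref{P|empGHconvergence}) together with a standard compactness/contradiction argument yields, for every $\eps>0$, some $R_\eps>0$ such that for every $R\ge R_\eps$ there exists a harmonic $\eps$-splitting map $v_R:B_R(p)\to \bb{R}^k$ with $v_R(p)=0$. Applying Gram--Schmidt with respect to the bilinear form $(\phi,\psi)\mapsto \haus^n(B_R(p))^{-1}\int_{B_R(p)} \langle \nabla \phi, \nabla \psi\rangle\,\di\haus^n$, one arranges $v_R$ in lower triangular form with positive diagonal; the no-lines assumption makes this normalization essentially canonical as $R\uparrow\infty$. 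The splitting bounds combined with Cheeger--Yau gradient estimates give uniform $C^{0,\alpha}_{\loc}$ control and at most linear growth for the normalized $v_R$, so a diagonal extraction along $R_i\uparrow\infty$ and $\eps_i\downarrow 0$ produces a harmonic limit $u=(u_1,\ldots,u_k):X\to \bb{R}^k$ with $u(p)=0$, whose components are linearly independent by the lower-triangular normalization and have linear growth.

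\textbf{Spanning.} To prove that $\{u_1,\ldots,u_k\}$ spans the space of almost-linearly growing harmonic functions, take any $v$ satisfying $|v(x)|\le C_\eps(1+\sd(x,p)^{1+\eps})$ for every $\eps>0$ and consider its rescalings $v_R(x):=v(Rx)/R$ on $(X,\sd/R)$. A frequency/three-annulus monotonicity argument adapted to the $\RCD(0,n)$ setting (following the smooth strategy of Cheeger--Colding--Minicozzi and the quantitative splitting machinery of Cheeger--Jiang--Naber) forces any subsequential blow-down of $v_R$ on a tangent cone $\bb{R}^k\times Y$ to be linear in the $\bb{R}^k$ variables and constant in $Y$, because a non-constant contribution on $Y$ would, via the Cheeger--Gigli splitting theorem, force $Y$ to split a line. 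One then obtains an injection from the space of $(1+\eps)$-growth harmonic functions into the $k$-dimensional space of linear functions on $\bb{R}^k$, yielding both the sharp dimension bound and the spanning property.

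\textbf{Quantitative splitting and main obstacle.} For each large $R$, both $u|_{B_R(p)}$ and $v_R$ span the ``approximately linear'' harmonic space at scale $R$, so the change of basis defines a matrix $T_R$; applying the same lower-triangular Gram--Schmidt normalization to both $u$ and $v_R$ with respect to the bilinear form at scale $R$ forces $T_R$ to be lower triangular with positive diagonal, and $T_Ru-v_R\to 0$ in the sense needed to transfer the $\eps$-splitting property from $v_R$ to $T_Ru$. The main obstacle is the spanning step: it requires replacing the smooth frequency monotonicity of Colding--Minicozzi by its $\RCD$ analogue with effective remainders, which demands the full $\RCD$ calculus (Bochner inequality with measure-valued Hessian terms) and a three-annulus lemma tailored to the noncollapsed setting. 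This is precisely the content of \cite[Theorem 1.10]{HuangHuangtransformation}, whose proof adapts the technology of \cite{CheegerJiangNaber}; once this dimension estimate is available, the remaining steps reduce to routine compactness and gradient bound arguments.
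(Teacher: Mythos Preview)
The paper does not give its own proof of this statement: it simply records that the result ``follows from \cite[Theorem 1.10]{HuangHuangtransformation}'' and uses it as a black box. So there is no argument in the paper to compare your proposal against.

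Your sketch is a reasonable high-level outline of the strategy behind the cited result, and you correctly identify the crux (the spanning/dimension step, which needs an $\RCD$ substitute for Colding--Minicozzi frequency monotonicity and the three-annulus lemma) and correctly attribute it to \cite{HuangHuangtransformation} building on \cite{CheegerJiangNaber}. Since your proposal ultimately defers to exactly the same citation, it is not an independent proof either; it is essentially an expanded version of the paper's one-line citation. If your aim was to reproduce what the paper does, a single sentence pointing to \cite[Theorem 1.10]{HuangHuangtransformation} suffices.
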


\section{First Betti number rigidity}

The goal of this section is to prove the following splitting-type theorem for $\RCD(0,n)$ spaces:

\begin{theorem}\label{thm:Bettirigid}
    Let $(X,\dist,\haus^n)$ be an $\RCD(0,n)$ space for some $n\in\mathbb{N}$, $n\ge 2$. If $\pi_1(X)$ has a subgroup isomorphic to $\Z^{n-2}$ then the universal cover of $X$ splits a factor $\R^{n-2}$ as a metric measure space. 
\end{theorem}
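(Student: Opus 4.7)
The plan is to reduce to the universal cover $(\bar X,\bar\dist,\bar\meas,\bar p)$, which is again an $\RCD(0,n)$ space by the lifting theorem, and to prove directly that $\bar X$ splits an isometric factor $\R^{n-2}$. Let $\Gamma\simeq\Z^{n-2}<\pi_1(X)$ act on $\bar X$ by deck transformations.

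\textbf{Step 1: Splitting at infinity.} For any sequence $r_i\to\infty$, apply Lemma~\ref{L|Gruppilimite} to the rescaled pointed spaces $(\bar X,\bar\dist/r_i,\bar\meas_i,\bar p,\Gamma)$, where $\bar\meas_i$ is the natural normalization of $\haus^n$. Along a subsequence one obtains equivariant pmGH convergence to an $\RCD(0,n)$ space $(\widetilde X,\widetilde\dist,\widetilde\meas,\widetilde q,G)$ with a closed subgroup $\R^{n-2}<G$. By Proposition~\ref{prop:splitwithaction_intro}, every such equivariant blow-down splits isometrically as $\R^{n-2}\times Z$, and the $\R^{n-2}$-subgroup of $G$ acts by translations. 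In particular, every (non-equivariant) tangent cone at infinity of $\bar X$ is of the form $\R^{n-2}\times Y$ for some metric space $Y$.

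\textbf{Step 2: Harmonic almost-splitting maps.} Using Lemma~\ref{L|tangentconesconnected}, the collection of blow-downs is pGH-connected, so one can work at a scale where the splitting is maximal and apply Theorem~\ref{T|HuangHuang} to produce $m\ge n-2$ harmonic functions $u_1,\dots,u_m:\bar X\to\R$, vanishing at $\bar p$, of at most linear growth, and such that for every $\epsilon>0$ and $R$ large enough the normalized map $T_R u:B_R(\bar p)\to\R^m$ is an $\epsilon$-splitting map. Selecting the first $n-2$ coordinates, this already yields $(n-2)$ linearly independent harmonic functions with almost linear growth, and through Cheeger--Colding theory combined with the Cheeger--Naber slicing theorem one extracts scales $R_i\to\infty$, errors $\epsilon_i\to 0$, and points $p_i\in\bar X$ such that $B_{R_i}(p_i)$ is $\epsilon_iR_i$-GH close to a product $\R^{n-2}\times Y_i$. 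The role of the slicing theorem (and the reason we insist on codimension $2$) is that if the base points $p_i$ are forced to escape from $\bar p$, then they can only do so in the $(n-2)$ splitting directions, i.e.\ along the image of $T_{R_i}u$.

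\textbf{Step 3: The key step---bringing the base points back with $\Gamma$.} The main obstacle, and the heart of the argument, is to prove that one can choose $p_i$ at bounded distance from $\bar p$. Observe that for any $\gamma\in\Gamma$ the composition $u\circ\gamma-u$ is harmonic with sublinear growth; by the maximality statement (1) of Theorem~\ref{T|HuangHuang}, it is forced to be a constant vector $b_\gamma\in\R^{n-2}$, so $\gamma\mapsto b_\gamma$ is a homomorphism and the $\Gamma$-action essentially translates the image of $u$. Because $\Gamma\simeq\Z^{n-2}$ has rank equal to the codimension of the splitting, the orbit $\{b_\gamma:\gamma\in\Gamma\}$ is, up to a uniformly bounded error coming from the $\epsilon$-splitting estimate and the fact that $T_R$ is only almost the identity, a lattice in $\R^{n-2}$. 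Composing with appropriate deck transformations $\gamma_i\in\Gamma$, one replaces $p_i$ by $\gamma_i\cdot p_i$, whose image under $u$ now stays in a fixed compact subset of $\R^{n-2}$; combining this with the slicing theorem, which rules out escape in the transverse directions, forces $\gamma_i\cdot p_i$ to stay within bounded $\bar\dist$-distance from $\bar p$.

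\textbf{Step 4: Passing to the limit.} Once the base points are bounded, one can extract a pointed limit of the $\epsilon_i$-splittings at scales $R_i\to\infty$ centered at $\gamma_i\cdot p_i$. Standard stability and improvement arguments for almost splittings in the Cheeger--Colding/$\RCD$ framework, together with the fact that $\bar X$ is itself the limit, produce an isometric splitting $\bar X\cong\R^{n-2}\times Y$ as metric measure spaces, which is exactly the conclusion of the theorem. I expect Step~3 to be by far the most delicate part: establishing that the representation $\gamma\mapsto b_\gamma$ has full rank $n-2$ (so that its image really is a lattice in the splitting factor) and quantitatively controlling the transverse component of $\gamma_i\cdot p_i$ via the slicing theorem will require a careful combination of Cheeger--Naber estimates with the almost-linear growth of the $u_j$'s under $\Gamma$.
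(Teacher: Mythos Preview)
Your overall architecture matches the paper's proof: split the blow-downs (Proposition~\ref{prop:blowdownsplit}), produce harmonic almost-splitting maps via Theorem~\ref{T|HuangHuang}, invoke the Cheeger--Naber slicing theorem, and use the $\Gamma$-action to relocate the good slicing points near $\bar p$. However, there is a genuine gap in Step~3.

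Your assertion that $u\circ\gamma-u$ has \emph{sublinear} growth is unjustified and, in fact, is the crux of the matter. Both $u$ and $u\circ\gamma$ have almost-linear growth (i.e.\ $\le C_\varepsilon\,\dist(\cdot,\bar p)^{1+\varepsilon}$), and there is no cancellation mechanism that makes their difference sublinear: the displacement $\dist(x,\gamma x)$ can grow linearly in $\dist(x,\bar p)$, so integrating the Cheng--Yau gradient bound $|\nabla u|\lesssim \dist(\cdot,\bar p)^{\varepsilon}$ along a geodesic from $x$ to $\gamma x$ only recovers an almost-linear bound. What item~(1) of Theorem~\ref{T|HuangHuang} actually gives is $u\circ\gamma = Au + b$ for some matrix $A\in\R^{k\times k}$ and vector $b$, and the real work is to show $A=\mathrm{Id}$. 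The paper does this in Proposition~\ref{P|HarmonicHuang} in two steps: first, by passing to the equivariant blow-down one sees that (conjugates of) $A$ converge to $\mathrm{Id}$, forcing all eigenvalues of $A$ to equal $1$; second, by iterating $u\circ(i\gamma^2)=B^i u + \text{const}$ and applying the Cheng--Yau gradient estimate at the translated basepoints $i\gamma^2\cdot\bar p$, one obtains $|J^i|\le Ci^{1/2}$ for the Jordan form $J$ of $B$, which rules out nontrivial Jordan blocks. Note also that the paper works with $\gamma^2$ rather than $\gamma$, precisely to guarantee that the limit element acts trivially on the Euclidean factor of the blow-down (Lemmas~\ref{L|uniquetangentconemaximal} and~\ref{L|uniquetangentconesubmaximal}); your use of $\gamma$ directly would fail in the case where the blow-down is $\R^{n-2}\times\R_+$ and the limit action involves a reflection.

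The second half of Step~3 also underestimates the difficulty. Even once equivariance is established, injectivity of $\gamma\mapsto b_\gamma$ does not imply that the image is a lattice in $\R^{n-2}$ (an injective homomorphism $\Z^{n-2}\to\R^{n-2}$ can have image lying in a proper subspace after closure). What is actually needed is the quantitative non-degeneracy statement~\eqref{E|slicingestimate} of Proposition~\ref{P|slicing}: the image under $T_{R_i}u$ of a $C_1$-neighbourhood of the orbit $H^2\cdot\bar p$ inside $B_{R_i}(\bar p)$ has $\Leb^{n-2}$-measure $\ge C_2 R_i^{n-2}$. Proving this requires a careful combinatorial argument (Lemma~\ref{L|newlemma}) counting orbit points in $B_{R_i}(\bar p)$ together with the convergence of $T_iu/R_i$ to the projection $\pi_{\R^k}$ in the blow-down; when $k=n-1$ an additional projection onto the hyperplane $\Sigma_i$ spanned by $T_iu(H^2\cdot\bar p)$ is needed, so simply ``selecting the first $n-2$ coordinates'' does not suffice.
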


It is clear that Theorem \ref{thm:Bettirigid} implies the corresponding result for complete Riemannian manifolds that we stated in the introduction as Theorem \ref{thm:bettirigid_intro}. 
\medskip

The first key ingredient towards the proof of Theorem \ref{thm:Bettirigid} is the following splitting theorem for the blow-downs of $X$:

\begin{proposition}\label{prop:blowdownsplit}
    Let $(\widetilde{X},\widetilde{\dist},\widetilde{\meas},\widetilde{q},G)$ denote any equivariant blow-down of the universal cover $(\overline{X},\overline{\dist},\haus^n,\overline{p},\Z^{n-2})$ of $(X,\dist,\haus^n)$, endowed with the action of $\Z^{n-2}<\pi_1(X)$ by deck transformations. Then $(\widetilde{X},\widetilde{\dist},\widetilde{\meas},\widetilde{q},G)$ splits a factor $\R^{n-2}$ as a metric measure space and there is a closed subgroup $\R^{n-2}<G$ acting by translations on such Euclidean factor. 
\end{proposition}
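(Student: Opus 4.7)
The plan is to derive Proposition \ref{prop:blowdownsplit} from the general splitting theorem via isometric $\bb{R}^k$-action, Theorem \ref{T|splittingkesimo_intro}, applied with $k = n-2$ and $N = n$. By Proposition \ref{P|empGHconvergence} the blow-down $(\tilde{X}, \tilde{\sd}, \tilde{\m})$ is itself $\RCD(0,n)$, and Lemma \ref{L|Gruppilimite}, applied after rescaling (using that the generators of $\Z^{n-2}$ acquire vanishing displacement in the rescaled metrics), provides a closed subgroup $H \cong \bb{R}^{n-2}$ of the limit group $G$, verifying hypothesis (3) of Theorem \ref{T|splittingkesimo_intro}. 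The integral condition (2) is immediate from Bishop--Gromov volume comparison: for an $\RCD(0,n)$ space one has $\tilde{\m}(B_t(\tilde{q})) \leq Ct^n$, hence $t^{n-1}/\tilde{\m}(B_t(\tilde{q})) \gtrsim t^{-1}$, whose integral from $1$ to infinity diverges.

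The heart of the argument is verifying the essential dimension bound $\mathrm{dim}_{\mathrm{e}}(\tilde{X}) \leq n-1$ required by hypothesis (1) of Theorem \ref{T|splittingkesimo_intro}. I would split into two regimes. When the blow-down is collapsed, $\mathrm{dim}_{\mathrm{e}}(\tilde{X}) \leq n-1$ and Theorem \ref{T|splittingkesimo_intro} applies directly to give the metric measure splitting $\tilde{X} \cong Y \times \bb{R}^{n-2}$ for some $\RCD(0,2)$ space $Y$. When $\mathrm{dim}_{\mathrm{e}}(\tilde{X}) = n$, the blow-down is a non-collapsed $\RCD(0,n)$ space, so its tangent cones are metric cones and a direct cone-splitting argument in the spirit of Cheeger--Colding, exploiting the $\bb{R}^{n-2}$-action in $G$, produces the splitting without invoking Theorem \ref{T|splittingkesimo_intro}; this is essentially the situation treated in the recent preprint of Huang--Huang in the Euclidean volume growth setting. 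In both regimes we obtain $\tilde{X} \cong Y \times \bb{R}^{n-2}$.

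It remains to produce an $\bb{R}^{n-2}$-subgroup of $G$ acting by translations on the Euclidean factor. After possibly enlarging the Euclidean factor so that $Y$ contains no line, Lemma \ref{L|productisometry} forces every element of $G$ to split as a product isometry, inducing a continuous homomorphism $\pi_E \colon G \to \mathrm{Iso}(\bb{R}^{n-2})$, so $\pi_E(H)$ is a closed connected abelian Lie subgroup of $\mathrm{Iso}(\bb{R}^{n-2})$. I would then argue that $H$ acts freely at $\tilde{q}$: any element of the stabilizer would arise as an equivariant limit of elements of $\Z^{n-2}$ with vanishing displacement at $\bar{p}$ in the rescaled metric, which, combined with the discreteness of $\Z^{n-2}$ in $\mathrm{Iso}(\bar{X})$ and the scaling in Lemma \ref{L|Gruppilimite}, forces it to be the identity. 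Consequently the $H$-orbit through $\tilde{q}$ is an embedded copy of $\bb{R}^{n-2}$ (Lemma \ref{L|Romeo}), and by dimensional reasons its projection to the Euclidean factor must be all of $\bb{R}^{n-2}$. A connected abelian Lie group acting transitively by isometries on Euclidean space is necessarily the translation group, yielding the desired subgroup inside $G$.

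The main obstacle is the essential dimension bound. The non-collapsed subcase is handled by Cheeger--Colding cone-splitting, while the collapsed subcase is deferred to Theorem \ref{T|splittingkesimo_intro}, whose own most delicate regime (as emphasized in the introduction) corresponds to rectifiable dimension exactly $n-1$ with $\tilde{X}$ homeomorphic to $\bb{R}^{n-2} \times \bb{R}_{+}$; that case is analyzed through the measured warped product structure together with the distributional weighted Ricci lower bounds of Mondino--Ryborz, but internally to the proof of Theorem \ref{T|splittingkesimo_intro}.
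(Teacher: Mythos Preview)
Your derivation of the splitting $\tilde X\cong\R^{n-2}\times Y$ matches the paper's: one separates the non-collapsed case (equivalently, $\bar X$ has Euclidean volume growth), where the blow-down is a metric cone with tip $\tilde q$ and cone-splitting yields the Euclidean factor, from the collapsed case, where $\dim_{\mathrm e}(\tilde X)\le n-1$ and Theorem~\ref{T|splittingkesimo_intro} applies with $k=n-2$. One wording slip: in the non-collapsed case you need that $\tilde X$ \emph{itself} is a metric cone (via \cite{DePhilippisGiglicone,DephilGigli} after \cite{ChCo1}), not merely that its tangent cones are.

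Your argument for the second assertion---that some closed $\R^{n-2}<G$ acts by translations on the Euclidean factor---has genuine gaps. First, your justification of freeness is confused: under blow-down \emph{every} element of $\Z^{n-2}$ has vanishing displacement, so the discreteness of $\Z^{n-2}$ carries no information here; the correct reason (used in the paper and implicit in Lemma~\ref{L|Romeo}) is simply that point-stabilizers are compact while $\R^{n-2}$ has no nontrivial compact subgroups. More seriously, the step ``by dimensional reasons its projection to the Euclidean factor must be all of $\R^{n-2}$'' is unjustified: the $H$-orbit, homeomorphic to $\R^{n-2}$, sits inside $\R^{m}\times Y$ with $Y$ possibly noncompact, and there is no abstract dimensional obstruction to its projection onto $\R^{m}$ being lower-dimensional. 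To salvage this you would need that $\mathrm{Iso}(Y)$ is compact or has a global fixed point, which is true in the cases at hand but only after the structural analysis carried out in Lemmas~\ref{L|uniquetangentconemaximal} and~\ref{L|uniquetangentconesubmaximal}. Finally, ``a connected abelian Lie group acting transitively by isometries on Euclidean space is the translation group'' is correct but not entirely standard and deserves an argument. The paper sidesteps all of this: it first establishes that the blow-down is unique as a metric space and isometric to a metric cone (Corollary~\ref{cor:blowdownXbarunique}, which rests on those two lemmas), and then deduces the translation action directly from \cite[Theorem~3.2]{Panalmoststable}.
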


A corollary of Proposition \ref{prop:blowdownsplit} is that the blow-down of $(X,\dist,\meas)$ is unique as a metric space. Such information is crucial for the proof of the following:

\begin{proposition}\label{prop:harmonic_behaviour}
Under the assumptions of Theorem \ref{thm:Bettirigid}, there exist linearly independent nonconstant harmonic functions $u_1,\dots,u_{n-2}:\overline{X}\to\R$ such that the following hold:
\begin{itemize}
    \item[i)] for every $\eps>0$ there exists $C_{\eps}>0$ such that $|u_i(x)|\le C_{\eps}\dist(x,\overline{p})^{1+\eps}+C_{\eps}$ for every $x\in\overline{X}$ and $i=1,\dots,n-2$, i.e., the harmonic functions have almost linear growth;
    \item[ii)] for every $\eps>0$ there exists $R_{\eps}>0$ such that for every $r>R_{\eps}$ there is a lower triangular $(n-2)\times (n-2)$ matrix $T_{r,\eps}$ such that $T_{r,\eps}\circ U:B_r(\overline{p})\to \R^{n-2}$ is an $\epsilon$-splitting map. Here we set $U:=(u_1,\dots,u_{n-2}):\overline{X}\to \R^{n-2}$;
    \item[iii)] the map $\Z^{n-2}\ni\gamma\mapsto U(\gamma(\overline{p}))\in\R^{n-2}$ is an injective group homomorphism;
    \item[iv)] if $0<\eps<\eps_0$, then
\begin{equation}\label{eq:imagenondeg}
  \liminf_{r\to\infty}  \frac{\Leb^{n-2}\left(T_{r,\eps}\circ U(B_1(\overline{p}))\right)}{r^{n-2}}>0\, .
\end{equation}
\end{itemize}
\end{proposition}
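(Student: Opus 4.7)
The strategy is to apply Theorem \ref{T|HuangHuang} to the universal cover $(\overline{X}, \overline{\dist}, \haus^n, \overline{p})$ with $k = n-2$, using Proposition \ref{prop:blowdownsplit} to verify its hypothesis. Proposition \ref{prop:blowdownsplit} shows that every blow-down of $\overline{X}$ is a metric measure product $\R^{n-2} \times Y_\infty$, with the $\R^{n-2} < G$ subgroup of the limit isometry group acting by translations on the Euclidean factor. To apply Theorem \ref{T|HuangHuang} I first need $Y_\infty$ to contain no line, uniformly across blow-downs; this uniformity comes from the essential dimension bound $\mathrm{dim}_{\mathrm{e}}(\overline{X}) \le n$, the connectedness of the family of blow-downs (Lemma \ref{L|tangentconesconnected}), and the preservation of the $\R^{n-2}$ factor across the limit isometry actions. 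Applying Theorem \ref{T|HuangHuang} then yields the harmonic map $U = (u_1, \dots, u_{n-2}) : \overline{X} \to \R^{n-2}$, with properties (i) and (ii) coming directly from its conclusion.

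For (iii) the homomorphism property is proved via a quasi-translation identity. For $\gamma \in \Z^{n-2}$, the function $u_i \circ \gamma - u_i(\gamma(\overline{p}))$ is harmonic (since $\gamma$ is a measure-preserving isometry), vanishes at $\overline{p}$, and has almost linear growth by the triangle inequality $\overline{\dist}(\gamma y, \overline{p}) \le \overline{\dist}(y, \overline{p}) + \overline{\dist}(\gamma \overline{p}, \overline{p})$. By the basis property of $\{u_1, \dots, u_{n-2}\}$ from Theorem \ref{T|HuangHuang}, one can write $u_i \circ \gamma - u_i(\gamma(\overline{p})) = \sum_j a_{ij}(\gamma)\, u_j$ for some matrix $A(\gamma)$. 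I would show that $A(\gamma) = \Id$ by composing with the $\eps$-splitting maps $T_{r, \eps}$ from (ii), rescaling by $1/r$, and passing to the blow-down limit: the maps $T_{r, \eps} \circ U / r$ converge to the coordinate projection onto the $\R^{n-2}$ factor while $\gamma$ tends to the identity in the limit (since $\overline{\dist}(\gamma \overline{p}, \overline{p})/r \to 0$). Since $T_{r,\eps}$ scales linearly in $r$, conjugation by $T_{r,\eps}$ acts on $A(\gamma)$ by a bounded similarity, so the limit identity forces $A(\gamma) = \Id$. Hence $u_i \circ \gamma = u_i + u_i(\gamma(\overline{p}))$, and the homomorphism property of $\Phi(\gamma) := U(\gamma(\overline{p}))$ follows by a direct chain computation $\Phi(\gamma \gamma') = U(\gamma(\gamma' \overline{p})) = U(\gamma' \overline{p}) + U(\gamma \overline{p}) = \Phi(\gamma') + \Phi(\gamma)$.

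Injectivity of $\Phi$ and property (iv) are then handled together by exploiting the orbit structure in the blow-down. If $\Phi(\gamma) = 0$ for a nontrivial $\gamma$, the homomorphism property yields $U(\gamma^m \overline{p}) = 0$ for all $m \in \Z$, while $\overline{\dist}(\gamma^m \overline{p}, \overline{p}) \to \infty$ since $\Z^{n-2}$ acts freely and properly discontinuously. Blowing down at scales $r_m = \overline{\dist}(\gamma^m \overline{p}, \overline{p})$, Proposition \ref{prop:blowdownsplit} and the translation action of $\R^{n-2} < G$ imply that $\gamma^m(\overline{p})$ converges to a point with nonzero projection onto the Euclidean factor. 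Since $T_{r_m, \eps} \circ U$ is an $\eps$-splitting map on $B_{r_m}(\overline{p})$ and therefore approximates precisely this coordinate projection, the identity $U(\gamma^m \overline{p}) = 0$ is incompatible with the nonzero limit, yielding the contradiction. Property (iv) is obtained from the same blow-down picture: the $\eps$-splitting control combined with the translation orbit of $\Phi(\Z^{n-2})$ in $\R^{n-2}$ gives the claimed volume growth of the image $T_{r, \eps} \circ U(B_1(\overline{p}))$, since every lattice translate of $B_1(\overline{p})$ lying within $B_r(\overline{p})$ contributes an approximately disjoint chunk of Lebesgue measure of definite size to this image.

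The main technical obstacle is the blow-down calibration underlying (iii) and (iv), namely tracking how the basis of almost linear harmonic functions rescales in the limit and identifying the $\Z^{n-2}$ action with discrete translations by $\Phi(\gamma)$. This step rests on the $\eps$-splitting map calculus developed in \cite{CheegerJiangNaber, HuangHuangtransformation}; once the identification is secured, (iii) and (iv) follow from routine, if technical, manipulations.
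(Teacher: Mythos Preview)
Your overall strategy matches the paper's: items (i) and (ii) come from Theorem \ref{T|HuangHuang} via the blow-down uniqueness of Corollary \ref{cor:blowdownXbarunique}, and your injectivity and volume arguments for (iii) and (iv) are essentially those of Propositions \ref{P|HarmonicHuang} and \ref{P|slicing}. However, there is a real gap in your derivation of the translation identity $u_i \circ \gamma = u_i + u_i(\gamma(\overline{p}))$. The assertion that ``conjugation by $T_{r,\eps}$ acts on $A(\gamma)$ by a bounded similarity, so the limit identity forces $A(\gamma)=\Id$'' is unjustified: the matrices $T_r$ are lower triangular but there is no control on their condition number $\|T_r\|\,\|T_r^{-1}\|$ as $r\to\infty$. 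What the blow-down argument actually gives is that the conjugates $A_r := T_r\, A(\gamma)\, T_r^{-1}$ converge to $\Id$; since all $A_r$ are similar to $A(\gamma)$, this only shows that every eigenvalue of $A(\gamma)$ equals $1$, and does \emph{not} rule out a nontrivial Jordan block. The paper closes this gap by a separate argument: iterating to get $u\circ(m\gamma)=A(\gamma)^m u + \text{const}$, then combining the Cheng-Yau gradient estimate with the almost-linear growth of $u$ to obtain $|J^m| \le C\, m^{1/2}$ for the Jordan form $J$ of $A(\gamma)$; a non-diagonal $J$ with unit eigenvalues would force an entry of $J^m$ to grow linearly in $m$, a contradiction.

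Two smaller issues are also glossed over. First, in the blow-down $\gamma$ converges to an isometry $\gamma_\infty$ fixing the basepoint, but $\gamma_\infty$ need not act trivially on the Euclidean factor (in the non-Euclidean-volume-growth case the quotient $H_\infty/\R^{n-2}$ can be generated by a reflection); the paper therefore passes to $H^2=\{h^2:h\in H\}$ to guarantee $\pi_{\mathrm{Iso}(\R^k)}(\gamma_\infty^2)=\Id$, so your claim that ``$\gamma$ tends to the identity in the limit'' is not quite correct. Second, Theorem \ref{T|HuangHuang} produces $k$ harmonic functions where $k\in\{n-2,n-1\}$ is the maximal Euclidean dimension of the blow-down (Corollary \ref{L|LemmaTriv}); when $k=n-1$ an additional orthogonal projection onto the $(n-2)$-plane containing $u(H^2\cdot\overline{p})$ is required to produce the map $U$, and this projection must be tracked through the volume estimate in (iv) (cf.\ Proposition \ref{P|slicing}).
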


Items i) and ii) of Proposition \ref{prop:harmonic_behaviour} follow from \cite[Theorem 1.10]{HuangHuangtransformation}, taking into account the metric uniqueness of the blow-down which follows from Proposition \ref{prop:blowdownsplit} (see Corollary \ref{cor:blowdownXbarunique}). On the other hand, the proofs of items iii) and iv) require some new ideas. The \emph{non-degeneracy} condition \eqref{eq:imagenondeg}, together with the almost-splitting property (with arbitrary precision) up to linear transformation in ii) and the equivariance from iii) play a key role in the proof of Theorem \ref{thm:Bettirigid}. 

\begin{remark}
For the sake of clarity we note that a posteriori the functions $u_i$ could just be taken to be the coordinates of the $\R^{n-2}$ factor of $\overline{X}$. It is straightforward to check that with such choice items i)--iv) hold and one can choose all the matrices $T_{r,\eps}=\mathrm{Id}$. However, at this stage of the proof we do not know that $\overline{X}$ splits a factor $\R^{n-2}$ yet.
\end{remark}

We will prove Proposition \ref{prop:blowdownsplit} in Section \ref{sec:splitblowdown} below. Proposition \ref{prop:harmonic_behaviour} is a slightly imprecise combination of Proposition \ref{P|HarmonicHuang} and Proposition \ref{P|slicing}, which are the bulk of Section \ref{sec:equivariantharmonic}. In Section \ref{sec:proofThmbettirigid} we will combine these tools to complete the proof of Theorem \ref{thm:Bettirigid}.

\subsection{Splitting of the blow-downs}\label{sec:splitblowdown}

The goal of this section is to prove Proposition \ref{prop:blowdownsplit}. The proof is much simpler if one restricts to the case where the universal covering of the space in question has Euclidean volume growth. In such case, which is the one dealt with in the recent \cite{HuangHuangBetti}, the sought splitting follows from the so-called cone-splitting principle. 
To treat the remaining cases, the key step is to prove the following splitting theorem via $\R^k$-action.

\begin{theorem} \label{T|splittingkesimo}
    Let $(X,\sd,\m)$ be an $\RCD(0,N)$ space satisfying the following conditions for some integer $1\le k\le N$:
    \begin{enumerate}
        \item $\mathrm{dim}_{\mathrm{e}}(X) \leq k+1$.
        \item $\int_1^{+\infty} \frac{t^{k+1}}{\m(B_t(p))} \, dt =+\infty$ for some $p \in X$.
        \item There exists a closed subgroup $H <\mathrm{Iso}(X)$ which is isomorphic to $\bb{R}^k$.
    \end{enumerate} Then, $X \cong Y \times \bb{R}^k$ as metric measure spaces for some $\RCD(0,N-k)$ space $(Y,\dist_Y,\meas_Y)$.
    \end{theorem}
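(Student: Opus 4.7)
The plan is to analyze the geometric structure of $X$ imposed by the free isometric $H \cong \R^k$-action together with the essential dimension bound, and then to invoke the distributional Ricci curvature theory for warped products to rigidify the structure into a direct product.

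\textbf{Step 1 (Warped product structure).} By Lemma \ref{L|Romeo} each orbit $Hx$ is homeomorphic to $\R^k$, and being a homogeneous space under the free isometric action of the abelian group $H \cong \R^k$, it is intrinsically a flat Euclidean $\R^k$. The quotient $Y := X/H$ inherits a metric structure. Since the orbits are already $k$-dimensional, the essential dimension bound $\dim_{\mathrm{e}}(X) \leq k+1$ forces $Y$ to be essentially $1$-dimensional; in particular, the possibilities for $Y$ are a point, an interval, a ray, or a line. The case where $Y$ is a single point reduces immediately to $X \cong \R^k$. In the generic case, regularity results for $\RCD$ spaces with isometric group actions allow one to realize $X$ as a measured warped product
\begin{equation}
    X \cong \left(I \times \R^k,\, \di r^2 + f(r)^2 g_{\R^k},\, h(r)\, \di r\, \di \Leb^k \right),
\end{equation}
for some connected interval $I \subset \R$ and positive measurable functions $f, h$ on $I$.

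\textbf{Step 2 (Distributional Ricci analysis \`a la Mondino--Ryborz).} Invoking the characterization of $\RCD(0,N)$ for warped products of the form above, the curvature-dimension condition on $X$ translates into distributional inequalities on $I$. These relate the second derivatives of $\log f$ and $\log h$ and in particular imply that $\log f$ is concave and that a weighted combination of $h$ and $f^k$ satisfies a $\CD(0, N)$-type bound on the base. Thus $f$ is either constant or strictly log-concave, and in the latter case $f$ must either grow sublinearly and become unbounded, decay, or have one-sided behavior near an endpoint of $I$.

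\textbf{Step 3 (Volume growth forces $f$ constant).} A direct computation in the warped product gives an estimate of the form
\begin{equation}
    \m(B_t(p)) \asymp \int_{|r-r_0| \lesssim t} h(r)\, f(r)^k \, \di r\, ,
\end{equation}
up to harmless geometric factors that do not affect the polynomial rate. The assumption $\int_1^{+\infty} t^{k+1}/\m(B_t(p))\, \di t = +\infty$ forces an asymptotic upper bound on $h \cdot f^k$: roughly, $h(r) f(r)^k$ cannot grow faster than $r^{k+1}/\log r$. If $f$ were nonconstant (hence strictly log-concave), the distributional Ricci inequalities combined with this volume bound lead to a contradiction, since the interaction between the curvature bound on the base and a nontrivial warping forces an $\m(B_t) \gtrsim t^{k+2}$ type growth, incompatible with condition (2). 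Hence $f \equiv \text{const}$, the warping is trivial, and $X$ splits as a direct metric measure product $Y \times \R^k$ where $Y := (I, \di r, h(r)\di r)$ is $\RCD(0, N-k)$.

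\textbf{Main obstacle.} The delicate step is Step 1, namely promoting the a.e.\ picture provided by the essential dimension bound to a globally defined warped product structure, including boundary behavior when $Y$ is a ray (which is the "most delicate case" the authors themselves flag in the exposition). This requires carefully combining the structure theory for $\RCD$ with cocompact or codimension-one isometric actions, the identification of orbits as Euclidean leaves, and the regularity of the quotient projection. Once the warped product structure is set up, Steps 2 and 3 reduce to a careful ODE/comparison argument using the sharp Mondino--Ryborz distributional Ricci bounds and the integral growth condition (2).
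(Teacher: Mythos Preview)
Your overall strategy---establish a warped product structure, extract a differential inequality from the distributional Ricci bound, and use the volume-growth hypothesis to force the warping to be trivial---matches the paper's approach in spirit. But there are substantive gaps and one structural difference worth flagging.

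\textbf{Structural difference: the paper argues by induction on $k$.} You attempt to handle all $k$ at once by writing $X$ as a warped product $I \times_f \R^k$. The paper instead proves the case $k=1$ in full detail and then reduces general $k$ to $k-1$ by quotienting out a single $\R$-factor (Lemma~\ref{L|inductivestep}) and lifting splittings back up (Lemma~\ref{L|liftsplit}). This is not merely cosmetic: the warped-product analysis is carried out only for $k=1$, where the fiber is one-dimensional and the analysis of Section~\ref{subsubsec:warped} (harmonicity of the fiber coordinate, regularity of $|\nabla f|$, the Sobolev-to-Lipschitz arguments) is tractable. Your Step~1 for general $k$ would require reproducing all of that with $\R^k$-valued coordinate functions, and you give no indication of how to do this.

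\textbf{Step 1 is the entire difficulty, not just an obstacle.} You correctly flag it, but ``regularity results for $\RCD$ spaces with isometric group actions allow one to realize $X$ as a measured warped product'' is not an argument. Even for $k=1$ the paper devotes Lemmas~\ref{P|spacedecomposition} through \ref{L|isometryfix} and Proposition~\ref{prop:measuredwarped} to this. The key technical point---showing the fiber-coordinate function $f$ is harmonic and $|\nabla f|^2$ has a continuous, monotone representative on the quotient ray---requires a blow-up analysis (Lemmas~\ref{L|goodapproximtion}, \ref{L|keyharmonicity}) that has no obvious analogue in your sketch.

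\textbf{Step 2 contains an incorrect claim.} The distributional Ricci inequality does \emph{not} yield that $\log f$ is concave. What one obtains (for $k=1$, with $\phi = |\nabla f|^{-1}$ and weight $V = -\log F$) is $\phi'' - \phi' V' \le 0$ in the distributional sense, i.e.\ $\phi$ is superharmonic for the weighted Laplacian on $(I, F\,\di r)$; see equations \eqref{E|DistrRicci}--\eqref{E|superharmonicityInterior}. This is weaker than log-concavity and is exactly what is needed for Step~3.

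\textbf{Step 3 is not a volume-growth contradiction of the form you describe.} The paper does not argue that nontrivial warping forces $\m(B_t) \gtrsim t^{k+2}$. Instead it (i) shows the growth hypothesis implies a parabolicity-type condition $\int_1^\infty t/A(t)\,\di t = +\infty$ for the weighted base, and (ii) feeds the superharmonicity of $\phi$ into a Yau-type integration-by-parts inequality (Lemma~\ref{L|intbypartsyau}) with carefully chosen test functions to conclude $\phi' \equiv 0$. Your heuristic ``nontrivial warping forces faster growth'' is neither stated precisely nor obviously true.
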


As we shall see, Theorem \ref{T|splittingkesimo} easily implies the following uniqueness and structure theorem for the blow-downs of the universal cover in the setting of Proposition \ref{prop:blowdownsplit}.

\begin{corollary}\label{cor:blowdownXbarunique}
Let $(X,\dist,\haus^n)$ be an $\RCD(0,n)$ space for some $n\in\mathbb{N}$, $n\ge 2$. If $\pi_1(X)$ has a subgroup isomorphic to $\Z^{n-2}$, then the blow-down of the universal cover $(\overline{X},\overline{\dist},\haus^n)$ of $(X,\dist,\haus^n)$ is unique as a (pointed) metric space and (pointed) isometric to a metric cone (pointed at a tip). 
\end{corollary}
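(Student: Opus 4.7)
The plan is to combine Proposition \ref{prop:blowdownsplit} with Bishop-Gromov rigidity for noncollapsed $\RCD(0, n)$ spaces and the connectedness of the set of blow-downs from Lemma \ref{L|tangentconesconnected}.

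First, Proposition \ref{prop:blowdownsplit} gives that every equivariant blow-down $(\widetilde X, \widetilde\dist, \widetilde\meas, \widetilde q, G)$ of $(\overline X, \overline\dist, \haus^n, \overline p, \Z^{n-2})$ splits as a metric measure space in the form $\widetilde X \cong \R^{n-2} \times Y$ for some $\RCD(0, 2)$ space $(Y, \dist_Y, \meas_Y, y_0)$; as a pointed metric space, $\widetilde X$ is thus determined by $(Y, y_0)$. Second, I would upgrade $\widetilde X$ to a metric cone at $\widetilde q$: Bishop-Gromov monotonicity of $r \mapsto \haus^n(B_r(\overline p))/r^n$ in the noncollapsed $\RCD(0, n)$ setting, together with its rigidity, forces every pointed pmGH limit of rescalings of $(\overline X, \overline\dist, \haus^n, \overline p)$ to be a metric cone at the basepoint. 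Combining with the splitting, $(Y, y_0)$ is itself a metric cone pointed at $y_0$; being $\RCD(0, 2)$, it falls in a short classified list: a point, a half-line, a line, or a 2-dimensional flat sector $C(S^1(\ell))$ with vertex angle $\ell \in (0, 2\pi]$.

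For uniqueness I would apply Lemma \ref{L|tangentconesconnected}: the set of pointed blow-downs of $\overline X$ is connected in the pGH topology. Because every element has the product structure $\R^{n-2} \times Y$ with $Y$ in the classified list, connectedness confines $Y$ to a single connected component of the family. The one remaining continuous parameter (the cross-sectional length of $Y$) is controlled by the asymptotic volume ratio $V_\infty := \lim_{r \to \infty} \haus^n(B_r(\overline p))/r^n$, which is a fixed constant independent of the rescaling sequence; this rigidity uniquely determines $(Y, y_0)$, so $(\widetilde X, \widetilde\dist, \widetilde q)$ is unique as a pointed metric cone.

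The main obstacle I expect is the sub-Euclidean case $V_\infty = 0$, where the normalized blow-down can be genuinely collapsed and the matching of $V_\infty$ with the parameter of $Y$ needs care. In that regime one should argue that $\mathrm{dim}_{\mathrm{e}}(\widetilde X)$ drops strictly below $n$, forcing $Y$ into the lower-dimensional alternatives so that connectedness collapses the family to a single element; alternatively, this can be streamlined via a direct application of Theorem \ref{T|splittingkesimo} to extract the maximal Euclidean factor.
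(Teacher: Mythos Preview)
Your argument in the Euclidean volume growth case is essentially the paper's (Lemma~\ref{L|uniquetangentconemaximal}): cone rigidity via \cite{DePhilippisGiglicone} plus the $\R^{n-2}$--splitting reduces to at most two isometry classes determined by the asymptotic volume ratio, and connectedness (Lemma~\ref{L|tangentconesconnected}) finishes.

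The sub-Euclidean case $V_\infty = 0$ has a genuine gap, and your sketched fix does not close it. The Bishop--Gromov ``volume cone implies metric cone'' rigidity simply does not apply to collapsed blow-downs, so you cannot conclude that $(\widetilde X,\widetilde q)$ is a metric cone at its basepoint this way. Your fallback --- dimension drop forces $\dim_{\mathrm e}(Y)\le 1$, then connectedness picks one element --- fails because the family of \emph{pointed} one-dimensional $\RCD(0,2)$ spaces is not discrete: $(\R_+,t)$ for $t>0$ is a whole one-parameter family (none of which is a cone at its basepoint), and compact $Y$ is not excluded either. Connectedness alone cannot collapse such a continuum. What is missing is control of the position of the basepoint $\widetilde q$ inside the $Y$--factor. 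The paper obtains this by passing to the quotient $\overline X/\Z^{n-2}$ (Lemma~\ref{lemma:blowdownquot}): one shows that the blow-down of the quotient is either $\R$ (forcing $\overline X$ to split) or $\R_+$ pointed at $0$; since equivariant convergence identifies $\widetilde X/H_\infty$ with this blow-down, the basepoint $\widetilde q$ is forced to lie in the orbit over $0\in\R_+$, hence on $\partial\widetilde X$ when $Y=\R_+$. Only then does the set of pointed blow-downs become finite (at most two), and Lemma~\ref{L|tangentconesconnected} applies.

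A minor but important bookkeeping point: you invoke Proposition~\ref{prop:blowdownsplit}, whose full statement (the part about the $\R^{n-2}$--action by translations) is proved in the paper \emph{using} Corollary~\ref{cor:blowdownXbarunique}. You only use the metric-measure splitting $\widetilde X\cong\R^{n-2}\times Y$, which is established independently via cone-splitting (Euclidean case) or Theorem~\ref{T|splittingkesimo} (sub-Euclidean case), so there is no circularity --- but you should say so explicitly.
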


Assuming the validity of Theorem \ref{T|splittingkesimo} and Corollary \ref{cor:blowdownXbarunique}, whose proofs will be discussed later in this section, we complete the proof of Proposition \ref{prop:blowdownsplit}. 

\begin{proof}[Proof of Proposition \ref{prop:blowdownsplit}]
    We assume first that $\overline{X}$ has Euclidean volume growth. Then every blow-down $(\widetilde{X},\widetilde{\sd},\widetilde{q})$ is an $n$-dimensional metric cone with tip $\widetilde{q}$ by \cite[Theorem 1.1]{DePhilippisGiglicone} and \cite[Theorem 1.3]{DephilGigli} after \cite[Theorem 7.6]{ChCo1}. By Lemma \ref{L|Gruppilimite}, there exists a closed subgroup of $G$ isomorphic to $\bb{R}^{n-2}$ which acts freely on $\tilde{X}$ since the only compact subgroup of $\bb{R}^{n-2}$ is the trivial group. Hence, the set of tips of $\tilde{X}$ contains a subset homeomorphic to $\bb{R}^{n-2}$, so that $\tilde{X} \cong \bb{R}^{n-2} \times C(Z)$ isometrically, by cone-splitting, where $Z$ is either isometric to a circle $S^1_r$ for $0<r<1$ or to an interval $[0,a]$ for $0<a\le \pi$.

    Assume now that $\overline{X}$ does not have Euclidean volume growth. In this case, we have that $\mathrm{dim}_{\mathrm{e}}(\widetilde{X})\le \dim_{\haus}(\widetilde{X}) \leq n-1$, by \cite[Theorem 1.4]{DephilGigli} after \cite[Theorem 3.1]{Colding1}. By Lemma \ref{L|Gruppilimite} there exists a closed subgroup of $G$ isomorphic to $\bb{R}^{n-2}$. Moreover, by the Bishop-Gromov inequality, there exists $c>0$ such that 
    \begin{equation}
    \widetilde{\m}(B_r(\widetilde{q})) \leq cr^n \quad \text{for every } r>1\, .
    \end{equation}
    Hence, Theorem \ref{T|splittingkesimo} applies with $k=n-2$ thus yielding that $\widetilde{X} \cong \bb{R}^{n-2} \times Z$ as metric measure space. 
    
    By Corollary \ref{cor:blowdownXbarunique}, the blow-down of $\overline{X}$ is unique as a metric space and isometric to a metric cone. The second part of the statement follows immediately from \cite[Theorem 3.2]{Panalmoststable}. We note that such result is stated for smooth complete Riemannian manifolds therein but the proof works verbatim in the $\RCD$ case. 
\end{proof}

We are going to prove Theorem \ref{T|splittingkesimo} by induction on the dimension $k$ of the group $\bb{R}^k\cong H<\mathrm{Iso}(X)$. The most challenging part of the proof is the base step of the induction, corresponding to $k=1$.
As shown by Proposition \ref{P|alternative}, we can easily reduce to the case where $X/H \cong \bb{R}_+$ isometrically. 
The proof of the splitting theorem in this case consists of two main steps.
The first step of the proof is to show that $(X,\dist,\meas)$ 
is isomorphic to a measured warped product $(\bb{R}_+ \times_{\phi} \bb{R},\dist_{\phi},h(s)\Leb^1(\di s)\Leb^1(\di t))$ for some $\phi \in C(\bb{R}_+,(0,+\infty])$ and $h\in\Lip_{\mathrm{loc}}((0,+\infty))\cap L^1_{\mathrm{loc}}([0,+\infty))$. Establishing such structural result is the goal of Section \ref{subsubsec:warped} 
(see in particular Proposition \ref{prop:measuredwarped}).
In Section \ref{subsubsec:splittingk=1},
using the assumption that $X$ has non-negative Ricci curvature in synthetic sense, we deduce that the warping factor $\phi$ is superharmonic on a suitably weighted half-line. Such weighted half-line turns out to have slow volume growth, thanks to assumption (2), thus forcing the warping factor $\phi$ to be constant and yielding the sought splitting (see in particular Proposition \ref{T|keystepsplitting}). In Section \ref{subsubsec:induction} we deal with the inductive step. In Section \ref{subsubsec:structureblowdowns} we exploit Theorem \ref{T|splittingkesimo} to prove Corollary \ref{cor:blowdownXbarunique}.
\smallskip

\subsubsection{Proof of Theorem \ref{T|splittingkesimo}, base step: measured warped product structure}\label{subsubsec:warped}

Our setup in this section is that we consider an $\RCD(0,N)$ space $(X,\dist,\meas)$ such that $\dim_{\mathrm{e}}(X)\le 2$,
\begin{equation}\label{eq:volgrowthass}
    \int_1^{+\infty}\frac{t^2}{\meas(B_t(p))}\di t=+\infty\, ,
\end{equation}
for some $p\in X$ and there is a closed subgroup $H<\mathrm{Iso}(X)$ isomorphic to $\R$. To complete the proof of the base step of 
Theorem \ref{T|splittingkesimo} we are going to argue that $X$ splits a line isometrically. To do so, as mentioned above, our first goal is to show that $X$ is a measured warped product. Such result holds independently of the volume growth assumption \eqref{eq:volgrowthass}. More precisely, we are going to prove the following.

\begin{proposition}\label{prop:measuredwarped}
Let $(X,\dist,\meas)$ be an $\RCD(0,N)$ space with $\dim_{\mathrm{e}}(X)\le 2$ such that there is a closed subgroup $H<\mathrm{Iso}(X)$ isomorphic to $\R$. Then either $(X,\dist,\meas)$ splits a line or there exist an increasing function $h \in \Lip_{\mathrm{loc}}((0,+\infty)) \cap \sL^1_{\mathrm{loc}}(\bb{R}_+)$ and a continuous and strictly positive function $\phi:(0,+\infty)\to (0,+\infty)$ such that $(X,\dist,\meas,H)$ is equivariantly isomorphic to a measured warped product on $\R_+\times\R$ with metric and measure given respectively by
\begin{equation}\label{eq:measurewarpedprod}
    \di r^2+\phi(r)^2\di x^2\, ,\quad \quad h(r)\di r\di x\, ,
\end{equation} 
and with the $\R$-action being by translation on the $x$ component.
\end{proposition}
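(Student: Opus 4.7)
My plan is to analyse the quotient $X/H$ and then recover a warped product structure of $X$ over it. Since $H\cong\R$ has no nontrivial compact subgroups the action is free, and by Lemma \ref{L|Romeo} every orbit is homeomorphic to $\R$. Combining the rectifiable dimension bound $\mathrm{dim}_{\mathrm{e}}(X)\le 2$ with the theory of isometric quotients of $\RCD$ spaces, I would first show that $X/H$ is a connected, complete, one-dimensional metric space. The classification of such spaces forces $X/H$ to be isometric to one of $\R$, $\R_+$, $S^1_r$, or $[0,a]$.

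The second step is to show that in each of these cases $X$ is equivariantly isomorphic to a warped product $X/H\times_\phi\R$ with metric $\di r^2+\phi(r)^2\di x^2$ and measure $h(r)\di r\,\di x$, with $H$ acting by translation in the $x$-variable. The warping factor $\phi(r)$ is the speed of parametrisation of the $H$-orbit over the point $r\in X/H$ which, being $H$-invariant, depends only on $r$. Orthogonality of the orbit direction to the horizontal direction is, in the $\RCD$ sense, a consequence of the Killing property of the infinitesimal generator of $H$, to be made rigorous via test vector fields and the first variation of the isometric action on the Cheeger energy. The density $h$ arises from disintegrating $\meas$ along the projection $X\to X/H$, and its monotonicity, local Lipschitz regularity on $(0,+\infty)$, and local integrability up to $r=0$ should follow from the $\RCD(0,N)$ condition together with the Bishop--Gromov inequality.

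With the warped product structure in hand, the final step is to see that in three of the four cases the space splits a line. The $\RCD(0,N)$ condition applied to the warped product $X/H\times_\phi\R$ forces $\phi$ to be concave in $r$, in the distributional sense characterised by Mondino--Ryborz. A positive concave function defined on all of $\R$, or a concave function on a circle, must be constant; hence in the cases $X/H\in\{\R,S^1_r\}$ the space $X$ is an isometric metric product with an $\R$ factor and therefore splits a line. The case $X/H\cong[0,a]$ can be handled analogously, exploiting the presence of $\RCD$-compatible degeneracies at both endpoints to force $\phi$ again to be constant. This leaves only $X/H\cong\R_+$, which is precisely the warped product conclusion of the proposition; note that the volume growth assumption \eqref{eq:volgrowthass} is nowhere needed, consistently with the statement.

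The principal difficulty lies in justifying the warped product structure in the second step: in the smooth Riemannian setting the orthogonal decomposition into Killing and horizontal directions is immediate, but in the $\RCD$ regime it has to be derived from test vector field techniques, the first variation formula for the isometric action, and the regularity theory of the Laplacian along orbits. Establishing the claimed regularity and monotonicity of $h$ and the behaviour of $\phi$ near the potentially degenerate endpoint $r=0$ will also require care.
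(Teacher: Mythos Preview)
Your high-level outline is reasonable, but there are two genuine gaps.

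First, your treatment of the cases $X/H\in\{\R,S^1_r,[0,a]\}$ is both harder than necessary and not quite correct. The paper disposes of these cases \emph{before} any warped product analysis: if $X/H\cong\R$, one lifts a line from the quotient to $X$ (since the projection is a submetry) and applies the splitting theorem directly (Lemma \ref{L|linequotient}); if $X/H$ is compact, the $H$-action is cocompact and $X$ is noncompact, so again $X$ splits a line (Lemma \ref{L|compactquotients}). Your proposed route through concavity of $\phi$ is also not what the $\RCD(0,N)$ condition actually gives: via Mondino--Ryborz (as the paper uses later, in the proof of Theorem \ref{T|keystepsplitting}) one obtains the \emph{weighted} inequality $\phi''-\phi'V'\le 0$ with $V=-\log F$ the weight, not concavity of $\phi$. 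A positive weighted-superharmonic function on $\R$ need not be constant without a growth condition, so your Step 3 argument does not close as stated.

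Second, and more seriously, your Step 2 --- establishing the warped product structure in the case $X/H\cong\R_+$ --- is where essentially all the work lies, and the key mechanism is missing from your sketch. The paper does not proceed via a Killing vector field or the first variation of the action on the Cheeger energy. Instead it introduces the orbit-parameter function $f:X\to\R$ (the first coordinate of the inverse of $\Phi(t,s)=t\cdot r(s)$), proves directly that $f$ is locally Lipschitz (Lemma \ref{P|fLip}), and then shows that $f$ is \emph{harmonic} (Lemma \ref{T|harmonicity}). Harmonicity is the engine of the whole argument: by Bochner, $|\nabla f|^2$ has nonnegative measure-valued Laplacian, which together with $H$-invariance and the strong maximum principle yields a continuous representative that is monotone along the quotient and strictly positive on $(0,+\infty)$ (Corollary \ref{C|mainregularity}). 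Only with this regularity in hand can one set $\phi=|\nabla f|^{-1}$ and prove, via blow-up analysis at regular points (Lemmas \ref{L|goodapproximtion}, \ref{L|keyharmonicity}) and a tailored Sobolev-to-Lipschitz argument (Lemmas \ref{L|SobToLip}, \ref{L|isometryfix}), that $\Psi:(X,\sd,\m)\to(\R_+\times\R,\sd_g,h\,\di r\,\di x)$ is a measure-preserving isometry. The orthogonality of orbit and horizontal directions that you invoke is obtained pointwise at the blow-up level, not from an abstract Killing-field formalism. The regularity of $h$ comes from the double-quotient $\RCD$ structure on $X/H\cong(X/\Z)/(\R/\Z)$ (Remark \ref{rmk:doublequotient}) together with the one-dimensional localization theory of Cavalletti--Milman, not from Bishop--Gromov alone.
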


Note that the conclusion of Proposition \ref{prop:measuredwarped} would be pretty standard if $(X,\dist,\meas)$ is assumed to be a smooth weighted Riemannian manifold. Such assumption is unfair in our setup, and this is a source of several headaches. Indeed, we will be forced to prove the measured warped product structure simultaneously with some regularity of the warping function $\phi$ and weight function $h$.

\begin{remark}\label{rmk:secondcoordharmonic}
On any (sufficiently regular) measured warped product with the structure in \eqref{eq:measurewarpedprod}, the projection onto the Euclidean component $\R_+\times \R\to \R$ is a harmonic function. Such observation is at the root of all the subsequent arguments in this section.
\end{remark}

\begin{remark}
Proposition \ref{prop:measuredwarped} generalizes in particular the structural results obtained in the recent \cite{DaiHondaPanWei}. In their case, $(X,\dist,\meas)$ is the blow-down of a complete Riemannian manifold $(M,g)$ with $\Ric\ge 0$ and the explicit form of the metric $g$ was exploited to obtain the measured warped product structure of the blow-down.   
\end{remark}

\medskip

We start by observing that there is a natural $\RCD$ structure on the quotient space $X/H$.

\begin{remark}[Quotients of $\RCD$ spaces by a group isomorphic to $\bb{R}$]\label{rmk:doublequotient}
    Let $(X,\sd,\m)$ be an $\RCD(K,N)$ space and let $\bb{R}\cong H <\mathrm{Iso}(X)$ be a closed subgroup. Consider the discrete subgroup $\bb{Z} < H \cong\bb{R}$ generated by $1 \in \bb{R}$. We consider the quotient space $X/\bb{Z}$ equipped with the quotient distance $\sd'$ and the unique measure $\m'$ such that the projection $\pi:X \to X/\bb{Z}$ is locally measure preserving. Since $\pi$ is locally an isomorphism of metric measure spaces, the quotient space $(X/\bb{Z},\sd',\m')$ is locally $\RCD(K,N)$. Thus, by the local to global property \cite{CavallettiMilman,Liglobalization}, $(X/\bb{Z},\sd',\m')$ is $\RCD(K,N)$ as well. 
    
    Consider the action of $\bb{R}/\bb{Z}$ on $X/\bb{Z}$. Since $\bb{R}/\bb{Z}$ is a compact subgroup of $\mathrm{Iso}(X /\bb{Z})$, thanks to \cite{MondinoQuotients}, the quotient space $(X/\bb{Z})/(\bb{R}/\bb{Z})$ is an $\RCD(K,N)$ space when equipped with the quotient distance $\sd''$ and the quotient measure $\m''$. Moreover the metric space $((X/\bb{Z})/(\bb{R}/\bb{Z}), \sd'')$ is isomorphic to $X/\bb{R}$ with its natural quotient distance.
\end{remark}

We continue with the following observation regarding the dimension of the quotient of an $\RCD$ space with respect to an action of $\R$ by measure preserving isometries.

\begin{lemma} \label{L|genericquotients}
     Let $(X,\sd,\m)$ be an $\RCD(K,N)$ space such that $H <\mathrm{Iso}(X)$ is a closed subgroup isomorphic to $\bb{R}$.
     Consider the $\RCD(K,N)$ structure $(X/H,\tilde\sd,\tilde\m)$ on the quotient space $X/H$ given by Remark \ref{rmk:doublequotient}.
  Then, we have
    \[
    \mathrm{dim}_{\mathrm{e}}(X/H) \leq \mathrm{dim}_{\mathrm{e}}(X)-1\, .
    \]
    \begin{proof}
        Let $d \in \bb{N}$ be the essential dimension of $(X/H,\tilde\sd,\tilde\m)$ and let $d'$ be the essential dimension of $(X,\sd,\m)$. Call $\pi:X \to X/H$ the projection map. By \cite{MondinoNaber}, there exists $x \in X$ such that $\mathrm{Tan}_x(X)=\{\bb{R}^{d'}\}$ and $\mathrm{Tan}_{\pi(x)}(X/H)=\{\bb{R}^{d}\}$. By Lemma \ref{L|Gruppilimite}, for a sequence $r_i \uparrow + \infty$, we have the pointed equivariant Gromov-Hausdorff convergence
        \[
        (X,r_i\sd,H,x) \to (\bb{R}^{d'},\sd_{\mathrm{eu}},G,0)\, ,
        \]
        where $G < \mathrm{Iso}(\bb{R}^{d'})$, and $G$ contains a subgroup isomorphic to $\bb{R}$.
        By Theorem \ref{T|EquivaraintConvergence}, $\bb{R}^d \cong \bb{R}^{d'}/G$, so that $d \leq d'-1$, as claimed.
    \end{proof}
\end{lemma}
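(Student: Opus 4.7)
The plan is to compare tangent spaces at a generic point $x \in X$ and at its image $\pi(x) \in X/H$ via a blow-up argument. The key observation is that the $H$-action persists in the blow-up limit of $X$, producing an isometric $\mathbb{R}$-action on a Euclidean space, whose quotient then has dimension strictly less than that of the Euclidean space.

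Set $d' := \mathrm{dim}_{\mathrm{e}}(X)$ and $d := \mathrm{dim}_{\mathrm{e}}(X/H)$. The Mondino--Naber rectifiability theorem, applied to $X$ and to the $\RCD(K,N)$ space $X/H$ from Remark \ref{rmk:doublequotient}, guarantees that the sets $\mathcal{R}_{d'} \subset X$ and $\mathcal{R}_d \subset X/H$ of points with unique Euclidean tangent of the respective dimension have full measure. Since the projection $\pi : X \to X/H$ factors as $X \to X/\mathbb{Z} \to (X/\mathbb{Z})/(\mathbb{R}/\mathbb{Z}) = X/H$, where the first map is locally measure preserving and the second pushes the measure forward up to normalization, $\pi^{-1}(\mathcal{R}_d)$ has full measure in $X$. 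I would therefore pick $x \in \mathcal{R}_{d'} \cap \pi^{-1}(\mathcal{R}_d)$.

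Next, take $r_i \uparrow +\infty$ and consider the rescaled spaces $(X, r_i \sd, \m_{r_i}, x)$ with measures renormalized so that the unit ball has unit mass. By the choice of $x$, these converge in pmGH sense to $(\mathbb{R}^{d'}, \sd_{\mathrm{eu}}, c\,\Leb^{d'}, 0)$. The subgroup $H$ remains a closed subgroup of $\mathrm{Iso}(X, r_i \sd)$ for each $i$, so after passing to a subsequence Theorem \ref{T|EquivaraintConvergence} produces equivariant pGH convergence $(X, r_i\sd, H, x) \to (\mathbb{R}^{d'}, \sd_{\mathrm{eu}}, G, 0)$ with $G$ a closed subgroup of $\mathrm{Iso}(\mathbb{R}^{d'})$. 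To exhibit an $\mathbb{R}$-subgroup inside $G$, I would use the idea behind Lemma \ref{L|Gruppilimite}: fix a topological isomorphism $\phi_H:\mathbb{R} \to H$, choose $s_i \downarrow 0$ with $r_i \cdot \sd(x, \phi_H(s_i) x) \to 0$ (possible since $\phi_H(0) = \Id$ and the action is continuous at $x$), and observe that the discrete subgroups $\langle \phi_H(s_i)\rangle \cong \mathbb{Z}$ have generators contracting to the identity at $x$ in the rescaled spaces. Lemma \ref{L|Gruppilimite} then produces, up to a further subsequence, an equivariant limit containing a closed subgroup of $G$ isomorphic to $\mathbb{R}$.

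Finally, the second conclusion of Theorem \ref{T|EquivaraintConvergence} implies that the quotients $(X/H, r_i \tilde\sd, \pi(x))$ converge in pGH to $(\mathbb{R}^{d'}/G, 0)$. On the other hand, by the choice of $x$, they also converge to $(\mathbb{R}^d, 0)$, so $\mathbb{R}^d \cong \mathbb{R}^{d'}/G$ as pointed metric spaces. Since $G$ contains a closed subgroup isomorphic to $\mathbb{R}$, whose orbit through any point is a closed $1$-dimensional subset of $\mathbb{R}^{d'}$ by Lemma \ref{L|Romeo}, the Hausdorff dimension of $\mathbb{R}^{d'}/G$ is at most $d' - 1$, forcing $d \le d' - 1$. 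The slightly delicate step is securing the $\mathbb{R}$-subgroup of $G$ from the $H$-action after rescaling, since a fixed nontrivial element of $H$ moves $x$ arbitrarily far in the rescaled metric; everything else is a direct application of the equivariant convergence package.
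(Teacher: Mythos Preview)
Your proof is correct and follows essentially the same approach as the paper: pick a point $x$ where both $X$ and $X/H$ have unique Euclidean tangent, blow up equivariantly, identify the quotient of the blown-up space with $\mathbb{R}^d$, and use that the limit group contains a closed copy of $\mathbb{R}$ to force $d \le d'-1$. The only difference is in level of detail: you are more explicit about why such an $x$ exists (the full-measure argument via the double-quotient construction) and, more importantly, about how to invoke Lemma~\ref{L|Gruppilimite}, which as stated applies to $\mathbb{Z}^k$-actions rather than to $H \cong \mathbb{R}$ directly; your device of passing to the discrete subgroups $\langle \phi_H(s_i)\rangle \cong \mathbb{Z}$ with shrinking generators is exactly what is needed to fit the hypotheses of that lemma, whereas the paper invokes it somewhat informally for $H$ itself.
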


We record two well-known consequences of the splitting theorem for $\RCD(0,N)$ spaces from \cite{gigli2013splittingtheoremnonsmoothcontext}. Since their proofs are standard we omit them.

\begin{lemma} \label{L|linequotient}
     Let $(X,\sd,\m)$ be an $\RCD(0,N)$ space with a closed subgroup $H<\mathrm{Iso}(X)$ such that $X/H$ is homeomorphic (and hence isometric) to $\bb{R}$. Then, there exists a splitting $X \cong Y \times \bb{R}$ such that the $H$-action is trivial on the $\R$ factor. 
\end{lemma}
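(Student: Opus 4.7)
The plan is to first produce a line in $X$ and then invoke the Cheeger--Gigli splitting theorem \cite{gigli2013splittingtheoremnonsmoothcontext}.

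To produce the line, I would use that the quotient map $\pi : X \to X/H \cong \bb{R}$ is a submetry, because the quotient distance is the infimum of $X$-distances along $H$-orbits. Fix $p \in X$ and, up to reparametrising the isometry $X/H \cong \bb{R}$, assume $\pi(p) = 0$. Lifting the geodesics $[0, n] \subset \bb{R}$ and $[-n, 0] \subset \bb{R}$ starting at $\pi(p)$ to geodesics in $X$ starting at $p$ produces points $p_n^\pm \in X$ with $\pi(p_n^\pm) = \pm n$ and $\sd(p, p_n^\pm) = n$. The $1$-Lipschitz property of $\pi$ forces $\sd(p_n^+, p_n^-) \ge 2n$, while the triangle inequality gives $\sd(p_n^+, p_n^-) \le 2n$; hence $p$ lies at the midpoint of a geodesic from $p_n^-$ to $p_n^+$. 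Extracting a subsequential limit of these geodesics by local compactness (as in Gromov's precompactness theorem) yields a line $\gamma : \bb{R} \to X$ through $p$; a $1$-Lipschitz squeeze on $\pi\circ\gamma$ forces $\pi \circ \gamma = \mathrm{id}_\bb{R}$. Applying the Cheeger--Gigli splitting theorem then provides a metric measure isomorphism $X \cong Y \times \bb{R}$ with the $\bb{R}$-factor corresponding to $\gamma$.

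The final step is to check that $H$ acts trivially on the $\bb{R}$-factor of the splitting. The $\bb{R}$-coordinate equals (up to sign) the Busemann function $b^+$ of the forward ray of $\gamma$, and $b^+ + b^- = 0$ where $b^-$ is the Busemann function of the backward ray (by the splitting itself). Using that $\pi$ is $1$-Lipschitz and $\pi(\gamma(t)) = t$, I compute $b^+(x) = \lim_t (t - \sd(x, \gamma(t))) \le \lim_t (t - (t - \pi(x))) = \pi(x)$ and symmetrically $b^-(x) \le -\pi(x)$; combined with $b^+ + b^- = 0$, these force $b^+ = \pi$ identically. Since $\pi \circ h = \pi$ for every $h \in H$, each such isometry preserves the $\bb{R}$-coordinate of the splitting and hence acts as $(y, s) \mapsto (h'(y), s)$ for some $h' \in \mathrm{Iso}(Y)$. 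The only mildly delicate step is the identification $b^+ = \pi$, but it reduces to the Lipschitz/submetry bookkeeping described above.
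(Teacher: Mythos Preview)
Your argument is correct and is precisely the standard argument the paper has in mind: the authors explicitly omit the proof, stating that the lemma is a well-known consequence of the splitting theorem from \cite{gigli2013splittingtheoremnonsmoothcontext}. Your lift-a-line-through-the-submetry construction, followed by the identification of the Busemann function with the quotient map $\pi$ to pin down the $H$-action, is exactly that standard route.
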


\begin{lemma} \label{L|compactquotients}
     Let $(X,\sd,\m)$ be a non-compact $\RCD(0,N)$ space such that $\mathrm{Iso}(X)$ contains a closed subgroup $H$, with $X /H$ compact. Then, $X \cong Y \times \bb{R}$ as metric measure spaces.
\end{lemma}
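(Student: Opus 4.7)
The plan is to exhibit an isometrically embedded line in $X$ and then invoke Gigli's splitting theorem for $\RCD(0,N)$ spaces, which directly yields the desired metric measure space isomorphism. The first step will be to set $D := \mathrm{diam}(X/H) < \infty$ and record that, by cocompactness of the $H$-action, for every $p \in X$ the orbit $H \cdot p$ is $D$-dense in $X$. Since $X$ is non-compact, $H$-orbits must then be unbounded: otherwise $X$ would be contained in the bounded neighbourhood $\bar{B}_D(H \cdot p)$ of a compact orbit. Fixing $p \in X$, I can therefore extract $\{h_n\} \subset H$ with $L_n := \sd(p, h_n p) \to +\infty$.

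Next I would apply the standard midpoint-recentering argument. Choosing unit-speed minimizing geodesics $\sigma_n : [0, L_n] \to X$ from $p$ to $h_n p$, with midpoints $m_n := \sigma_n(L_n/2)$, density of the orbits supplies $g_n \in H$ with $\sd(g_n m_n, p) \leq D$. The translated curves
\begin{equation*}
\eta_n(t) := g_n\, \sigma_n\!\left(t + \tfrac{L_n}{2}\right), \qquad t \in \left[-\tfrac{L_n}{2}, \tfrac{L_n}{2}\right],
\end{equation*}
are still unit-speed and globally minimizing (being isometric reparametrizations of $\sigma_n$), satisfy $\eta_n(0) \in \bar{B}_D(p)$, and have length diverging on both sides of $0$. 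Properness of $X$ (closed balls are compact in $\RCD(0,N)$ spaces, thanks to Bishop--Gromov) combined with a diagonal Arzel\`a--Ascoli argument then produces a subsequential locally uniform limit $\eta_\infty : \bb{R} \to X$. Since minimality is preserved in the limit, $\eta_\infty$ is a unit-speed geodesic line.

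Once a line has been constructed, the $\RCD(0,N)$ splitting theorem gives $X \cong Y \times \bb{R}$ as metric measure spaces, with $Y$ an $\RCD(0, N-1)$ space, which is the claim. No step is a serious obstacle, but the one requiring attention is ensuring that the limit curve is a two-sided \emph{line} and not merely a ray: this is precisely why one must translate by the midpoint $m_n$ rather than by the endpoint $p$, so that $\eta_n$ extends symmetrically in both directions and both halves grow to infinite length. The $H$-action itself plays no explicit role in the final splitting statement, as one only needs the existence of \emph{some} line in $X$ to invoke Gigli's theorem.
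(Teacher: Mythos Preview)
Your argument is correct and is precisely the standard midpoint-recentering construction of a line under a cocompact isometric action, followed by Gigli's splitting theorem; the paper itself omits the proof as ``standard'', so your approach coincides with what the authors have in mind.
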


Combining Lemma \ref{L|genericquotients}, Lemma \ref{L|linequotient}, Lemma \ref{L|compactquotients} and the classification of $\RCD$ spaces with essential dimension less than or equal to one obtained in \cite{KL}, we immediately obtain the following.

\begin{lemma} \label{P|alternative}
   Under the same assumptions and with the same notation as in Proposition \ref{prop:measuredwarped},
       either $X \cong Y \times \bb{R}$ as a metric measure space and the $H$ action is by translation on the second factor, or $X/H \cong \bb{R}_+$ as a metric space.
\end{lemma}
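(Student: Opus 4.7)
My plan combines the dimension reduction for quotients by $H$ with the Kitabeppu--Lakzian classification of essential-dimension-one $\RCD$ spaces and the two splitting Lemmas~\ref{L|linequotient} and~\ref{L|compactquotients}.

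First I would observe that $X$ is non-compact: otherwise $\mathrm{Iso}(X)$ would be a compact Lie group, and hence could not contain the closed non-compact subgroup $H\cong\bb{R}$. Equipping $X/H$ with the $\RCD(0,N)$ structure $(\tilde{\sd},\tilde{\m})$ from Remark~\ref{rmk:doublequotient}, Lemma~\ref{L|genericquotients} yields $\mathrm{dim}_{\mathrm{e}}(X/H)\le \mathrm{dim}_{\mathrm{e}}(X)-1\le 1$. By the classification in \cite{KL} of $\RCD(K,N)$ spaces of essential dimension at most one, $X/H$ as a metric space must then be isometric to one of: a point, a bounded closed interval $[0,a]$, a circle $S^1_r$, the half-line $\bb{R}_+$, or the full line $\bb{R}$.

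The case $X/H\cong\bb{R}_+$ is the second alternative of the conclusion, so assume we are not in this case. If $X/H\cong\bb{R}$, then Lemma~\ref{L|linequotient} gives a splitting $X\cong Y\times\bb{R}$ on which $H$ acts trivially along the $\bb{R}$-factor, so that $Y/H$ is a single point and $H$ acts transitively on $Y$. By faithfulness of the action of $H\cong\bb{R}$ on $X$, the point-stabilizers in $H$ must be trivial (otherwise $H$ would act through a nontrivial compact quotient), forcing $Y$ to be isometric to $\bb{R}$ and hence $X\cong\bb{R}^2$; exchanging the two Euclidean factors yields the sought splitting $X\cong Y'\times\bb{R}$ with $H$ acting by translation on the second factor. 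If instead $X/H$ is compact (a point, $[0,a]$, or $S^1_r$), then since $X$ is non-compact Lemma~\ref{L|compactquotients} produces a splitting $X\cong Y\times\bb{R}$. Applying Lemma~\ref{L|productisometry} (possibly after iteratively peeling off residual Euclidean factors of $Y$) gives an inclusion $\mathrm{Iso}(X)<\mathrm{Iso}(Y)\times\mathrm{Iso}(\bb{R})$, so $H$ has a well-defined projection to $\mathrm{Iso}(\bb{R})\cong \bb{R}\rtimes\{\pm 1\}$, which is a connected subgroup. Since $H\cong\bb{R}$ cannot be contained in the compact group $\mathrm{Iso}(Y)$, this projection is nontrivial, and must therefore coincide with the translation subgroup $\bb{R}\subset\mathrm{Iso}(\bb{R})$.

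The delicate point, particularly in the case $X/H\cong S^1_r$, is to show that the $H$-action on $X\cong Y\times\bb{R}$ is genuinely by pure translation on the second factor, rather than by a combined translation--rotation (``helical'') action which is a priori compatible with the splitting. I would handle this by combining the fact that $H\cong\bb{R}$ has no nontrivial compact quotients with the dimension constraint $\mathrm{dim}_{\mathrm{e}}(X)\le 2$ to re-choose the product splitting so that the $H$-orbits are aligned with the $\bb{R}$-direction, thereby ensuring that the projection of $H$ onto $\mathrm{Iso}(Y)$ is trivial.
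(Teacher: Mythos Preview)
Your approach coincides with the paper's brief argument: reduce to $\dim_{\mathrm{e}}(X/H)\le 1$ via Lemma~\ref{L|genericquotients}, apply the classification in \cite{KL}, and treat the line and compact cases via Lemmas~\ref{L|linequotient} and~\ref{L|compactquotients}. Everything through your second paragraph is correct and already establishes the dichotomy ``$X$ splits a line or $X/H\cong\bb{R}_+$''. (The parenthetical about peeling off Euclidean factors of $Y$ is unnecessary: in the compact-quotient case one checks directly that $Y$ must be compact, since $Y\cong\bb{R}$ or $\bb{R}_+$ would force $X/H$ to be noncompact by the same reasoning you use elsewhere.)

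Your final paragraph is where the proposal falters. The worry about a helical action is legitimate, but the proposed fix cannot succeed. Take $X=S^1\times\bb{R}$ with the flat product structure and let $H$ act by $h\cdot(\theta,t)=(\theta+\alpha h,\,t+h)$ for some $\alpha\neq 0$: this is a closed subgroup of measure-preserving isometries with $X/H\cong S^1$, yet no metric-measure product decomposition $X\cong Y'\times\bb{R}$ has the helical orbits as its $\bb{R}$-lines (the leaves of the orthogonal distribution are properly embedded copies of $\bb{R}$, so $Y'$ would have to be $\bb{R}$, contradicting $X\not\cong\bb{R}^2$). Hence, if ``the $H$ action is by translation on the second factor'' is read strictly as ``trivially on $Y$, by translation on $\bb{R}$'', the clause is simply false in general and cannot be salvaged by re-choosing the splitting. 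The statement should be read in the weak sense --- the projection of $H$ to $\mathrm{Iso}(\bb{R})$ is the translation subgroup --- and that is precisely what your argument via Lemma~\ref{L|productisometry} already proves. This weak reading is all that Proposition~\ref{prop:measuredwarped} actually uses: its first alternative only asserts that $X$ splits a line.
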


Thanks to Lemma \ref{P|alternative}, to complete the proof of Proposition \ref{prop:measuredwarped} we can assume without loss of generality that $X/H$ is isometric to $\R_+$. This is the most delicate case in the proof, and it is our focus for the rest of the section.
The next two lemmas show that, in this case, $X$ admits an explicit and natural parametrization (as a set) with a map $\Phi:\bb{R} \times \bb{R}_+ \to X$.

\begin{lemma} \label{P|spacedecomposition}
    Let $(X,\sd,\m)$ be an $\RCD(0,N)$ space with a closed subgroup $H <\mathrm{Iso}(X)$ such that $X/H \cong \bb{R}_+$ as metric spaces. Then there exists a ray $r:[0,+\infty) \subset X$ such that
    \begin{equation} \label{E|unionrays}
    X=\bigcup_{t \geq 0} (H\cdot r(t))\, ,
    \end{equation}
    and
    \begin{equation} \label{E|equidistanceorbits}
    \sd(H \cdot r(t),H \cdot r(s))=|t-s| 
    \quad \text{ for every } t,s \geq 0\, .
    \end{equation}
    In particular, the canonical projection to the quotient space $\pi:X\to X/H\cong \R_+$ coincides with the distance $\dist(\cdot,H\cdot r(0)):X\to \R_+$ from the orbit of $r(0)$.
    \begin{proof}
        Consider the canonical projection map $\pi:X \to X/H \cong \bb{R}_+$. Fix $s>0$, and let $H \cdot x_0$ and $H \cdot x_s$ be the preimages of $0$ and $s$ via $\pi$. 
        By definition of quotient distance, we have that
        \[
        s=\sd(H \cdot x_0,H \cdot x_s)\, .
        \]
        It is elementary to check
        that there exists a geodesic $\gamma_s:[0,s] \to X$ realizing the distance between $H\cdot x_0$ and $H\cdot x_s$. Modulo translating $\gamma_s$ with an element of $H$, we may assume without loss of generality that $\gamma_s(0)=x_0$.

        We claim that with such choice $\pi(\gamma_s(t))=t$ for every $t \in [0,s]$.
        To this aim observe that $\pi:X \to X/H$ is a $1$-Lipschitz map, so that $\pi \circ \gamma_s$ is a $1$-Lipschitz curve. At the same time $\pi(\gamma_s(0))=0$ and $\pi(\gamma_s(s))=s$, so that $\pi(\gamma_s(t))=t$ for every $t \in [0,s]$. It follows that
        \[
        \sd(H \cdot \gamma_s(t_1),H \cdot \gamma_s(t_2))=|t_1-t_2| \quad \text{for every } t_1,t_2 \in [0,s]\, .
        \]
        Choose a sequence $s_i \uparrow + \infty$ and consider the corresponding geodesics $\gamma_{s_i} \subset X$. Modulo passing to a subsequence, they converge to a limiting ray $r:[0,+\infty)\to X$, with the property that $\pi(r(t))=t$ for every $t \geq 0$ and
        \[
        \sd(H \cdot r(t_1),H\cdot r(t_2))=|t_1-t_2| \quad \text{for every } t_1,t_2 \in \bb{R}_+\, .
        \]
    Since the image of $\pi \circ r$ is the whole $\bb{R}_+$, also \eqref{E|unionrays} follows.
    \end{proof}
    \end{lemma}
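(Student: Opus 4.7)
The plan is to construct the ray $r$ as a subsequential limit of a family of geodesics traversing more and more $H$-orbits. First, I would fix any $x_0 \in \pi^{-1}(0)$ and, for each $s > 0$, any $x_s \in \pi^{-1}(s)$. Since $H$ acts by isometries, the quotient distance satisfies
\[
\dist_{X/H}(\pi(x_0),\pi(x_s)) = \inf_{h \in H} \sd(x_0, h\cdot x_s) = s.
\]
I would argue that this infimum is attained: a minimizing sequence $\{h_n\cdot x_s\}_n$ is contained in a bounded neighbourhood of $x_0$, hence by properness of $(X,\sd)$ it has a convergent subsequence, producing a minimizer $z_s \in H\cdot x_s$ with $\sd(x_0,z_s) = s$. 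Since $\RCD(0,N)$ spaces are geodesic, I can then connect $x_0$ to $z_s$ by a unit-speed geodesic $\gamma_s:[0,s]\to X$.

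Next I would show that $\pi\circ \gamma_s$ is the identity parametrization $t\mapsto t$ of $[0,s]$. Since $\pi$ is $1$-Lipschitz, the curve $\pi\circ \gamma_s$ is a $1$-Lipschitz path in $\R_+$ from $0$ to $s$ of total variation at most $s$, which forces equality throughout and monotone linear parametrization. Consequently $\pi(\gamma_s(t)) = t$, so
\[
\sd(H\cdot\gamma_s(t_1),H\cdot\gamma_s(t_2)) = |t_1-t_2| \quad \text{for every } t_1,t_2 \in [0,s].
\]

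Finally, I would let $s_i\uparrow +\infty$. The geodesics $\gamma_{s_i}$ are all $1$-Lipschitz curves starting at $x_0$, and $(X,\sd)$ is proper, so by Arzelà–Ascoli together with a diagonal extraction, a subsequence converges locally uniformly to a ray $r:[0,+\infty)\to X$. Both $\pi(r(t))=t$ and the equidistance relation \eqref{E|equidistanceorbits} pass to the limit by continuity of $\pi$ and $\sd$. The surjectivity \eqref{E|unionrays} follows since $\pi\circ r$ already surjects onto $\R_+$, so every $H$-orbit meets the image of $r$. The final identification of $\pi$ with $\dist(\cdot,H\cdot r(0))$ is then immediate: for any $x\in X$,
\[
\dist(x, H\cdot r(0)) = \inf_{h\in H}\sd(x,h\cdot r(0)) = \dist_{X/H}(\pi(x),0) = \pi(x),
\]
using that $H$ acts by isometries, the definition of quotient distance, and that $X/H\cong \R_+$ is parametrized by the distance from $\pi(r(0))=0$.

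The only mildly subtle point I expect is the attainment of the infimum defining the quotient distance; this relies on properness of $\RCD(0,N)$ spaces together with closedness of $H$-orbits (which follows since $H$ is a closed subgroup of $\mathrm{Iso}(X)$ acting on a proper space). Everything else reduces to standard Arzelà–Ascoli compactness and elementary length-space considerations.
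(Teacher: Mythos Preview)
Your proof is correct and follows essentially the same approach as the paper's: both construct geodesics $\gamma_s$ from a fixed $x_0$ to the orbit $H\cdot x_s$ realizing the quotient distance, use the $1$-Lipschitz property of $\pi$ to force $\pi(\gamma_s(t))=t$, and then extract a limiting ray via Arzel\`a--Ascoli. Your version is slightly more explicit about why the distance between orbits is attained and also spells out the final identification of $\pi$ with $\sd(\cdot,H\cdot r(0))$, which the paper leaves implicit.
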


\begin{lemma} \label{C|Existencebijection}
    Let $(X,\sd,\m)$ be an $\RCD(0,N)$ space with a closed subgroup $H <\mathrm{Iso}(X)$ isomorphic to $\bb{R}$ such that $X/H \cong \bb{R}_+$.
    Let $r \subset X$ be the ray given by Lemma \ref{P|spacedecomposition}. Then the map
    \[
    \Phi:\R \times \bb{R}_+ \to X\, , \quad \Phi(t,s):=t \cdot r(s)
    \]
    is bijective. Moreover, denoting by $\sd(\cdot,H\cdot r(0))$ the distance from the orbit of $r(0)$ in $X$, the inverse $\Psi:=\Phi^{-1}:X \to \R \times \bb{R}_+$ can be written as $\Psi(x)=(f(x),\sd(x,H\cdot r(0))$ for some function $f:X \to \R$. Furthermore, $f:X \to \R$ is equivariant with respect to the $H\cong\R$-action on $X$ and the standard $\R$-action on $\R$ by translations.
    \begin{proof}
        The map $\Phi$ is surjective by \eqref{E|unionrays}. Thus we only need to check the injectivity. Let $(t_1,s_1),(t_2,s_2) \in H \times \bb{R}_+$ be such that 
        \[
        t_1 \cdot r(s_1)=t_2 \cdot r(s_2)\, .
        \]
        Then $r(s_1)$ and $r(s_2)$ belong to the same orbit of $H$ in $X$. Thus $s_1=s_2$ by \eqref{E|equidistanceorbits}. Since the action of $H$ is free (because the only compact subgroup of $H$ is the trivial group), it follows, $t_1=t_2$, thus concluding the proof that $\Phi$ is bijective. To prove the representation formula for the inverse $\Psi:=\Phi^{-1}$ we observe that, given any $x=\Phi(t,s)=t\cdot r(s)$,
        \begin{equation}
            \sd(x,H\cdot r(0)) = \sd(t\cdot r(s),H\cdot r(0))=\sd(H\cdot r(s),H\cdot r(0))= s\, ,
        \end{equation}
        where we applied \eqref{E|equidistanceorbits} for the last equality.
    \end{proof}
\end{lemma}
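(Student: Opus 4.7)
The plan is to verify the three assertions---bijectivity of $\Phi$, the representation of the inverse, and the equivariance of $f$---in sequence, leveraging the structural properties of the ray $r$ provided by Lemma \ref{P|spacedecomposition}. Surjectivity of $\Phi$ is immediate from \eqref{E|unionrays}. For injectivity, suppose $t_1\cdot r(s_1) = t_2\cdot r(s_2)$; then $r(s_1)$ and $r(s_2)$ lie in a common $H$-orbit, so the equidistance formula \eqref{E|equidistanceorbits} forces $|s_1 - s_2| = 0$, i.e., $s_1 = s_2$. It then remains to argue that $H$ acts freely on $X$, so that $(t_2 - t_1)\cdot r(s_1) = r(s_1)$ implies $t_1 = t_2$. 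For this I would invoke that a closed subgroup of $\mathrm{Iso}(X)$ acts properly on the proper metric space $X$, whence point stabilizers are compact subgroups of $H\cong\bb{R}$; since $\bb{R}$ has no nontrivial compact subgroups, all stabilizers are trivial and the action is free.

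For the representation $\Psi(x) = (f(x),\sd(x,H\cdot r(0)))$, the key point is that the second coordinate is determined by $x$ alone: for $x = t\cdot r(s)$, the $H$-invariance of the orbit $H\cdot r(0)$ together with the equidistance property \eqref{E|equidistanceorbits} yield
\begin{equation}
\sd(x,H\cdot r(0)) = \sd(t\cdot r(s),H\cdot r(0)) = \sd(H\cdot r(s),H\cdot r(0)) = s\, ,
\end{equation}
in agreement with the identification of the projection $\pi:X\to X/H$ with $\sd(\cdot,H\cdot r(0))$ from the last sentence of Lemma \ref{P|spacedecomposition}. Setting $f(x):=t$, which is well-defined thanks to the bijectivity of $\Phi$, produces the claimed form of $\Psi$.

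For the equivariance of $f$, given $h\in H$ and any $x\in X$, write $x = f(x)\cdot r(\sd(x,H\cdot r(0)))$. Since $H\cong\bb{R}$ is abelian and $\sd(\cdot,H\cdot r(0))$ is $H$-invariant, one has $h\cdot x = (h + f(x))\cdot r(\sd(h\cdot x,H\cdot r(0)))$, so injectivity of $\Phi$ forces $f(h\cdot x) = h + f(x)$, which is exactly the desired equivariance. The main subtlety throughout the proof is the freeness of the $H$-action: one needs to identify why a closed $\bb{R}$-subgroup of $\mathrm{Iso}(X)$ must act freely in our setting, which I expect to reduce to properness of isometric Lie group actions on proper metric spaces combined with the absence of nontrivial compact subgroups in $\bb{R}$. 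Once freeness is established, the remaining claims amount to bookkeeping along the parametrization induced by $r$.
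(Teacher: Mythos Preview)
Your proposal is correct and follows essentially the same route as the paper: surjectivity from \eqref{E|unionrays}, injectivity via \eqref{E|equidistanceorbits} together with freeness of the $H$-action (the paper phrases freeness simply as ``the only compact subgroup of $H$ is the trivial group'', which is exactly your properness-plus-no-compact-subgroups observation), and the same computation of the second component of $\Psi$. You additionally spell out the equivariance of $f$, which the paper states but does not prove; this is a welcome clarification but does not represent a different method.
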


Our next key goal is to show that the function $f:X \to \bb{R}$ introduced in Lemma \ref{C|Existencebijection} above is harmonic. Such conclusion, and its proof, are of course motivated by the observation we made in Remark \ref{rmk:secondcoordharmonic}. Nevertheless, the full argument is quite technical and it requires a few intermediate steps. We start from a mild regularity result for $f$. It will be our starting point for the forthcoming computations.

\begin{lemma} \label{P|fLip}
   Under the same assumptions and with the same notation as in Lemma \ref{P|spacedecomposition},
    let $f:X \to \bb{R}$ be the map given by Lemma \ref{C|Existencebijection}. Then, $f \in \Lip_{\mathrm{loc}}(X)$.
    \begin{proof}
        Let $K \subset \subset X$ and assume by contradiction that there exist distinct points $x_i,y_i \in K$ such that
        \begin{equation} \label{E|NoLip}
        \frac{|f(x_i)-f(y_i)|}{\sd(x_i,y_i)} \to + \infty \quad \text{as } i \uparrow + \infty\, .
        \end{equation}
        Using the same notation as in Lemma \ref{C|Existencebijection}, we can write $x_i=t_i^x \cdot r(s_i^x)$ and $y_i=t_i^y \cdot r(s_i^y)$ for some $t_i^x,t_i^y\in\R$ and $s_i^x,s_i^y\in\R_+$, so that \eqref{E|NoLip} reads as
         \begin{equation} \label{E|NoLip2}
        \frac{\sd(x_i,y_i)}{|t_i^x-t_i^y|} \to 0 \quad \text{as } i \uparrow + \infty\, .
        \end{equation}
        By the equivariance of $f$ we can assume without loss of generality that $t_i^y=0$ for every $i\in\N$. Moreover, we can also assume that $t_i^x>0$ for every $i\in\N$.
        By the triangle inequality, we get that
        \begin{align}     \sd  (r(s_i^y),1\cdot r(s_i^y))        
         \leq & \Bigg\lfloor \frac{1}{t_i^x}  \Bigg\rfloor \sd \Big(r(s^y_i), (t_i^x) \cdot r(s^y_i) \Big)
       \\
       &+\sd \Bigg( \Bigg\lfloor \frac{1}{t_i^x} \Bigg\rfloor t_i^x \cdot r(s^y_i),1 \cdot r(s^y_i)\Bigg)\, .
        \end{align}
        We claim that the right-hand side converges to zero as $i \uparrow + \infty$. If the claim holds, we readily get a contradiction using that $H \cong \bb{R}$ acts freely on $X$. 
        To prove the claim, using \eqref{E|equidistanceorbits} we can control the first summand as
        \begin{align}
        \Bigg\lfloor \frac{1}{t_i^x}  \Bigg\rfloor \sd \Big(r(s^y_i), t_i^x \cdot r(s^y_i) \Big) \leq & \frac{1}{t_i^x}  \Big( \sd(x_i,y_i)+\sd(r(s_i^x),r(s_i^y))\Big) \\
        \leq &2 \frac{\sd(x_i,y_i)}{t_i^x}\, , 
        \end{align}
        and note that the last term converges to zero by \eqref{E|NoLip2}. Hence, to conclude, we need to show that 
        \[
        \sd \Bigg( \Bigg\lfloor \frac{1}{t_i^x} \Bigg\rfloor t_i^x \cdot r(s^y_i),1 \cdot r(s^y_i)\Bigg) \to 0\, .
        \]
        This follows combining that
        \[
        \Bigg\lfloor \frac{1}{t_i^x} \Bigg\rfloor t_i^x \to 1\, ,
        \]
        and that the sequence $r(s_i^y)$ is precompact in $X$.
    \end{proof}
\end{lemma}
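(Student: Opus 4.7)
The approach is by contradiction. Suppose $f$ fails to be locally Lipschitz on some compact $K \subset X$; then there exist distinct pairs $x_i, y_i \in K$ with $|f(x_i) - f(y_i)|/\sd(x_i, y_i) \to \infty$. Using Lemma \ref{C|Existencebijection}, I write $x_i = t^x_i \cdot r(s^x_i)$ and $y_i = t^y_i \cdot r(s^y_i)$. The $H$-equivariance $f(t \cdot x) = t + f(x)$ combined with the fact that $H$ acts by isometries lets me translate both sequences by $-t^y_i \in H$, reducing to $t^y_i = 0$ (so $y_i = r(s^y_i) \in r([0, M])$ for a suitable $M$) and, up to swapping, $t^x_i > 0$. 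The contradiction hypothesis then reads $\sd(x_i, y_i)/t^x_i \to 0$.

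The plan is to exploit the freeness of the $H \cong \bb{R}$-action: since $\bb{R}$ has no nontrivial compact subgroup, the displacement $y \mapsto \sd(y, 1 \cdot y)$ is strictly positive on $X$ and, by continuity together with compactness of $r([0,M])$, uniformly bounded below there. I aim to derive $\sd(y_i, 1 \cdot y_i) \to 0$, which will yield the contradiction. The main mechanism is to iterate the small element $t^x_i \in H$ about $\lfloor 1/t^x_i \rfloor$ times and apply the triangle inequality, giving
\[
\sd(y_i, 1 \cdot y_i) \le \bigl\lfloor 1/t^x_i \bigr\rfloor \, \sd(y_i, t^x_i \cdot y_i) + \sd\!\left(\bigl\lfloor 1/t^x_i \bigr\rfloor t^x_i \cdot y_i,\; 1 \cdot y_i\right).
\]

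For the first summand, I plan to compare $(r(s^y_i), t^x_i \cdot r(s^y_i))$ with $(y_i, x_i) = (r(s^y_i), t^x_i \cdot r(s^x_i))$, using that $r$ is a unit-speed ray, so $\sd(r(s^x_i), r(s^y_i)) = |s^x_i - s^y_i|$ equals the quotient distance $\sd(H \cdot r(s^x_i), H \cdot r(s^y_i)) \le \sd(x_i, y_i)$. This yields $\sd(y_i, t^x_i \cdot y_i) \le 2\, \sd(x_i, y_i)$, so the first summand is at most $2\, \sd(x_i, y_i)/t^x_i \to 0$. The second summand is continuous in the group element and vanishes once we know $\lfloor 1/t^x_i \rfloor t^x_i \to 1$, which holds as soon as $t^x_i \to 0$.

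The delicate step, which I expect to be the main obstacle, is establishing $t^x_i \to 0$: the contradiction hypothesis only controls the ratio $\sd(x_i, y_i)/t^x_i$, not $t^x_i$ in isolation. The plan is to pass to a subsequence and extract limits $t^x_i \to t^\infty$, $s^x_i \to s^\infty_x$, $s^y_i \to s^\infty_y$; boundedness of $s^x_i, s^y_i$ follows from $\pi(K) \subset \bb{R}_+$ being bounded, and boundedness of $t^x_i$ follows from properness of the $H$-action (since $H$ is closed in $\mathrm{Iso}(X)$, which acts properly on the complete proper space $X$). If $t^\infty$ were positive, the ratio condition would force $\sd(x_i, y_i) \to 0$, whence $\Phi(t^\infty, s^\infty_x) = \Phi(0, s^\infty_y)$ in $X$; this contradicts the injectivity of $\Phi$ established in Lemma \ref{C|Existencebijection}. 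Hence $t^\infty = 0$, and the argument closes.
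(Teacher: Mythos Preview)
Your proof is correct and follows essentially the same route as the paper's: reduce by equivariance to $t^y_i=0$, $t^x_i>0$, then iterate the small displacement $t^x_i$ about $\lfloor 1/t^x_i\rfloor$ times to force $\sd(y_i,1\cdot y_i)\to 0$, contradicting freeness of the $H$-action. The bound on the first summand via $\sd(r(s^x_i),r(s^y_i))=|s^x_i-s^y_i|\le \sd(x_i,y_i)$ is exactly what the paper uses.

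The one point where you go beyond the paper is your explicit argument that (along a subsequence) $t^x_i\to 0$. The paper simply asserts $\lfloor 1/t^x_i\rfloor\, t^x_i\to 1$, which of course requires $t^x_i\to 0$; your justification via properness of the $H$-action (to get boundedness of $t^x_i$) together with continuity and injectivity of $\Phi$ (to rule out a positive subsequential limit) is correct and fills in this step cleanly.
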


\begin{corollary}\label{rmk:essdim}
    The map $\Phi:\R\times \R_+ \to X$ introduced in Lemma \ref{C|Existencebijection} is a homeomorphism. Moreover, $\mathrm{dim}_{\mathrm{e}}(X)=2$.    
    \begin{proof}
    The first conclusion follows from Lemma \ref{C|Existencebijection} and Lemma \ref{P|fLip}.
    In particular, $X$ is homeomorphic to $\R_+\times\R$ and thus it has topological dimension equal to $2$.  
    Combining with \cite[Theorem 1.1]{MondinoNaber}, it follows that the essential dimension of $X$ is at most $2$. 
    At the same time, Lemma \ref{L|genericquotients} implies that $\mathrm{dim}_{\mathrm{e}}(X) \geq 2$. 
\end{proof}
\end{corollary}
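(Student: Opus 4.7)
The plan has two separable parts, corresponding to the two assertions of the corollary. For the homeomorphism claim, I would upgrade the bijection $\Phi$ from Lemma \ref{C|Existencebijection} to a homeomorphism by checking continuity of $\Phi$ and of its inverse $\Psi = \Phi^{-1}$ separately. The continuity of $\Phi(t,s) = t \cdot r(s)$ on $\R \times \R_+$ is immediate: the evaluation map $H \times X \to X$ is jointly continuous since $H < \mathrm{Iso}(X)$, and the ray $r : [0,+\infty) \to X$ is continuous (even $1$-Lipschitz), so $\Phi$ is a composition of continuous maps. For the inverse, Lemma \ref{C|Existencebijection} provides the explicit formula $\Psi(x) = (f(x), \dist(x, H \cdot r(0)))$. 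The second component is the distance from a fixed closed set, hence $1$-Lipschitz. The first component $f : X \to \R$ is locally Lipschitz by Lemma \ref{P|fLip}. Therefore $\Psi$ is continuous on $X$, and $\Phi$ is a homeomorphism.

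For the essential dimension, I would obtain $\dim_{\mathrm{e}}(X) = 2$ by matching upper and lower bounds. For the upper bound, the homeomorphism $\Phi$ just established identifies $X$ topologically with the closed half-plane $\R \times \R_+$, which has topological (covering) dimension $2$. By \cite[Theorem 1.1]{MondinoNaber}, the essential dimension of an $\RCD(K,N)$ space is bounded above by the topological dimension, so $\dim_{\mathrm{e}}(X) \leq 2$. (Alternatively, this bound is already one of the standing hypotheses in this subsection.) For the lower bound, I would invoke Lemma \ref{L|genericquotients} applied to the $\R$-action $H$ on $X$: since we are in the case $X/H \cong \R_+$ isometrically, we have $\dim_{\mathrm{e}}(X/H) = 1$, and Lemma \ref{L|genericquotients} gives $1 = \dim_{\mathrm{e}}(X/H) \leq \dim_{\mathrm{e}}(X) - 1$, i.e., $\dim_{\mathrm{e}}(X) \geq 2$.

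Combining these two inequalities yields $\dim_{\mathrm{e}}(X) = 2$. I do not expect any genuine obstacle here: the first assertion is essentially bookkeeping once the local Lipschitz regularity of $f$ from Lemma \ref{P|fLip} is in hand, and the essential dimension computation is a direct consequence of pairing the general inequality of Lemma \ref{L|genericquotients} with the concrete topological structure of $X$ now that $\Phi$ is known to be a homeomorphism.
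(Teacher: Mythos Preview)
Your proposal is correct and follows essentially the same approach as the paper: you spell out explicitly why $\Phi$ and $\Psi$ are continuous (the paper just cites Lemmas \ref{C|Existencebijection} and \ref{P|fLip}), and the essential-dimension argument is identical, pairing the topological-dimension upper bound via \cite[Theorem 1.1]{MondinoNaber} with the lower bound from Lemma \ref{L|genericquotients}. The only minor remark is that your parenthetical about the upper bound being a standing hypothesis is slightly off: the corollary is stated under the hypotheses of Lemma \ref{C|Existencebijection}, which assume only $X/H\cong\R_+$ and not $\dim_{\mathrm{e}}(X)\le 2$ directly, so the Mondino--Naber step is genuinely needed here.
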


We continue by studying the structure of the measure $\meas$.

\begin{lemma} \label{L|measure}
    Under the same assumptions and with the same notation as in Lemma \ref{C|Existencebijection}, we have the following.
    There exists a non-decreasing function $h \in \Lip_{\mathrm{loc}}((0,+\infty)) \cap \sL^1_{\mathrm{loc}}(\bb{R}_+)$ such that, for every $A \subset \subset X$,
    \begin{equation} \label{E|measure1}
    \m(A)=\int_{\bb{R}_+}h(s) \, \Leb^1(f(A \cap \{\sd_{H \cdot r(0)}=s\})) \, ds\, .
    \end{equation}
    Moreover, $h$ is strictly positive on $(0,+\infty)$.
    \begin{proof}
        Let $A \subset \subset X$. By the coarea formula applied to the function $\sd_{H \cdot r(0)}$,
        \[
        \m(A)=\int_{\bb{R}_+}P(\{\sd_{H \cdot r(0)}>s\},A) \, ds\, ,
        \]
        where $P(\{\sd_{H \cdot r(0)}>s\},A)$ denotes the perimeter of the set $\{\sd_{H \cdot r(0)}>s\}$ evaluated on $A$.
        We claim that there exists a nonnegative function $h \in \sL^1_{\mathrm{loc}}(\bb{R}_+)$ independent of $A$ such that for $\Leb^1$-a.e. $s \in \bb{R}_+$ we have
        \begin{equation} \label{E|identitymeasures}
        P(\{\sd_{H \cdot r(0)}>s\},A)
        =
        h(s) \,\Leb^1(f(A \cap \{\sd_{H \cdot r(0)} = s\}))\, .
        \end{equation}
        Since the measure $P(\{\sd_{H \cdot r(0)}>s\},\cdot)$ is concentrated on the set $\{\sd_{H \cdot r(0)} = s\}$, and the restriction $f_{|\{\sd_{H \cdot r(0)}=s\}}:\{\sd_{H \cdot r(0)}=s\} \to \bb{R}$ is bijective by Lemma \ref{C|Existencebijection}, \eqref{E|identitymeasures} follows if we can prove the identity of measures
        \[
        f_{\#} P(\{\sd_{H \cdot r(0)}>s\},\cdot)=h(s) \Leb^1 \quad \text{for }\Leb^1 \text{-a.e. }s \in \bb{R}_+\, .
        \]
        Fix $s \geq 0$. For every $a<b$ and $t \in  \bb{R}$, by equivariance of $f$ and invariance of the perimeter we have 
        \[
        f_{\#} P(\{\sd_{H \cdot r(0)}>s\},\cdot) (a,b)=f_{\#} P(\{\sd_{H \cdot r(0)}>s\},\cdot) (a+t,b+t)\, .
        \]
        Since the only translation-invariant measures on $\bb{R}$ are multiples of $\Leb^1$, $f_{\#} P(\{\sd_{H \cdot r(0)}>s\},\cdot)=h(s) \Leb^1$ for some $g(s)\ge 0 $. Since this holds for every $s \geq 0$, \eqref{E|measure1} holds for some function $h \in \sL^1_{\mathrm{loc}}(\bb{R}_+)$. 
        \medskip
        
        We are left to show that $h \in \Lip_{\mathrm{loc}}((0,+\infty))$ and that $h$ is increasing and strictly positive on $(0,+\infty)$. Let $\bb{Z} < H \cong \bb{R}$ be the copy of $\bb{Z}$ in $H$ generated by $1 \in H \cong \bb{R}$. Consider the quotient space $X/\bb{Z}$ with its canonically induced $\RCD(0,N)$ structure $(X/\bb{Z},\sd',\m')$ (cf.~Remark \ref{rmk:doublequotient}). Let $f_{\bb{Z}}:X/\bb{Z} \to H/\bb{Z}$ be defined such that $f_{\bb{Z}}(\bb{Z}\cdot x):=\bb{Z} \cdot f(x)$, and let $\tilde{r} \subset X/\bb{Z}$ be the projection of $r$ under the canonical quotient map.
        Observe that 
        \begin{equation}
            \bigcup_{t \in (0,1)\, ,\,\, s \in \bb{R}_+} t \cdot r(s)
        \end{equation}is a fundamental domain of the covering $\pi:X \to X/\bb{Z}$. Therefore, applying \eqref{E|measure1}, we obtain that for every $A \subset \subset X/\bb{Z}$
        \begin{equation} \label{E|measure2}
        \m'(A)=\int_{\bb{R}_+} h(s)\, \Leb^1(f_{\bb{Z}}(A \cap \{\tilde{\sd}_{(H/\bb{Z}) \cdot \tilde{r}(0)}=s\})) \, ds\, .
        \end{equation}
        Consider now the quotient $X/H \cong(X/\bb{Z})/(H/\bb{Z})$, with its $\RCD(0,N)$ structure $(X/H,\sd'',\m'')$ (cf.~Remark \ref{rmk:doublequotient}). Since $X/H \cong \bb{R}_+$, \cite[Theorem A.2]{CavallettiMilman} implies that $\m''=\overline{h} \Leb^1$ for a non-decreasing, locally Lipschitz, and strictly positive function $\overline{h}:(0,+\infty) \to \bb{R}$. On the other hand, for every $s \geq 0$, we have that
        \[
        f_{\bb{Z}}( \{\tilde{\sd}_{(H/\bb{Z}) \cdot \tilde{r}(0)}=s\})= H/\bb{Z}
        \]
        and therefore
        \[
        \Leb^1(f_{\bb{Z}}( \{\tilde{\sd}_{(H/\bb{Z}) \cdot \tilde{r}(0)}=s\}))=1\, . 
        \]
        Thanks to \eqref{E|measure2}, we infer that $\overline{h}=h$, thus completing the proof.
    \end{proof}
\end{lemma}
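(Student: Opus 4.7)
The plan is to decompose $\meas$ by slicing against the $1$-Lipschitz function $\rho := \sd(\cdot, H \cdot r(0))$, identify each slice measure via the $H$-equivariance of $f$, and then extract the regularity and positivity of the resulting density by passing through the $1$-dimensional quotient $X/H$.

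First, I would apply the coarea formula for $\mathrm{BV}$ functions on $\RCD$ spaces to $\rho$, obtaining for any $A \subset\subset X$ the identity
\begin{equation}
\meas(A) = \int_{\R_+} P(\{\rho > s\}, A) \, ds,
\end{equation}
with each perimeter measure concentrated on the level set $\{\rho = s\} = H \cdot r(s)$. By Lemma \ref{C|Existencebijection}, $f$ restricts to a bijection between this orbit and $\R$. Since each sublevel set $\{\rho > s\}$ is $H$-invariant and $H$ acts by measure-preserving isometries, the slice measure $P(\{\rho > s\}, \cdot)$ is $H$-invariant; combined with the equivariance of $f$ (with the $H \cong \R$ action translating the target), this forces $f_{\#}\bigl(P(\{\rho > s\}, \cdot)\bigr)$ to be a translation-invariant Radon measure on $\R$. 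By uniqueness of Haar measure it must equal $h(s)\Leb^1$ for some $h(s) \ge 0$, yielding the desired integral representation with $h$ a priori only measurable.

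The main obstacle is to upgrade $h$ to be non-decreasing, locally Lipschitz on $(0, +\infty)$, and strictly positive. My plan is to exploit the chain of quotients $X \to X/\Z \to (X/\Z)/(H/\Z) \cong X/H \cong \R_+$, whose intermediate stages inherit $\RCD(0,N)$ structures by Remark \ref{rmk:doublequotient}. Taking $\bigcup_{t \in [0,1),\, s \ge 0}\, t \cdot r(s)$ as a fundamental domain for the covering $X \to X/\Z$, repeating the coarea argument downstairs reproduces the very same density $h$ for the quotient measure. Quotienting further by $H/\Z$, which is isometric to a circle of length $1$, the fibers of the projected distance function collapse isometrically onto $H/\Z$, so the induced measure on $\R_+ \cong X/H$ is exactly $h(s)\, ds$. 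Finally, since $(X/H, \sd'', \meas'')$ is a non-compact $1$-dimensional $\RCD(0,N)$ space, the structure theorem \cite[Theorem A.2]{CavallettiMilman} forces its $\Leb^1$-density to be non-decreasing, locally Lipschitz on $(0,+\infty)$, and strictly positive; identifying this density with $h$ closes the argument.
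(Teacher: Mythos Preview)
Your proposal is correct and follows essentially the same approach as the paper: coarea for $\rho$, translation-invariance of the pushed-forward perimeter measures to produce $h$, then the double quotient $X \to X/\Z \to X/H \cong \R_+$ to identify $h$ with the density of the $1$-dimensional $\RCD(0,N)$ quotient and read off the regularity and positivity from \cite[Theorem A.2]{CavallettiMilman}. The only cosmetic differences are your choice of half-open fundamental domain and that you phrase the translation-invariance step via uniqueness of Haar measure rather than directly; these do not affect the argument.
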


The following corollary is an immediate consequence of Lemma \ref{L|measure}.

\begin{corollary} \label{C|measurenull}
    Under the same assumptions and with the same notation as in Lemma \ref{C|Existencebijection}, if
    $A \subset \subset X$ is a Borel set, then $\m(A)=0$ if and only if $\Leb^2(\Psi(A))=0$.
\end{corollary}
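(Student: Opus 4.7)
The plan is to deduce the corollary directly from the disintegration formula for $\m$ provided by Lemma \ref{L|measure}, coupled with Fubini's theorem on $\R \times \R_+$. Since $\Psi = \Phi^{-1}$ is a homeomorphism by Corollary \ref{rmk:essdim}, for any Borel set $A \subset \subset X$ the image $\Psi(A) \subset \R \times \R_+$ is Borel, so its horizontal slices
\begin{equation}
    \Psi(A)_s := \{t \in \R : (t,s) \in \Psi(A)\}
\end{equation}
are Borel in $\R$ for every $s \geq 0$.

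The key identification is that these horizontal slices coincide with the sets appearing in Lemma \ref{L|measure}. Indeed, since $\Psi(x) = (f(x), \sd(x, H\cdot r(0)))$ and the restriction $f|_{\{\sd_{H\cdot r(0)}=s\}}$ is bijective onto $\R$ by Lemma \ref{C|Existencebijection}, one has
\begin{equation}
    \Psi(A)_s = f\bigl(A \cap \{\sd_{H \cdot r(0)} = s\}\bigr)
\end{equation}
for every $s > 0$. Substituting this identity into \eqref{E|measure1} yields
\begin{equation}
    \m(A) = \int_{\R_+} h(s)\, \Leb^1(\Psi(A)_s)\, ds = \int_{\Psi(A)} h(s)\, d\Leb^2(t,s)\, ,
\end{equation}
where the second equality is Fubini's theorem, using that $h$ depends only on the second coordinate.

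From this representation both implications follow immediately. Since $h$ is strictly positive on $(0,+\infty)$ by Lemma \ref{L|measure} and the set $\R \times \{0\}$ is $\Leb^2$-negligible, the integrand $h \circ \mathrm{pr}_2$ is strictly positive $\Leb^2$-a.e.~on $\R \times \R_+$. Therefore $\int_{\Psi(A)} h(s)\, d\Leb^2 = 0$ if and only if $\Leb^2(\Psi(A)) = 0$, which is equivalent to $\m(A) = 0$. There is no genuine obstacle here: the only subtle point is confirming that $\Psi$ sends Borel sets to Borel sets, which is handled by the homeomorphism property already recorded in Corollary \ref{rmk:essdim}.
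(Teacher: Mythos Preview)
Your proof is correct and is exactly the intended argument: the paper records this corollary as an immediate consequence of Lemma \ref{L|measure} without further comment, and you have simply made explicit the slice identification, the Fubini step, and the use of the strict positivity of $h$ on $(0,+\infty)$ that underlie that sentence.
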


For our next purposes it is helpful to study the behaviour of the tangent spaces of $(X,\dist,\meas)$.

\begin{lemma}\label{lemma:tangents}
Let $(X,\sd,\m)$ be an $\RCD(0,N)$ 
    space with a closed subgroup $H <\mathrm{Iso}(X)$ isomorphic to $\R$ and such that $X/H \cong \bb{R}_+$ as metric spaces. Let $\pi:X\to [0,+\infty)$ be the canonical projection to the quotient. Then, we have the following:
    \begin{itemize}
        \item[i)] every $x\in \pi^{-1}((0,+\infty))$ is a regular point of $(X,\dist,\meas)$ and the (unique) equivariant blow-up of $(X,\dist,\meas,x,H)$ is $(\R^2,\dist_{\mathrm{eucl}},\Leb^2)$ endowed with an $\R$-action by translations;
        \item[ii)] for every $x\in \pi^{-1}(\{0\})$ the blow-up of $(X,\dist,\meas)$ at $x$ is homeomorphic to $\R\times\R_+$; in particular $x$ is not a regular point.
    \end{itemize}
\end{lemma}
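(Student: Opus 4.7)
The plan is to use the parametrization $\Phi:\R\times\R_+\to X$ and the ray $r$ from Corollary~\ref{C|Existencebijection} and Lemma~\ref{P|spacedecomposition}; by the $H$-equivariance of both statements I may reduce to $x=r(s)$ for some $s\ge 0$. The core idea is to produce geodesic lines in every tangent of $X$ at $r(s)$ and then apply the splitting theorem, identifying the remaining factor via the essential-dimension bound from Corollary~\ref{rmk:essdim} and the explicit description of the $H$-quotient.

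For part (i), fix $s>0$. The restriction $r|_{[s-\varepsilon,s+\varepsilon]}$ is a minimizing geodesic through $r(s)$: on the one hand $\sd(r(s-\varepsilon),r(s+\varepsilon))\le 2\varepsilon$ since $r$ is a ray, and on the other hand $\ge 2\varepsilon$ by the $1$-Lipschitz projection $\pi$. Blowing up, this geodesic yields a line $\ell_1$ in every tangent $T\in\mathrm{Tan}_{r(s)}(X)$. A second line $\ell_2\subset T$ comes from the $H$-orbit after equivariant rescaling. Since $\mathrm{Iso}(X)$ is a Lie group, $H\cong\R$ is a one-parameter subgroup with locally Lipschitz orbit map $t\mapsto \gamma_t\cdot x$, and combining with the lower bound $\sd(x,\gamma_t x)\ge |t|/L$ coming from Lemma~\ref{P|fLip} applied to $f$ (using $f(\gamma_t x)=f(x)+t$), the asymptotic orbit speed $c_s=\lim_{t\to 0^+}\sd(x,\gamma_t x)/t$ exists, is finite, and is strictly positive. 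Reparametrising $H$ by $c_s$ and passing to the equivariant pGH limit via Theorem~\ref{T|EquivaraintConvergence} and Lemma~\ref{L|convergenceelementsequi} produces an isometric $\R$-action on $T$ whose orbit of $o$ is a line $\ell_2$, transverse to $\ell_1$ because $\pi$ is constant along $H$-orbits while strictly increasing along $r$. Two applications of the splitting theorem give $T=\R^2\times Z$, and the essential-dimension bound $\mathrm{dim}_{\mathrm{e}}(X)=2$ from Corollary~\ref{rmk:essdim}, inherited by tangents through iterated blow-up, forces $Z$ to be a single point. Hence $T=\R^2$; uniqueness of the equivariant blow-up and the description of the limit action as translation then follow from the rigidity of closed one-parameter subgroups of $\mathrm{Iso}(\R^2)$ acting freely through the origin.

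For part (ii), now take $x=r(0)$. The equivariant blow-up argument still produces the line $\ell_2$ from the $H$-orbit, so every tangent splits as $T=\R\times Z$ with $Z$ an $\RCD(0,N-1)$ space of essential dimension $\le 1$, and $\ell_2=\R\times\{o_Z\}$. The limit action $H_\infty\cong\R$ preserves the level sets of $\pi_\infty:T\to\R_+$ (the blow-up of $\pi$), which coincide with the $Z$-fibres; combining this with the product structure forces $H_\infty$ to act by translation on the $\R$ factor and trivially on $Z$, so $T/H_\infty=Z$ as metric spaces. On the other hand, Theorem~\ref{T|EquivaraintConvergence} identifies $T/H_\infty$ with the pGH blow-up of $X/H\cong\R_+$ at $0$, which is $\R_+$ itself since $\R_+$ is already a cone with vertex at the basepoint. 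Hence $Z=\R_+$ and $T\cong\R\times\R_+$, as claimed; in particular $x$ is not a regular point of $X$.

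The main obstacle will be producing the line $\ell_2$ in every equivariant blow-up, which hinges on the asymptotic orbit speed $c_s$ being both strictly positive and finite. The positivity is immediate from Lemma~\ref{P|fLip}, but the finiteness essentially requires the Lie-group structure of $\mathrm{Iso}(X)$, since the continuity and sub-additivity of $t\mapsto\sd(x,\gamma_t x)$ alone would leave open the possibility $c_s=+\infty$ and the naive reparametrization would fail. Once this Lipschitz bound is in place, the rest of the argument is a routine combination of equivariant pGH compactness, the $\RCD$ splitting theorem, and the quotient-and-tangent commutativity in Theorem~\ref{T|EquivaraintConvergence}.
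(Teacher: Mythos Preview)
Your overall strategy—producing two transverse lines in the tangent via the ray $r$ and the $H$-orbit—is natural but differs from the paper's, and it contains a genuine gap.

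The critical issue is the finiteness of the orbit speed $c_s=\lim_{t\to 0^+}\sd(x,\gamma_t x)/t$. You correctly observe that sub-additivity gives existence of the limit in $[0,+\infty]$ and that Lemma~\ref{P|fLip} gives positivity, but your appeal to ``the Lie-group structure of $\mathrm{Iso}(X)$'' for finiteness is not an argument: knowing that $\mathrm{Iso}(X)$ is a Lie group says nothing about Lipschitz regularity of orbit maps into the metric space $X$. A translation-invariant metric on $\R$ inducing the standard topology need not be bi-Lipschitz to the Euclidean one, and nothing you cite rules this out here. The natural fix would be Lemma~\ref{L|PsiBilip}, but that lemma is proved \emph{after} the present one, and its proof chain passes through Lemma~\ref{L|goodapproximtion}, which in turn uses Lemma~\ref{lemma:tangents}—so invoking it would be circular.

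The paper sidesteps this entirely. For (i) it never constructs $\ell_2$: it observes that the blow-up of $X/H\cong\R_+$ at $s>0$ is $\R$, so by Theorem~\ref{T|EquivaraintConvergence} and Lemma~\ref{L|linequotient} the tangent splits as $Y\times\R$ with the limit action trivial on the $\R$-factor; then Lemma~\ref{L|Gruppilimite} supplies a closed $\R$-subgroup acting on $Y$, and since $\dim_{\mathrm{e}}(Y)\le 1$ the one-dimensional classification forces $Y=\R$. For (ii) the same quotient machinery reduces to whether $X_\infty/H'_\infty$ is $\R$ or $\R_+$; the case $\R$ would give $X_\infty\cong\R^2$, which the paper excludes by a Hausdorff-convergence argument on the sets $\{\sd(\cdot,r)\ge r_i\}$. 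Your part (ii) has a second gap as well: even granting $\ell_2$, your claim that $H_\infty$ acts trivially on $Z$ relies on Lemma~\ref{L|productisometry}, which requires $Z$ to contain no line—but $Z=\R$ (i.e.\ $T\cong\R^2$) is not yet excluded at that stage, and ruling it out is precisely where the paper's geometric argument does the work.
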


\begin{proof}
If $x\in \pi^{-1}((0,+\infty))$, then the blow-up of $X/H\cong \R_+$ at $\pi(x)$ is isomorphic to $\R$. By Theorem \ref{T|EquivaraintConvergence} and Lemma \ref{L|linequotient}, every equivariant blow-up of $(X,\dist,\meas,x,H)$ admits a splitting $X=Y\times \R$ with the blow-up action being trivial on the $\R$-factor. By lower semicontinuity of the essential dimension under pmGH convergence, see \cite[Theorem 1.5]{KitabeppuLSC}, and Corollary \ref{rmk:essdim}, $\dim_{\mathrm{e}}(Y)\le 1$.
By Lemma \ref{L|Gruppilimite} the limit group contains a closed subgroup isomorphic to $\R$. Item (i) thus follows from the classification of $1$-dimensional $\RCD$ spaces, see \cite{KL}.

If $\pi(x)=0$, then every blow-up of $X/H\cong \R_+$ at $\pi(x)$ is isometric to $\R_+$. By Lemma \ref{L|Gruppilimite} the limit group $H_\infty$ of every equivariant blow-up $(X_\infty,H_\infty)$ of $(X,\dist,\meas,x,H)$ contains a closed subgroup $H'_\infty <H_\infty$ isomorphic to $\R$. 
By Corollary \ref{rmk:essdim}, lower semicontinuity of the essential dimension again, and Lemma \ref{L|genericquotients}, the quotient $X_\infty/H'_\infty$ has dimension at most $1$. At the same time, $X_\infty/H_\infty \cong \bb{R}_+$ by Theorem \ref{T|EquivaraintConvergence}, so that $X_\infty/H_\infty'$ is non-compact. In particular, $X_\infty/H_\infty'$ is either a ray or a line. If $X_\infty/H_\infty' \cong \bb{R}_+$, then $X_\infty$ is homeomorphic to $\bb{R} \times \bb{R}_+$ by Corollary \ref{rmk:essdim}.
If $X_\infty/H'_\infty \cong \bb{R}$, then $X_\infty$ is isometric to $\bb{R}^2$ by Lemma \ref{L|linequotient}. We show that this is not possible. Let $r \subset X$ be the ray given by Lemma \ref{P|spacedecomposition} and assume without loss of generality that $x=r(0)$. As $(X,\sd/r_i) \to X_\infty \cong \bb{R}^2$ for some sequence $r_i \downarrow 0$, up to a subsequence, the ray $r \subset X$ converges to a ray $r_\infty \subset \bb{R}^2$, and the sets $\{y \in X: \sd(y,r) \geq r_i\}$ converge to $\{y \in \bb{R}^2: \sd_{\mathrm{eucl}}(y,r_\infty) \geq 1\}$ in Hausdorff sense in the space realizing the pGH convergence. At the same time, one can write $\{y \in X: \sd(y,r) \geq r_i\}$ as the union of two sets at distance greater than $1$ from each other and distance smaller than $2$ from $r(0)$ in $(X,\sd/r_i)$. Hence, also $\{y \in \bb{R}^2: \sd_{\mathrm{eucl}}(y,r_\infty) \geq 1\}$ must admit such partition with $r(0)$ replaced by $r_\infty(0)$, which is a contradiction.
\end{proof}

\begin{lemma} \label{L|non-branchingtrick}
    Under the same assumptions and with the same notation as in Lemma \ref{P|spacedecomposition},
    let $\gamma:[0,T] \to  X$ be a segment. If the image of $\gamma$ has three distinct points lying in $\{\sd_{\bb{R}\cdot r(0)}=0\}$, then it is fully contained in $\{\sd_{\bb{R}\cdot r(0)}=0\}$.
    \begin{proof}
    The conclusion follows from Lemma \ref{lemma:tangents} thanks to the H\"older continuity of tangent cones in the interior of geodesics, see \cite{zbMATH08038284} after \cite{ColdingNabercones}.
    \end{proof}
\end{lemma}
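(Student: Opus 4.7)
The plan is to argue by contradiction, exploiting the fact that Lemma \ref{lemma:tangents} distinguishes sharply between the tangent structure at points of $S := \{\sd_{\R\cdot r(0)}=0\} = \pi^{-1}(0)$ and at points outside $S$, and that tangent cones cannot jump in the interior of a geodesic. Assume $\gamma([0,T]) \not\subset S$. Writing $t_1 < t_2 < t_3$ for the three given parameters with $\gamma(t_i)\in S$, the set $F:=\gamma^{-1}(S)$ is a proper closed subset of $[0,T]$. Let $[a,b]$ be the connected component of $F$ containing the interior point $t_2$. Since $F\neq [0,T]$, at least one of $a>0$ or $b<T$ holds; say $a>0$, so that $a\in(0,T)$. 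By maximality of the component, there is a sequence $t_n \uparrow a$ with $\gamma(t_n)\notin S$. Set $t_0:=a$, so $t_0$ is an interior parameter with $\gamma(t_0)\in S$ and $\gamma(t_n)\in \pi^{-1}((0,+\infty))$.

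By Lemma \ref{lemma:tangents}(i), each $\gamma(t_n)$ is a regular point whose (unique) tangent cone is $(\R^2,\dist_{\mathrm{eucl}})$. By Lemma \ref{lemma:tangents}(ii), every tangent cone $Y$ at $\gamma(t_0)$ is homeomorphic to $\R\times\R_+$; since tangent cones to an $\RCD(0,N)$ space are themselves metric cones by the volume-cone-implies-metric-cone theorem, such $Y$ must be isometric to a cone over an interval of length $a_Y\in(0,\pi]$. In particular $Y$ is never isometric to $\R^2$, and there is a definite gap between $Y$ and $\R^2$ in pointed GH distance at scale $1$: indeed $\haus^2(B_1^Y(o_Y)) = a_Y/2 \le \pi/2$, whereas $\haus^2(B_1^{\R^2}(0))=\pi$, and any $\eps$-pGH approximation of $B_1(0)\subset \R^2$ by $B_1(o_Y)\subset Y$ would, via volume convergence under noncollapsed pGH convergence of $\RCD(0,2)$ spaces, force the two volumes to be within $\omega(\eps)$ of each other, which fails for $\eps$ small enough independently of $a_Y$.

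Now I would invoke the H\"older continuity of tangent cones along the interior of a minimising geodesic in $\RCD(K,N)$ spaces (\cite{zbMATH08038284}, after \cite{ColdingNabercones}): for any tangent cone $Y$ at the interior point $\gamma(t_0)$ and every $\eps>0$ there exists $\delta>0$ such that for every $t\in(t_0-\delta,t_0+\delta)\cap(0,T)$ one can find a tangent cone at $\gamma(t)$ whose unit ball is $\eps$-close to the unit ball of $Y$ in the pointed Gromov--Hausdorff sense. Applied with $t=t_n$ for $n$ sufficiently large, this forces $\R^2$ (the only tangent cone at the regular point $\gamma(t_n)$) to be $\eps$-close to some tangent cone at $\gamma(t_0)$, which is impossible by the previous paragraph once $\eps$ is smaller than the uniform volume gap. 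This is the desired contradiction, so $\gamma([0,T])\subset S$.

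The only genuinely delicate point is the last one: exhibiting a quantitative, $a_Y$-independent gap between $\R^2$ and cones over intervals of length $\le \pi$. The approach above extracts this from volume convergence in the noncollapsed $\RCD(0,2)$ setting. An alternative (and perhaps cleaner) route is to observe that the boundary ray of any such $Y$ produces a half-line of non-regular points through the vertex, a feature that survives small pGH perturbations and cannot appear in $\R^2$; either formulation makes the H\"older continuity argument rigorous.
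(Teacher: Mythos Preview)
Your overall strategy coincides with the paper's: use Lemma~\ref{lemma:tangents} to separate the tangent behaviour on and off $S$, then invoke the H\"older continuity of tangent cones along the interior of geodesics to derive a contradiction. The reduction to an interior parameter $t_0\in(0,T)$ with $\gamma(t_0)\in S$ and $t_n\to t_0$ with $\gamma(t_n)\notin S$ is correct and makes the paper's one-line argument explicit.

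There is, however, a genuine gap in the way you manufacture the quantitative separation. The ambient space here is a general $\RCD(0,N)$ space $(X,\sd,\m)$ with essential dimension $2$ (Corollary~\ref{rmk:essdim}); it is \emph{not} assumed that $\m=\haus^N$, so we are not in the noncollapsed regime. Consequently, blow-ups need not be metric cones, and neither the volume-cone-implies-metric-cone theorem nor noncollapsed volume convergence is available. Your assertion that every tangent $Y$ at $\gamma(t_0)$ is isometric to a cone over an interval of length $a_Y\in(0,\pi]$, together with the computation $\haus^2(B_1^Y(o_Y))=a_Y/2$, is therefore unjustified. Your alternative route via a ``boundary ray'' still presupposes the cone structure on $Y$ and inherits the same defect.

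The repair is immediate and bypasses volumes entirely. By Lemma~\ref{lemma:tangents}(ii), every element of $\mathrm{Tan}_{\gamma(t_0)}(X)$ is homeomorphic to $\R\times\R_+$, hence none is isometric to $\R^2$. Since $\mathrm{Tan}_{\gamma(t_0)}(X)$ is compact in the pGH topology (Gromov precompactness together with closedness of the tangent set, obtained by a diagonal argument), its pGH-distance from $\R^2$ is strictly positive, and the H\"older estimate yields the contradiction directly. Equivalently: H\"older continuity produces a sequence in $\mathrm{Tan}_{\gamma(t_0)}(X)$ converging to $\R^2$, and closedness then forces $\R^2\in\mathrm{Tan}_{\gamma(t_0)}(X)$, contradicting Lemma~\ref{lemma:tangents}(ii).
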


\begin{remark}
Under the assumptions of Lemma \ref{P|spacedecomposition} it is possible that a segment $\gamma:[0,T]\to X$ has $\gamma(0)$ and $\gamma(T)$ lying in the singular orbit $\R\cdot r(0)$ with all the interior points lying in the regular part. Examples where this actually happens appear in \cite{PanWeiGAFA,DaiHondaPanWei}.  
\end{remark}

Motivated by Remark \ref{rmk:secondcoordharmonic}, our next goal is to show that the function $f$ we introduced before is harmonic. Such statement corresponds to Lemma \ref{T|harmonicity} below. The next two lemmas are instrumental to its proof. Oversimplifying, we are going to rely on a blow-up argument to compute the partial derivatives of any sufficiently regular test function by using $\nabla\dist_{H\cdot r(0)}$ and $\nabla f$ as an orthogonal basis for the tangent bundle of $X$.

\begin{lemma} \label{L|goodapproximtion}
     Under the same assumptions and with the same notation as in Lemma \ref{P|spacedecomposition}, 
    for every $s>0$ and every sequence $r_i \downarrow 0$, up to passing to a subsequence, there exist $\delta_i \downarrow 0$ and $\delta_i$-GH approximations
    \[
    \psi_i:(\bar{B}_{r_i}^X(r(s)),\sd/r_i) \to (\bar{B}_1^{\bb{R}^2}(0),\sd_{\mathrm{eucl}})
    \]
    satisfying the following conditions:
    \begin{enumerate}
        \item \label{Item1approximation} $\psi_i(r(s+r_i))=(0,1)$ and $\psi_i(r(s-r_i))=(0,-1)$.
        \item \label{Item2approximation}
        $\psi_i(H \cdot r(s)) \to \{y=0\}$ in Hausdorff sense in $\bb{R}^2$.
    \end{enumerate}
        Assume in addition that the function $f$ given by Lemma \ref{C|Existencebijection} is differentiable in $r(s)$ according to Theorem \ref{D|CheegerDiff}, and $\lip(f)(r(s))>0$. Modulo passing to a subsequence, the linear functions $f_\infty,\sd_\infty:\bb{R}^2 \to \bb{R}$ such that $\lip(\sd_\infty)=1$ and $\lip(f_\infty)=\lip(f)(r(s))$ 
        obtained by blowing up $\dist_{H\cdot r(0)}$ and $f$ at $r(s)$ have orthogonal gradients and they span the space of linear functions on $\bb{R}^2$.
    \begin{proof}
    Since $\mathrm{Tan}_{r(s)}(X)=\{\bb{R}^2\}$ by Lemma \ref{lemma:tangents}, up to passing to a (non-relabeled) subsequence, there exist $\delta_i \downarrow 0$, and $\delta_i$-GH approximations
    \[
    {\psi}_i:(\bar{B}_{ir_i}^X(r(s)),\sd/r_i) \to (\bar{B}_i^{\bb{R}^2}(0),\sd_{\mathrm{eucl}})\, ,
    \]
    with $\psi_i(r(s))=0$.
It is a standard observation that one can take the second components of the GH-maps $\psi_i$ to be just $(\dist_{H\cdot r(0)}-s)/r_i$, since $r(s)$ is an intermediate point of a ray for $\dist_{H\cdot r(0)}(\cdot)$ by \eqref{E|equidistanceorbits}. With such choice, condition \ref{Item1approximation} is trivially satisfied, up to a slight perturbation of the first component of the GH-maps. Condition \ref{Item2approximation} follows immediately from Lemma \ref{lemma:tangents} (i).
\medskip

    Consider now the linear maps $\sd_\infty,f_\infty:\bb{R}^2 \to \bb{R}$ obtained by blowing up $\dist_{H\cdot r(0)}$ and $f$ at $r(s)$ via Theorem \ref{D|CheegerDiff}. Since $\sd_{H \cdot r(0)}(r(s+r_i))=s+r_i$ and $\psi_i(r(s+r_i))=(0,1)$ for every $i$, it follows that $\sd_\infty((0,1))=1$. Since $|\nabla \sd_\infty | \leq 1$, $\nabla \sd_\infty$ is vertical. Since $f(r(s+r_i))=0$, $f_\infty((0,1))=0$, so that $\nabla f_\infty$ is horizontal. Since $\nabla f_\infty \neq 0$ by assumption, the set $\{\sd_\infty,f_\infty\}$ spans the space of linear functions on $\bb{R}^2$.
    \end{proof}
\end{lemma}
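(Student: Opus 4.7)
My plan is to build the approximations $\psi_i$ by stipulating that their second coordinate be a normalized version of $\dist_{H\cdot r(0)}$, and then to correct the first coordinate by a small shear so that conditions (1)--(2) hold exactly. After that the moreover part reduces to a direct computation from the differentiability data.

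Since $s>0$, by Lemma \ref{lemma:tangents}(i) the point $r(s)$ is a regular point and the unique equivariant blow-up of $(X,\sd,\meas,r(s),H)$ is $(\R^2,\sd_{\mathrm{eucl}},\Leb^2)$ endowed with an $\R$-action by translations; in particular $\mathrm{Tan}_{r(s)}(X)=\{\R^2\}$. Thus, along the given sequence $r_i\downarrow 0$, up to passing to a subsequence, there exist $\delta_i\downarrow 0$ and $\delta_i$-GH approximations $\tilde\psi_i:(\bar B^X_{r_i}(r(s)),\sd/r_i)\to (\bar B_1^{\R^2}(0),\sd_{\mathrm{eucl}})$ with $\tilde\psi_i(r(s))=0$. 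I would set $\psi_i^2(x):=(\dist_{H\cdot r(0)}(x)-s)/r_i$, which is $1$-Lipschitz with respect to $\sd/r_i$ and, since $r$ is a ray for $\dist_{H\cdot r(0)}$ by \eqref{E|equidistanceorbits}, satisfies $\psi_i^2(r(s))=0$ and $\psi_i^2(r(s\pm r_i))=\pm 1$. Writing $\tilde\psi_i(r(s\pm r_i))=(a_i^\pm,b_i^\pm)$ with $a_i^\pm\to 0$, the shear
\[
\psi_i^1(x):=\tilde\psi_i^1(x)-\tfrac{a_i^++a_i^-}{2}-\tfrac{a_i^+-a_i^-}{2}\,\psi_i^2(x)
\]
is an $O(\delta_i)$-perturbation, so $(\psi_i^1,\psi_i^2)$ is still an $O(\delta_i)$-GH approximation and a direct check gives $\psi_i(r(s\pm r_i))=(0,\pm 1)$ exactly. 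Condition (2) is then immediate: the orbit $H\cdot r(s)$ lies in $\{\dist_{H\cdot r(0)}=s\}$, so $\psi_i^2$ vanishes along it; moreover, by Lemma \ref{lemma:tangents}(i), the limit $\R$-action on $\R^2$ is by translations and preserves $\sd_\infty$, so the orbit through $0$ is a full line contained in $\{y=0\}$, hence equal to $\{y=0\}$, which yields the asserted Hausdorff convergence to $\{y=0\}\cap \bar B_1^{\R^2}(0)$.

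For the moreover part, I would use the representation $\Psi(x)=(f(x),\dist(x,H\cdot r(0)))$ from Lemma \ref{C|Existencebijection}: since $\Psi(r(t))=(0,t)$ for all $t\ge 0$, $f$ vanishes identically on the ray $r$. The function $\dist_{H\cdot r(0)}$ is $1$-Lipschitz, so its blow-up $\sd_\infty$ via the $\psi_i$ in the sense of Definition \ref{D|CheegerDiff} is linear with $\lip(\sd_\infty)\le 1$; evaluating at $r(s+r_i)$ and using $\psi_i(r(s+r_i))=(0,1)$ yields $\sd_\infty(0,1)=\lim_i(s+r_i-s)/r_i=1$, forcing $\nabla\sd_\infty=(0,1)$ and $\lip(\sd_\infty)=1$. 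Similarly, $f(r(s+r_i))=0=f(r(s))$ combined with $\psi_i(r(s+r_i))=(0,1)$ gives $f_\infty(0,1)=0$, so that $\nabla f_\infty$ is horizontal; since $|\nabla f_\infty|=\lip(f)(r(s))>0$ by assumption, $\nabla f_\infty$ and $\nabla \sd_\infty$ are nonzero and orthogonal and thus span the space of linear functions on $\R^2$. The only real technical point is the shear adjustment, designed to enforce (1) exactly at both $r(s\pm r_i)$ while preserving the GH approximation property up to a constant factor; everything else follows directly from Lemma \ref{lemma:tangents}(i), the $H$-equivariance of $f$, and the vanishing of $f$ along $r$.
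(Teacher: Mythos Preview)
Your proposal is correct and follows essentially the same approach as the paper: both fix the second coordinate of the GH-approximation to be $(\dist_{H\cdot r(0)}-s)/r_i$, adjust the first coordinate by a small perturbation to achieve condition~(1) exactly, deduce condition~(2) from Lemma~\ref{lemma:tangents}(i), and obtain the orthogonality of $\nabla f_\infty$ and $\nabla\sd_\infty$ by evaluating at $r(s+r_i)$ using $f\equiv 0$ along the ray. You are simply more explicit than the paper about the shear formula and about why the limit orbit fills out the whole line $\{y=0\}$; the one step you leave implicit---that $a_i^\pm\to 0$, i.e.\ that $(\tilde\psi_i^1,\psi_i^2)$ is still an $O(\delta_i)$-GH approximation---is exactly the ``standard observation'' the paper also invokes without further detail.
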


\begin{lemma} \label{L|keyharmonicity}
Under the same assumptions and with the same notation as in Lemma \ref{L|measure},  
    let $\phi \in \Lip_{c}(X)$, and let $\tilde{\phi}:\bb{R} \times \bb{R}_+ \to \bb{R}$ be defined as $\phi \circ \Psi^{-1}$, so that $\phi(x)=\tilde{\phi}(f(x),\sd_{\bb{R} \cdot r(0)}(x))$. Then we have the following:
    \begin{enumerate}
        \item \label{item1decomposition} For $h\Leb^1$-a.e. $s \in \bb{R}_+$ with $\lip(f) \neq 0$ on $(\sd_{\bb{R} \cdot r(0)})^{-1}(s)$,
        \begin{equation} \label{E:item1decomposition}
        \tilde{\phi}(\cdot,s) \in \Lip_c(\bb{R})\, .
        \end{equation}
        \item \label{E:item2decomposition}
        For $\m$-a.e. $x \in X$ with $\lip(f)(x) \neq 0$, $\tilde{\phi}(\cdot,\sd_{\bb{R} \cdot r(0)}(x))$ is differentiable in $f(x)$, with
        \begin{equation} \label{E|diffidentity}
        \nabla \phi \cdot \nabla f (x)=\tilde{\phi}_{x_1}(f(x),\sd_{\bb{R} \cdot r(0)}(x))|\nabla f|^2(x)\, .
        \end{equation}
        \item \label{E:item3decomposition}
        For $\m$-a.e. $x \in X$, $\tilde{\phi}(f(x),\cdot)$ is differentiable in $\sd_{\bb{R} \cdot r(0)}(x)$, with
        \begin{equation} \label{E|diffidentity2}
        \nabla \phi \cdot \nabla \sd_{\bb{R} \cdot r(0)} (x)=\tilde{\phi}_{x_2}(f(x),\sd_{\bb{R} \cdot r(0)}(x))\, .
        \end{equation}
        \end{enumerate}
        In particular, for $\m$-a.e. $x \in X$ with $\lip(f)(x) \neq 0$, $\tilde{\phi}(\cdot,\cdot)$ has partial derivatives in $(f(x),\sd_{\bb{R} \cdot r(0)}(x))$, and 
        \[
        |\nabla \phi |^2(x)=\tilde{\phi}^2_{x_2}(f(x),\sd_{\bb{R} \cdot r(0)}(x))+
        \tilde{\phi}^2_{x_1}(f(x),\sd_{\bb{R} \cdot r(0)}(x))|\nabla f|^2(x)\, .
        \]
        \begin{proof}
        \textbf{Proof of \ref{item1decomposition}:}
        Fix $\epsilon>0$. We show that, for $h\Leb^1$-a.e. $s \in \bb{R}_+$ such that $\lip(f) \geq \epsilon$ on $(\sd_{\bb{R} \cdot r(0)})^{-1}(s)$, \eqref{E:item1decomposition} holds.
            Fix such an $s \in (0,+\infty)$, and assume that for every  $x \in (\sd_{\bb{R} \cdot r(0)})^{-1}(s)$ 
            the function $f$ is differentiable in $x$. By Theorem \ref{T|CheegerDiff}, 
          $h\Leb^1$-a.e.\ $s \in \bb{R}_+$ such that $\lip(f) \geq \epsilon$ on $(\sd_{\bb{R} \cdot r(0)})^{-1}(s)$ has this property.          
            Let $t \in \bb{R}$ and consider and sequence $t_i \to t$. We show that there exists $C>0$ independent of $t_i$ and $t$ such that
            \begin{equation} \label{E|limsup}
            \limsup_{i \to + \infty} \frac{|\tilde{\phi}(t_i,s)-\tilde{\phi}(t,s)|}{|t_i-t|}<C\, ,
            \end{equation}
            which would then imply item \ref{item1decomposition}.
            Let $x_s:=r(s) \in (\sd_{\bb{R} \cdot r(0)})^{-1}(s)$. We have that
            \[
            \frac{|{\phi}(t_i \cdot x_s)-{\phi}(t \cdot x_s)|}{\sd(t \cdot x_s,t_i \cdot x_s)}<\ssf{L}(\phi)\, ,
            \]
            where $\ssf{L}(\phi)$ denotes the Lipschitz constant of $\phi$,
            so that
            \[
            \frac{|\tilde{\phi}(t_i,s)-\tilde{\phi}(t,s)|}{|t_i-t|} \frac{|t_i-t|}{\sd(t \cdot x_s,t_i \cdot x_s)}<\ssf{L}(\phi)\, .
            \]
            Hence, to conclude the proof of \eqref{E|limsup} it is enough to show that for $i$ large enough 
            \begin{equation} \label{E|eval2}
            \frac{|t_i-t|}{\sd(t \cdot x_s,t_i \cdot x_s)} \geq \frac{\epsilon}{2}>0\, .
            \end{equation}
            Observe that $r_i:=\sd(t \cdot x_s,t_i \cdot x_s) \to 0$ as $i \uparrow + \infty$ and, consider the GH maps
            \[
    \psi_i:(\bar{B}_{r_i}^X(x_s),\sd/r_i) \to (\bar{B}_1^{\bb{R}^2}(0),\sd_{\mathrm{eucl}})\, ,
    \]
    introduced in Lemma \ref{L|goodapproximtion}. Up to to subsequences, since $f$ is differentiable in $x_s$ with $\lip(f)(x) \geq \epsilon$, there exists a linear function $f_\infty:\bb{R}^2 \to \bb{R}$ with $\lip(f_\infty) \geq \epsilon$, such that
    \begin{equation} \label{E|diffconv}
    \Big\|\frac{f-f(x_s)}{r_i}-f_\infty \circ \psi_i \Big\|_{\infty,\, ,\bar{B}^X_{r_i}(x_s)} \to 0\, 
    \end{equation}
    as $i\to\infty$.
    Thanks to
    Lemma \ref{L|goodapproximtion}, 
    \begin{equation} \label{E|evaluationf}
    |f_\infty (\pm 1,0)| \geq \epsilon\, .
    \end{equation}
    The same lemma gives that  $\psi_i((t_i-t) \cdot x_s)$ converges to a point in $\{(\pm 1,0)\}$. Evaluating \eqref{E|diffconv} in $(t_i-t) \cdot x_s$, we deduce
    \[
    \Big| \frac{t_i-t}{\sd(t \cdot x_s,t_i \cdot x_s)}-f_\infty (\pm 1,0) \Big | \to 0\, ,
    \]
    which, combined with \eqref{E|evaluationf}, gives \eqref{E|eval2}. This concludes the proof of item \ref{item1decomposition}.
\medskip

    \textbf{Proof of \ref{E:item2decomposition}:}. Thanks to Lemma \ref{L|measure} and item \ref{item1decomposition}, for $\m$-a.e.\ $x \in X$ with $\lip(f)(x) \neq 0$, $\tilde{\phi}(\cdot,\sd_{\bb{R} \cdot r(0)}(x))$ is differentiable in $f(x)$. Hence, we only need to check \eqref{E|diffidentity}.
    
    Let $x \in X\setminus (\R\cdot r(0))$ be any point such that $\tilde{\phi}(\cdot,\sd_{\bb{R} \cdot r(0)}(x))$ is differentiable in $f(x)$, $\lip(f)(x) \neq 0$, and the functions $f$, $\phi$, $\sd_{\bb{R} \cdot r(0)}$, $\phi-f$, and $\phi+f$ are differentiable at $x$. Let $r_i \downarrow 0$ and, up to passing to a subsequence, consider the corresponding linear functions $(\phi+f)_\infty,(\phi-f)_\infty,\phi_\infty,\sd_\infty,f_\infty:\bb{R}^2 \to \bb{R}$ obtained by blowing-up as in Definition \ref{D|CheegerDiff}. By Lemma \ref{L|goodapproximtion}, there exist $a,b \in \bb{R}$ such that $\phi_\infty=a\sd_\infty+bf_\infty$. Furthermore, $(\phi \pm f)_\infty=\phi_\infty \pm f_\infty$. Hence, using that $\nabla d_\infty \cdot \nabla f_\infty=0$ by Lemma \ref{L|goodapproximtion},
    \begin{align}
    \nabla \phi \cdot \nabla f (x) =&\frac{\lip(\phi+f)^2(x)-\lip(\phi-f)^2(x)}{4} \\
    =&\frac{|\nabla(\phi+f)_\infty|^2-|\nabla(\phi-f)_\infty|^2}{4}\\
    =&\nabla \phi_\infty \cdot \nabla f_\infty=b |\nabla f_\infty|^2=b |\nabla f|^2(x)\, .
    \end{align}
    To conclude, we only need to show that $b=\tilde{\phi}_{x_1}(f(x),\sd_{\bb{R} \cdot r(0)}(x))$. To this aim, let $t_i \downarrow 0$ be such that $\sd(x,t_i \cdot x)=r_i$. 
    Then, as $i\to\infty$, we have that
    \begin{equation} \label{E|2}
    \Big|\frac{\phi(t_i\cdot x)-\phi(x)}{\sd(x,t_i \cdot x)}-a\sd_\infty(\psi_i(t_i\cdot x))-bf_\infty(\psi_i(t_i\cdot x)) \Big| \to 0\, .
    \end{equation}
    By item \ref{Item2approximation} of Lemma \ref{L|goodapproximtion}, using also that $\nabla \sd_\infty \subset \bb{R}^2$ is vertical, we infer that $\sd_\infty(\psi_i(t_i \cdot x)) \to 0$. Rewriting \eqref{E|2} in terms of $\tilde{\phi}$, we obtain
    \begin{equation} \label{E|3}
    \Big| \frac{\tilde{\phi}(f(x)+t_i,\sd_{\bb{R} \cdot r(0)}(x))-\tilde{\phi}(f(x),\sd_{\bb{R} \cdot r(0)}(x))}{t_i} \frac{t_i}{\sd(x,t_i \cdot x)}-bf_\infty(\psi_i(t_i \cdot x)) \Big| \to 0\, .
    \end{equation}
    Arguing as in the proof of item \ref{item1decomposition}, we obtain that
    \[
    \Big|\frac{t_i}{\sd(x,t_i \cdot x)}-f_\infty(\psi_i(t_i \cdot x)) \Big| \to 0, \quad \text{and} \quad f_\infty(\psi_i(t_i \cdot x)) \to f_\infty(\pm1,0) \neq 0\, .
    \]
    Since $\tilde{\phi}(\cdot,\sd_{\bb{R} \cdot r(0)}(x))$ is differentiable in $f(x)$ by assumption, \eqref{E|3} implies that $b=\tilde{\phi}_{x_1}(f(x),\sd_{\bb{R} \cdot r(0)}(x))$, as claimed.
    \medskip
    
    \textbf{Proof of \ref{E:item3decomposition}:} For every $t \in \bb{R}$, the map $\tilde{\phi}(t,\cdot)$ is Lipschitz and hence differentiable $\Leb^1$-a.e. in $\bb{R}_+$. By Lemma \ref{C|measurenull}, for $\m$-a.e. $x \in X$, the map $\tilde{\phi}(f(x),\cdot)$ is differentiable in $\sd_{\bb{R} \cdot r(0)}(x)$. The identity \eqref{E|diffidentity2} follows repeating the argument used for \eqref{E|diffidentity} and taking into account that $|\nabla \sd_{\bb{R} \cdot r(0)}|\equiv 1$.
\medskip

    Finally, for $\m$-a.e. point $x \in X$ where \eqref{E|diffidentity} and \eqref{E|diffidentity2} hold, we have
    \begin{align}
    |\nabla \phi|^2 & =|\nabla \phi_\infty|^2=|\nabla \phi_\infty \cdot \nabla \sd_\infty|^2+\Big |\nabla \phi_\infty \cdot \frac{\nabla f_\infty}{|\nabla f_\infty|} \Big|^2 \\ 
    & =  |\nabla \phi \cdot \nabla \sd_{\bb{R} \cdot r(0)} |^2(x)+|\nabla f|^{-2} |\nabla \phi \cdot \nabla f|^2(x) \\
     & =
    \tilde{\phi}^2_{x_2}(f(x),\sd_{\bb{R}\cdot r(0)}(x))+
        \tilde{\phi}^2_{x_1}(f(x),\sd_{\bb{R} \cdot r(0)}(x))|\nabla f|^2(x)\, .
    \end{align}
        \end{proof}
\end{lemma}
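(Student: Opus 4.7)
My plan is to reduce every statement to the blow-up analysis of Lemma~\ref{L|goodapproximtion}, which provides an orthogonal decomposition of the tangent space at a generic interior point of an $H$-orbit via the linear functions $\sd_\infty$ and $f_\infty$. The whole argument consists in transferring this infinitesimal orthogonal structure to the finite-scale identities claimed in the lemma, after a preliminary step establishing Lipschitz regularity of $\widetilde\phi$ in the first slot.

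For item (1), I fix $s > 0$ with $\lip(f) \geq \eps$ on the fiber $\{\sd_{\bb{R}\cdot r(0)} = s\}$, which holds for $h\Leb^1$-a.e.\ such $s$ by Theorem~\ref{T|CheegerDiff} and Lemma~\ref{L|measure}. Since $\widetilde\phi(t_i,s)-\widetilde\phi(t,s) = \phi(t_i\cdot r(s))-\phi(t\cdot r(s))$ is bounded in modulus by $\sL(\phi)\,\sd(t\cdot r(s),t_i\cdot r(s))$, the task reduces to bounding the ratio $|t_i-t|/\sd(t\cdot r(s),t_i\cdot r(s))$ from below. Blowing up at $r(s)$ via Lemma~\ref{L|goodapproximtion} at scale $r_i := \sd(t\cdot r(s), t_i\cdot r(s))$, the image $\psi_i((t_i - t) \cdot r(s))$ converges to $(\pm 1, 0)$ because the orbit collapses to the horizontal axis; Cheeger-differentiating $f$ at $r(s)$ then yields $(t_i-t)/r_i \to f_\infty(\pm 1, 0)$, and $|f_\infty(\pm 1, 0)| \geq \eps$ since $f_\infty$ is horizontal with local Lipschitz constant $\lip(f)(r(s))$. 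This gives the uniform lower bound $|t_i-t|/r_i \geq \eps/2$ eventually, hence the desired Lipschitz regularity.

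For items (2) and (3) I combine item (1), Lemma~\ref{L|measure} and Corollary~\ref{C|measurenull} with the one-dimensional Rademacher theorem to obtain $\m$-a.e.\ differentiability of $\widetilde\phi(\cdot, \sd_{\bb{R}\cdot r(0)}(\cdot))$ in its first slot and of $\widetilde\phi(f(\cdot), \cdot)$ in its second. For the gradient identities, I fix a generic $x$ where $\phi$, $f$, $\phi + f$, $\phi - f$ are all Cheeger-differentiable in the sense of Definition~\ref{D|CheegerDiff} and blow up at $x$: since by Lemma~\ref{L|goodapproximtion} the pair $\{\sd_\infty, f_\infty\}$ spans the linear functions on $\R^2$ with orthogonal gradients, I write $\phi_\infty = a \sd_\infty + b f_\infty$. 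The polarization identity
\[
\nabla \phi \cdot \nabla f (x) = \tfrac{1}{4}\bigl(\lip(\phi+f)^2 - \lip(\phi-f)^2\bigr)(x),
\]
together with $(\phi \pm f)_\infty = \phi_\infty \pm f_\infty$ and $\nabla\sd_\infty\cdot\nabla f_\infty = 0$, then gives $\nabla\phi\cdot\nabla f(x) = b|\nabla f|^2(x)$. To identify $b$ with $\widetilde\phi_{x_1}(f(x), \sd_{\bb{R}\cdot r(0)}(x))$ I pick $t_i \downarrow 0$ with $r_i = \sd(x, t_i\cdot x)$, evaluate the Cheeger expansion of $\phi$ at $t_i\cdot x$, and match it against the one-dimensional difference quotient using the horizontality argument of item (1) (note $\sd_\infty(\psi_i(t_i\cdot x)) \to 0$ automatically, while $f_\infty(\psi_i(t_i\cdot x)) \to f_\infty(\pm 1, 0) \neq 0$); item (3) is analogous and slightly simpler because $|\nabla\sd_{\bb{R}\cdot r(0)}| \equiv 1$ and $\widetilde\phi(t, \cdot)$ is Lipschitz on $\R_+$ for every fixed $t$.

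The final formula for $|\nabla\phi|^2$ then follows from Pythagoras applied to $\nabla\phi_\infty$ along the orthonormal basis $\{\nabla\sd_\infty,\, \nabla f_\infty/|\nabla f_\infty|\}$ combined with the two gradient identities just obtained. I expect the most delicate step to be item (1): a priori nothing prevents an orbit from being infinitesimally compressed with respect to the $H \cong \R$ parameter, and only the combination of $\lip(f) > 0$ with the fact (Lemma~\ref{lemma:tangents}(i) and Lemma~\ref{L|goodapproximtion}(2)) that the orbit blows up to a horizontal line allows one to translate differentiability of $f$ into a uniform lower bound on the orbit displacement.
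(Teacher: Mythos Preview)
Your proposal is correct and follows essentially the same approach as the paper's proof: item~(1) via blow-up at $r(s)$ using Lemma~\ref{L|goodapproximtion} to bound the orbit-displacement ratio $|t_i-t|/\sd(t\cdot r(s),t_i\cdot r(s))$ from below by $\eps/2$, items~(2) and~(3) via Cheeger differentiation combined with the polarization identity and the orthogonal decomposition $\phi_\infty = a\,\sd_\infty + b\,f_\infty$, and the final Pythagorean formula via the orthonormal basis $\{\nabla\sd_\infty,\,\nabla f_\infty/|\nabla f_\infty|\}$ in the blow-up. The only differences are presentational compression; in particular, your reduction to a.e.\ $s$ in item~(1) tacitly also requires Cheeger differentiability of $f$ at every point of the fiber (as the paper states explicitly), which your blow-up step uses.
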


\begin{remark}
Item \ref{E:item3decomposition} in Lemma \ref{L|keyharmonicity} is well known, as it follows for instance from \cite[Theorem 4.5]{CM18}. The computation of the directional derivative in the other direction is more subtle instead.
\end{remark}

The next lemma and the subsequent corollary contain the main regularity result concerning the map $f:X \to \bb{R}$ introduced in Lemma \ref{C|Existencebijection}. 

\begin{lemma} \label{T|harmonicity}
    Under the same assumptions and with the same notation as in Lemma \ref{C|Existencebijection}, the function $f:X\to\R$ is harmonic.
    \begin{proof}
    Let $\Psi:X \to \bb{R} \times \bb{R}_+$ be the map given by Lemma \ref{C|Existencebijection} (with the usual identification $H \cong \bb{R}$), and let $h \in \sL^1_{\mathrm{loc}}(\bb{R}_+)$ be the density from Lemma \ref{L|measure}.
        Let $\phi \in \Lip_{c}(X)$, and define $\tilde{\phi}:\bb{R} \times \bb{R}_+ \to \bb{R}$ by $\tilde{\phi}:=\phi \circ \Psi^{-1}$, so that $\phi(x)=\tilde{\phi}(f(x),\sd_{\bb{R} \cdot r(0)}(x))$. To prove that $f$ is harmonic we need to check that
        $
        \int_X \nabla \phi \cdot \nabla f \, d\m =0\, .
        $
        By item \ref{E:item2decomposition} of Lemma \ref{L|keyharmonicity}, we have
        \[
        \int_X \nabla \phi \cdot \nabla f \, d\m=\int_X \tilde{\phi}_{x_1}(f(x),\sd_{\bb{R} \cdot r(0)}(x))|\nabla f|^2(x) \, d\m(x)\, .
        \]
        By Lemma \ref{L|measure} and the fact that $|\nabla f|$ is invariant by translations by $H \cong \bb{R}$, which follows from the equivariance of $f$, we deduce
        \begin{align} \label{E|harmonicity1}
        \nonumber
        \int_X \tilde{\phi}_{x_1}(f(x),\sd_{\bb{R} \cdot r(0)}(x)) &|\nabla f|^2(x) \, d\m(x) \\
        & = \int_{\bb{R}_+} |\nabla f|^2(\Psi^{-1}(0,s)) h(s) \int_{\bb{R}}
        \tilde{\phi}_{x_1}(t,s) \, dt \, ds\, .
        \end{align}
        By item \ref{item1decomposition} of Lemma \ref{L|keyharmonicity}, for $h\Leb^1$-a.e. $s \in \bb{R}_+$, we have that $\tilde{\phi}(\cdot,s) \in \Lip_c(\bb{R})$. Thus $\int_{\bb{R}}
        \tilde{\phi}_{x_1}(t,s) \, dt=0$. Combining with \eqref{E|harmonicity1}, the statement follows.
    \end{proof}
\end{lemma}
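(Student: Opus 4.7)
My plan is to establish harmonicity in the weak sense, namely that $\int_X \nabla\phi \cdot \nabla f \, d\m = 0$ for every $\phi \in \Lip_c(X)$. The strategy is to transfer the integral to the parametrization $\Psi: X \to \bb{R} \times \bb{R}_+$ given by Lemma \ref{C|Existencebijection}, and then reduce the vanishing of the integral to the elementary observation that the derivative of a compactly supported Lipschitz function on $\bb{R}$ integrates to zero. The entire technical apparatus of Lemma \ref{L|measure} and Lemma \ref{L|keyharmonicity} has been set up precisely to allow this reduction.

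Concretely, I would fix $\phi \in \Lip_c(X)$ and set $\tilde\phi := \phi \circ \Psi^{-1}$ on $\bb{R} \times \bb{R}_+$. On the set $\{|\nabla f| = 0\}$ the integrand $\nabla\phi \cdot \nabla f$ vanishes pointwise, and off this set Lemma \ref{L|keyharmonicity}\,(\ref{E:item2decomposition}) gives the identity
\[
\nabla\phi \cdot \nabla f(x) = \tilde\phi_{x_1}(f(x),\sd_{\bb{R}\cdot r(0)}(x)) \, |\nabla f|^2(x)
\]
for $\m$-a.e.\ such $x$. Hence $\int_X \nabla\phi \cdot \nabla f \, d\m$ equals the integral of the right-hand side over all of $X$.

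The next step is to disintegrate against the $H$-orbits. By Lemma \ref{L|measure}, $\m$ pushes forward under $\Psi$ to $h(s)\,dt\,ds$, and, crucially, the equivariance of $f$ under the $H \cong \bb{R}$-action by isometries forces $|\nabla f|$ to be $H$-invariant, so $|\nabla f|^2(\Psi^{-1}(t,s))$ depends only on $s$. Fubini then gives
\[
\int_X \tilde\phi_{x_1}(f,\sd_{\bb{R}\cdot r(0)})\,|\nabla f|^2 \, d\m
= \int_0^{+\infty} h(s)\,|\nabla f|^2(\Psi^{-1}(0,s)) \left(\int_{\bb{R}} \tilde\phi_{x_1}(t,s)\,dt\right) ds.
\]
For $h\Leb^1$-a.e.\ $s$ with $\lip(f)\ne 0$ on $(\sd_{\bb{R}\cdot r(0)})^{-1}(s)$, Lemma \ref{L|keyharmonicity}\,(\ref{item1decomposition}) guarantees $\tilde\phi(\cdot,s) \in \Lip_c(\bb{R})$, so the inner integral is zero by the fundamental theorem of calculus. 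For the remaining values of $s$, either $|\nabla f|^2(\Psi^{-1}(0,s)) = 0$ and the integrand vanishes anyway, or such $s$ form a negligible set. Either way both contributions vanish.

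The main anticipated obstacle is not any single step but rather the bookkeeping that Lemma \ref{L|keyharmonicity} has already handled: confirming that $\tilde\phi$ is genuinely compactly supported in the $t$-variable on almost every level of $\sd_{\bb{R}\cdot r(0)}$, and controlling the degenerate set where $\lip(f)=0$. Having those technical statements available, the harmonicity identity follows with no further analytic input, so in effect this lemma is the clean payoff for the more laborious gradient-decomposition results.
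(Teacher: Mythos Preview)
Your proposal is correct and follows essentially the same approach as the paper's proof: apply Lemma~\ref{L|keyharmonicity}\,(\ref{E:item2decomposition}) to rewrite $\nabla\phi\cdot\nabla f$, use Lemma~\ref{L|measure} together with the $H$-invariance of $|\nabla f|$ to disintegrate along orbits, and then invoke Lemma~\ref{L|keyharmonicity}\,(\ref{item1decomposition}) to kill the inner integral. Your explicit handling of the degenerate set $\{\lip(f)=0\}$ is a nice touch that the paper leaves implicit, but otherwise the arguments are identical.
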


\begin{corollary} \label{C|mainregularity}
    Under the same assumptions and with the same notation as in Lemma \ref{C|Existencebijection},
    the following hold:
    \begin{enumerate}
        \item \label{itemCreg1}
        $|\nabla f|^2 \in \W^{1,2}_{\mathrm{loc}}(X) \cap \sL^{\infty}_{\mathrm{loc}}(X)$, and $|\nabla f|^2$ has non-negative measure-valued Laplacian. 
        \item \label{itemCreg2}
        $|\nabla f|^2$ admits a continuous representative, which is invariant under the action of $H$ and increases along the ray $r:(0,\infty)\to X$. Equivalently, the function canonically induced by $|\nabla f|^2$ on the quotient space $X/H$ is increasing.
        \item \label{itemCreg3}
        The function induced by $|\nabla f|^2$ on the quotient $X/H \cong \bb{R}_+$ belongs to $\W^{1,2}_{\mathrm{loc}}((0,+\infty)) \cap \W^{1,2}_{\mathrm{loc}}(\bb{R}_+,hd\Leb^1)$, where $(\bb{R}_+,\sd_{\mathrm{eucl}},hd\Leb^1)$ is the quotient $\RCD(0,N)$ structure on $X/H$ (see Remark \ref{rmk:doublequotient}).
        \item \label{itemCreg4}
        The continuous representative of $|\nabla f|^2$ is strictly positive on $X\setminus (\R\cdot r(0))$.
    \end{enumerate}
    \begin{proof}
    \textbf{Proof of \ref{itemCreg1}:}
        By Lemma \ref{P|fLip}, $|\nabla f|^2 \in \sL^{\infty}_{\mathrm{loc}}(X)$. By Lemma \ref{T|harmonicity} and \cite[Lemma $3.2$]{SavareSelfImprovement}, $|\nabla f|^2\in\W^{1,2}_{\mathrm{loc}}(X) \cap \sL^{\infty}_{\mathrm{loc}}(X)$ and has non-negative measure-valued Laplacian.
\medskip

        \textbf{Proof of \ref{itemCreg2} and \ref{itemCreg3}:} We first prove simultaneously that item
        \ref{itemCreg3} holds, and that $|\nabla f|^2$ has a continuous representative in $ X\setminus (\R\cdot r(0))$.        
        As we already observed, by the equivariance of $f$ with respect to the $\R$-action, for every $a\in\R$ we have that
        \begin{equation} \label{E|CorReg1}
        |\nabla f|^2(x)=|\nabla f|^2(a \cdot x) \quad \text{for }\m \text{-a.e. }x \in X\, .
        \end{equation}
        Fix $R>0$, and let $\phi \in \Lip(X)$ be a cut-off function such that $\phi(a \cdot x)=\phi(x)$ for every $x \in X$ and every $a \in \bb{R}$, $\phi \equiv 1$ on $\{\sd_{\bb{R} \cdot r(0)}<R\}$, and $\phi \equiv 0$ on $\{\sd_{\bb{R} \cdot r(0)}>R+1\}$. Such a function can be obtained as a composition of the form $k \circ \sd_{\bb{R} \cdot r(0)}$, for some $k \in \Lip_c(\bb{R}_+)$. Since $R>0$ is arbitrary, it is enough to show that $\phi |\nabla f|^2$ has a continuous representative. 
        For every $t >0$, let $f_t:=P_t(\phi |\nabla f|^2) \in \Lip(X)$. Given any $a \in \bb{R}$, by \eqref{E|CorReg1}, the invariance of the cut-off function $\phi$ and of the heat flow, $f_t$ is invariant under the $H\cong\R$-action as well. 
        Moreover, for every open set $A \subset \subset X$, we have
        \begin{equation} \label{E|CorReg2}
        \lim_{t \to 0} \Ch(f_t,A)=\Ch(\phi |\nabla f|^2,A)<+\infty\, .
        \end{equation} 
        Consider now the quotient space $X/\bb{Z}$ with its natural $\RCD(0,N)$ structure, and observe that by invariance the functions $f_t$ naturally induce functions $f_t^{\bb{Z}}:X/\bb{Z}\to \R$. From \eqref{E|CorReg2}, for every $A \subset \subset X/\bb{Z}$, we have that
        \begin{equation} \label{E|CorReg3}
        \limsup_{t \to 0} \Ch(f_t^{\bb{Z}},A)<+\infty\, .
        \end{equation}
        Consider now the space $X/\bb{R} \cong\bb{R}_+$ obtained by quotienting $X/\bb{Z}$ by $\bb{R}/\bb{Z}$, endowed with its $\RCD(0,N)$ structure $(\bb{R}_+,\sd_{\mathrm{eucl}},\tilde\m)$. 
        Consider the functions $f_t^{\bb{R}}:X/\bb{R} \to \bb{R}$ naturally induced by $f_t^{\bb{Z}}$ on $X/\bb{R}$. By \cite[Corollary $5.5$]{MondinoQuotients} and \eqref{E|CorReg3}, for every $A \subset \subset X/\bb{R}$, we have
        \[
        \limsup_{t \to 0} \Ch(f_t^{\bb{R}},A)<+\infty\, .
        \]
        Hence, the functions $f_t^{\bb{R}}$ converge in $\sL^2_{\mathrm{loc}}$ in $(\bb{R}_+,\sd_{\mathrm{eucl}},\tilde\m)$ to a limiting function $f_0^{\bb{R}} \in \W^{1,2}_{\mathrm{loc}}(\bb{R}_+,\tilde\m)$.
        By \cite[Theorem A.2]{CavallettiMilman}, $\tilde \m=h \, d\Leb^1$, where $h^{1/(N-1)}:(0,+\infty)\to (0,+\infty)$ is a positive and concave function. Hence, for every $\epsilon>0$, the functions $f_t^{\bb{R}}$ have uniformly bounded Cheeger energies in $((\epsilon,\epsilon^{-1}),\sd_{\mathrm{eucl}},\Leb^1)$, where now we are using the standard Euclidean structure. 
        Thus the functions $f_t^{\bb{R}}$ converge to $f^\bb{R}_0$ in $\sL^2_{\mathrm{loc}}$ on $((0,+\infty),\sd_{\mathrm{eucl}},\Leb^1)$ as well. Therefore $f_0^{\bb{R}}\in \W^{1,2}_{\mathrm{loc}}(0,+\infty)$ w.r.t. the standard Euclidean structure. In particular,  $f^\bb{R}_0$ is continuous in $(0,+\infty)$. 
        At the same time, $f_0^\bb{R}$ induces a continuous \emph{pointwise defined} function $F:\{\sd_{\bb{R} \cdot r(0)}>0\} \subset X \to \bb{R}$, which is the $\sL^{2}_{\mathrm{loc}}$ limit of the functions $f_t$ in $X$. It follows that $F$ is a continuous representative of $\phi|\nabla f|^2$ in $\{\sd_{\bb{R} \cdot r(0)}>0\}$. This proves item \ref{itemCreg3}, and shows that $|\nabla f|^2$ has a continuous representative in $\{\sd_{\bb{R} \cdot r(0)}>0\} \subset X$, as we claimed.

        We now claim that $f_0^{\bb{R}}$ is monotone in an interval $(0,\eta)$ for $\eta>0$ small enough. Since $\Delta (\phi |\nabla f|^2) \geq 0$ in $\{\sd_{\bb{R} \cdot r(0)}<R\}$, by the strong maximum principle \cite[Theorem 2.8]{GigliRigonimax}, the function $f_0^{\bb{R}}$ is constant in a neighbourhood of each local maximum $(0,R)$. Hence, if $f_0^\bb{R}$ is not monotone in $(0,\eta)$, there exists a triplet
        \[
        0<a<b<c<\eta\, ,
        \]
        such that
        \begin{equation}
            f_0^\bb{R}(b) < \min\{f_0^\bb{R}(a),f_0^\bb{R}(c)\}.
        \end{equation}
        Thus, if $f_0^\bb{R}$ is not monotone in any $(0,\eta)$, there exist two triplets
        \[
        0<a<b<c<a'<b'<c'<R\, ,
        \]
        such that
        \begin{equation}
            f_0^\bb{R}(b) < \min\{f_0^\bb{R}(a),f_0^\bb{R}(c)\},
            \quad f_0^\bb{R}(b') < \min\{f_0^\bb{R}(a'),f_0^\bb{R}(c')\}\, .
        \end{equation}
        It follows that ${f_0^\bb{R}}_{|[b,b']}$ has a global maximum point $m \in (b,b')$. Thus, by the strong maximum principle again, $f_0^\bb{R}$ must be constant in $[b,b']$, a contradiction.
        This shows that there exists $\eta>0$ such that $f_0^\bb{R}$ is monotone in $(0,\eta)$. 
        Since $\phi |\nabla f|^2$ is essentially bounded, $f_0^{\bb{R}}$ is bounded. Thus $f_0^{\bb{R}}$ extends to a continuous monotone function on $[0,R)$. Hence, $\phi|\nabla f|^2$ has a continuous representative on $X$. Using again that $\Delta (\phi|\nabla f|^2) \geq 0$ on $\{\sd_{\bb{R} \cdot r(0)}<R\} \subset X$, it follows that $f_0^{\bb{R}}$ is increasing in $[0,R)$.
        Since by construction $f_0^{\bb{R}}$ is the function induced on $X/\bb{R}$ by the continuous representative of $\phi |\nabla f|^2$, $|\nabla f|^2$ is increasing on the quotient $X/\bb{R}$ or, equivalently, on $r$.
\medskip

        \textbf{Proof of \ref{itemCreg4}:}
        If $|\nabla f|^2$ vanishes on $r(s)$ for some $s>0$, then it vanishes on the whole $\{\sd_{\bb{R} \cdot r(0)} < s \}$ by invariance and monotonicity. If this is the case, then $f$ must be constant in $\{\sd_{\bb{R} \cdot r(0)} < s \}$, a contradiction.
    \end{proof}
\end{corollary}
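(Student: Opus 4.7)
The plan is to establish the four items in sequence, each building on the previous. Item (1) is essentially automatic from what is already at hand: $f$ is harmonic by Lemma~\ref{T|harmonicity} and locally Lipschitz by Lemma~\ref{P|fLip}, so $|\nabla f|^2 \in \sL^{\infty}_{\mathrm{loc}}(X)$ is immediate, while the $\W^{1,2}_{\mathrm{loc}}$ regularity together with the non-negative measure-valued Laplacian follows from the standard Bochner/self-improvement machinery in the $\RCD$ setting applied to the harmonic function $f$ (e.g.\ via the improved Bakry--\'Emery inequality).

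For items (2) and (3), which form the technical core, my strategy is to pass the regularity of $|\nabla f|^2$ through two successive quotients. First, the equivariance of $f$ from Lemma~\ref{C|Existencebijection} implies that $|\nabla f|^2$ is $H$-invariant $\m$-a.e. To obtain a pointwise defined, continuous representative I would fix a cut-off of the form $\phi = k\circ \dist_{\R\cdot r(0)}$ (which is automatically $H$-invariant) and regularize via $f_t := P_t(\phi|\nabla f|^2)$; invariance of $\phi$ and of the heat flow under isometries forces each $f_t$ to be $H$-invariant, with Cheeger energies uniformly bounded on bounded sets as $t\downarrow 0$ by item (1). To descend to $X/H \cong \R_+$ I would first quotient by the discrete subgroup $\Z < H$ (which produces an $\RCD(0,N)$ space by Remark~\ref{rmk:doublequotient}) and then by the compact group $\R/\Z$, applying Mondino's quotient theorem to transfer the Cheeger energy bounds. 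On the resulting $1$-dimensional quotient $(\R_+,\dist_{\mathrm{eucl}},h\,\di\Leb^1)$, the limit function lies in $\W^{1,2}_{\mathrm{loc}}$ with respect to the weighted measure; since $h^{1/(N-1)}$ is concave and strictly positive on $(0,\infty)$ by Cavalletti--Milman, the same holds with respect to $\Leb^1$, yielding on the line a continuous representative. This proves item (3) and, lifting back, produces a continuous representative of $|\nabla f|^2$ on $X\setminus (\R\cdot r(0))$. Monotonicity in item (2) then follows from the Gigli--Rigoni strong maximum principle: a strict interior local maximum of the $1$-dimensional reduction would force local constancy, and a triplet argument combined with the $\sL^\infty$ bound rules out non-monotone oscillations, leaving only monotonicity along $r$.

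Item (4) is then a short consequence. If the continuous representative of $|\nabla f|^2$ vanished at $r(s_0)$ for some $s_0>0$, then by $H$-invariance and the monotonicity from item (2) it would vanish identically on $\{\dist_{\R\cdot r(0)} < s_0\}$, forcing $f$ to be constant on this open set, in contradiction with the bijectivity of the parametrization $\Psi$ from Lemma~\ref{C|Existencebijection}.

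The main obstacle lies in items (2) and (3): the argument must navigate three distinct metric measure structures (on $X$, on $X/\Z$, and on $X/H$) while simultaneously tracking both the weighted and unweighted Cheeger energies on the $1$-dimensional quotient, and it must promote an $\m$-a.e.\ statement to a pointwise continuous one. Combining heat-flow regularization, Mondino's quotient theorem and the strong maximum principle for subharmonic functions on $\RCD$ spaces appears to be the cleanest path around this difficulty.
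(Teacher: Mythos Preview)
Your proposal is correct and follows essentially the same route as the paper: item~(1) via local Lipschitz plus Bochner/self-improvement for harmonic functions, items~(2)--(3) via heat-flow regularization of $\phi|\nabla f|^2$, the two-step quotient $X\to X/\Z\to X/\R$ using Remark~\ref{rmk:doublequotient} and the Mondino et al.\ quotient energy estimate, Cavalletti--Milman for the density on $\R_+$, and the strong maximum principle plus a triplet argument for monotonicity; item~(4) by the same vanishing-forces-constancy contradiction. The only detail you leave implicit is the extension of the continuous representative across the singular orbit $\R\cdot r(0)$, which the paper handles by noting that monotonicity near $0$ combined with the $\sL^\infty$ bound forces a continuous extension to $[0,R)$.
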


Roughly speaking, we proved in Lemma \ref{L|keyharmonicity} that gradients in $(X,\dist)$ can be computed as on a manifold with a warped product metric of the form $dr^2+|\nabla f(r)|^{-2}dx^2$. By Lemma \ref{L|measure}, the measure $\meas$ on $(X,\dist)$ has a similar warped product structure. By Corollary \ref{C|mainregularity}, the warping function has a reasonable amount of regularity.
Making rigorous the isomorphic identification of $(X,\dist,\meas)$ with a measured warped product is still quite technical at this stage, but the proof below follows a by now well established strategy essentially introduced in the proof of the splitting theorem for $\RCD(0,N)$ spaces in \cite{gigli2013splittingtheoremnonsmoothcontext}.

\begin{lemma} \label{L|PsiBilip}
   Under the same assumptions and with the same notation as in Lemma \ref{C|Existencebijection}, 
    the restriction
    \[
    \Psi^{-1}_{|\bb{R} \times (0,+\infty)}:\big(\bb{R} \times (0,+\infty),\sd_{\mathrm{eucl}}\big) \to  \big(\{\sd_{\bb{R} \cdot r(0)}>0\},\sd \big) \subset (X,\sd)
    \]
    is locally Lipschitz.
    \begin{proof}
        We denote by $f:X \to \bb{R}$ the map introduced in Lemma \ref{C|Existencebijection} under the usual identification $H \cong \bb{R}$. We recall that $\Psi^{-1}(a,b)=a \cdot r(b)$. Let $K \subset \subset \bb{R} \times (0,+\infty)$ be open. 
        We claim that there exists $C>1$ such that 
        \begin{equation}\label{E|LipLoc1}
        |f(a \cdot y)-f(y)|=|a| \geq C \sd(y,a \cdot y) \quad \text{for every } y,a \cdot y \in \Psi^{-1}(K)\, .
        \end{equation}
        If the claim holds, given pairs $(a,b),(\tilde{a},\tilde{b}) \in K$, we deduce
        \begin{multline}
        \sd(\Psi^{-1}(a,b),\Psi^{-1}(\tilde{a},\tilde{b}))  \leq |b-\tilde{b}|+\sd(a \cdot r(b),[(\tilde{a}-a)+a] \cdot r(b)) 
        \\
        \leq |b-\tilde{b}|+C^{-1}|a-\tilde{a}| \leq \sqrt{2}C^{-1} \sd_{\mathrm{eucl}}((a,b),(\tilde{a},\tilde{b}))\, ,
        \end{multline}
        which implies the statement.
        To finish the proof we are left to prove \eqref{E|LipLoc1}. By item \ref{itemCreg4} of Corollary \ref{C|mainregularity}, there exists $0<\tilde{C}<1$ such that $|\nabla f| \geq \tilde{C}$ on $\Psi^{-1}(K)$. Let $y \in \Psi^{-1}(K)$ and let $a >0$ be such that $a \cdot y \in \Psi^{-1}(K)$. Let $y_i \to y$ be differentiability points of $f$ in $\Psi^{-1}(K)$. Since $|\nabla f| \geq \tilde{C}>0$ on $\Psi^{-1}(K)$, using Lemma \ref{L|goodapproximtion}, for every $i\in\N$ we have that
        \[
        \frac{a/j}{\sd(a/j\cdot y_i,y_i)}
        =
        \frac{f(a/j\cdot y_i)-f(y_i)}{\sd(a/j\cdot y_i,y_i)} \geq \tilde{C}/2 \quad \text{for every } j \in \bb{N} \text{ large enough.}
        \]
        Hence
        \begin{align}
        a=&j(a/j) \geq \frac{\tilde{C}}{2} \Big(\sd(y_i,a/j \cdot y_i)+\cdots +\sd\big([(j-1)a/j]\cdot y_i,a\cdot y_i\big)\Big)\\ 
        \geq& \frac{\tilde{C}}{2} \sd(y_i,a\cdot y_i)\, .
        \end{align}
        Passing to the limit as $y_i \to y$, we obtain \eqref{E|LipLoc1} when $a>0$. The case $a <0$ is analogous.
    \end{proof}
\end{lemma}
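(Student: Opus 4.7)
My plan is to reduce the statement via the $H$-invariance of $\sd$ to a uniform upper bound on the displacement caused by the $H$-action, and then establish such a bound by combining a subdivision argument with the tangent-cone analysis at differentiability points of $f$.

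For the reduction, given $(a,b),(\tilde a,\tilde b)$ in a compact $K\subset\R\times(0,+\infty)$, the triangle inequality together with the fact that $H$ acts by isometries and with \eqref{E|equidistanceorbits} yields
\begin{equation}
\sd(\Psi^{-1}(a,b),\Psi^{-1}(\tilde a,\tilde b))\le \sd(r(b),r(\tilde b))+\sd(r(\tilde b),(\tilde a-a)\cdot r(\tilde b))\le |b-\tilde b|+\sd(y,(\tilde a-a)\cdot y),
\end{equation}
with $y:=r(\tilde b)$. Hence the goal is to prove a uniform estimate
\begin{equation}
\sd(y,c\cdot y)\le C\,|c|\quad\text{for every }y,c\cdot y\in\Psi^{-1}(K),
\end{equation}
with $C=C(K)$.

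For this estimate I would use the harmonic function $f$ from Lemma \ref{C|Existencebijection}: its equivariance gives $f(c\cdot y)-f(y)=c$, and by item \ref{itemCreg4} of Corollary \ref{C|mainregularity} the continuous representative of $|\nabla f|^2$ is bounded below by some $\tilde C>0$ on a neighbourhood of the compact $\Psi^{-1}(K)$. Since $f$ is only locally Lipschitz, the equivariance identity directly produces an inequality in the wrong direction; to flip it, I would telescope using the $H$-invariance of $\sd$,
\begin{equation}
\sd(y,c\cdot y)\le \sum_{i=0}^{j-1}\sd\bigl((ic/j)\cdot y,((i+1)c/j)\cdot y\bigr)=j\,\sd(y,(c/j)\cdot y),
\end{equation}
reducing the problem to the infinitesimal bound $\sd(y,(c/j)\cdot y)\le 2\tilde C^{-1}(c/j)$ for $j$ large. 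I would obtain this infinitesimal bound via Lemma \ref{L|goodapproximtion} at a differentiability point of $f$ (which exists $\m$-a.e.\ by Theorem \ref{T|CheegerDiff}, so any $y\in\Psi^{-1}(K)$ can be approximated by such points): the blow-ups of $X$ converge to $\R^2$ in a way that sends the $H$-orbit to a horizontal line and realizes $f$ as a linear function $f_\infty$ with horizontal gradient of norm $|\nabla f|(y)\ge\tilde C$. Combining the alignment with the equivariance identity $f((c/j)\cdot y)-f(y)=c/j$ and with the differentiability of $f$ yields
\begin{equation}
\frac{c/j}{\sd(y,(c/j)\cdot y)}\longrightarrow |f_\infty(\pm 1,0)|=|\nabla f|(y)\ge\tilde C\quad\text{as }j\to\infty,
\end{equation}
so that the infinitesimal bound holds; a density argument then extends the estimate to all $y\in\Psi^{-1}(K)$.

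The main obstacle is the conversion of a pointwise lower bound on $|\nabla f|$ into a uniform upper bound on $\sd(y,c\cdot y)$: the equivariance of $f$ naturally produces the wrong direction of inequality for a merely Lipschitz function, so one needs the explicit tangent-plane description from Lemma \ref{L|goodapproximtion}, whose key geometric content is that the $H$-direction and $\nabla f$ are aligned in every tangent plane. The accompanying technical nuisance is that $f$ is only $\m$-a.e.\ differentiable, which forces the final approximation step; beyond that, the argument is bookkeeping with the triangle inequality and the $H$-invariance of $\sd$.
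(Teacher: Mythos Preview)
Your proposal is correct and follows essentially the same route as the paper: reduce via the triangle inequality and $H$-invariance to the estimate $\sd(y,c\cdot y)\le C|c|$, then obtain this by telescoping along the $H$-action and establishing the infinitesimal bound $\sd(y,(c/j)\cdot y)\lesssim |c/j|$ from the tangent-cone description in Lemma~\ref{L|goodapproximtion} at differentiability points of $f$, where the lower bound on $|\nabla f|$ from Corollary~\ref{C|mainregularity}\ref{itemCreg4} is available, and finally pass to general $y$ by continuity. The order in which you present the telescoping and the blow-up differs slightly from the paper, but the ingredients and the logic are the same.
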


\begin{remark}
 The examples discussed in \cite{PanWeiGAFA,DaiHondaPanWei} show that, in general, the map $\Psi:(X,\dist)\to \R\times \R_+$ need not be globally biLipschitz.   
\end{remark}

By the invariance of $|\nabla f|$ with respect to the $H\cong\R$-action we can identify it with a function on $X/\R\cong\R_+$. We will tacitly make such identification from now on.
On $\bb{R} \times \bb{R}_+$, we consider the $C^0$ Riemannian metric (with extended values) given by $g(x_1,x_2):= |\nabla f|^{-2}(x_2) \, dx_1^2+dx_2^2$. 
Since the continuous representative of $|\nabla f|$ is strictly positive on $(0,+\infty)$ by Corollary \ref{C|mainregularity}, every two points in $\bb{R} \times \bb{R}_+$ can be joined by a curve of finite length w.r.t.\ $g$. Recall that the length of an absolutely continuous curve $\gamma \in AC([0,1] ,\bb{R} \times \bb{R}_+)$ with respect to $g$ is defined as
\begin{equation}
    \mathrm{length}_g(\gamma):=\int_0^1 g({\gamma}',{\gamma}')^{1/2} \, dt\, .
\end{equation}
Thus, the metric $g$ induces a real-valued distance $\sd_g$ on $\bb{R} \times \bb{R}_+$ by setting
\begin{equation}
    \sd_g(a,b):=\inf \big\{ \mathrm{length}_g(\gamma) :\gamma \in AC([0,1] ,\bb{R} \times \bb{R}_+) \text{ and }\gamma(0)=a,\gamma(1)=b\big\}\, .
\end{equation}

The next two lemmas establish some technical variants of the Sobolev-to-Lipschitz property on $(X,\dist)$ and $(\R\times\R_+,\dist_g)$ which are going to helpful later to complete the proof of Proposition \ref{prop:measuredwarped}.

\begin{lemma} \label{L|SobToLip}
    Consider the $C^0$ Riemannian metric (with extended values) $g(x_1,x_2):= |\nabla f|^{-2}(x_2) \, dx_1^2+dx_2^2$ and the induced real-valued distance $\sd_g$ on $\bb{R} \times \bb{R}_+$.   
    If $\phi \in \Lip_{\mathrm{loc}}(\bb{R} \times (0,+\infty),\sd_g)$ has $\lip(\phi) \leq 1$ at $\Leb^2$-a.e. point, then $\phi$ admits a $1$-Lipschitz extension to $(\bb{R} \times \bb{R}_+,\sd_g)$.
    \begin{proof}
        Let $a,b \in \bb{R} \times (0,+\infty)$. We need to show that $|\phi(a)-\phi(b)| \leq \sd_g(a,b)$. Without loss of generality we assume that $a,b \in \bb{R} \times (1,+\infty)$. By the monotonicity of $|\nabla f|^2$ obtained in Corollary \ref{C|mainregularity}, the set $\bb{R} \times (1,+\infty)$ is geodesically convex in $(\bb{R} \times \bb{R}_+,\sd_g)$. Hence, for every $\epsilon>0$, there exists a smooth curve $\gamma_\epsilon:[0,1] \to \bb{R} \times (1,+\infty)$ such that $\sd_g(a,b) \geq \length(\gamma_\epsilon)-\epsilon$. By perturbing $\gamma_\epsilon$, one can construct a curve $\tilde{\gamma}_\epsilon:[0,1] \to \bb{R} \times (0,+\infty)$ such that $\sd_g(a,b) \geq \length(\tilde{\gamma}_\epsilon)-2\epsilon$ and, for $\Leb^1$-a.e.\ $t \in [0,1]$, $\lip(\phi)(\tilde{\gamma}_\epsilon(t)) \leq 1$.
         Therefore 
         \begin{align*}
         |\phi(a)-\phi(b)| \leq \int_0^1 (\phi \circ \tilde{\gamma}_\epsilon)' \, dt \leq  \int_0^1 g(\tilde{\gamma}_\epsilon',\tilde{\gamma}_\epsilon')^{1/2} \, dt\\
         = \length(\tilde{\gamma}_\epsilon) \leq \sd_g(a,b)+2\epsilon\, ,
         \end{align*}
         where for the second inequality we used \cite[Proposition 4.22]{MondinoRyborz} which guarantees that 
         \begin{equation}
             \lim_{t \to 0} \frac{\sd_g \big(\tilde{\gamma}_\epsilon(t+s),\tilde{\gamma}_\epsilon(s) \big)}{s}=
             g(\tilde{\gamma}_\epsilon'(s),\tilde{\gamma}_\epsilon'(s))^{1/2} 
             \quad \text{ for } \Leb^1 \text{-a.e. }  s \in (0,1)\, .
\end{equation}
         Since $\epsilon>0$ is arbitrary, the statement follows.
    \end{proof}
\end{lemma}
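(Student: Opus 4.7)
The plan is to first establish the $1$-Lipschitz bound on the open half-plane $\bb{R} \times (0, +\infty)$ and then extend to $\bb{R} \times \bb{R}_+$ by density and uniform continuity. Given $a, b \in \bb{R} \times (0, +\infty)$, set $m := \min(a_2, b_2)$. By the monotonicity of the continuous representative of $|\nabla f|^2$ obtained in Corollary \ref{C|mainregularity}, the horizontal conformal factor $|\nabla f|^{-2}(x_2)$ is non-increasing in $x_2$, so raising the second coordinate of any candidate curve never increases its $g$-length. Consequently the slab $\bb{R} \times (m/2, +\infty)$ is geodesically convex in $(\bb{R} \times \bb{R}_+, \sd_g)$, and for every $\eta > 0$ I can find a smooth curve $\gamma_\eta:[0,1] \to \bb{R} \times (m/2, +\infty)$ joining $a$ to $b$ with $\length_g(\gamma_\eta) \leq \sd_g(a,b) + \eta$.

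The second step is to perturb $\gamma_\eta$ so that it avoids the $\Leb^2$-null set $N := \{\lip(\phi) > 1\}$ for $\Leb^1$-almost every time. This is a Fubini-type argument: consider a two-parameter family of translates $\gamma_\eta(\cdot) + (s_1, s_2)$, with $(s_1, s_2)$ in a small box around the origin, interpolating to zero on neighbourhoods of $0$ and $1$ so that the perturbed curves still join $a$ to $b$. By continuity of the Riemannian coefficients on the fixed slab $\bb{R} \times (m/4, +\infty)$, the $g$-length loss is $o(1)$ as the perturbation parameters shrink; by Fubini, for $\Leb^2$-a.e.~choice of $(s_1, s_2)$, the perturbed curve meets $N$ in a set of $\Leb^1$-measure zero. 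A good choice produces a curve $\tilde\gamma_\eta$ with $\length_g(\tilde\gamma_\eta) \leq \sd_g(a,b) + 2\eta$ and $\lip(\phi)(\tilde\gamma_\eta(t)) \leq 1$ for $\Leb^1$-a.e.~$t \in [0,1]$.

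Combining the standard slope bound $|(\phi \circ \tilde\gamma_\eta)'(t)| \leq \lip(\phi)(\tilde\gamma_\eta(t)) \cdot |\tilde\gamma_\eta'|_g(t)$ for Lipschitz $\phi$ with the metric-derivative identity $|\tilde\gamma_\eta'|_g(t) = g(\tilde\gamma_\eta'(t), \tilde\gamma_\eta'(t))^{1/2}$, valid for smooth curves in a continuous Riemannian metric (as in \cite[Proposition 4.22]{MondinoRyborz}), I integrate to get
\[
|\phi(a) - \phi(b)| \leq \int_0^1 g(\tilde\gamma_\eta', \tilde\gamma_\eta')^{1/2} \, dt = \length_g(\tilde\gamma_\eta) \leq \sd_g(a,b) + 2\eta,
\]
and letting $\eta \downarrow 0$ gives $|\phi(a) - \phi(b)| \leq \sd_g(a,b)$. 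The $1$-Lipschitz extension to $\bb{R} \times \bb{R}_+$ then follows from uniform continuity, since $\sd_g$ is real-valued and $\bb{R} \times (0,+\infty)$ is $\sd_g$-dense in $\bb{R} \times \bb{R}_+$.

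The main obstacle is the Fubini perturbation step: one must simultaneously preserve the endpoints of $\gamma_\eta$, keep the perturbed curve inside the slab where $g$ is continuous and nondegenerate, and control the resulting length loss. The monotonicity of $|\nabla f|^{-2}$ in $x_2$ is precisely what makes this clean, by allowing us to restrict to a fixed slab $\bb{R} \times (m/4, +\infty)$ on which the $C^0$ Riemannian metric is genuinely well behaved, so that the smooth Riemannian calculus used to derive the length-slope inequality above applies without further subtleties.
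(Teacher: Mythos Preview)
Your proposal is correct and follows essentially the same approach as the paper's proof: use the monotonicity of $|\nabla f|^2$ to confine near-optimal curves to a slab bounded away from the boundary, perturb to avoid the null set where $\lip(\phi)>1$, and then integrate using the metric-derivative identity for $C^0$ Riemannian metrics from \cite{MondinoRyborz}. Your version is somewhat more explicit than the paper's (working with the slab $\bb{R}\times(m/2,+\infty)$ rather than the paper's casual ``without loss of generality $a,b\in\bb{R}\times(1,+\infty)$'', spelling out the Fubini translation argument, and stating the density extension to the closed half-plane), but the underlying argument is the same.
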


\begin{lemma} \label{L|isometryfix}
Under the same assumptions and with the same notation as in Lemma \ref{P|spacedecomposition},
let $\phi \in \Lip_{\mathrm{loc}}(X \setminus \{\sd_{\bb{R}\cdot r(0)}=0\}) \cap C(X)$ be such that $|\nabla \phi| \leq 1$ at $\m$-a.e. point. Then, $\phi$ is $1$-Lipschitz.
\begin{proof}
    Our first claim is that $\lip (\phi) \leq 1$ everywhere on $X \setminus \{\sd_{\bb{R}\cdot r(0)}=0\}$. To this aim, fix $x \in X \setminus \{\sd_{\bb{R}\cdot r(0)}=0\}$, and let $\eta \in \Lip_c( X \setminus \{\sd_{\bb{R}\cdot r(0)}=0\})$ be a function which is equal to $1$ on a neighbourhood of $x$. By the Bakry-Émery contraction estimate from \cite[Theorem 6.2]{AGS2},
    \begin{equation}
        \lip(P_t(\eta \phi)) \leq P_t(\lip(\eta \phi)) \quad \text{pointwise}\, .
    \end{equation}
    On a sufficiently small neighbourhood of $x$, $P_t(\lip(\eta \phi)) \leq 1+o(1)$ and $P_t(\eta \phi) $ converges uniformly to $\phi$ as $t\to 0$, by the well-known properties of the heat flow on $\RCD(K,N)$ spaces. It follows that, possibly considering a smaller neighbourhood of $x$, $\phi$ is a uniform limit of $(1+o(1))$-Lipschitz functions. The claim follows.

    Let $a,b \in X$ and let $\gamma \subset X$ be any geodesic connecting them. By Lemma \ref{L|non-branchingtrick}, either $\gamma$ has at most two points in $ \{\sd_{\bb{R}\cdot r(0)}=0\}$, or it is fully contained in $ \{\sd_{\bb{R}\cdot r(0)}=0\}$. In the former case, $\phi \circ \gamma$ is absolutely continuous and
    \begin{equation}
        |\phi(a)-\phi(b)| \le \int_0^{\length(\gamma)}(\phi \circ \gamma)'(t) \, dt \leq \length(\gamma) \leq \sd(a,b)\, .
    \end{equation}
    So, assume that $\gamma \subset  \{\sd_{\bb{R}\cdot r(0)}=0\}$. Fix $\epsilon>0$. Let $x'$ be the midpoint of $\gamma$. Let $x''$ be the point at distance $\epsilon$ from $x'$ on the ray starting from $x'$  which realizes the distance from $\{\sd_{\bb{R}\cdot r(0)}=0\}$.
    Let $\gamma_a$ and $\gamma_b$ be geodesics from $a$ to $x''$, and from $x''$ to $b$. These geodesics have at most two points each in $ \{\sd_{\bb{R}\cdot r(0)}=0\}$, and they have length at most $\sd(a,b)/2 + \epsilon$. Let $\eta$ be the curve (parameterized by arc length) obtained by joining these two geodesics, and observe that
    \begin{equation}
        |\phi(a)-\phi(b)| \le \int_0^{\length(\eta)}(\phi \circ \eta)'(t) \, dt \leq \length(\eta) \leq \sd(a,b)+2\epsilon\, .
    \end{equation}
    The conclusion follows by the arbitrariness of $\epsilon$.
\end{proof}
\end{lemma}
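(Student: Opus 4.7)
The plan is to upgrade the $\m$-a.e. bound $|\nabla \phi| \le 1$ to a pointwise bound on the regular set and then integrate along geodesics, with a separate perturbation argument to handle geodesics that sit inside the singular orbit $\{\sd_{\R \cdot r(0)} = 0\}$.

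First, I would show that $\lip(\phi)(x) \le 1$ at every $x \in X \setminus \{\sd_{\R \cdot r(0)} = 0\}$. Choose a Lipschitz cutoff $\eta$ compactly supported in $X \setminus \{\sd_{\R \cdot r(0)} = 0\}$ and equal to $1$ in a neighbourhood $U$ of $x$. The Bakry--\'Emery contraction estimate on $\RCD(K,N)$ spaces (cf.\ \cite[Theorem~6.2]{AGS2}) yields $\lip(P_t(\eta\phi)) \le P_t(\lip(\eta\phi))$ pointwise. Since $\lip(\eta\phi) \le 1$ holds $\m$-a.e.\ in a smaller neighbourhood, the right-hand side is $1 + o(1)$ there, while $P_t(\eta\phi) \to \phi$ uniformly on $U$ as $t \to 0^+$ by standard properties of the heat semigroup. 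Hence $\phi$ is a uniform limit of $(1+o(1))$-Lipschitz functions on a neighbourhood of $x$, giving the pointwise bound.

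Next, given $a,b \in X$, fix a minimizing geodesic $\gamma \colon [0,L] \to X$ with $L = \sd(a,b)$. By Lemma~\ref{L|non-branchingtrick} we face the dichotomy: either $\gamma$ meets $\{\sd_{\R \cdot r(0)} = 0\}$ in at most two points, or it lies entirely in that set. In the former case, $\phi \circ \gamma$ is continuous on $[0,L]$ and is absolutely continuous on the complement of at most two points, where the pointwise Lipschitz bound yields $|(\phi \circ \gamma)'(t)| \le 1$ for a.e.\ $t$. Thus
\begin{equation*}
|\phi(a) - \phi(b)| \le \int_0^{L} |(\phi \circ \gamma)'(t)|\, dt \le L = \sd(a,b).
\end{equation*}

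The main obstacle is the second case, where $\gamma$ sits entirely in the singular orbit. To bypass it, I would push the midpoint off the singular set: let $m = \gamma(L/2)$ and pick, for each small $\epsilon > 0$, a point $m_\epsilon$ at distance $\epsilon$ from $m$ along a ray realizing $\sd(\cdot, \{\sd_{\R \cdot r(0)} = 0\})$ (such a ray exists because $X$ has regular points nearby by Lemma~\ref{lemma:tangents}). Minimizing geodesics $\gamma_1$ from $a$ to $m_\epsilon$ and $\gamma_2$ from $m_\epsilon$ to $b$ cannot be contained in the singular orbit (as $m_\epsilon$ lies in the regular part), so Lemma~\ref{L|non-branchingtrick} forces them into the first case above. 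Applying that estimate to each and using the triangle inequality gives
\begin{equation*}
|\phi(a) - \phi(b)| \le \length(\gamma_1) + \length(\gamma_2) = \sd(a,m_\epsilon) + \sd(m_\epsilon,b) \le \sd(a,b) + 2\epsilon,
\end{equation*}
and letting $\epsilon \to 0$ concludes. The delicate point throughout is that $\phi$ is only continuous across the singular orbit, but continuity is enough to glue absolutely continuous pieces of $\phi \circ \gamma$ together, while Lemma~\ref{L|non-branchingtrick} prevents pathological re-entries into the singular set which would otherwise invalidate the line integral.
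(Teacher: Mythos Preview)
Your proposal is correct and follows essentially the same approach as the paper's proof: the heat-flow/Bakry--\'Emery argument to upgrade the a.e.\ gradient bound to a pointwise local Lipschitz bound on the regular set, the dichotomy from Lemma~\ref{L|non-branchingtrick}, and the midpoint perturbation for geodesics lying in the singular orbit all match. Your added remark about continuity being enough to glue the absolutely continuous pieces across the (at most two) singular points is a useful clarification that the paper leaves implicit.
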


We are ready to complete the proof of the measured warped product structure.

\begin{proof}[Proof of Proposition \ref{prop:measuredwarped}]
Thanks to Lemma \ref{P|alternative} it suffices to prove the following statement: 
\medskip

Let $(X,\sd,\m)$ be an $\RCD(0,N)$ space such that $\mathrm{Iso}(X)$ contains a closed subgroup $H<\mathrm{Iso}(X)$ isomorphic to $\bb{R}$ and $X/H \cong \bb{R}_+$. Let $\Psi:X \to \bb{R} \times \bb{R}_+$, $f:X \to \bb{R}$ and $r:[0,+\infty)\to X$ be as in Lemma \ref{C|Existencebijection}, under the usual identification $H \cong \bb{R}$. On $\bb{R} \times \bb{R}_+$, consider the $C^0$ Riemannian metric (with extended values) given by $g(x_1,x_2):= |\nabla f|^{-2}(x_2) \, dx_1^2+dx_2^2$ and the induced real-valued distance $\sd_g$, and let $h$ be the density from Lemma \ref{L|measure}.
    Then
    \[
    \Psi:(X,\sd,\meas) \to (\bb{R} \times \bb{R}_+,\sd_g,h(x_2)dx_1dx_2)
    \]
    is a measure preserving isometry.   
\medskip

The fact that $\Psi$ is measure preserving readily follows from Lemma \ref{L|measure}. In the remaining part of the proof we prove that it is an isometry.   
    We denote by $\nabla_g$ gradients w.r.t.\ $g$, and by $|\cdot|_g$ the norm of a vector w.r.t.\ $g$.  
        Let $\phi \in \Lip(X)$ be such that $\ssf{L}(\phi) \leq 1$ and define $\tilde{\phi}:\bb{R} \times \bb{R}_+ \to X$ as $\tilde{\phi}:=\phi \circ \Psi^{-1}$. We claim that $\tilde{\phi}$ is $1$-Lipschitz w.r.t.\ $\sd_g$.
        By Lemma \ref{L|PsiBilip} and Corollary \ref{C|mainregularity}, $\tilde{\phi} \in \Lip_{\mathrm{loc}}(\bb{R} \times (0,+\infty),\sd_g)$. 
        By Lemma \ref{L|keyharmonicity}, combined with the fact that $|\nabla f|^2 \neq 0$ on $\m$-a.e.\ $x \in X$ by Corollary \ref{C|mainregularity},
        \begin{align} \label{E|isometrychiave}
        \nonumber
       |\nabla_g \tilde{\phi}|^2_g(\Psi(x))
        =&
        \tilde{\phi
        }^2_{x_2}
        (f(x)
        ,\sd_{\bb{R} \cdot r(0)}(x))+
        \tilde{\phi}^2_{x_1}(f(x),\sd_{\bb{R} \cdot r(0)}(x))|\nabla f|^2(x)
        \\
        =&
        |\nabla \phi |^2(x) \quad
        \text{for } \m \text{-a.e. } x \in X\, .
        \end{align}
        By \cite[Proposition 4.25]{MondinoRyborz}, $\lip_g(\tilde{\phi})=|\nabla_g \tilde{\phi}|_g $ on $\Leb^2$-a.e.\ point of $\bb{R} \times \bb{R}_+$, where $\lip_g(\tilde{\phi})$ denotes the local Lipschitz constant of $\tilde{\phi}$ w.r.t.\ $\sd_g$. By Corollary \ref{C|measurenull} and \eqref{E|isometrychiave}, we infer that $\lip_g(\tilde{\phi})\leq 1$ on $\Leb^2$-a.e.\ point of $\bb{R} \times (0,+\infty)$. Hence $\tilde{\phi}$ is $1$-Lipschitz in $(\bb{R} \times \bb{R}_+,\sd_g)$ by Lemma \ref{L|SobToLip}.

        Fix any $x \in X$ and let $\sd_x:X \to \bb{R}$ be defined as $\sd_x(y):=\sd(x,y)$. The function $\sd_x \circ \Psi^{-1}$ is $1$-Lipschitz in  $(\bb{R} \times \bb{R}_+,\sd_g)$ by the previous part of the proof. Hence
        \begin{equation}\label{eq:isoineq1}
        \sd(x,y)=|\sd_x \circ \Psi^{-1} (\Psi(x))-\sd_x \circ \Psi^{-1}(\Psi(y))| \leq \sd_g(\Psi(x),\Psi(y))\, .
        \end{equation}

        Let $\tilde{\phi} \in \Lip(\bb{R} \times \bb{R}_+,\sd_g)$ be a $1$-Lipschitz function, and let $\phi:X \to \bb{R} \times \bb{R}_+$ be defined as $\phi:= \tilde{\phi} \circ \Psi$. We claim that $\phi$ is $1$-Lipschitz. 
        By Lemma \ref{P|fLip}, $\phi \in \Lip_{\mathrm{loc}}(X \setminus \{\sd_{\bb{R}\cdot r(0)}=0\})$.
        Arguing as before, we deduce that $|\nabla \phi| \leq 1$ on $\m$-a.e.\ point of $X$.  Thus $\phi$ is $1$-Lipschitz on $X$ by Lemma \ref{L|isometryfix}.
        Consider any point $a \in \bb{R} \times \bb{R}_+$, and let $\sd_{g,a}:\bb{R} \times \bb{R}_+ \to \bb{R}$ be defined as $\sd_{g,a}(b):=\sd_g(a,b)$. By the previous considerations $\sd_{g,a} \circ \Psi$ is $1$-Lipschitz on $X$. Hence,
        \begin{equation}\label{eq:isoineq2}
        \sd_g(a,b)=|\sd_{g,a} \circ \Psi (\Psi^{-1}(a))-\sd_{g,a} \circ \Psi (\Psi^{-1}(b))| \leq \sd(\Psi^{-1}(a),\Psi^{-1}(b))\, .
        \end{equation}
        The combination of \eqref{eq:isoineq1} and \eqref{eq:isoineq2} shows that $\Psi$ is an isometry, thus completing the proof.
    \end{proof}

\subsubsection{Proof of Theorem \ref{T|splittingkesimo}, base step: splitting}\label{subsubsec:splittingk=1}

Theorem \ref{T|keystepsplitting} below corresponds to the base case $k=1$ in Theorem \ref{T|splittingkesimo}. Proving it is the goal of this section.

\begin{theorem} \label{T|keystepsplitting}
 Let $(X,\sd,\m)$ be an $\RCD(0,N)$ space such that $\mathrm{Iso}(X)$ contains a closed subgroup $H<\mathrm{Iso}(X)$ isomorphic to $\bb{R}$, $\dim_{\mathrm{e}}(X)\le 2$, and  
    \begin{equation}\label{eq:growthbasestep}
    \int_1^{+\infty} \frac{t^2}{\m(B_t(p))} \, dt =+\infty\, .
    \end{equation}
   Then $X \cong Y \times \bb{R}$ as metric measure spaces for some $\RCD(0,N-1)$ space $(Y,\dist_Y,\meas_Y)$.
\end{theorem}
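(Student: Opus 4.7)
My plan is to start from Proposition \ref{prop:measuredwarped}: either $X$ already splits a line isometrically (and we are done), or $X$ is equivariantly isomorphic to a measured warped product on $\R \times \R_+$ with metric $dr^2+\phi(r)^2\,dx^2$ and measure $h(r)\,dr\,dx$, with $H\cong\R$ acting by translations in $x$. From Lemma \ref{L|measure} and Corollary \ref{C|mainregularity}, the weight $h$ is non-decreasing and locally Lipschitz on $(0,\infty)$, while the warping factor $\phi=|\nabla f|^{-1}$ is continuous, strictly positive, and non-increasing on $(0,\infty)$. The goal is to convert the $\RCD(0,N)$ condition into a distributional ODE inequality for $\phi$ and then force $\phi$ to be constant using the volume growth hypothesis.

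The first step is to establish the distributional inequality $\Delta_h\phi\le(\phi')^2/\phi$ on $(0,+\infty)$, where $\Delta_h u=u''+(h'/h)u'$ is the weighted Laplacian on the quotient $X/H\cong\R_+$ (whose $\RCD(0,N)$ structure has reference measure $h\,dr$ by Remark \ref{rmk:doublequotient} and Lemma \ref{L|measure}). For this I would apply Savar\'e's self-improved Bochner inequality from \cite{SavareSelfImprovement} to the harmonic function $f$ of Lemma \ref{T|harmonicity}, yielding $\tfrac{1}{2}\Delta_X|\nabla f|^2\ge|\Hess f|^2$ on $X$ with $|\nabla f|^2=1/\phi^2$. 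A direct warped product computation (justifiable in the $\RCD$ sense via the distributional framework of \cite{MondinoRyborz}) gives $|\Hess f|^2=2(\phi')^2/\phi^4$, and since $|\nabla f|^2$ is $H$-invariant, the Laplacian on $X$ reduces to $\Delta_h$ on the quotient by Corollary \ref{C|mainregularity}(3) and \cite{MondinoQuotients}. Unraveling yields the claimed inequality, which is equivalent to $\Delta_{h/\phi}\phi=\Delta_h\phi-(\phi')^2/\phi\le 0$; that is, $\phi$ is superharmonic on the ``suitably weighted half-line'' $(\R_+,(h/\phi)\,dr)$.

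The second step is to turn the volume growth hypothesis into parabolicity of this weighted half-line. By the triangle inequality in the warped product, $\sd(p,(r,x))\le r+\phi(r)|x|$ with $p=r(0)$, so
\begin{equation*}
\m(B_t(p))\;\ge\;2\int_0^{t/2}(t-r)\frac{h(r)}{\phi(r)}\,dr\;\ge\;t\,W(t/2),\qquad W(s):=\int_0^s \frac{h}{\phi}\,dr.
\end{equation*}
Combined with the assumption $\int_1^{+\infty} t^2/\m(B_t(p))\,dt=+\infty$, this yields $\int_1^{+\infty}t/W(t/2)\,dt=+\infty$. Since $w:=h/\phi$ is non-decreasing ($h$ is non-decreasing and $\phi$ is non-increasing), one has $W(s)\ge(s/2)w(s/2)$, and a standard comparison delivers $\int_1^{+\infty}\phi/h\,dr=\int_1^{+\infty}1/w\,dr=+\infty$, i.e., parabolicity of $(\R_+,(h/\phi)\,dr)$.

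To conclude, I would argue that a positive superharmonic function on a parabolic weighted half-line is constant. Setting $G:=(h/\phi)\phi'$, superharmonicity gives $G$ non-increasing and $\phi'\le 0$ forces $G\le 0$. Were $\phi$ non-constant, there would exist $r_0\ge 1$ and $c>0$ with $G\le -c$ on $[r_0,+\infty)$; integrating $\phi'\le -c\phi/h$ would give $\phi(r)\le\phi(r_0)-c\int_{r_0}^r\phi/h\,ds\to -\infty$ by parabolicity, contradicting $\phi>0$. Hence $\phi\equiv\phi_0>0$; rescaling $y:=\phi_0 x$ then identifies $X$ with $(\R_+,dr,(h/\phi_0)\,dr)\times\R$ as metric measure spaces, and the first factor is $\RCD(0,N-1)$ by the splitting theorem of \cite{gigli2013splittingtheoremnonsmoothcontext}. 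I expect the main technical difficulty to lie in the rigorous derivation of the inequality $\Delta_h\phi\le(\phi')^2/\phi$, which requires a careful interplay between the intrinsic $\RCD$ calculus (Bochner inequality, Hessian, quotient construction) and the explicit warped product formulas.
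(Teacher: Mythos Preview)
Your proposal is correct, and it both parallels and simplifies the paper's argument.

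For the first step, you and the paper arrive at the same inequality $\Delta_{h/\phi}\phi\le 0$ (equivalently $(F\phi')'\le 0$ with $F=h/\phi$), and both ultimately rest on \cite{MondinoRyborz}. The paper, however, reaches it more directly: rather than going through the self-improved Bochner inequality and then identifying the abstract $\RCD$ Hessian of $f$ with the classical formula $2(\phi')^2/\phi^4$, it applies Theorem~\ref{T|VanessaMondino} straight to the $(t,t)$-component of the distributional bound $\Ric_g+\Hess_g V\ge 0$ on the warped product, obtaining $-(\phi\phi')'+(\phi')^2+\phi\phi'V'\ge 0$ with $V=-\log F$, which simplifies algebraically to the same conclusion. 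Your Bochner route is formally equivalent, but making ``$|\Hess f|^2=2(\phi')^2/\phi^4$ in the $\RCD$ sense'' rigorous essentially requires the Mondino--Ryborz identification of the intrinsic calculus with the distributional one anyway (the Kato-type bound $|\Hess f|^2\ge|\nabla|\nabla f||^2=(\phi')^2/\phi^4$, which comes for free, loses a factor of $2$ and would only give superharmonicity with respect to the weight $h/\phi^2$, which is not enough). So the paper's formulation is the cleaner packaging of the same external input; you correctly flagged this as the main technical difficulty.

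Your Steps 2--3 are where you genuinely diverge, and in a good way. The paper only extracts $\int_1^\infty t/A(t)\,dt=+\infty$ with $A(t)=\int_0^t F\,ds$ from the volume hypothesis, and then uses a Yau-type Caccioppoli inequality (Lemma~\ref{L|intbypartsyau}) with carefully chosen cutoffs $\psi_i(t)=\big(\int_{R\vee t\wedge i}^i s/A(s)\,ds\big)\big/\big(\int_R^i s/A(s)\,ds\big)$ to force $\phi'\equiv 0$. You instead exploit the extra information that $F=h/\phi$ is non-decreasing (from the monotonicity of $h$ in Lemma~\ref{L|measure} and of $\phi=|\nabla f|^{-1}$ in Corollary~\ref{C|mainregularity}) to upgrade the growth condition to the pointwise parabolicity $\int_1^\infty 1/F\,ds=+\infty$, and then run the elementary ODE argument: $G:=F\phi'$ is non-increasing and $\le 0$, so if it is not identically zero then $G\le -c$ eventually, whence $\phi'\le -c/F$ and integration against $\int 1/F=+\infty$ contradicts $\phi>0$. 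This is considerably more transparent than the Yau-cutoff step. The paper's argument is more robust (it does not use monotonicity of $F$), but in the present setting that monotonicity is already available, so your route is a legitimate shortcut.
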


The proof of Theorem \ref{T|keystepsplitting} relies on the measured warped product rigidity obtained in Proposition \ref{prop:measuredwarped}, and on the equivalence between synthetic and distributional lower bounds on the weighted Ricci curvature valid for sufficiently regular weighted Riemannian metrics (Theorem \ref{T|VanessaMondino} below).
\medskip

Given a Riemannian metric $g$ on $\bb{R}^n$ such that $g_{ij} \in \W^{1,2}_{\mathrm{loc}}(\bb{R}^n) \cap C(\bb{R}^n)$, one can compute the distributional Christoffel symbols, defined as
\begin{equation} \label{E|Christ}
\Gamma^{k}_{ij}
:= \frac{1}{2} \, g^{k\ell}
\left(
  \partial_i g_{j\ell}
  + \partial_j g_{i\ell}
  - \partial_\ell g_{ij}
\right)\, .
\end{equation}
Thanks to the regularity of $g$, the Christoffel symbols belong to $\sL^2_{\mathrm{loc}}(\bb{R}^n)$.
Hence, one can define the distributional Ricci curvature as
\begin{equation} \label{E|Christ1}
(\Ric_g)_{ij}
:= \partial_k \Gamma^{k}_{ij}
  - \partial_j \Gamma^{k}_{ik}
  + \Gamma^{k}_{ij} \Gamma^{\ell}_{k\ell}
  - \Gamma^{k}_{i\ell} \Gamma^{\ell}_{jk}\, .
\end{equation}
Given a function $V \in \W^{1,2}_{\mathrm{loc}}(\bb{R}^n) \cap C(\bb{R}^n)$, its distributional Hessian is defined as
\begin{equation} \label{E|Christ2}
(\Hess_g(V))_{ij}
:= {\partial^2_{i,j} V}
- \Gamma^{k}_{ij} \, {\partial_k V}\, .
\end{equation}

The next result is proved in \cite{MondinoRyborz}.

\begin{theorem} \label{T|VanessaMondino}
    Let $(M^n,g)$ be a Riemannian manifold with boundary with $g$ a continuous Riemannian metric with Christoffel symbols in $\sL^2_{loc}(M)$. Let $V \in C^0(M) \cap \W^{1,2}_{loc}(M)$, and assume that $(M,\sd_g,e^{-V} d\ssf{Vol}_g)$ is an $\RCD(0,\infty)$ space. Then,
    \[
    \Ric_M+\Hess_V \geq 0 \quad \text{on } M \setminus \partial M \text{ in distributional sense}.
    \]
\end{theorem}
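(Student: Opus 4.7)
The plan is to derive the distributional weighted Ricci bound from the weak Bakry-Émery Bochner inequality implied by the $\RCD(0,\infty)$ condition. First, from the heat flow characterization of $\RCD(0,\infty)$ spaces, for every smooth function $f$ compactly supported in $M\setminus\partial M$ and every smooth nonnegative test function $\phi$ also compactly supported there, the synthetic Bochner inequality gives
\begin{equation*}
\int\Big(\tfrac{1}{2}|\nabla f|^2\,\Delta_V\phi-\phi\,\langle\nabla f,\nabla\Delta_V f\rangle\Big)e^{-V}d\ssf{Vol}_g\ge 0,
\end{equation*}
where $\Delta_V=\Delta_g-\langle\nabla V,\nabla\cdot\rangle$ is the weighted Laplacian associated with the measure $e^{-V}d\ssf{Vol}_g$.

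Next, I would establish a distributional weighted Bochner identity. The hypothesis that $\Gamma^k_{ij}\in\sL^2_{loc}$ together with $V\in\W^{1,2}_{loc}\cap C^0$ ensures that the distributional objects $\Ric_g$ and $\Hess_g V$ defined by \eqref{E|Christ1}--\eqref{E|Christ2} are well posed, and that $\Hess_g f$ lies in $\sL^2_{loc}$ for every smooth $f$. A direct integration-by-parts computation in local coordinates, justified precisely by this regularity, yields
\begin{equation*}
\tfrac{1}{2}\Delta_V|\nabla f|^2-\langle\nabla f,\nabla\Delta_V f\rangle=|\Hess_g f|_g^2+(\Ric_g+\Hess_g V)(\nabla f,\nabla f)
\end{equation*}
as distributions. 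Substituting into the weak Bochner inequality gives, for all admissible $f$ and $\phi\ge 0$,
\begin{equation*}
\int\phi\Big(|\Hess_g f|_g^2+(\Ric_g+\Hess_g V)(\nabla f,\nabla f)\Big)e^{-V}d\ssf{Vol}_g\ge 0.
\end{equation*}

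Finally, to obtain the desired tensorial bound one has to suppress the nonnegative Hessian contribution. In the smooth setting this is standard: for each point $x_0$ and vector $v$ one picks $f$ linear in normal coordinates so that $\nabla f(x_0)=v$ and $\Hess_g f(x_0)=0$, yielding pointwise $(\Ric_g+\Hess_g V)(v,v)(x_0)\ge 0$. In the present low-regularity setting this is the main obstacle, because the distributional Hessian of $f$ involves the Christoffel symbols, which are only $\sL^2_{loc}$, and cannot in general be made to vanish at prescribed points. I would overcome this by a mollification scheme: approximate $g,V$ by smooth $g_\delta,V_\delta$ whose Christoffel symbols converge to $\Gamma^k_{ij}$ in $\sL^2_{loc}$, choose test functions $f_\delta$ with prescribed smooth gradient and $g_\delta$-vanishing Hessian at the relevant point, apply the smooth version of the argument to obtain a local lower bound, and pass to the distributional limit using the continuity of \eqref{E|Christ1}--\eqref{E|Christ2} with respect to $\sL^2_{loc}$ convergence of the Christoffel symbols. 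An arguably cleaner alternative is to invoke the tensorial self-improvement of the $\Gamma_2$ inequality for $\RCD(0,\infty)$ spaces applied to $1$-forms (in the spirit of \cite{SavareSelfImprovement}), which produces a form-level estimate whose distributional expansion directly encodes $\Ric_g+\Hess_g V\ge 0$ without the scalar Hessian obstruction.
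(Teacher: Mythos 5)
A preliminary remark: the paper does not prove Theorem \ref{T|VanessaMondino} at all — it is quoted from \cite{MondinoRyborz} — so there is no internal proof to compare with. Judged on its own terms, your proposal sets up the natural first half of an argument (the synthetic Bochner inequality coming from the $\RCD(0,\infty)$ condition, combined with a distributional Bochner identity for the rough metric), but it does not close the argument, and the step you yourself flag as the main obstacle is a genuine gap rather than a technicality.

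Concretely, after your step combining the two inequalities you only know, for admissible $f$ and $\phi\ge 0$,
\begin{equation*}
\int\phi\,(\Ric_g+\Hess_g V)(\nabla f,\nabla f)\,e^{-V}d\mathrm{Vol}_g \;\ge\; -\int\phi\,|\Hess_g f|_g^2\,e^{-V}d\mathrm{Vol}_g\, .
\end{equation*}
For a coordinate-linear $f$ the right-hand side is $-\int\phi\,|\Gamma^k_{ij}v_k|^2$, a fixed negative quantity: since the Christoffel symbols are merely $L^2_{loc}$, this error does not become negligible under any localization of $\phi$ (at a Lebesgue point the averaged error tends to $|\Gamma(x_0)|^2$, not to $0$), and $\Ric_g+\Hess_g V$ is a distribution of order one (it contains $\partial\Gamma$), not a measure, so one cannot evaluate at points and absorb a fixed error — the Hessian term must be beaten at every scale simultaneously. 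Your first fix is circular: the ``smooth version of the argument'' for the mollified $g_\delta$ presupposes a Bochner inequality, i.e.\ a curvature lower bound, for $g_\delta$, whereas the $\RCD$ hypothesis concerns only the original rough space and is not preserved by mollifying the metric; if instead you feed the $g_\delta$-adapted functions $f_\delta$ back into the inequality for $g$, their $g$-Hessian is again of size $\|\Gamma\|_{L^2(\supp\phi)}$ and nothing is gained. The second fix does not help either: the self-improved $1$-form Bochner inequality still carries the full term $|\nabla\omega|^2$, which for a constant-coefficient form is exactly $|\Gamma\cdot v|^2$, so the same obstruction reappears. There are also unaddressed admissibility issues: a coordinate-smooth $f$ only has $\Delta_V f\in L^2_{loc}$, so it is not in the class for which the $\RCD$ Bochner inequality holds, and the identification of the Cheeger energy and Laplacian of $(M,\sd_g,e^{-V}d\mathrm{Vol}_g)$ with the coordinate expressions you use must itself be proved at this regularity. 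In short, the proposal reproduces the routine part of the strategy and leaves precisely the content of \cite{MondinoRyborz} — extracting the tensorial bound from the synthetic condition despite only $L^2_{loc}$ Christoffel symbols — unproved.
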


Consider now a function $\phi \in \W^{1,2}_{\mathrm{loc}}((0,+\infty)) \cap C((0,+\infty))$, and the manifold $(0,+\infty)\times\R$ equipped with the warped product metric $g(s,t):=ds^2+\phi^2(s) dt^2$. We will denote as $(0,+\infty) \times_\phi \bb{R}$ the resulting Riemannian manifold. Specializing \eqref{E|Christ} to this simplified setting, we deduce
\begin{align*}
    & \Gamma^s_{t,t}=-{\phi \phi'}, \\
     & \Gamma^t_{t,s}=\Gamma^t_{s,t}=\frac{\phi'}{\phi}, 
     \\
     & \Gamma^s_{s,s}=\Gamma^t_{t,t}=\Gamma^t_{s,s}=0\, .
\end{align*}
Hence, using \eqref{E|Christ1}, we obtain
\begin{equation} \label{E|Christ3}
    (\Ric_{(0,+\infty) \times_\phi  \bb{R}})_{t,t}:=-(\phi \phi')'+(\phi')^2\, .
\end{equation}
Similarly, using \eqref{E|Christ2}, we deduce
\begin{equation} \label{E|Christ4}
    (\Hess_{(0,+\infty) \times_\phi  \bb{R}}(V))_{t,t}:=\phi\phi'V'\, .
\end{equation}

    \begin{proof}[Proof of Theorem \ref{T|keystepsplitting}]
  
        Thanks to Proposition \ref{prop:measuredwarped}, we can assume that $(X,\dist,\meas)$ is equivariantly isomorphic to a measured warped product  $\R_+\times_{\phi}\R$, with metric and measure given respectively by
\begin{equation}\label{eq:measurewarpedprod2}
    g(r,x):=\di r^2+\phi(r)^2\di x^2\, ,\quad \quad \meas:=h(r)\di r\di x\, ,
\end{equation} 
for some increasing function $h \in \Lip_{\mathrm{loc}}((0,+\infty)) \cap \sL^1_{\mathrm{loc}}(\bb{R}_+)$ and some continuous and strictly positive function $\phi:(0,+\infty)\to (0,+\infty)$, with the $\R$-action being by translation on the $x$ component and $\dist$ the length distance induced by $g$. Under such identification, the equivariant function $f:X\to\R$ introduced in Lemma \ref{C|Existencebijection} admits the clear expression $f(r,x)=x$ and we have $\phi=|\nabla f|^{-1}$, by direct computation.

The $2$-dimensional Hausdorff measure with respect to $g$ on $X$ can be written in coordinates as $\phi(r)drdx$. By \eqref{eq:measurewarpedprod2}, there exists a function  $F:\bb{R}_+ \to \bb{R}_+$ such that $F\phi \in \sL^1_{\mathrm{loc}}(\bb{R}_+,\Leb^1)$ and 
\begin{equation}
    \meas=F(r)\phi(r)drdx\, .
\end{equation}

        Let $\sd_{g,0}:\bb{R} \times \bb{R}_+ \to \bb{R}_+$ be defined by $\sd_{g,0}(a):=\sd_g(a,\bb{R} \times \{0\})$. According to \cite[Section 7]{CavallettiMilman}, consider the disintegration of $\m$ induced by the $1$-Lipschitz function $\sd_{g,0}$. By \cite[Main Theorem 1.1]{CavallettiMilman} and \cite[Theorem A.2]{CavallettiMilman},
        \begin{equation}
         (F\phi)^{\frac{1}{N-1}}:(0,+\infty)\to (0,+\infty)   
        \end{equation}
        is a concave function. Since 
        $\phi \in C^0 (0,+\infty) \cap \W^{1,2}_{\mathrm{loc}}(0,+\infty)$
        and it is non-increasing and strictly positive by Corollary \ref{C|mainregularity}, $F \in C^0 [0,+\infty) \cap \W^{1,2}_{\mathrm{loc}}(0,+\infty)$.
        Using again that 
        $\phi\in C^0 (0,+\infty) \cap \W^{1,2}_{\mathrm{loc}}(0,+\infty)$ we deduce that the distributional Christoffel symbols of $g$ are in $\sL^2_{\mathrm{loc}}((0,+\infty)\times \bb{R})$. Note that this condition is independent of whether we consider as reference measure $\m$ or $\Leb^2$. The remaining part of the proof is divided into three steps. In Step 1 we use the equivalence between synthetic and distributional lower bound on the Ricci curvature to deduce that $\phi$ is super-harmonic on the weighted half-lines $([\delta,+\infty),\dist_{\mathrm{eucl}},F\Leb^1)$ for every $\delta>0$. In Step 2 we reformulate the growth condition \eqref{eq:growthbasestep} into a growth condition for $F$. In Step 3 we combine the results of the previous two steps to show that $\phi$ is constant. This is clearly enough to complete the proof of the splitting of $(X,\dist,\meas)$.

\medskip

        \textbf{Step 1:} We claim that for every $\delta>0$, and every non-negative function $\psi \in \Lip_c([\delta,+\infty)])$, there holds
        \begin{equation} \label{E|superharmonicity}
        \int_{[\delta,+\infty)} \phi'\psi' \,  F d \Leb^1 \geq 0\, .
        \end{equation}
        
        Note that the integral in \eqref{E|superharmonicity} is well defined and finite thanks to Corollary \ref{C|mainregularity}.        
        Set $V:=-\log(F):\bb{R}_+ \to \bb{R}$. By Theorem \ref{T|VanessaMondino}, since $(X,\dist,\meas)$ is an $\RCD(0,N)$ space, and thus in particular it is an $\RCD(0,\infty)$ space, we have that
        \begin{equation} \label{E|DistrRicci}
        \Ric_{\bb{R}_+ \times_\phi  \bb{R}}+\Hess_{\bb{R}_+ \times_\phi  \bb{R}}(V) \geq 0 \quad \text{distributionally on } (0,+\infty)\times \bb{R} \, .
        \end{equation}
        Using \eqref{E|Christ3} and \eqref{E|Christ4}, we obtain that
        \begin{equation}\label{eq:distrphiV}
            -(\phi \phi')'+(\phi')^2+\phi\phi'V' \geq 0 \quad \text{in distributional sense in } (\delta,+\infty)\, .
        \end{equation}
        By a straightforward manipulation, \eqref{eq:distrphiV} is equivalent to
        \[
        \phi''-\phi'V' \leq 0 \quad \text{in distributional sense on } (\delta,+\infty)\, ,
        \]
        which in turn is equivalent to
        \begin{equation} \label{E|superharmonicityInterior}
            \int_\delta^{+\infty} \phi' \psi' \, F d\Leb^1 \geq 0 \quad \text{for every non-negative } \psi \in \Lip_c((\delta,+\infty))\, .
        \end{equation}
         
       We continue proving \eqref{E|superharmonicity} for any non-negative function $\psi \in \Lip_c([\delta,+\infty))$. For every $\epsilon>0$, consider the function $\eta_{\epsilon}$ such that $(1-\eta_\epsilon) \in \Lip_c([\delta,+\infty))$ defined by 
        \begin{equation}
        \eta_\epsilon(t):=1 \wedge (1-\epsilon^{-1}(t-\delta-\epsilon)) \vee 0 \, .
        \end{equation}
        Using the Leibniz rule for the second equality, we have that
        \begin{align*} 
       & \int_{[\delta,+\infty)} \phi'\psi' \,  F d \Leb^1 \\
        &= \int_{[\delta,+\infty)}  \phi' (\psi \eta_\epsilon)' \, F d \Leb^1 
        + \int_{[\delta,+\infty)}  \phi' (\psi (1-\eta_\epsilon))'\, F d \Leb^1
        \\
        & = \int_{[\delta,+\infty)} \phi'(\psi  \eta_\epsilon'+\eta_\epsilon  \psi') \, F d \Leb^1 
        + \int_{[\delta,+\infty)}  \phi' (\psi (1-\eta_\epsilon))'\, F d \Leb^1. 
        \end{align*}
        Since $\phi' \eta_\epsilon' \geq 0$ in $(0,+\infty)$ by Corollary \ref{C|mainregularity}, passing to the limit in the previous equation as $\epsilon \to 0$ and taking into account \eqref{E|superharmonicityInterior}, we deduce \eqref{E|superharmonicity}.
\medskip

        \textbf{Step 2:}
        Let $A:\bb{R}_+ \to \bb{R}_+$ be defined by $A(r):=\int_0^rF(t)\, dt$. Note that $A(r)$ is the measure of the $r$-ball centered at $0$ in the weighted half-line $([0,+\infty),\dist_{\mathrm{eucl}},F\Leb^1)$.
        We claim that
        \begin{equation} \label{E|weightedparabolicity}
        \int_1^{+\infty} \frac{t}{A(t)} \, dt=+\infty\, .
        \end{equation}
        
        By triangle inequality, there holds
        \[
        B^g_r((0,0)) \supset \Big\{(a,b) \in \bb{R} \times \bb{R}_+: b \in (0,r/2), \, a \in \Big(-r \frac{\phi(b)^{-1}}{2},r \frac{\phi(b)^{-1}}{2} \Big) \Big\}\, .
        \]
        Hence,
        \[
        \m(B_r^g(0,0)) \geq \int_0^r \int_{-r \frac{\phi(s)^{-1}}{2}}^{r \frac{\phi(s)^{-1}}{2}} \phi(s)F(s) \, dx ds=r \int_0^rF(t) \, dt=rA(r)\, .
        \]
        Therefore, using \eqref{eq:growthbasestep}, we conclude that 
        \[
        \int_1^{+\infty} \frac{t}{A(t)} \, dt \geq
        \int_1^{+\infty} \frac{t^2}{\m \big(B^g_t((0,0))\big)} \, dt
        =+\infty\, ,
        \]
        thus proving \eqref{E|weightedparabolicity}.
\medskip

        \textbf{Step 3:} We claim that $\phi$ is constant. As noted before, this is clearly enough to prove that $(X,\dist,\meas)$ splits a line.

        By Lemma \ref{L|intbypartsyau} below and Step 1, for every $\delta>0$ we have that
        \begin{equation} \label{Eintbypartsparabolicity}
        \int_\delta^{+\infty} \psi^2 \frac{{\phi'}^2}{(1+\phi)^2 } \, Fdt \leq 16\int_\delta^{+\infty} {\psi'}^2 \, Fdt \, ,
        \end{equation}
        for every non-negative function $\psi \in \Lip_c([\delta,+\infty))$.
        Fix any $R>\delta$. We claim that $\phi$ is constant on $[\delta,R]$. Using the notation of Step 2, consider the functions $\psi_i \in \Lip_c([\delta,+\infty))$ defined by
        \[
        \psi_i(t):=\Big(\int_{R \vee t \wedge i}^i \frac{s}{A(s)} \, ds\Big)\Big/ \Big(\int_{R}^i \frac{s}{A(s)} \, ds \Big)\, .
        \]
        We claim that 
        \begin{equation} \label{E|vanishingparabolicity}
            \int_\delta^{+\infty} {\psi'_i}^2 \, Fdt \to 0 \quad \text{as } i \to +\infty\, ,
        \end{equation}
        which, combined with \eqref{Eintbypartsparabolicity}, would imply that $\phi$ is constant on $[\delta,R]$.
      To prove the claim, we can compute
      \begin{align*}
          \int_\delta^{+\infty} {\psi'_i}^2 \, Fdt &=
          \Big( \int_R^{i} t^2F(t)/A^2(t) \, dt \Big) 
          \Big/ \Big(\int_{R}^i \frac{s}{A(s)} \, ds \Big)^2 \\
          & = \Big(-\frac{i^2}{A(i)}+\frac{R^2}{A(R)}+2\int_R^i \frac{s}{A(s)} \, ds \Big)\Big/ \Big(\int_{R}^i \frac{s}{A(s)} \, ds \Big)^2\, .
      \end{align*}
      The right-hand side of the previous equality converges to zero as $i \to + \infty$ thanks to Step 2, thus proving \eqref{E|vanishingparabolicity}. It follows from \eqref{Eintbypartsparabolicity} that $\phi$ is constant on $[\delta,R]$. By the arbitrariness of $\delta$ and $R$, $\phi$ is constant on $(0,+\infty)$, as we claimed.
    \end{proof}

The next lemma follows by adapting Yau's result from \cite{Y} (see also \cite[Lemma $7.1$]{LiG}).

\begin{lemma} \label{L|intbypartsyau}
    Let $f,u \in \W^{1,2}_{\mathrm{loc}}(\bb{R}^n) \cap C(\bb{R}^n)$ be positive functions such that
    \begin{equation}
        \int_{\bb{R}^n} \nabla u \cdot \nabla \phi \, f d\Leb^n \geq 0 \quad \text{for every non-negative } \phi \in \W^{1,2}(\bb{R}^n) \cap C_c(\bb{R}^n)\, .
    \end{equation}
    Then,
    \[
    \int_{\bb{R}^n} \phi^2 \frac{|\nabla u|^2}{(1+u)^2} \, f d\Leb^n \leq 16 \int_{\bb{R}^n}|\nabla \phi|^2 \, f d \Leb^n\, , 
    \]
    for every non-negative $\phi \in \W^{1,2}(\bb{R}^n) \cap C_c(\bb{R}^n)$.
    \begin{proof}
        We define $v \in \W^{1,2}_{\mathrm{loc}}(\bb{R}^n) \cap C(\bb{R}^n)$ as $v:=(1+u)^{-1}$. Using the chain rule and the Leibniz rule, for every $\phi \in \W^{1,2}(\bb{R}^n) \cap C_c(\bb{R}^n)$, we get
        \[
        0 \geq \int_{\bb{R}^n} \nabla (\phi^2 v) \cdot \nabla v \, f d\Leb^n
        =2 \int_{\bb{R}^n} \phi v \nabla \phi \cdot \nabla v \, f d\Leb^n +\int_{\bb{R}^n}\phi^2 |\nabla v|^2 \, f d\Leb^n\, ,
        \]
        so that
        \begin{equation} \label{E1ss}
        \int_{\bb{R}^n} \phi^2 |\nabla v|^2 \, f d\Leb^n \leq 2 \int_{\bb{R}^n}|\phi v 
        \nabla \phi \cdot \nabla v| \, f d\Leb^n\, .
        \end{equation}
        Applying Young's inequality we get
        \[
        2 \int_{\bb{R}^n} \phi v \nabla \phi \cdot \nabla v \, f d\Leb^n
        \leq \frac{1}{2} \int_{\bb{R}^n} \phi^2 |\nabla v|^2 \, f d\Leb^n
        +8 \int_{\bb{R}^n} |\nabla \phi|^2 v^2 \, f d\Leb^n\, ,
        \]
        which combined with \eqref{E1ss} gives
        \[
        \int_{\bb{R}^n} \phi^2 |\nabla v|^2 \, f d\Leb^n \leq
        16 \int_{\bb{R}^n} |\nabla \phi|^2 v^2 \, f d\Leb^n\, .
        \]
        Recalling the definition of $v$, the statement follows.
    \end{proof}
\end{lemma}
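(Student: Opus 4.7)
My plan is to prove the inequality by testing the weak superharmonicity hypothesis against a carefully chosen non-negative test function built from $u$ itself. The natural substitution is $v := (1+u)^{-1}$, which lies in $\W^{1,2}_{\mathrm{loc}}(\bb{R}^n) \cap C(\bb{R}^n)$, is bounded by $1$, and satisfies $\nabla v = -(1+u)^{-2}\nabla u$. In particular, the quantity $|\nabla u|^2/(1+u)^2$ appearing on the left-hand side of the target inequality is exactly $|\nabla v|^2/v^2$, and using $v$ as the main object will cleanly produce it.

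The concrete test function I would use in the superharmonicity inequality $\int \nabla u \cdot \nabla \psi \, f\, d\Leb^n \geq 0$ is $\psi := \phi^2/(1+u) = \phi^2 v$, which is non-negative, compactly supported, and belongs to $\W^{1,2}(\bb{R}^n)$ as a product of a bounded Lipschitz function and a locally Sobolev function. Expanding via the Leibniz rule gives
\[
\nabla \psi = 2\phi v \, \nabla \phi - \phi^2 \frac{\nabla u}{(1+u)^2}\, ,
\]
so that the hypothesis becomes
\[
\int_{\bb{R}^n} \phi^2 \frac{|\nabla u|^2}{(1+u)^2}\, f\, d\Leb^n
\;\leq\; 2 \int_{\bb{R}^n} \phi \,\frac{\nabla u \cdot \nabla \phi}{1+u}\, f\, d\Leb^n\, .
\]
Applying Young's inequality $2ab \leq \tfrac{1}{2}a^2 + 2b^2$ with $a = \phi |\nabla u|/(1+u)$ and $b = |\nabla \phi|$, the right-hand side is controlled by $\tfrac{1}{2} \int \phi^2 |\nabla u|^2/(1+u)^2 \, f + 2\int |\nabla \phi|^2 \, f$; absorbing the first term on the left produces the desired bound (with constant $4$, from which the stated constant $16$ follows a fortiori).

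An equivalent route, probably closer in spirit to Yau's original argument, is to first observe that since $t \mapsto 1/(1+t)$ is convex and decreasing and $u$ is $f$-weighted superharmonic, the function $v$ is $f$-weighted subharmonic, i.e.\ $\int \nabla v \cdot \nabla \eta \, f\, d\Leb^n \leq 0$ for every non-negative $\eta \in \W^{1,2} \cap C_c$. Testing this against $\eta = \phi^2 v$, expanding $\nabla(\phi^2 v) \cdot \nabla v = 2\phi v\, \nabla \phi \cdot \nabla v + \phi^2 |\nabla v|^2$, and applying Young's inequality produces $\int \phi^2 |\nabla v|^2 f \leq C \int v^2 |\nabla \phi|^2 f$; then using $v \leq 1$ and the identity $|\nabla v|^2/v^2 = |\nabla u|^2/(1+u)^2$ concludes.

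The only real subtlety is verifying that $\phi^2 v$ (respectively $\phi^2/(1+u)$) is a legitimate test function despite $u$ being only locally Sobolev and continuous; this is immediate because $v$ is globally bounded by $1$ and $\phi$ has compact support, so the product lies in $\W^{1,2}(\bb{R}^n) \cap C_c(\bb{R}^n)$ with gradient computed by the Leibniz rule. Beyond this minor point, no deep machinery is required — the argument is just an integration-by-parts identity combined with Young's inequality and the chain rule, and the main ``trick'' is choosing the substitution $v = (1+u)^{-1}$ so that the weighted superharmonicity of $u$ translates directly into the quantity one wants to estimate.
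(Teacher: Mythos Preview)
Your proposal is correct and matches the paper's proof in substance. The paper's argument is precisely your ``equivalent route'': it sets $v=(1+u)^{-1}$, uses (implicitly, via the chain rule) that $v$ is $f$-weighted subharmonic, tests with $\eta=\phi^2 v$, and finishes with Young's inequality to obtain constant $16$. Your first route---testing $\psi=\phi^2 v$ directly in the hypothesis on $u$---is a minor streamlining that skips the intermediate subharmonicity observation and yields the sharper constant $4$; either version is fine for the purposes of the paper.
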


\subsubsection{Proof of Theorem \ref{T|splittingkesimo}, inductive step}\label{subsubsec:induction}

The goal of this section is complete the inductive proof of Theorem \ref{T|splittingkesimo} by proving the inductive step.
The next three lemmas are needed to carry out such inductive step.

\begin{lemma} \label{L|inductivestep}
    Let $(X,\sd,\m)$ be an $\RCD(0,N)$ space satisfying the following conditions:
    \begin{enumerate}
        \item $\mathrm{dim}_{\mathrm{e}}(X) \leq k+1$;
        \item $\int_1^{+\infty} \frac{t^{k+1}}{\m(B_t(p))} \, dt =+\infty$ for some $p \in X$;
        \item There exists a closed subgroup $\R^k\cong H <\mathrm{Iso}(X)$.
    \end{enumerate} 
     Let $H'' \leq H$ be a closed subgroup isomorphic to $\bb{R}$. Consider the quotient space $(X/H'',\sd'')$ equipped with the measure $\m''$ provided by Remark \ref{rmk:doublequotient}. Then, denoting by $B''_t(p'')$ balls in $(X/H'',\sd'')$ centered in $p'':=H'' \cdot p$, we have that: 
     \begin{enumerate}
        \item\label{item1induction} $\mathrm{dim}_{\mathrm{e}}(X/H'') \leq k$.
        \item\label{item2induction} $\int_1^{+\infty} \frac{t^{k}}{\m''(B''_t(p''))} \, dt =+\infty$.
        \item\label{item3induction}  $\mathrm{Iso}(X/H'')$ contains a closed subgroup isomorphic to $\bb{R}^{k-1}$.
    \end{enumerate} 
    \begin{proof}
        Item \ref{item1induction} immediately follows from Lemma \ref{L|genericquotients}.             To prove item \ref{item3induction}, it suffices to observe that the quotient $\bb{R}^{k-1}\cong H/H''$ is isomorphic to a closed subgroup of $\mathrm{Iso}(X/H'')$.
        Hence, we only need to show item \ref{item2induction}, i.e.~that
        \begin{equation} \label{E|quotientstepfinale}
            \int_1^{+\infty} \frac{t^{k}}{\m''(B''_t(p''))} \, dt =+\infty\, .
        \end{equation}
        Let $H' \leq H''$ be the subgroup isomorphic to $\bb{Z}$ generated by $1 \in H'' \cong \bb{R}$, and consider the natural $\RCD(0,N)$ structure $(X/H',\sd',\m')$. We denote by $B'_t(p')$ balls in $(X/H',\sd')$ centered in $p':=p+H'$. As an intermediate step, we show that there exists $c>0$ such that
        \begin{equation} \label{E|quotientintermediofin}
            \m''(B''_t(p'')) \leq \m'(B'_{t+c}(p')) \quad \text{for every } t >0\, .
        \end{equation}
        To this aim, we observe that by definition of $\m''$,
        \begin{equation} \label{E|quotientintermedio}
        \m''(B''_t(p''))=\m'\{x \in X/H':\sd'(x+H''/H',p') \leq t\}\, .
        \end{equation}
        Since $H''/H'$ is a compact group, there exists $c>0$, such that $p'+H''/H' \subset B'_c(p')$. Combining with \eqref{E|quotientintermedio}, \eqref{E|quotientintermediofin} follows.
        Our next claim is that there exists $c'>0$ such that
        \begin{equation} \label{E|growthcover}
        t \m'(B'_{t/2}(p')) \leq c' \m(B_t(p)) \quad \text{for every } t \geq 0 \text{ large enough}\, . 
        \end{equation}
        Note that the combination of \eqref{E|growthcover} and \eqref{E|quotientintermediofin} would give \eqref{E|quotientstepfinale}. To prove the claim, consider the projection $\pi:X \to X/H'$, and observe that it is a covering map. 
       Let $F \subset X$ be the fundamental domain containing $p$ provided by Lemma \ref{L|fundamentaldomain}. By construction,
         \begin{equation}
            \m'(B'_t(p')) \leq \m(B_t(p) \cap F)\, ,\quad \forall t>0\, .
         \end{equation}
        Assume w.l.o.g.\ that the generator $h$ of $H' \cong \bb{Z}$ satisfies $\sd(p,p+h)=1$. By the triangle inequality,
        \begin{align}
            B_t(p) \supset \bigcup_{i=1}^{\lfloor t/2 \rfloor} h^i+(B_{t/2}(p) \cap F)\, .
        \end{align}
        Since $\m((h^i+F) \cap F)=0$ by Lemma \ref{L|fundamentaldomain} again, we deduce that
        \begin{equation}
            \m(B_t(p)) \geq \lfloor t/2 \rfloor \m'(B'_{t/2}(p'))\, ,
        \end{equation}
        thus yielding \eqref{E|growthcover}.
    \end{proof}
\end{lemma}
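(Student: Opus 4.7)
The plan is to dispatch items \ref{item1induction} and \ref{item3induction} via general quotient principles, and then to concentrate on the volume growth estimate in item \ref{item2induction}, which is the substantive part.

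For item \ref{item1induction}, Lemma \ref{L|genericquotients} immediately gives $\mathrm{dim}_{\mathrm{e}}(X/H'') \leq \mathrm{dim}_{\mathrm{e}}(X) - 1 \leq k$, since $H'' \cong \bb{R}$ acts by measure-preserving isometries. For item \ref{item3induction}, the quotient group $H/H'' \cong \bb{R}^{k-1}$ descends to a faithful isometric action on $X/H''$ and embeds as a closed subgroup of $\mathrm{Iso}(X/H'')$; closedness follows from $H$ being closed in $\mathrm{Iso}(X)$ together with properness of the natural projection maps.

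The heart of the lemma is item \ref{item2induction}, and my plan is to factor the quotient $X \to X/H''$ through an intermediate $\bb{Z}$-covering. Let $H' < H''$ denote the discrete subgroup generated by a unit-length translation of $H'' \cong \bb{R}$, so that $H''/H' \cong S^1$ acts on the intermediate $\RCD(0,N)$ space $(X/H', \sd', \m')$ (cf.\ Remark \ref{rmk:doublequotient}) with quotient isometric to $X/H''$. I will then establish two volume comparison estimates:
\begin{equation}
    \m''(B''_t(p'')) \leq \m'(B'_{t+c}(p'))
    \quad \text{and} \quad
    t \cdot \m'(B'_{t/2}(p')) \leq C \cdot \m(B_t(p)),
\end{equation}
for suitable constants $c, C > 0$ and all $t$ large. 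The first inequality comes from the fact that balls in a quotient by a compact group action are obtained by saturating balls upstairs, and the $H''/H'$-orbit of the base point has diameter bounded by some $c$. The second follows from packing roughly $\lfloor t/2 \rfloor$ essentially disjoint $\bb{Z}$-translates of a fundamental domain $F \ni p$ inside $B_t(p)$, each contributing a set isometric via the covering projection to a portion of $B'_{t/2}(p')$. Chaining these inequalities and performing a change of variables yields
\begin{equation}
    \int_1^\infty \frac{t^k}{\m''(B''_t(p''))} \, dt \geq C' \int_1^\infty \frac{t^{k+1}}{\m(B_{2(t+c)}(p))} \, dt = +\infty
\end{equation}
using hypothesis $(2)$.

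The main technical obstacle I anticipate is the lower bound $t\,\m'(B'_{t/2}(p')) \leq C\,\m(B_t(p))$. It requires a measure-theoretically well behaved fundamental domain $F$ for the covering $\pi : X \to X/H'$, namely one containing $p$ with $\m((h^i \cdot F) \cap F) = 0$ for every nonzero $i$, so that $B_{t/2}(p) \cap F$ projects (up to a null set) injectively onto $B'_{t/2}(p')$. Granted such a fundamental domain, the triangle inequality applied to the unit generator $h$ of $H'$ produces the packing $B_t(p) \supset \bigsqcup_{i=1}^{\lfloor t/2 \rfloor} h^i \cdot (B_{t/2}(p) \cap F)$ up to null sets, and the remainder of the estimate is a direct computation.
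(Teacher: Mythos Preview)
Your proposal is correct and follows essentially the same approach as the paper: both dispatch items \ref{item1induction} and \ref{item3induction} via Lemma \ref{L|genericquotients} and the descent of $H/H'' \cong \bb{R}^{k-1}$, and both establish item \ref{item2induction} by factoring through the intermediate $\bb{Z}$-quotient $X/H'$, proving the pair of inequalities $\m''(B''_t(p'')) \leq \m'(B'_{t+c}(p'))$ and $t\,\m'(B'_{t/2}(p')) \leq C\,\m(B_t(p))$ exactly as you outline. The fundamental domain you anticipate needing is precisely the one supplied by Lemma \ref{L|fundamentaldomain} in the paper.
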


The following lemma is well known, thus its elementary proof will be omitted.

\begin{lemma}\label{lem:splittingISO}
    Let $(X,\sd)$ be a metric space, and let $H < \mathrm{Iso}(X \times \bb{R})$ be a closed subgroup. Consider the resulting quotient space $(X \times \bb{R})/H$, and assume that it is isometric to $Y \times \bb{R}$ for some metric space $(Y,\sd_y)$. Let $\pi:X \times \bb{R} \to Y \times \bb{R}$ be the projection on the quotient, and assume that $\pi((t,x))=(t,y)$ for some $x \in X$, $y \in Y$ and every $t\in \R$. Then
    \begin{equation}
    H<\mathrm{Iso}(X) \times \{\mathrm{Id}\}<\mathrm{Iso}(X\times\R)\, .
    \end{equation}
\end{lemma}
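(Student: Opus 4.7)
The plan is to show that every $h \in H$ preserves the $\bb{R}$-coordinate of points in $X \times \bb{R}$, since an isometry of the product metric that fixes the second coordinate on every point must be of the form $h_0 \times \mathrm{Id}$ with $h_0 \in \mathrm{Iso}(X)$.

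First I would pull back the projection onto the $\bb{R}$-factor of the quotient. Let $\tau \colon Y \times \bb{R} \to \bb{R}$ denote the second-coordinate projection, and define $f := \tau \circ \pi \colon X \times \bb{R} \to \bb{R}$. Since $\tau$ and $\pi$ are both $1$-Lipschitz, so is $f$, and $f$ is $H$-invariant by construction. The hypothesis of the lemma provides a distinguished point $x \in X$ such that $\pi$ restricted to the line $\{x\} \times \bb{R}$ is the coordinate-preserving identification with $\{y\} \times \bb{R}$, which precisely means $f(x, t) = t$ for every $t \in \bb{R}$.

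The heart of the argument is to upgrade this into the identity $f(x', t') = t'$ for \emph{every} $(x', t') \in X \times \bb{R}$. This follows from a Busemann-type two-sided estimate specific to the product metric: for every $s \in \bb{R}$, the $1$-Lipschitz property of $f$ together with $f(x, s) = s$ gives
\begin{equation}
  s - \sqrt{\sd_X(x', x)^2 + (s - t')^2} \;\leq\; f(x', t') \;\leq\; s + \sqrt{\sd_X(x', x)^2 + (s - t')^2}.
\end{equation}
Letting $s \to +\infty$, the lower bound tends to $t'$ since $\sqrt{\sd_X(x', x)^2 + (s - t')^2} = (s - t') + O(1/s)$; letting $s \to -\infty$, the upper bound tends to $t'$ by the same expansion. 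Hence $f(x', t') = t'$ identically.

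To conclude, fix $h \in H$ and $(x', t') \in X \times \bb{R}$, and write $h \cdot (x', t') = (x'', t'')$. $H$-invariance of $f$ forces $t'' = f(x'', t'') = f(x', t') = t'$, so $h$ preserves the $\bb{R}$-coordinate. Writing $h \cdot (x', t) = (h_t(x'), t)$ and using that $h$ is an isometry of the product metric, the identity
\begin{equation}
  \sd_X(h_{t_1}(x_1), h_{t_2}(x_2))^2 + (t_1 - t_2)^2 = \sd_X(x_1, x_2)^2 + (t_1 - t_2)^2
\end{equation}
specialized to $x_1 = x_2$ yields $h_{t_1} = h_{t_2}$, so $h_t \equiv h_0$ for some $h_0 \in \mathrm{Iso}(X)$, i.e., $h = h_0 \times \mathrm{Id}$. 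The only substantive step is the two-sided Busemann-type estimate; everything else is formal, so no real obstacle is expected.
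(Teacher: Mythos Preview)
Your proof is correct. The paper omits the proof entirely, stating that the lemma is well-known and elementary; your Busemann-type argument showing that the lifted $\bb{R}$-coordinate $f=\tau\circ\pi$ must agree with the second projection on all of $X\times\bb{R}$ is a clean and standard way to carry this out.
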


Applying iteratively Lemma \ref{lem:splittingISO} and the splitting theorem for $\RCD(0,N)$ spaces from \cite{gigli2013splittingtheoremnonsmoothcontext}, and using that lines can be lifted under metric submetries, we obtain the following: 

\begin{lemma} \label{L|liftsplit}
    Let $(X,\sd,\m)$ be an $\RCD(0,N)$ space, and let 
    $H<\mathrm{Iso}(X)$ be a closed subgroup.
    Assume that $X/H \cong \bb{R}^k \times Y$ isometrically. Then, $X \cong \bb{R}^k \times Y'$ isometrically, and
    \begin{equation}
        H<\mathrm{Iso}(X') \times \{\mathrm{Id}\}<\mathrm{Iso}(Y'\times\R^k)\, .
    \end{equation}
\end{lemma}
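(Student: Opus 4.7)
The plan is to proceed by induction on $k$, with the whole argument reducing to the case $k=1$. The base case $k=0$ is trivial with $Y':=X$. Granted the conclusion for $k-1$, one writes $X/H \cong \R \times (\R^{k-1} \times Y)$ and applies the case $k=1$ with $\R^{k-1} \times Y$ playing the role of the base factor: this gives an isometric splitting $X \cong \R \times X_1$, with $H$ acting trivially on the $\R$-factor and $X_1/H \cong \R^{k-1} \times Y$. The splitting theorem of \cite{gigli2013splittingtheoremnonsmoothcontext} ensures that $X_1$ is an $\RCD(0,N-1)$ space, and the inductive hypothesis applied to it yields $X_1 \cong \R^{k-1} \times Y'$ with $H$ acting trivially on the $\R^{k-1}$-factor. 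Concatenating the two splittings gives the full statement.

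For $k=1$, fix $y_0 \in Y$ and consider the line $\gamma(t) := (t,y_0)$ in $X/H \cong \R \times Y$. The quotient projection $\pi : X \to X/H$ is a submetry, since $H$ acts by isometries and the quotient distance is defined as the infimum of orbit distances, and $X$ is proper because it is $\RCD(0,N)$. One then lifts $\gamma$ to a line $\tilde\gamma : \R \to X$ as follows. Fix $p_0 \in \pi^{-1}(\gamma(0))$; for every $n \in \N$, the submetry property yields $p_n \in \pi^{-1}(\gamma(n))$ with $\sd(p_0,p_n)=n$. Join $p_0$ and $p_n$ by a unit-speed geodesic $\sigma_n^+: [0,n] \to X$. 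Since $\pi$ is $1$-Lipschitz and geodesics from $\gamma(0)$ to $\gamma(n)$ in $\R \times Y$ are unique (any competitor of length $n$ must have constant $Y$-component equal to $y_0$), it follows that $\pi \circ \sigma_n^+ = \gamma|_{[0,n]}$. Arzel\`a-Ascoli produces a ray $\sigma^+ : [0,\infty) \to X$ lifting $\gamma|_{[0,\infty)}$, and analogously a ray $\sigma^- : (-\infty,0] \to X$ lifting the negative half-line. The estimate $\sd(\sigma^-(-s),\sigma^+(t)) \geq \sd_{X/H}(\gamma(-s),\gamma(t)) = s+t$ combined with the triangle inequality forces the concatenation $\tilde\gamma$ to be an isometric embedding of $\R$ into $X$, as desired.

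By the splitting theorem of \cite{gigli2013splittingtheoremnonsmoothcontext}, the existence of the line $\tilde\gamma$ yields an isomorphism of metric measure spaces $X \cong X_1 \times \R$, with $X_1$ an $\RCD(0,N-1)$ space, such that $\tilde\gamma(t) = (x_1^*,t)$ for some $x_1^* \in X_1$. Under this identification, $\pi(x_1^*,t) = \pi(\tilde\gamma(t)) = \gamma(t) = (t,y_0)$ for every $t \in \R$, so the hypothesis of Lemma \ref{lem:splittingISO} is satisfied, with the $\R$-factor of $X_1 \times \R$ matched (via $\tilde\gamma$) to the $\R$-factor of $\R \times Y$, modulo the inessential reordering of factors. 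Lemma \ref{lem:splittingISO} then gives $H < \mathrm{Iso}(X_1) \times \{\mathrm{Id}\}$; in particular $H$ preserves every slice $X_1 \times \{t\}$ and descends to an action by measure-preserving isometries on $X_1$. Consequently $X/H \cong (X_1/H) \times \R$, and comparing with the given isomorphism $X/H \cong \R \times Y$, where both distinguished $\R$-factors arise from $\tilde\gamma$, the cancellation gives $X_1/H \cong Y$. Setting $Y' := X_1$ completes the $k=1$ case and hence the proof.

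The main technical ingredient is the line-lifting procedure, whose correctness rests on the submetry property of $\pi$ and on the properness of $\RCD(0,N)$ spaces. The subsequent subtlety is purely bookkeeping, but crucial: we must ensure that the $\R$-factor produced by the splitting theorem inside $X$ corresponds exactly, via $\pi$, to the chosen $\R$-factor in the given isometric decomposition of $X/H$. Only once this alignment is recorded does Lemma \ref{lem:splittingISO} apply and force the identification $X_1/H \cong Y$ rather than some unrelated splitting of the quotient. After this, both the induction and the conclusion are formal.
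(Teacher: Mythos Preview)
Your proposal is correct and follows exactly the approach the paper indicates in the sentence preceding the lemma: iterate Lemma~\ref{lem:splittingISO} and Gigli's splitting theorem, using that lines lift under the metric submetry $\pi:X\to X/H$. You have simply written out the details of this iteration carefully, including the alignment of the $\R$-factors needed to invoke Lemma~\ref{lem:splittingISO} and to cancel the $\R$-factor when identifying $X_1/H$ with the remaining piece of the quotient.
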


    \begin{proof}[Proof of Theorem \ref{T|splittingkesimo}]
        We argue by induction. The base step corresponds to Theorem \ref{T|keystepsplitting}.
        For the inductive step, we assume that the statement holds for every $j$ such that $1\le j\le  k-1$, and we show that it holds for $k$. Let $H' \cong \bb{R}$ be a closed subgroup of $H$, and consider the quotient space $(X/H',\sd'',\m'')$ with the induced $\RCD(0,N)$ structure, as in Remark \ref{rmk:doublequotient}.
        By Lemma \ref{L|inductivestep} the space $X/H'$ satisfies the hypotheses of the theorem for $k-1$. Thus, by the inductive hypothesis, $X/H' \cong \bb{R}^{k-1} \times Y$ as metric measure spaces, for some $\RCD(0,N-k+1)$ space $(Y,\dist_Y,\meas_Y)$. 
        By Lemma \ref{L|liftsplit}, we also have that $X \cong \bb{R}^{k-1} \times Y'$ isometrically, and that 
        \begin{equation}
        H'<\{\mathrm{Id}\}\times\mathrm{Iso}(Y')<\mathrm{Iso}(\bb{R}^{k-1}) \times \mathrm{Iso}(Y')\, .
        \end{equation}
        Hence, $Y'$ splits off a line isometrically thanks to the base case $k=1$. 
    \end{proof}

\subsubsection{Proof of Corollary \ref{cor:blowdownXbarunique}}\label{subsubsec:structureblowdowns}

We distinguish two cases, according to the volume growth rate at infinity. The case of Euclidean volume growth is much easier and does not require Theorem \ref{T|splittingkesimo}. 

\begin{lemma} \label{L|uniquetangentconemaximal}
    Let $(X,\sd,\haus^n)$ be an $\RCD(0,n)$ space with a closed subgroup $\Z^{n-2}\cong H < \mathrm{Iso}\{X\}$ and such that 
     \begin{equation} \label{E|volumegrowth}
            \lim_{s \to + \infty} \frac{\haus^n(B_s(p))}{s^n}=v>0 \quad \text{for some } p \in X\, .
        \end{equation}
    Then, either $X \cong \bb{R}^{n-1} \times Y$ isometrically, or the blow-down $X_\infty$ of $X$ is unique and isometric to  $\bb{R}^{n-2} \times Z$, for some $Z \in \{C(S^1_r),C([0,a])\}$ with $r \in (0,1)$, $a \in (0,\pi)$.    
    In the latter case, if $(X_\infty,H_\infty)$ is an equivariant blow-down of $(X,H)$, then $H_\infty < \mathrm{Iso}(\bb{R}^{n-2}) \times \mathrm{Iso}(Z)$ and $\pi_{\mathrm{Iso}(\bb{R}^{n-2})}(H_\infty)\cong\R^{n-2}$ acts by translations on $\R^{n-2}$.
    \begin{proof}
         By \eqref{E|volumegrowth} and \cite{DePhilippisGiglicone,DephilGigli} after \cite[Theorem 7.6]{ChCo1}, every blow-down $(X_\infty,\sd_\infty,p_\infty)$ of $X$ is a $n$-dimensional metric cone centered at its tip $p_\infty$. Fix one such $X_\infty$. By Lemma \ref{L|Gruppilimite}, there exists a closed subgroup of $ \mathrm{Iso}(X_\infty)$ isomorphic to $\bb{R}^{n-2}$. 
         By cone-splitting,
         $X_\infty \cong \bb{R}^{n-2} \times C(Z)$ isometrically, where $Z \in \{S^1_r,[0,a]\}$. If $Z=S^1_1$, then $X \cong \bb{R}^n$ by \cite[Theorem 1.6]{DephilGigli}. If $Z=[0,\pi]$, then $X \cong \bb{R}^{n-1} \times \bb{R}_+$ by \cite[Theorem 8.1]{zbMATH07514027}. Thus, we reduce to the case when $Z \in \{C(S^1_r),C([0,a])\}$ and $r \in (0,1), a \in (0,\pi)$.        
         By volume convergence, see \cite[Theorem 1.3]{DephilGigli} after \cite{ColdingConv,Colding1},
         $\haus^n(B_1(p_\infty))=v$. 
         Thus, the radius $r \in (0,1)$ or the parameter $a \in (0,\pi)$ such that $X_\infty \cong \bb{R}^{n-2} \times C(S^1_r)$ or $X_\infty \cong \bb{R}^{n-2} \times C([0,a])$  are determined by the asymptotic volume ratio of $X$. In particular, there are at most two non-isometric choices for the blow-down of $X$. By Lemma \ref{L|tangentconesconnected}, the blow-down of $X$ must be unique. 
\medskip

The second part of the statement now readily follows from \cite[Theorem 3.2]{Panalmoststable}.

    \end{proof} 
\end{lemma}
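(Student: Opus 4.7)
My plan is to exploit the rigidity of cones coming from Euclidean volume growth together with the equivariant blow-down structure. First I would observe that under the Euclidean volume growth assumption \eqref{E|volumegrowth}, the volume cone theorem of De Philippis--Gigli (building on Cheeger--Colding) guarantees that every blow-down $(X_\infty,\sd_\infty,p_\infty)$ is an $n$-dimensional metric cone with $p_\infty$ a tip. Fix one such blow-down and pick an equivariant version $(X_\infty,H_\infty)$ of it; by Lemma \ref{L|Gruppilimite} the limit group $H_\infty$ contains a closed subgroup isomorphic to $\bb R^{n-2}$ because the generators of $\bb Z^{n-2}$ get rescaled to zero displacement. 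Since $\bb R^{n-2}$ has no non-trivial compact subgroups, this action is free on the set of tips.

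Next I would apply the cone-splitting principle: a metric cone containing a line splits this line isometrically and the complement is again a metric cone. Iterating using the $\bb R^{n-2}$-orbit of the tip (which is a flat affine copy of $\bb R^{n-2}$ through the tip), I would conclude $X_\infty \cong \bb R^{n-2} \times C(W)$ isometrically where $C(W)$ is a two-dimensional $\RCD(0,2)$ cone. Then the classification of one-dimensional $\RCD$ spaces (Kitabeppu--Lakzian) forces $W$ to be either a circle $S^1_r$ with $r \in (0,1]$ or an interval $[0,a]$ with $a \in (0,\pi]$.

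I would then split off the degenerate cases. If $r = 1$ then $C(S^1_1) \cong \bb R^2$ and $X_\infty \cong \bb R^n$; the rigidity of volume cones (\cite[Theorem 1.6]{DephilGigli}) forces $X \cong \bb R^n$, which in particular splits $\bb R^{n-1}$. If $a = \pi$ then $C([0,\pi])$ is a closed half-plane and $X_\infty \cong \bb R^{n-1} \times \bb R_+$, and a half-space rigidity theorem (such as \cite[Theorem 8.1]{zbMATH07514027}) upgrades this to $X \cong \bb R^{n-1} \times \bb R_+$. In both situations $X$ splits off $\bb R^{n-1}$, handling the first alternative in the conclusion.

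Assuming from now on that $X$ does not split off an $\bb R^{n-1}$ factor, I would show uniqueness by combining two ingredients: the volume convergence theorem of Colding/De Philippis--Gigli, which expresses the asymptotic volume ratio of $X$ as $\haus^n(B_1(p_\infty))$ and hence pins down the parameter $r$ or $a$ (once the type is fixed), and the connectedness of the set of blow-downs in pGH topology (Lemma \ref{L|tangentconesconnected}), which rules out the simultaneous occurrence of both combinatorial types $C(S^1_r)$ and $C([0,a])$ as these yield topologically non-homeomorphic spaces. Finally, for the structure of $H_\infty$: since $Z = C(S^1_r)$ with $r < 1$ or $C([0,a])$ with $a < \pi$ contains no lines, Lemma \ref{L|productisometry} gives $\mathrm{Iso}(X_\infty) < \mathrm{Iso}(\bb R^{n-2}) \times \mathrm{Iso}(Z)$, so $H_\infty$ fits in this product; the $\bb R^{n-2}$-subgroup projects trivially into $\mathrm{Iso}(Z)$ (which is a compact-type group with no $\bb R^{n-2}$ subgroup) and thus injects into $\mathrm{Iso}(\bb R^{n-2})$, where any closed subgroup isomorphic to $\bb R^{n-2}$ is necessarily the translation subgroup. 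The main obstacle in this strategy is the uniqueness of the blow-down in the non-degenerate case: the connectedness argument must be combined with the fact that the volume ratio is a continuous invariant on the space of blow-downs, and that the two topological types cannot be deformed into each other without passing through a degenerate parameter value that would force the full splitting of $X$ and thus contradict our standing assumption.
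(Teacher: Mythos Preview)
Your argument for the uniqueness of the blow-down follows the paper's proof essentially step by step: cone structure from Euclidean volume growth, $\bb R^{n-2}$ in the limit group via Lemma~\ref{L|Gruppilimite}, cone-splitting to get $\bb R^{n-2}\times C(W)$ with $W$ one-dimensional, volume rigidity to dispose of the degenerate parameters $r=1$ and $a=\pi$, then volume convergence plus connectedness of blow-downs (Lemma~\ref{L|tangentconesconnected}) to force uniqueness.

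For the second part (structure of $H_\infty$), the paper simply cites \cite[Theorem~3.2]{Panalmoststable}. Your direct approach via Lemma~\ref{L|productisometry} is viable but the execution has gaps. First, the claim that the $\bb R^{n-2}$-subgroup $G<H_\infty$ ``projects trivially into $\mathrm{Iso}(Z)$'' because the latter has no $\bb R^{n-2}$ subgroup is wrong as stated: a continuous homomorphism $\bb R^{n-2}\to O(2)$ can certainly be nontrivial. The correct argument for injectivity of $\pi_1\colon G\to\mathrm{Iso}(\bb R^{n-2})$ is that $\ker\pi_1$ embeds as a closed subgroup of the compact group $\mathrm{Iso}(Z)$, hence is compact, hence trivial since $\bb R^{n-2}$ has no nontrivial compact subgroups. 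Second, before invoking ``any closed subgroup of $\mathrm{Iso}(\bb R^{n-2})$ isomorphic to $\bb R^{n-2}$ is the translation subgroup,'' you must check that $\pi_1(G)$ is closed; this follows because $G$ is closed in $\mathrm{Iso}(\bb R^{n-2})\times\mathrm{Iso}(Z)$ and $\mathrm{Iso}(Z)$ is compact. Third, the statement concerns $\pi_{\mathrm{Iso}(\bb R^{n-2})}(H_\infty)$, not just $\pi_1(G)$; to pass from one to the other use that $H_\infty$ is abelian (as a limit of abelian groups) and that the centralizer of the full translation subgroup in $\mathrm{Iso}(\bb R^{n-2})$ is the translation subgroup itself.
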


In the case where the volume growth is non-Euclidean, we begin by studying the blow-down of the quotient space with respect to the $\Z^{n-2}$-action.

    \begin{lemma}\label{lemma:blowdownquot}
    Let $(X,\sd,\haus^n)$ be an $\RCD(0,n)$ space with a closed subgroup $\Z^{n-2}\cong H < \mathrm{Iso}\{X\}$ and such that
     \begin{equation}
            \lim_{s \to + \infty} \frac{\haus^n(B_s(p))}{s^n}=0 \quad \text{for some } p \in X\, .
        \end{equation}
    Then, either $X \cong \bb{R}^{n-2} \times Z$ as metric measure spaces, or the blow-down of $X/H$ is unique up to pointed isometry and pointed isometric to $(\bb{R}_+,\sd_{\mathrm{eucl}},0)$.
    \begin{proof}
        Set $Y:=X/H$, and consider its induced $\RCD(0,n)$ structure $(Y,\bar{\sd},\bar{\m})$. By Theorem \ref{T|compactsplittingcovering2}, we can assume without loss of generality that $Y$ is non-compact.
        Consider any blow-down $(Y_\infty,\bar\sd_{\infty},\bar\m_{\infty},y_\infty)$ of $Y$. Up to taking a subsequence, the corresponding rescalings of $X$ converge to an equivariant blow-down $(X_\infty,\sd_\infty,\m_\infty,H_\infty,x_\infty)$ of $(X,\sd,\m,H)$.
        By Lemma \ref{L|Gruppilimite}, $H_\infty$ contains a closed subgroup isomorphic to $\R^{n-2}$.
               By Theorem \ref{T|EquivaraintConvergence}, $X_\infty/H_\infty \cong Y_\infty$ isometrically. Thus, by Lemma \ref{L|genericquotients}, $\mathrm{dim}_{\mathrm{e}}(Y_\infty) \leq 1$.
       Since $Y$ is non-compact, $\mathrm{dim}_{\mathrm{e}}(Y_\infty) =1$ and $Y_\infty$ is non-compact. Thus, by the classification of $1$-dimensional $\RCD$ spaces from \cite{KL}, $Y_\infty$ is isometric to $\bb{R}$ or $\bb{R}_+$. 
       
       If $Y$ has a blow-down isometric to $\R$, then $Y \cong \bb{R} \times Y'$, for some compact $\RCD(0,n-1)$ space $Y'$ by Lemma \ref{L|linetangentcone}. 
       By Lemma \ref{L|liftsplit}, $X \cong \bb{R} \times X'$, $H <\{\mathrm{Id}\}\times \mathrm{Iso}(X')<\mathrm{Iso}(\bb{R}) \times  \mathrm{Iso}(X')$. 
       Hence, with a slight abuse of notation, $X'/H \cong Y'$. By Theorem \ref{T|compactsplittingcovering2} again, $X'$ splits off $n-2$ Euclidean factors.

       Assume that every blow-down $(Y_\infty,y_\infty)$ of $Y$ is isometric to $\bb{R}_+$. In this case, we claim that $y_\infty=0$. If not, by taking a blow-up of $Y_\infty$ at $y_\infty$ and using that the blow-up of a blow-down is still a blow-down, we obtain a blow-down of $Y$ isometric to $\bb{R}$, thus reaching a contradiction.
    \end{proof}
    \end{lemma}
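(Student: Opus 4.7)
The plan is to study the blow-downs of the quotient space $Y := X/H$ equipped with its induced $\RCD(0,n)$ structure from Remark \ref{rmk:doublequotient}. If $Y$ is compact, then Theorem \ref{T|compactsplittingcovering2} produces an isometric splitting $X \cong \R^{n-2} \times Z$ immediately, so I may assume from here on that $Y$ is non-compact. My goal is then to show that every blow-down of $Y$ is pointed isometric to $(\R_+, 0)$, unless $X$ itself already splits into the claimed form.

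Given any blow-down $(Y_\infty, y_\infty)$ of $Y$, I would pass to an equivariant blow-down $(X_\infty, H_\infty, x_\infty)$ of $(X, H)$ along the same sequence of scales, using Proposition \ref{P|empGHconvergence} together with Theorem \ref{T|EquivaraintConvergence}, obtaining $X_\infty/H_\infty \cong Y_\infty$. By Lemma \ref{L|Gruppilimite}, $H_\infty$ contains a closed subgroup isomorphic to $\R^{n-2}$. Since $X$ has sub-Euclidean volume growth, the De Philippis--Gigli--Colding machinery (as used in the proof of Proposition \ref{prop:blowdownsplit}) yields $\mathrm{dim}_{\mathrm{e}}(X_\infty) \le n-1$. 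Iterating Lemma \ref{L|genericquotients} along $n-2$ independent one-parameter subgroups inside $H_\infty$, I then conclude $\mathrm{dim}_{\mathrm{e}}(Y_\infty) \le (n-1) - (n-2) = 1$. Since $Y$ is non-compact, so is $Y_\infty$, and the classification of $1$-dimensional $\RCD$ spaces from \cite{KL} forces $Y_\infty$ to be isometric either to $\R$ or to $\R_+$.

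I would then split into two cases. If some blow-down of $Y$ is isometric to $\R$, the dimension count above shows that every blow-down is one-dimensional, so Lemma \ref{L|linetangentcone} applies and $Y \cong \R \times Y'$ for some compact metric space $Y'$. Lifting via Lemma \ref{L|liftsplit}, I obtain $X \cong \R \times X'$ with $H$ embedding into $\{\mathrm{Id}\} \times \mathrm{Iso}(X')$, so that (with a slight abuse of notation) $X'/H \cong Y'$ is compact. A further application of Theorem \ref{T|compactsplittingcovering2} to the $\RCD(0,n-1)$ space $X'$ produces $X' \cong \R^{n-2} \times Z$, and hence $X \cong \R^{n-1} \times Z$, which is of the claimed form.

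If instead every blow-down of $Y$ is isometric to $\R_+$, uniqueness up to pointed isometry reduces to showing that the basepoint sits at the tip. Suppose toward contradiction that $y_\infty$ lies in the interior of $\R_+$; then a blow-up of $Y_\infty$ at $y_\infty$ is isometric to $\R$, and a standard diagonal argument realizes this iterated blow-down as an honest blow-down of $Y$, returning us to the first case. The main obstacle I anticipate is the dimension reduction step: I need the sub-Euclidean growth of $X$ to force a strict dimension drop in $X_\infty$, and I need the quotient by the abelian limit group $H_\infty$ to decrease dimension by the full $n-2$, which requires some care since $H_\infty$ might strictly contain its $\R^{n-2}$ subgroup.
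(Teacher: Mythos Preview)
Your proof is correct and follows essentially the same approach as the paper's. The obstacle you flag at the end is not a genuine one: since $H_\infty$ is abelian, $Y_\infty = X_\infty/H_\infty$ is isometric to a further quotient of $X_\infty/\R^{n-2}$, and the latter already has essential dimension at most $1$ by your iteration of Lemma~\ref{L|genericquotients}, so passing to the further quotient cannot increase the dimension.
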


The combination of Lemma \ref{L|uniquetangentconesubmaximal} below with Lemma \ref{L|uniquetangentconemaximal} is clearly sufficient to prove Corollary \ref{cor:blowdownXbarunique}.

\begin{lemma} \label{L|uniquetangentconesubmaximal}
    Let $(X,\sd,\haus^n)$ be an $\RCD(0,n)$ space with a closed subgroup $\Z^{n-2}\cong H < \mathrm{Iso}\{X\}$ and such that
     \begin{equation}\label{eq:noneuclblowdown}
            \lim_{s \to + \infty} \frac{\haus^n(B_s(p))}{s^n}=0 \quad \text{for some } p \in X\, .
        \end{equation}
    Then, either $X \cong \bb{R}^{n-2} \times Y$ isometrically, or the blow-down $X_\infty$ of $X$ is unique up to pointed isometry and isometric to $\bb{R}^{n-2} \times Z$, where $Z \in \{\bb{R},\bb{R}_+\}$.
    Moreover, if $(X_\infty,H_\infty)$ is an equivariant blow-down of $(X,H)$, then $H_\infty^2:=\{h^2:h \in H_\infty\}$ acts freely on $X_\infty$. 
    \begin{proof}
       Set $Y:=X/H$. By Lemma \ref{lemma:blowdownquot}, we can assume without loss of generality that the blow-down of $Y$ is unique (up to pointed isometry) and pointed isometric to $(\bb{R}_+,\sd_{\mathrm{eucl}},0)$.
       Let $(X_\infty,\sd_\infty, \m_\infty,H_\infty,x_\infty)$ be an equivariant blow-down of $(X,\sd,\m,H)$. By Theorem \ref{T|EquivaraintConvergence}, $X_\infty/H_\infty=\bb{R}_+$, with the orbit $x_\infty\cdot H_\infty$ corresponding to $0 \in \bb{R}_+$. 
        By Lemma \ref{L|Gruppilimite}, $H_\infty$ contains a closed subgroup isomorphic to $\R^{n-2}$.
        By \eqref{eq:noneuclblowdown}, $(X_\infty,\sd_{\infty},\m_{\infty})$ is an $\RCD(0,n)$ space with $\dim_{\mathrm{e}}(X_\infty)\leq n-1$. Thus, by Theorem \ref{T|splittingkesimo}, $X_\infty:=\bb{R}^{n-2} \times Z'$, where $\mathrm{dim}_{\mathrm{e}}(Z')  \leq 1$. Since $X_\infty/H_\infty\cong\R_+$, by Lemma \ref{L|genericquotients} we infer that $\dim_{\mathrm{e}}(Z')=1$.
    If a blow-down $X$ is isometric to $\bb{R}^{n-2} \times K$ where $K$ is compact then by taking another blow-down we obtain a contradiction.
       Therefore, all blow-downs of $X$ are isometric to $\bb{R}^{n-2} \times Z$ with $Z \in \{\R,\R_+\}$.           Moreover, if $X_\infty \cong \bb{R}^{n-2} \times \bb{R}_+$, then $x_\infty \in \partial X_\infty$. Hence, the space of \emph{pointed} blow-downs of $X$ contains at most two elements. Since such space is connected by Lemma \ref{L|tangentconesconnected}, it follows that blow-down of $X$ is unique up to pointed isometries.
       
     To prove the second part of the statement, 
    let $G_\infty$ denote a closed subgroup of $H_\infty$ isomorphic to $\bb{R}^{n-2}$ obtained from Lemma \ref{L|Gruppilimite}. Observe that $H_{\infty}$ is abelian, since all the approximating groups along the blow-down sequence are.
       The quotient group $H_\infty/G_\infty$ acts effectively by measure preserving isometries on $X_\infty/
       G_\infty$. By Lemma \ref{L|genericquotients}, $X_\infty/
       G_\infty$ has essential dimension at most $1$, and $(X_\infty/
       G_\infty)/(H_\infty/G_\infty)  \cong \bb{R}_+$. It follows that $X_\infty/
       G_\infty$ is isometric to either $\bb{R}$ or $\bb{R}_+$, and that $H_\infty/G_\infty$ is either generated by a reflection of $\bb{R}$, or the identity on $\bb{R}_+$. In both cases, if $h \in H_\infty$, then $h^2 \in G_\infty$. Since $G_{\infty}\cong\R^{n-2}$, $h^2$ acts freely on $X_{\infty}$.
    \end{proof}
\end{lemma}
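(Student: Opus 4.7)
My plan is to combine the preceding lemmas, with Theorem \ref{T|splittingkesimo} playing the central role, reducing to the setting where the quotient $X/H$ has a unique blow-down isometric to $(\R_+,0)$.

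First, set $Y := X/H$ endowed with its induced $\RCD(0,n)$ structure (Remark \ref{rmk:doublequotient}). By Lemma \ref{lemma:blowdownquot}, either we are already in the first alternative $X \cong \R^{n-2} \times Y'$, or every blow-down of $Y$ is pointed isometric to $(\R_+,\sd_{\mathrm{eucl}},0)$. In the latter case, let $(X_\infty,\sd_\infty,\m_\infty,H_\infty,x_\infty)$ be any equivariant blow-down of $(X,\sd,\m,H)$. By Theorem \ref{T|EquivaraintConvergence} we have $X_\infty/H_\infty \cong \R_+$ with $x_\infty \cdot H_\infty$ corresponding to $0$, and by Lemma \ref{L|Gruppilimite} the group $H_\infty$ contains a closed subgroup $G_\infty \cong \R^{n-2}$.

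Next, I would verify the hypotheses of Theorem \ref{T|splittingkesimo} for $(X_\infty,\sd_\infty,\m_\infty)$ with $k=n-2$. The sub-Euclidean volume growth of $X$ together with \cite[Theorem 1.4]{DephilGigli} (after \cite[Theorem 3.1]{Colding1}) gives $\dim_{\mathrm{e}}(X_\infty) \leq n-1$, and Bishop-Gromov provides the required polynomial bound on $\m_\infty(B_r)$, so assumption (2) of that theorem holds. Thus $X_\infty \cong \R^{n-2} \times Z'$ as metric measure spaces, with $\dim_{\mathrm{e}}(Z') \leq 1$; taking quotients and applying Lemma \ref{L|genericquotients} forces $\dim_{\mathrm{e}}(Z')=1$. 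The classification from \cite{KL} then gives $Z' \in \{\R,\R_+,S^1_r,[0,a]\}$. If $Z'$ were compact, a further blow-down of $X_\infty$ would produce $\R^{n-2}$ as a blow-down of $X$, whose quotient by the limit group would have dimension $\leq n-2$, contradicting that every blow-down of $Y$ is $\R_+$. Hence $Z \in \{\R,\R_+\}$.

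For the uniqueness of the pointed blow-down, when $X_\infty \cong \R^{n-2}\times \R_+$ the base point $x_\infty$ must lie on $\R^{n-2}\times\{0\}$, so up to pointed isometry there are at most two possibilities. Since the set of blow-downs is connected in pGH topology by Lemma \ref{L|tangentconesconnected}, uniqueness follows. Finally, for the moreover statement, I use that $H_\infty$ is abelian (it is a limit of $\Z^{n-2}$'s, which are abelian), so $G_\infty \triangleleft H_\infty$ and $H_\infty/G_\infty$ acts effectively on $X_\infty/G_\infty$ with quotient $\R_+$. By Lemma \ref{L|genericquotients}, $X_\infty/G_\infty$ has essential dimension at most $1$; being non-compact it is isometric to $\R$ or $\R_+$, and then $H_\infty/G_\infty$ is either generated by a reflection of $\R$ or is trivial. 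In both cases every element of $H_\infty/G_\infty$ has order dividing two, so $h^2 \in G_\infty$ for every $h \in H_\infty$. Since $G_\infty \cong \R^{n-2}$ has no nontrivial compact subgroups and acts properly by isometries, its action is free; therefore every non-identity element of $H_\infty^2$ is a non-identity element of $G_\infty$, and so acts without fixed points. The main technical obstacle is the application of Theorem \ref{T|splittingkesimo} to the blow-down, which requires the essential-dimension bound and the non-parabolic-type growth condition to be checked carefully.
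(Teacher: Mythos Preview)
Your proposal is correct and follows essentially the same route as the paper's proof: reduce via Lemma~\ref{lemma:blowdownquot} to the case where the blow-down of $X/H$ is $(\R_+,0)$, apply Theorem~\ref{T|splittingkesimo} to the equivariant blow-down (using the essential-dimension drop and Bishop--Gromov for the growth condition), rule out compact $Z'$ by a further blow-down, and conclude uniqueness via Lemma~\ref{L|tangentconesconnected}; the argument for the freeness of $H_\infty^2$ via the structure of $H_\infty/G_\infty$ acting on a $1$-dimensional quotient is also identical. Your write-up is in fact slightly more explicit than the paper's in verifying hypothesis~(2) of Theorem~\ref{T|splittingkesimo} and in invoking the $1$-dimensional classification from~\cite{KL}.
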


\begin{corollary} \label{L|LemmaTriv}
    Let $(X,\sd,\haus^n)$ be an $\RCD(0,n)$ space with a closed subgroup $\bb{Z}^{n-2}\cong H < \mathrm{Iso}\{X\}$, and assume that $X$ does not split a factor $\R^{n-2}$.     
    Then there is $k \in \{n-2,n-1\}$ such that the blow-down of $X$ is unique (up to pointed isometry), pointed isometric to $(\bb{R}^k \times Y,(0,y))$ for some pointed metric space $(Y,\dist_Y,y)$, and $\mathrm{Iso}(\bb{R}^k \times Y) \cdot (0,y) \subset \bb{R}^k \times \{y\}$.
    \begin{proof}
        The conclusion follows combining Lemmas \ref{L|uniquetangentconesubmaximal} and \ref{L|uniquetangentconemaximal} with Lemma \ref{L|productisometry}.
    \end{proof}
\end{corollary}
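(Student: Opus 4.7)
The plan is to split into two cases according to whether the asymptotic volume ratio $\mathrm{AVR}(X) := \lim_{s \to \infty} \haus^n(B_s(p))/s^n$ is positive or zero (the limit exists by Bishop-Gromov), and then apply directly the structural results of Lemmas \ref{L|uniquetangentconemaximal} and \ref{L|uniquetangentconesubmaximal} together with the isometry factorization provided by Lemma \ref{L|productisometry}. In each case, the standing hypothesis that $X$ does not split a factor $\R^{n-2}$ rules out the degenerate alternative in the corresponding lemma, leaving a unique blow-down of the form $\R^k \times Y$ with $k \in \{n-2,n-1\}$.

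If $\mathrm{AVR}(X) > 0$, I would invoke Lemma \ref{L|uniquetangentconemaximal}: the alternative $X \cong \R^{n-1} \times Y'$ is forbidden because it would split $\R^{n-2}$, so the unique blow-down is pointed isometric to $\R^{n-2} \times Z$ with either $Z = C(S^1_r)$ for some $r \in (0,1)$ or $Z = C([0,a])$ for some $a \in (0,\pi)$; I would set $k = n-2$, $Y = Z$, and take $y$ to be the tip. If instead $\mathrm{AVR}(X) = 0$, I would invoke Lemma \ref{L|uniquetangentconesubmaximal}: its first alternative is once again excluded, so the blow-down is unique up to pointed isometry and pointed isometric to $\R^{n-2} \times Z$ with $Z \in \{\R,\R_+\}$. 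If $Z = \R$, I would re-interpret the blow-down as $\R^{n-1} \times \{\mathrm{pt}\}$ and take $k = n-1$ (the basepoint condition is then trivial); if $Z = \R_+$, I would take $k = n-2$, $Y = \R_+$, and $y = 0$.

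To conclude in all non-trivial cases, I need to verify $\mathrm{Iso}(\R^k \times Y)\cdot (0,y) \subset \R^k \times \{y\}$. The key observation is that in each of the surviving cases $Y$ contains no line: this is immediate for $\R_+$ and for $C([0,a])$ with $a < \pi$, while for $C(S^1_r)$ with $r < 1$ any line would pass through the tip and force the cone angle to be $2\pi$, a contradiction. Lemma \ref{L|productisometry} then gives the factorization $\mathrm{Iso}(\R^k \times Y) < \mathrm{Iso}(\R^k) \times \mathrm{Iso}(Y)$, so the claim reduces to checking that $y$ is fixed by every isometry of $Y$. This is trivial when $Y = \R_+$ since $\mathrm{Iso}(\R_+)$ is the trivial group, and it holds for the cones because the tip is characterized metrically as the unique apex of the metric cone structure (any isometry must send the apex to an apex).

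I do not expect a serious obstacle here: the argument is essentially a bookkeeping combination of the three quoted lemmas. The only minor technicality is the verification that the conical factors $C(S^1_r)$, $C([0,a])$ arising in the Euclidean case contain no line and have an isometry-invariant tip, both of which follow from the strict positivity of the cone-angle defect ($r < 1$, $a < \pi$).
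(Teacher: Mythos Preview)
Your proposal is correct and follows essentially the same approach as the paper's one-line proof, which simply cites Lemmas \ref{L|uniquetangentconemaximal}, \ref{L|uniquetangentconesubmaximal}, and \ref{L|productisometry}; you have accurately filled in the bookkeeping those citations stand for. The only points left implicit are that the basepoint of the blow-down lands at the tip (maximal case) or on $\partial X_\infty$ (submaximal case with $Z=\R_+$), but these are contained in the proofs of the cited lemmas and your identification of $y$ is the right one.
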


\subsection{Equivariant harmonic functions with almost linear growth}\label{sec:equivariantharmonic}

We continue to work under the assumption that $(X,\dist,\haus^n)$ is an $\RCD(0,n)$ space such that $\mathrm{Iso}(X)$ contains a closed subgroup $H\cong \Z^{n-2}$.
Our first goal in this section is to construct an $\R^k$-valued harmonic map $u:X\to\setR^k$, with $k\in\{n-2,n-1\}$ determined by the dimension of the maximal Euclidean factor split-off by the blow-down of $X$ which is: 
\begin{itemize}
\item[i)] equivariant with respect to the $H^2<H$-action;
\item[ii)] almost-splitting up to linear transformation at every sufficiently large scale;
\item[iii)] \emph{non-degenerate} along the $H^2$-orbit of the base-point, after the linear corrections appearing in ii). 
\end{itemize}
For the precise meaning of the non-degeneracy condition in iii) we address the reader to the statement of Proposition \ref{P|slicing}. 
\medskip

Proposition \ref{P|HarmonicHuang} below addresses the first two items of the plan.

\begin{proposition} \label{P|HarmonicHuang}
    Let $(X,\sd,\haus^n)$ be an $\RCD(0,n)$ space with a closed subgroup $\bb{Z}^{n-2}\cong H < \mathrm{Iso}\{X\}$. Let $H^2:=\{h^2:h \in H\}$.
    Then there exist $k\in\{n-2,n-1,n\}$ and a harmonic map $u:X \to \bb{R}^{k}$ satisfying the following:
    \begin{enumerate}
        \item For every $\gamma \in H$ we have $u \circ \gamma^2(\cdot)=u(\cdot)+u(\gamma^2 \cdot p)$, and the map $\phi:H^2 \to \bb{R}^k$ given by $\phi(\gamma^2):=u(\gamma^2 \cdot p)$ is an injective group homomorphism.
        \item For every $\epsilon>0$ there exists $R_\epsilon>0$ such that for every $R>R_\epsilon$ there exists a positive definite lower triangular matrix $T_R \in \bb{R}^{k \times k}$ such that $T_R u:B_R(p) \to \bb{R}^{k}$ is an $\epsilon$-splitting map.
    \end{enumerate}
    \begin{proof}
        The statement holds trivially if $X \cong \bb{R}^{n-2} \times Z$. Thus we assume without loss of generality that this is not the case. By Lemmas \ref{L|uniquetangentconemaximal} and \ref{L|uniquetangentconesubmaximal}, the blow-down $X_\infty$ of $X$ is unique (up to isometry) and isometric to $\bb{R}^k \times Y$ for some $k \geq n-2$ and some metric space $(Y,\dist_Y)$ that does not split a line.
        By Theorem \ref{T|HuangHuang}, there exists a harmonic map $u=(u_1,\cdots,u_k):X \to \bb{R}^k$ vanishing in $p \in X$ such that: 
        \begin{enumerate}
            \item \label{item1HuangHuang} 
            $\{u_1,\cdots,u_k\}$ is a basis for
            \begin{multline}
                \Big\{
                v \in \W^{1,2}_{\mathrm{loc}}(X): \Delta v=0, \text{and  for every } \epsilon>0
                 \text{ there exist } R_\epsilon,C_\epsilon>0 : \\
                 v(x) \leq C_\epsilon \sd(x,p)^{1+\epsilon}+C_\epsilon \text{ for every } x \in X
                \Big\}\, .
                \end{multline}
                \item \label{item2HuangHuang}
                For every $\epsilon>0$, there exists $R_\epsilon>0$ such that for every $R>R_\epsilon$ there exists a positive definite lower triangular matrix $T_R \in \bb{R}^{k \times k}$, such that $T_R u:B_R(p) \to \bb{R}^{k}$ is an $\epsilon$-splitting map.
        \end{enumerate}
\medskip
        
        Let $\gamma \in H$. We claim that $u \circ \gamma^2(\cdot)=u(\cdot)+u(\gamma^2 \cdot p)$. To this aim observe that along any sequence of rescalings converging to the equivariant blow-down $(X_\infty,H_\infty,p_\infty)$ of $(X,H)$, $\gamma$ converges to a limiting element $\gamma_\infty \in H_\infty$ by Lemma \ref{L|convergenceelementsequi} and $\gamma^2$ converges to $\gamma_\infty^2$. Since $\gamma_\infty$ fixes $p_\infty$, by Lemmas \ref{L|uniquetangentconemaximal} and \ref{L|uniquetangentconesubmaximal}, we get that $\pi_{\mathrm{Iso}(\bb{R}^k)}(\gamma_\infty^2)=\mathrm{Id}_{\bb{R}^k}$.
        
        Consider a sequence $\epsilon_i \downarrow 0$, and corresponding $R_i \uparrow + \infty$ and $T_i \in \bb{R}^{k \times k}$ such that $T_i u:B_{R_i}(p) \to \bb{R}^{k}$ is an $\epsilon_i$-splitting map for every $i\in\N$.
        By \ref{item1HuangHuang}, there exist matrices $A_i \in \bb{R}^{k \times k}$, such that
        \begin{equation} \label{E|cambiobaseHuang}
            \frac{T_i u \circ \gamma^2 (\cdot)}{R_i}=\frac{A_i T_iu(\cdot)+T_iu(\gamma^2 \cdot p)}{R_i}\, .
        \end{equation}
        To conclude the proof of the claim, it suffices to show that $A_i=\mathrm{Id}_{\bb{R}^k}$ for every $i \in \bb{N}$.
        
        Since the sequence of maps $T_i u /R_i$ converges to $\pi_{\bb{R}^k}$ in the blow-down and $\gamma^2$ converges to an element $\gamma_\infty^2$ such that $\pi_{\mathrm{Iso}(\bb{R}^k)}(\gamma_\infty^2)=\mathrm{Id}_{\bb{R}^k}$, passing to the limit in \eqref{E|cambiobaseHuang} we obtain that $A_i \to \mathrm{Id}_{\bb{R}^k}$. Note that the matrices $A_i$ are all similar to each other by \eqref{E|cambiobaseHuang}. Thus, all their (complex) eigenvalues are equal to $1$. To conclude the proof it suffices to show that all the matrices $A_i$ are diagonalizable. To this aim, let $B \in \bb{R}^{k \times k}$ be such that
        \begin{equation} \label{E|cambiobaseHuang2}
            u \circ \gamma^2(\cdot)=Bu(\cdot)+u(\gamma^2 \cdot p)\, .
        \end{equation}
        The matrix $B$ is easily seen to be similar to the matrices $A_i$, so that it suffices to show that $B$ is diagonalizable.
        Iterating \eqref{E|cambiobaseHuang2}, we deduce
        \begin{equation} \label{E|cambiobaseHuang3}
            u \circ i\gamma^2(\cdot)=B^iu(\cdot)+u(i\gamma^2 \cdot p) \quad \text{ for every } i \in \bb{N}\, ,
        \end{equation}
        where $i\gamma^2$ denotes the sum in $H^2$ of $i$ copies of $\gamma^2$.
        Let $M \in \bb{C}^{k \times k}$ be the invertible matrix such that $MBM^{-1}=J$ is the Jordan form of $B$.
        From \eqref{E|cambiobaseHuang3} we deduce that
        \begin{equation} \label{E|cambiobaseHuang4}
            Mu \circ i\gamma^2(\cdot)=J^iMu(\cdot)+Mu(i\gamma^2 \cdot p) \quad \text{ for every } i \in \bb{N}\, .
        \end{equation}
         By the triangle inequality there exists $c>0$ such that $\sd(i\gamma^2 \cdot p,p) \leq ci$ for every $i \in \bb{N}$. Fix now $R>0$. By the Cheng-Yau gradient estimate \cite[Theorem 1.2]{HuaKellXia} and the almost linear growth assumption in item \ref{item1HuangHuang}, there exists a constant $c(R,n)>0$ such that
         \begin{equation}\label{eq:CY}
             \sup_{B_R(i\gamma^2 \cdot p)} |\nabla u| \leq c(R,n)i^{1/2} \quad \text{for every } i \in \bb{N}\, .
         \end{equation}
         Hence, from \eqref{E|cambiobaseHuang4} and \eqref{eq:CY} we deduce that 
         \begin{align} \label{E|cambiobaseHuang5}
         \nonumber
             \sup_{x \in B_R(p)} |J^i M u| = \sup_{x \in B_R(p)} |Mu (i\gamma^2\cdot x) - Mu (i\gamma^2\cdot p)| \\
             \leq |M| R \,\sup_{ B_R(i\gamma^2\cdot p)} |\nabla u| \leq Ci^{1/2}\, ,
             \end{align}
         for some constant $C>0$.
         Our next claim is that if $R>0$ is sufficiently large then
         \begin{equation}\label{eq:splitnondege}
         \Leb^{n-2}\big(u(B_R(p))\big)>0\, .
\end{equation}
         Note that the claim is almost straightforward in the smooth Riemannian case. Indeed, if $R>0$ is sufficiently large then $T_Ru$ is an almost splitting map. Thus, by a standard propagation of regularity argument originally due to \cite{Colding1}, there exists a point $x\in B_R(p)$ such that $\{\nabla u_1(x),\dots,\nabla u_k(x)\}$ are linearly independent. In the $\RCD$ case one can argue in a similar way to find a regular point $x\in B_R(p)$ and $0<r<R$ such that $T_Ru:B_r(x)\to\R^{n-2}$ is still an almost splitting map. Then one can notice that $T_Ru:B_r(x)\to\R^{n-2}$ can be extended to an almost splitting map $\overline{T_Ru}:B_{r/2}(x)\to\R^n$ (with the first $n-2$ components given by those of $T_Ru$). From \cite{BruePasqualettoSemolarectRCD} we have that $\Leb^n(\overline{T_Ru}(B_r(x)))>0$. Thus, from Fubini we deduce that $\Leb^{n-2}(B_r(x))>0$ and \eqref{eq:splitnondege} follows. 
         By \eqref{eq:splitnondege} there exist linearly independent vectors $\{e_i\}_{i=1}^{n-2} \subset Mu(B_R(p)) \subset \bb{C}^{n-2}$. Thus, possibly increasing $C>0$, \eqref{E|cambiobaseHuang5} implies that
         \begin{equation} \label{E|cambiobaseHuang6}
             |J^i| \leq Ci^{1/2}\, .
         \end{equation}
        If $J$ is not diagonal, using that it has all eigenvalues equal to $1$, then at least one entry of $J^i$ has modulus greater than $i$, a contradiction. This shows that $u \circ \gamma^2 (\cdot)=u(\cdot)+u(\gamma^2 \cdot p)$, as we claimed. 
\medskip

        Consider now the map $\phi:H^2 \to \bb{R}^k$ defined by $\phi(\gamma^2):=u(\gamma^2 \cdot p)$. The map is a group homomorphism thanks to the identity $u \circ \gamma^2(\cdot)=u(\cdot)+u(\gamma^2 \cdot p)$. Indeed, given any $\gamma_1,\gamma_2\in H^2$, we have that 
        \begin{align*}
            u \circ \gamma_1 \circ\gamma_2 (\cdot) = & [u(\cdot)+u(\gamma_1 \cdot p)] \circ \gamma_2 
            = u \circ \gamma_2(\cdot) +u(\gamma_1 \cdot p) 
            \\
            = &u (\cdot)+u(\gamma_1 \cdot p) +u(\gamma_2 \cdot p)\, .
        \end{align*}
        Therefore $\phi(\gamma_1 + \gamma_2)=\phi(\gamma_1 )+\phi( \gamma_2) $.
 \medskip
 
        To complete the proof, assume by contradiction that $\phi$ is not injective. Under such assumption there exists a subgroup $H'<H$ isomorphic to $\bb{Z}$ such that $u(H' \cdot p)=\{0\}$. Since we assumed at the beginning of the proof that $X$ does not split off $\bb{R}^{n-2}$ isometrically, by Corollary \ref{L|LemmaTriv} the blow-down of $X$ is pointed isometric to $(\bb{R}^k \times Y,(0,y))$, with $Y$ containing no lines, and it satisfies $\mathrm{Iso}(\bb{R}^k \times Y) \cdot (0,y) \subset \bb{R}^k \times \{y\}$.
        Moreover, the orbit $H' \cdot p$ pointed Hausdorff converges along a blow down sequence to an orbit $H'_\infty \cdot (0,y)$ for some $H'_\infty < \mathrm{Iso}(\bb{R}^k \times Y)$. By Lemma \ref{L|Gruppilimite}, $H_{\infty}$ has a closed subgroup isomorphic to $\R$. Hence, $\pi_{\bb{R}^k}(H'_\infty \cdot (0,y))$ contains a set homeomorphic to $\bb{R}$.
        At the same time, by item \ref{item2HuangHuang}, the map $\pi_{\bb{R}^k}$ is approximated, along a chosen blow down sequence, by maps of the form $T_i u/R_i$ for some $R_i \uparrow + \infty$ and some matrices $T_i \in \bb{R}^{k \times k}$. Since all these maps vanish on $H' \cdot p$,
        we deduce that $\pi_{\bb{R}^k}(H'_\infty \cdot (0,y)) = \{0\}$, a contradiction.  
    \end{proof}
\end{proposition}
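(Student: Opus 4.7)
The plan is to apply Theorem \ref{T|HuangHuang} to produce the harmonic map $u$, and then upgrade it to satisfy the equivariance and injectivity assertions using the rigidity of the blow-down established in the previous section together with standard gradient estimates. As a preliminary reduction, I would dispose of the trivial case in which $X$ already splits off $\R^{n-2}$ isometrically; otherwise Corollary \ref{L|LemmaTriv} says that the blow-down of $X$ is unique up to pointed isometry, equal to $\R^k \times Y$ for some $k \in \{n-2, n-1\}$, with $Y$ containing no line and with the property that the orbit of the basepoint $(0,y)$ under $\mathrm{Iso}(\R^k \times Y)$ is contained in $\R^k \times \{y\}$. Applying Theorem \ref{T|HuangHuang} to this setting produces a harmonic map $u = (u_1,\ldots,u_k) : X \to \R^k$ whose components form a basis of the space of almost-linear-growth harmonic functions vanishing at $p$; conclusion (2) is built into the output of that theorem.

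For the equivariance assertion in (1), fix $\gamma \in H$ and observe that $u \circ \gamma^2(\cdot) - u(\gamma^2 \cdot p)$ is again a harmonic map $X \to \R^k$ with almost linear growth vanishing at $p$, so by item \ref{item1HuangHuang} of Theorem \ref{T|HuangHuang} there exists a matrix $B \in \R^{k \times k}$ with $u \circ \gamma^2(\cdot) = B u(\cdot) + u(\gamma^2 \cdot p)$. I would show $B = \mathrm{Id}$ in two steps. First, dividing by $R_i$, composing with the almost-splitting matrix $T_{R_i}$, and passing to the blow-down, the maps $T_{R_i} u / R_i$ converge to $\pi_{\R^k}$ and $\gamma^2$ converges by Lemma \ref{L|convergenceelementsequi} to a limiting isometry $\gamma_\infty^2$ fixing the basepoint. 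By the structural description of the blow-down, $\pi_{\mathrm{Iso}(\R^k)}(\gamma_\infty^2) = \mathrm{Id}$, so the conjugates $A_i := T_{R_i} B T_{R_i}^{-1}$ tend to $\mathrm{Id}$. Since all $A_i$ are similar, they have all eigenvalues equal to $1$, and similarly for $B$.

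The key remaining point is to show that $B$ is diagonalizable, which forces $B = \mathrm{Id}$. For this I would iterate to obtain $u \circ (\gamma^2)^i = B^i u + u((\gamma^2)^i \cdot p)$ and invoke the Cheng-Yau gradient estimate \cite{HuaKellXia} combined with the almost linear growth: on a fixed ball $B_R$ centered at $(\gamma^2)^i \cdot p$ the gradient of $u$ is bounded by $C(R,n) i^{1/2}$, so $\sup_{B_R(p)} |B^i u| \lesssim i^{1/2}$. The nondegeneracy of $u$ on a sufficiently large ball $B_R(p)$—meaning that $u(B_R(p)) \subset \R^k$ contains $k$ linearly independent vectors—then forces $|B^i| \lesssim i^{1/2}$, which is incompatible with any non-trivial Jordan block (since nilpotent perturbations of $\mathrm{Id}$ give polynomial growth of degree $\geq 1$). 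Proving this nondegeneracy is where I expect the main technical work: it should follow from combining the almost-splitting property (extending $T_R u$ to an $\R^n$-valued almost-splitting map around a regular point as in \cite{BruePasqualettoSemolarectRCD}) with a Fubini argument to conclude that $u(B_R(p))$ has positive $\mathscr{L}^k$ measure.

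Finally, for injectivity of $\phi : H^2 \to \R^k$, suppose by contradiction that $\phi(\gamma^2) = 0$ for some nontrivial $\gamma$. Then by the equivariance just proved, $u$ vanishes on the entire $\Z$-orbit $\{(\gamma^2)^i \cdot p : i \in \Z\}$. Rescaling by $R_i \uparrow \infty$, this orbit Hausdorff-converges (by Lemma \ref{L|Gruppilimite}) to an orbit in the blow-down under a closed subgroup $H'_\infty$ containing an $\R$-subgroup, hence containing a subset homeomorphic to $\R$. Since this orbit lives inside $\R^k \times \{y\}$ by Corollary \ref{L|LemmaTriv}, its projection to $\R^k$ is homeomorphic to $\R$; but $u$ vanishing on the orbit means its approximants $T_{R_i} u / R_i$ vanish there, giving in the limit that $\pi_{\R^k}$ vanishes on a one-dimensional set—a contradiction.
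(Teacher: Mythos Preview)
Your proposal is correct and follows essentially the same approach as the paper's proof: reduce to the non-split case via Corollary \ref{L|LemmaTriv}, invoke Theorem \ref{T|HuangHuang}, then establish equivariance by showing the linear transformation $B$ has all eigenvalues $1$ (via blow-down convergence) and is diagonalizable (via Cheng--Yau and the nondegeneracy of $u(B_R(p))$), and finally prove injectivity by a blow-down contradiction. Your identification of the nondegeneracy step as the main technical point, and the tool for it (extending to an $\R^n$-valued almost-splitting map around a regular point and applying \cite{BruePasqualettoSemolarectRCD} plus Fubini), matches the paper exactly; in fact your formulation with $\mathscr{L}^k$ and $k$ linearly independent vectors is slightly cleaner than the paper's, which writes $\mathscr{L}^{n-2}$ in a place where $k$ would be more precise.
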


Th next lemma will be needed for the proof of Proposition \ref{P|slicing}. The result holds for actions of $\bb{Z}^k$, but we only state it for $k=n-2$, as this is the case that we will use later on.

\begin{lemma} \label{L|newlemma}
    Let $(X,\sd)$ be a proper separable metric space with a closed subgroup $H < \mathrm{Iso}\{X\}$ isomorphic to $\bb{Z}^{n-2}$ and generated by $\{\gamma_1,\cdots,\gamma_{n-2}\}$. Let $R >0$. Then, there are natural numbers $\{k_{j}\}_{j=1}^{n-2}$ and $C>0$ such that 
        \begin{equation}
            R/2n \leq \sd \Big(\mathrm{span}(\gamma_1,\cdots,\gamma_{j-1}) \cdot p,k_{j}\gamma_j \cdot p \Big) \leq R/2n+C \quad \text{for } j=1, \dots,n-2\, ,
        \end{equation}
        and for every natural number $t < k_{j}$ we have
        \begin{equation}
            \sd \Big(\mathrm{span}(\gamma_1,\cdots,\gamma_{j-1}) \cdot p,t \gamma_j \cdot p \Big) \leq R/2n\, .
        \end{equation}
    \begin{proof}
        Let $k_{1}$ be the smallest natural number such that 
        \begin{equation}
            \sd \Big(p,k_{1}\gamma_1 \cdot p\Big) \geq R/2n\, .
        \end{equation} 
        Observe that such $k_{1}$ exists since $\mathrm{span}(\gamma_1) \cdot p$ is non-compact.
        We claim that for every $j \in \{1,\cdots,n-3\}$ we have
        \begin{equation}\label{E|prepreslicing}
            \limsup_{s \to + \infty} \sd(\mathrm{span}(\gamma_1,\cdots,\gamma_j) \cdot p,s\gamma_{j+1} \cdot p)=+\infty\, .
        \end{equation}
        Assume by contradiction that this is not the case, i.e.,
        \begin{equation}
            \limsup_{s \to + \infty} \sd(\mathrm{span}(\gamma_1,\cdots,\gamma_j) \cdot p,s\gamma_{j+1} \cdot p)<c<+\infty\, .
        \end{equation}
        Then for every $s \in \bb{N}$ there exists $h_s \in \mathrm{span}(\gamma_1,\cdots,\gamma_j)$ such that $\sd(p,(h_s+s\gamma_{j+1}) \cdot p) \leq c$. Up to a not relabelled subsequence, $h_s+s\gamma_{j+1} \to h_\infty \in H$, thus contradicting the assumption that $H \cong \bb{Z}^{n-2}$ as a topological group. Hence \eqref{E|prepreslicing} holds.
\medskip

        For every $j \in \{1,\cdots,n-3\}$, we let $k_{j}$ be the smallest natural number such that 
        \begin{equation} \label{E|keyslicingdefk}
            \sd \Big(\mathrm{span}(\gamma_1,\cdots,\gamma_{j}) \cdot p,k_{j}\gamma_{j+1} \cdot p \Big) \geq R/2n\, .
        \end{equation}
        These numbers exist by \eqref{E|prepreslicing}. At the same time, there exists a constant $C>0$ such that
        \begin{equation}
            \sd(p,\gamma_j \cdot p) \leq C \quad \text{for every } j \in \{1, \cdots n-2\}\, .
        \end{equation}
        Therefore
         \begin{align}
            \sd  \Big(\mathrm{span}(\gamma_1,\cdots,\gamma_{j-1}) \cdot p,k_{j}\gamma_j \cdot p\Big) 
             \leq&\,  \sd \Big(\mathrm{span}(\gamma_1,\cdots,\gamma_{j-1}) \cdot p,(k_{j}-1)\gamma_j \cdot p \Big)\\
            &+
            \sd \Big((k_{j}-1)\gamma_j \cdot p,k_{j}\gamma_j \cdot p \Big) \\
             \leq &\,  R/2n+C\, .
        \end{align}
        Combining with \eqref{E|keyslicingdefk}, we obtain
        \begin{equation} \label{E|keyslicingsumup}
            R/2n \leq \sd \Big(\mathrm{span}(\gamma_1,\cdots,\gamma_{j-1}) \cdot p,k_{j}\gamma_j \cdot p \Big) \leq R/2n+C\, ,
        \end{equation}
       thus concluding the proof.
    \end{proof}
\end{lemma}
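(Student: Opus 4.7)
The plan is to construct the $k_j$ inductively, at each step taking the smallest positive integer that pushes $k_j \gamma_j \cdot p$ past distance $R/2n$ from the orbit of the previously selected directions. Explicitly, for $j=1$ let $k_1$ be the smallest natural number with $\sd(p, k_1 \gamma_1 \cdot p) \geq R/2n$; such an integer exists because the orbit $\mathrm{span}(\gamma_1) \cdot p$ is discrete and unbounded (else $\gamma_1$ would generate a compact subgroup of $H \cong \Z^{n-2}$). Inductively, for each $j \in \{1, \dots, n-3\}$, assuming $k_1, \dots, k_j$ are defined, set
\begin{equation}
k_{j+1} := \min \bigl\{ s \in \mathbb{N} : \sd\bigl(\mathrm{span}(\gamma_1, \dots, \gamma_j) \cdot p,\, s \gamma_{j+1} \cdot p\bigr) \geq R/2n \bigr\}.
\end{equation}
Provided this set is nonempty, the minimality immediately yields the second conclusion: for every $t < k_{j+1}$, the distance is strictly below $R/2n$.

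The main obstacle is showing that the above set is nonempty, i.e.\ that
\begin{equation}
\limsup_{s \to +\infty} \sd\bigl(\mathrm{span}(\gamma_1, \dots, \gamma_j) \cdot p,\, s \gamma_{j+1} \cdot p\bigr) = +\infty
\end{equation}
for every $j \in \{1, \dots, n-3\}$. Suppose to the contrary that this limsup is bounded by some $c < +\infty$. Then for every $s \in \mathbb{N}$ we can pick $h_s \in \mathrm{span}(\gamma_1,\dots,\gamma_j)$ with $\sd(p,\, (h_s + s\gamma_{j+1}) \cdot p) \leq c$. Since $H$ acts properly on $X$ and is discrete (as it is isomorphic to $\Z^{n-2}$ as a topological group), the set $\{h \in H : \sd(p, h \cdot p) \leq c\}$ is finite. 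By the pigeonhole principle there exist $s \neq s'$ with $h_s + s\gamma_{j+1} = h_{s'} + s'\gamma_{j+1}$, so $(s-s')\gamma_{j+1} \in \mathrm{span}(\gamma_1, \dots, \gamma_j)$, contradicting the fact that $\gamma_1, \dots, \gamma_{n-2}$ are free generators of $\Z^{n-2}$.

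For the upper bound, by minimality we have $\sd(\mathrm{span}(\gamma_1, \dots, \gamma_{j-1}) \cdot p,\, (k_j - 1)\gamma_j \cdot p) \leq R/2n$, so the triangle inequality gives
\begin{equation}
\sd(\mathrm{span}(\gamma_1, \dots, \gamma_{j-1}) \cdot p,\, k_j \gamma_j \cdot p) \leq R/2n + \sd(p, \gamma_j \cdot p).
\end{equation}
Setting $C := \max_{1 \le j \le n-2} \sd(p, \gamma_j \cdot p)$ completes the proof. The argument is essentially a greedy selection combined with an elementary properness-plus-freeness observation; the only subtle point is the unboundedness claim, which is precisely where the hypothesis that $H$ is a \emph{closed} subgroup isomorphic to $\Z^{n-2}$ (rather than merely an abstract $\Z^{n-2}$-action) is used.
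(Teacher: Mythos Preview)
Your proof is correct and follows essentially the same approach as the paper: greedily choose each $k_j$ as the first time the orbit point escapes distance $R/2n$ from the lower-dimensional orbit, then use the triangle inequality for the upper bound. The only minor difference is in the unboundedness step: the paper argues that a subsequence of $h_s + s\gamma_{j+1}$ converges in $H$ (contradicting discreteness), whereas you observe directly that the set $\{h \in H : \sd(p, h\cdot p) \leq c\}$ is finite and apply pigeonhole to produce a linear dependence among the generators---these are really the same argument, with yours spelled out slightly more explicitly.
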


Proposition \ref{P|slicing} formulates and establishes a precise version of the non-degeneracy condition referred to in item iii) of the plan discussed at the beginning of the section.

\begin{proposition} \label{P|slicing}
    Let $(X,\sd,\haus^n)$ be an $\RCD(0,n)$ space with a closed subgroup $\bb{Z}^{n-2}\cong H < \mathrm{Iso}\{X\}$. Assume that $X$ does not split a factor $\R^{n-2}$. Let $k \in \{n-2,n-1\}$ be given by Corollary \ref{L|LemmaTriv} such that the blow-down of $X$ is unique and isometric to $\bb{R}^k \times Y$, where $Y$ does not split any line.
     Let $u:X \to \bb{R}^{k}$ be the harmonic map given by Proposition \ref{P|HarmonicHuang} and let $\eps_i\downarrow 0$, $R_i\uparrow +\infty$, and $T_i\in\R^{k\times k}$ be such that $T_iu:B_{R_i}(p)\to\R^k$ is an $\epsilon_i$-splitting map for every $i\in\N$. 

    If $k=n-1$, for every $i \in \bb{N}$ we let $\Sigma_i \subset \bb{R}^{n-1}$ be the hyperplane containing $T_i u (H^2 \cdot p)$ and let $\pi_i:\bb{R}^{n-1} \to \Sigma_i \cong \bb{R}^{n-2}$ be the orthogonal projection on $\Sigma_i$.
    Define $v_i:B_{R_i}(p) \to \bb{R}^{n-2}$ by $v_i:=T_iu$ if $k=n-2$, and $v_i:=\pi_i \circ T_iu$ if $k=n-1$.
    Then $v_i$ is a $\epsilon_i$-splitting map for every $i\in\N$ and there exist constants $C_1,C_2>0$ such that for every $i$ sufficiently large
    \begin{equation} \label{E|slicingestimate}
        \Leb^{n-2} \Big(v_i(\{x \in B_{R_i}(p): \sd(x,H^2\cdot p) \leq C_1 \}) \Big) \geq C_2R_i^{n-2}.
    \end{equation}
    \begin{proof}
     By Lemmas \ref{L|uniquetangentconemaximal} and \ref{L|uniquetangentconesubmaximal}, either $X \cong \bb{R}^{n-2} \times Y$ 
        isometrically, or the blow-down of $X$ is unique and isomorphic to $\bb{R}^k \times Y'$ for some 
        $k \in \{n-2,n-1\}$.
        Let $\{\gamma_j\}_{j=1}^{n-2}$ be generators of $H^2$ and, for every $i$, let $\{k_{i,j}\}_{j=1}^{n-2}$ be the natural numbers given by Lemma \ref{L|newlemma} relative to the scale $R_i$. 
        We divide the rest of the proof in three steps.
        \medskip
        
        \textbf{Step 1}: Let $\Sigma \subset \bb{R}^k$ be the $(n-2)$-dimensional vector space containing $u(H \cdot p)$ and consider the map ${T_i}_{|\Sigma}:\Sigma \to \Sigma_i$. We claim that
        \begin{equation}
        \label{E|slicingfinalstep}
         \liminf_{i \to +\infty} \frac{\mathrm{det}({T_i}_{|\Sigma}) \Pi_{j=1}^{n-2}k_{i,j}}{R^{n-2}_i} > 0\, .
     \end{equation}
     
        By construction of the $\{k_{i,j}\}_{j=1}^{n-2}$ (see Lemma \ref{L|newlemma}), for every $j \in \{2,\cdots,n-2\}$ there exists $h_{i,j} \in \mathrm{span}(\gamma_1,\cdots,\gamma_{j-1})$ such that
        \begin{align} \label{E|keyslicing1}
        \nonumber
            \frac{\sd \Big (p,(h_{i,j}+k_{i,j}\gamma_j) \cdot p \Big )}{R_i}      
            &=
            \frac{\sd \Big( \mathrm{span}
    (\gamma_1,\cdots,\gamma_{j-1}) \cdot p
            ,k_{i,j}\gamma_j \cdot p
            \Big)}{R_i}\\
            &\to 1/2n
            \quad \text{as } i\uparrow + \infty\, .
        \end{align}
        
        Let $(X_\infty,\sd_\infty,H^2_\infty,p_\infty)$ be an equivariant blow-down obtained as limit of the sequence $(X,\sd/R_i,H^2)$. Recall that $X_\infty \cong \bb{R}^k \times Y$, and set $p_\infty=(0,y_\infty)$. 
        The groups $H^2_j:=\mathrm{span}(\gamma_1,\cdots,\gamma_j)$ converge, up to not relabelled subsequences, to groups $H^2_{\infty,j} <H^2_\infty$, and the orbits $H^2_j \cdot p$ converge to $H^2_{\infty,j} \cdot p_\infty$. Thanks to \eqref{E|keyslicing1} and Lemma \ref{L|LemmaTriv}, the points $q_{i,j}:=(h_{i,j}+k_{i,j}\gamma_j) \cdot p$ converge to limit points 
        \begin{equation}  \label{E|keyslicing4}     
        q_{\infty,j} \in H^2_{\infty,j} \cdot p_\infty \subset \bb{R}^k \times \{y_\infty\} \subset X_\infty
        \end{equation}such that
        \begin{equation} \label{E|keyslicing2}
            \sd_\infty(H^2_{\infty,j-1} \cdot p_\infty,q_{\infty,j})=\sd_\infty(p_\infty,q_{\infty,j})=1/2n\, .
        \end{equation}
    Thanks to \eqref{E|keyslicing4} and \eqref{E|keyslicing2} we have
    \begin{equation} \label{E|indipendence}
        \sd_{\mathrm{eucl}}(\pi_{\bb{R}^k}(H^2_{\infty,j-1} \cdot p_\infty),\pi_{\bb{R}^k}(q_{\infty,j}))=1/2n\, .
    \end{equation}
    
    The maps $T_iu/R_i$ converge to the projection $\pi_{\bb{R}^k}:X_\infty \to \bb{R}^k$ along the blow-down sequence, so that
    \begin{equation}T_iu/R_i(q_{i,j}) \to \pi_{\bb{R}^k}(q_{\infty,j}) \in \bb{R}^k \quad \text{as } i \uparrow + \infty\, .
    \end{equation}
    The set $\pi_{\bb{R}^k}(H^2_{\infty,j-1} \cdot p_\infty)$ is contained in a $(j-1)$-dimensional vector space since it is the limit of the nets $T_iu/R_i(H^2_{j-1} \cdot p)$. Moreover, it contains the set $\{\pi_{\bb{R}^k}(q_{\infty,s})\}_{s=1}^{j-1}$. At the same time, by Lemma \ref{L|Gruppilimite}, $H^2_{\infty,j-1}$ contains a closed group isomorphic to $\bb{R}^{j-1}$. Thus, by invariance of the domain, $\pi_{\bb{R}^k}(H^2_{\infty,j-1} \cdot p_\infty)$ coincides with the aforementioned $(j-1)$-dimensional vector space.    
    It follows from \eqref{E|indipendence} that $\{\pi_{\bb{R}^k}(q_{\infty,j})\}_{j=1}^{n-2}$ are linearly independent. Hence we obtain that
    \begin{align}
         &\frac{\mathrm{det}({T_i}_{|\Sigma}) \Pi_{j=1}^{n-2}k_{i,j}}{R^{n-2}_i}   \Big(  u(\gamma_1 \cdot p) \wedge \cdots \wedge u(\gamma_{n-2} \cdot p)\Big)
         \\
         & = \frac{1}{R^{n-2}_i} \Big( [T_i u(q_{i,1})] \wedge \cdots \wedge [T_i u(q_{i,n-2})]\Big) \\
         &\to \pi_{\bb{R}^{k}}(q_{\infty,1}) \wedge \cdots \wedge \pi_{\bb{R}^{k}}(q_{\infty,n-2}) \neq 0\, ,
     \end{align}
    thus completing the proof of \eqref{E|slicingfinalstep}.
    \medskip

\textbf{Step 2}: 
Arguing as we did in the proof of \eqref{eq:splitnondege}, we can show that
there exists $i_0 \in \bb{N}$ such that the $\epsilon_{i_0}$-splitting map $T_{i_0}u:B_{R_{i_0}}(p) \to \bb{R}^{k}$ satisfies
         \begin{equation}
             \Leb^{k}(T_{i_0}(u(B_{R_{i_0}}(p))))>0\, .
         \end{equation}
         Thus, setting $C_1:=R_{i_0}$, we have 
         \begin{equation} \label{E|slicingpalle}
         \Leb^k(u(B_{C_1}(p)))>0\, .
     \end{equation}
        Moreover, modulo passing to a sublattice of $H^2$, we may suppose that the sets $u(h \cdot B_{C_1}(p))$ are all disjoint as $h$ varies in $H^2$. 
           We claim that 
    \begin{equation} \label{E|keyslicingClaim1}
        \#\big((H^2\cdot p) \cap B_{R_i-C_1}(p)\big) \geq \Pi_{j=1}^{n-2}k_{i,j}\, .
    \end{equation}
    
    To prove \eqref{E|keyslicingClaim1}, fix $j \in \{2,\cdots,n-2\}$. For every natural number $m_{i,j} \leq  k_{i,j}$ there exists $h_{i,j}^{m_{i,j}} \in H^2_{j-1}$ such that
    \begin{equation}
        \sd(p,(h_{i,j}^{m_{i,j}}+m_{i,j}\gamma_j) \cdot p) \leq R_i/2n +C < \frac{R_i-C_1}n\, ,
    \end{equation}
    where $C>0$ is the constant of Lemma \ref{L|newlemma} and the last inequality holds for $i$ large enough.
    Hence by triangle inequality we have
    \begin{equation}\label{eq:allcontained1}
        \sd\Big(p, \bigg[ \sum_{j=1}^{n-2} \big(h_{i,j}^{m_{i,j}}+m_{i,j}\gamma_j\big)\bigg] \cdot p\Big) < R_i-C_1\, , 
        \end{equation}
        for every
        \begin{equation}\label{eq:allcontained2}
           (m_{i,1}, \cdots, m_{i,n-2}) \in \bb{N}^{n-2} \cap \Pi_{j=1}^{n-2} [1,k_{i,j}]\, .
    \end{equation}
     Given \eqref{eq:allcontained1} and \eqref{eq:allcontained2}, to prove \eqref{E|keyslicingClaim1} it suffices to establish the following claim: if 
     \begin{equation}\label{eq;indexdiff}
     (m_{i,1}, \cdots, m_{i,n-2}) \neq (m'_{i,1}, \cdots, m'_{i,n-2})\, , 
     \end{equation}
     then 
     \begin{equation}
         \sum_{j=1}^{n-2} \big(h_{i,j}^{m_{i,j}}+m_{i,j}\gamma_j \big)\neq  \sum_{j=1}^{n-2}\big(h_{i,j}^{m'_{i,j}}+m'_{i,j}\gamma_j\big) \,.
     \end{equation}
     To prove the claim, assume by contradiction that \eqref{eq;indexdiff} holds but
     \begin{equation} \label{E|asskeyslicing1}
         \sum_{j=1}^{n-2}\big(h_{i,j}^{m_{i,j}}+m_{i,j}\gamma_j\big) = \sum_{j=1}^{n-2}\big(h_{i,j}^{m'_{i,j}}+m'_{i,j}\gamma_j\big)\, .
     \end{equation}
     Then $m_{i,n-2}=m'_{i,n-2}$ since $\{\gamma_1,\dots,\gamma_{n-2}\}$ is a basis of $H^2$ and 
     \begin{equation}
         \sum_{j=1}^{n-2}h_{i,j}^{m_{i,j}}\,, \,\,   \sum_{j=1}^{n-2}h_{i,j}^{m'_{i,j}}\in \mathrm{span}
    (\gamma_1,\cdots,\gamma_{n-3})\, .
     \end{equation}

     Hence $h_{i,n-2}^{m_{i,n-2}}=h_{i,n-2}^{m'_{i,n-2}}$ and thus
     \begin{equation} 
         \sum_{j=1}^{n-3} \big(h_{i,j}^{m_{i,j}}+m_{i,j}\gamma_j\big) = \sum_{j=1}^{n-3}\big(h_{i,j}^{m'_{i,j}}+m'_{i,j}\gamma_j\big) \,.
     \end{equation}
     Iterating this procedure, we deduce $(m_{i,1}, \cdots, m_{i,n-2}) = (m'_{i,1}, \cdots, m'_{i,n-2})$, a contradiction.
     \medskip 

     \textbf{Step 3}: To conclude the proof we treat the case $k=n-2$ and $k=n-1$ separately. 
     
     Assume first that $k=n-2$.
     Using \eqref{E|keyslicingClaim1} and the equivariance of $u$, we obtain that for every $i \in \bb{N}$ there holds
     \begin{align*}
         & \Leb^{n-2}  \Big(\frac{T_i}{R_i}\left(u \big(\{x \in B_{R_i}(p): \sd(x,H^2 \cdot p) \leq C_1 \} \big) \right)\Big) \\
         &= \frac{\mathrm{det}(T_i)}{R_i^{n-2}}
         \Leb^{n-2} \Big( u \big(\{x \in B_{R_i}(p): \sd(x,H^2 \cdot p) \leq C_1 \} \big) \Big) \\
         & \geq 
         \frac{\mathrm{det}(T_i) \Pi_{j=1}^{n-2}k_{i,j}}{R^{n-2}_i}
         \Leb^{n-2} \Big( u(B_{C_1}(p)) \Big)\, .
     \end{align*}
    Combining with \eqref{E|slicingfinalstep} and \eqref{E|slicingpalle}, we obtain the result for $k=n-2$.

    Assume now that $k=n-1$. 
    Let $\mathcal{H}_i$ be the set of affine hyperplanes of $\bb{R}^{n-1}$ parallel to $\Sigma_i$. Observe that for every $A \subset \bb{R}^{n-1}$ we have
    \begin{align}\label{eq:slicehyper1}
         \nonumber \Leb^{n-2}  (\pi_i \circ T_i (A)) \geq&\,  \sup_{H \in \mathcal{H}_i} \Leb^{n-2} (T_i (A) \cap H )\\ =&\, \sup_{H \in \mathcal{H}_i} \Leb^{n-2} \Big(  T_i (A \cap T_i^{-1}(H)) \Big)\, . 
     \end{align}
      Let $v \in \bb{R}^{n-1}$ be a unit vector normal to $\Sigma:=T_i^{-1}(\Sigma_i) \subset \bb{R}^{n-1}$. We have that
     \begin{align}\label{eq:slicehyper2}
        \nonumber \sup_{H \in \mathcal{H}_i} \Leb^{n-2} \Big(  T_i (A \cap T_i^{-1}(H)) \Big) & =
         \sup_{t \in \bb{R}} \Leb^{n-2} \Big(  T_i \big((A+tv) \cap T_i^{-1}(\Sigma_i)\big) \Big) \\
         & 
         = \mathrm{det}({T_i}_{|\Sigma}) \sup_{t \in \bb{R}} \Leb^{n-2}     \big((A+tv) \cap \Sigma\big)\, .
     \end{align}
     Applying \eqref{eq:slicehyper1} and \eqref{eq:slicehyper2} with
     \begin{equation}
         A:=u(\{x \in B_{R_i}(p): \sd(x,H^2 \cdot p) \leq C_1 \})\, ,
     \end{equation}
     we deduce that
     \begin{multline}
         \Leb^{n-2} \Big(\pi_i \circ \frac{T_i}{R_i}\left(u(\{x \in B_{R_i}(p): \sd(x,p+H^2) \leq C_1 \}\right) \Big)  
         \\
         \geq \frac{\mathrm{det}({T_i}_{|\Sigma})}{R^{n-2}_i}
        \sup_{t \in \bb{R}} \Leb^{n-2}\big([u(\{x \in B_{R_i}(p): \sd(x,H^2 \cdot p) \leq C_1 \})+tv] \cap \Sigma\big)\, .
     \end{multline}
     By \eqref{E|keyslicingClaim1}, $[u(\{x \in B_{R_i}(p): \sd(x,H^2 \cdot p) \leq C_1 \})+tv] \cap \Sigma$ contains $\Pi_{j=1}^{n-2}k_{i,j}$ disjoint copies of $[u(B_{C_1}(p))+tv] \cap \Sigma$. Hence
     \begin{align}\label{eq:measlowerbdlast}
        \nonumber & \Leb^{n-2}  \Big(\pi_i \circ \frac{T_i}{R_i}u(\{x \in B_{R_i}(p): \sd(x,p+H^2) \leq C_1 \}) \Big)  
         \\
         & \geq \frac{ \mathrm{det}({T_i}_{|\Sigma})\Pi_{j=1}^{n-2}k_{i,j}}{R^{n-2}_i}
        \sup_{t \in \bb{R}} \Leb^{n-2}([u(B_{C_1}(p))+tv] \cap \Sigma)\, .
     \end{align}
     The statement then follows combining \eqref{eq:measlowerbdlast} with \eqref{E|slicingfinalstep} and \eqref{E|slicingpalle}.   
    \end{proof}
\end{proposition}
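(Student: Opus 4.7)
The first assertion that $v_i$ is an $\epsilon_i$-splitting map is essentially immediate. When $k=n-2$ we have $v_i=T_iu$ and there is nothing to prove. When $k=n-1$, $\pi_i$ is an orthogonal projection onto the hyperplane $\Sigma_i\subset\bb{R}^{n-1}$, so the components of $v_i=\pi_i\circ T_iu$ are orthonormal linear combinations of the components of $T_iu$; such a linear operation preserves the $\epsilon$-splitting condition (both the harmonicity and the almost-orthonormality of the gradients on $L^2$ averages). The real content is the measure estimate \eqref{E|slicingestimate}.

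My plan for the measure bound is to use the equivariance $u\circ\gamma^2=u+u(\gamma^2\cdot p)$ from Proposition~\ref{P|HarmonicHuang} to fill out the image of $u$ with many disjoint translates of a small ``good'' set. The first step is to fix a radius $C_1>0$ for which $\Leb^k(u(B_{C_1}(p)))>0$. This is not automatic, but it can be extracted from the fact that $T_iu$ is eventually an $\epsilon_i$-splitting map, combined with a standard argument producing regular points on a bounded scale and the Fubini-type observation from \cite{BruePasqualettoSemolarectRCD} (one first extends the splitting map to an $n$-splitting map and then projects to the first $k$ coordinates). After passing, if necessary, to a sublattice of $H^2$ so that the translates $u(h\cdot B_{C_1}(p))=u(B_{C_1}(p))+\phi(h)$ are pairwise disjoint (which is possible because $\phi:H^2\to\bb{R}^k$ is an injective homomorphism by Proposition~\ref{P|HarmonicHuang}), I would count lattice points: using Lemma~\ref{L|newlemma} with the scale $R_i$, I obtain integers $k_{i,j}$ such that the ``box'' $\{\sum_{j}m_j\gamma_j:1\le m_j\le k_{i,j}\}$ consists of $\prod_j k_{i,j}$ distinct elements of $H^2$ all lying in $B_{R_i-C_1}(p)$ (distinctness follows by an inductive argument using that $\{\gamma_1,\dots,\gamma_{n-2}\}$ is a basis of $H^2$).

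The main technical step is then to prove
\begin{equation}
    \liminf_{i\to\infty}\frac{\det(T_i|_\Sigma)\,\prod_{j=1}^{n-2}k_{i,j}}{R_i^{n-2}}>0,
\end{equation}
where $\Sigma\subset\bb{R}^k$ is the $(n-2)$-dimensional subspace containing $u(H\cdot p)$. This is where the uniqueness of the blow-down from Corollary~\ref{L|LemmaTriv} enters. Along the rescaling $(X,\sd/R_i)$ one converges to the equivariant blow-down $(\bb{R}^k\times Y,H_\infty^2,(0,y_\infty))$, and the maps $T_iu/R_i$ converge to $\pi_{\bb{R}^k}$. The points $k_{i,j}\gamma_j\cdot p$, adjusted by elements of $\mathrm{span}(\gamma_1,\dots,\gamma_{j-1})$, converge to limit points $q_{\infty,j}$ lying in $\bb{R}^k\times\{y_\infty\}$ (by the second part of Corollary~\ref{L|LemmaTriv}) at distance $1/(2n)$ from both $p_\infty$ and from the orbit $H^2_{\infty,j-1}\cdot p_\infty$. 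Since $H^2_{\infty,j-1}$ contains a closed subgroup isomorphic to $\bb{R}^{j-1}$ by Lemma~\ref{L|Gruppilimite}, invariance of domain forces $\pi_{\bb{R}^k}(H^2_{\infty,j-1}\cdot p_\infty)$ to be exactly a $(j-1)$-dimensional vector subspace containing $\pi_{\bb{R}^k}(q_{\infty,s})$ for $s<j$. Hence $\{\pi_{\bb{R}^k}(q_{\infty,j})\}_{j=1}^{n-2}$ are linearly independent, and the wedge product $T_iu(q_{i,1})\wedge\cdots\wedge T_iu(q_{i,n-2})$, which is essentially $\det(T_i|_\Sigma)\prod_j k_{i,j}$ times a fixed nonzero wedge in $\Sigma$, converges after rescaling by $R_i^{n-2}$ to a nonzero element.

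Putting it together: for $k=n-2$, equivariance and disjointness give $\Leb^{n-2}(v_i(A_i))\ge \det(T_i)\prod_j k_{i,j}\cdot\Leb^{n-2}(u(B_{C_1}(p)))$, which is $\gtrsim R_i^{n-2}$ by the previous step. For $k=n-1$, the image $u(H^2\cdot p)$ lies in an $(n-2)$-dimensional affine subspace, so I would apply a slicing argument: for any $A\subset\bb{R}^{n-1}$, $\Leb^{n-2}(\pi_i(A))\ge\sup_{t}\Leb^{n-2}((A+t\nu)\cap\Sigma_i)$, where $\nu$ is normal to $\Sigma_i$. Because the translates $T_iu(B_{C_1}(p))+T_iu(h\cdot p)$ for $h\in H^2$ are parallel copies stacked along the lattice $T_iu(H^2\cdot p)\subset\Sigma_i$, the $\prod_jk_{i,j}$ copies all contribute to the same slice, giving the lower bound $\det(T_i|_\Sigma)\prod_jk_{i,j}\cdot\sup_t\Leb^{n-2}([u(B_{C_1}(p))+t\nu]\cap\Sigma)$. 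The hardest piece will be the blow-down computation of Step~3, which simultaneously needs the unique-blow-down rigidity, the fixing of the orbit of $p_\infty$ in $\bb{R}^k\times\{y_\infty\}$, and the correct bookkeeping of the lattice-rank invariant $\det(T_i|_\Sigma)\prod_j k_{i,j}$.
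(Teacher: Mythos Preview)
Your proposal is correct and matches the paper's approach essentially step for step: the blow-down computation for the liminf bound on $\det(T_i|_\Sigma)\prod_j k_{i,j}/R_i^{n-2}$, the lattice-point count via Lemma~\ref{L|newlemma}, and the final assembly treating $k=n-2$ and $k=n-1$ with the same slicing trick. The only slip is that the $\prod_j k_{i,j}$ orbit points lying in $B_{R_i-C_1}(p)$ are the corrected elements $\sum_j(h_{i,j}^{m_j}+m_j\gamma_j)$ with $h_{i,j}^{m_j}\in\mathrm{span}(\gamma_1,\dots,\gamma_{j-1})$ rather than the bare $\sum_j m_j\gamma_j$---this is exactly why the inductive distinctness argument you flag is nontrivial.
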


\subsection{Proof of Theorem \ref{thm:Bettirigid}}\label{sec:proofThmbettirigid}

       Let $(\overline{X},\overline{\dist},\haus^n,p)$ denote the universal cover of $(X,\dist,\haus^n)$. Since $\pi_1(X)$ has a subgroup isomorphic to $\Z^{n-2}$ there is a closed subgroup $\Z^{n-2}\cong H<\mathrm{Iso}(\overline{X})$.
            By Corollary \ref{L|LemmaTriv}, either $\overline{X} \cong \bb{R}^{n-2} \times Y$ 
        isometrically, or the blow-down of $\overline{X}$ is unique up to isometry and isometric to $\bb{R}^k \times Y'$ for some 
        $k \in \{n-2,n-1\}$. 
        \medskip
        
        Let $\epsilon_i 
        \downarrow 0$, 
        $R_i \uparrow + \infty$,
        and $v_i:B_{R_i}(p) \to \bb{R}^{n-2}$ be the 
        numbers and the $\epsilon_i$-splitting maps given by Proposition \ref{P|slicing}.             
        Fix $\tilde{\epsilon}>0$ sufficiently small.
        By the slicing theorem, see \cite{slicingbrena} after \cite[Theorem 1.23]{CheegerNaberCodim4}, for every $i$ large enough 
        there exists $A_{i,\tilde{\epsilon}} \subset B^{\bb{R}^{n-2}}_{R_i/2}(0)$ with $\Leb^{n-2}(B^{\bb{R}^{n-2}}_{R_i/2}(0) \setminus A_{i,\tilde{\epsilon}})<\tilde{\epsilon}R_i^{n-2}$ such that the following holds:
        For every $x \in v_i^{-1}(A_{i,\tilde\epsilon}) \cap B_{R_i}(p)$, and for every  $r \in (0,R_i/2]$, there exists a matrix $A_{r} \in \bb{R}^{(n-2) \times (n-2)}$ such that $A_{r}{v_i}_{|B_r(x)} \to \bb{R}^{n-2}$ is an $\tilde\epsilon$-splitting map.

        By \eqref{E|slicingestimate} in Proposition \ref{P|slicing}, there exists $C_1>0$ such that for $i$ large enough we have
        \begin{equation}
            A_{i,\tilde\epsilon} \cap v_i\big(\{x \in B_{R_i}(p): \sd(x,H^2\cdot p) \leq C_1 \} \big) \neq \emptyset\, .
        \end{equation}
        Fix now a sequence $\tilde{\epsilon}_j \downarrow 0$. For every $j\in\N$ let $i(j)$ be such that 
         \begin{equation}
            A_{i(j),\tilde\epsilon_j} \cap v_{i(j)}\big(\{x \in B_{R_{i(j)}}(p): \sd(x,H^2\cdot p) \leq C_1 \} \big) \neq \emptyset\, .
        \end{equation}
        Accordingly, for every $j$ we take $x_{j} \in B_{R_{i(j)}}(p)$ such that 
        \begin{itemize}
\item[i)] $v_{i(j)}(x_{j}) \in A_{i(j),\tilde\epsilon_j}$; 
\item[ii)] $\sd(x_j,H^2 \cdot p) \leq C_1$.
\end{itemize}
\medskip

        Fix any $R \gg 1$. Applying \cite[Theorem 3.8]{zbMATH07514027} {(after \cite[Lemma 1.21]{CheegerNaberCodim4})} we obtain a sequence $\delta_j \downarrow 0$ such that the balls $B_{R}(x_j)$ are $\delta_j R$-close in GH sense to balls 
        \begin{equation}
        B_R^{\bb{R}^{n-2} \times Y_j}((0,y_i))\, , 
        \end{equation}
        for some pointed metric spaces $(Y_j,y_j)$. Since $\sd(x_j,H^2 \cdot p) \leq C_1$, modulo applying an isometry of $H^2$, we can assume that $x_j \in \bar{B}_{C_1}(p)$ for every $j$.
        Up to another subsequence, $x_j \to x_\infty \in \bar{B}_{C_1}(p)$, so that $B_R(x_\infty)$ is isometric to $B_R^{\bb{R}^{n-2} \times Y}((0,y))$ for some pointed metric space $(Y,y)$. Hence, the same holds for $B_{R-C_1}(p)$. Since $R \gg 1$ is arbitrary, the statement follows.

\section{A vanishing theorem for the simplicial volume}

The goal of this section is to prove the following vanishing result for the simplicial volume of $\RCD(0,n)$ manifolds:

\begin{theorem}\label{thm:simplicialvolthm}
For every $n\ge 2$, the following holds. If $(X,\dist,\haus^n)$ is an $\RCD(0,n)$ space and $X$ is an oriented triangulable topological $n$-manifold, then $||X||=0$. 
\end{theorem}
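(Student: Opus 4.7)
The plan is to deduce Theorem \ref{thm:simplicialvolthm} from the Gromov--Frigerio--Moraschini vanishing result cited in the introduction as Theorem \ref{T|Frig}: the simplicial volume of a triangulable open $n$-manifold vanishes provided it carries a countable open cover by relatively compact sets, of multiplicity at most $n$, which is amenable at infinity in the sense of Definition \ref{def:amenableatinfinity}. Thus everything reduces to constructing such a cover of $X$, and then checking amenability of the images of the fundamental groups of the cover elements at infinity.

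The construction of the cover splits according to the volume growth of $(X,\sd,\haus^n)$. First I would handle the Euclidean growth case: when $\liminf_{r\to\infty} \haus^n(B_r(p))/r^n>0$, I would cover $X$ by geometrically scaled metric annuli $A_k:=B_{R_{k+1}}(p)\setminus\overline{B}_{R_k}(p)$, slightly thickened so that consecutive pieces overlap; a geometric progression of the radii $R_k$ yields multiplicity $\le 2\le n$. The key geometric input is Theorem \ref{T|ZhouAnelli}: under Euclidean volume growth on an $\RCD(0,n)$ space, the relative fundamental group of a large annulus inside a slightly larger one is finite, whose proof in the excerpt proceeds by a Cheeger--Colding blow-down contradiction argument. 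This will give the amenability at infinity once we enlarge each annulus by the controlled collar appearing in Theorem \ref{T|ZhouAnelli}.

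For the sub-Euclidean case the annular cover degenerates, and I would instead use the small $(n-1)$-Uryson width approach of Papasoglu (after Guth and Gromov). Explicitly, I would introduce a conformal rescaling of $(X,\sd,\haus^n)$ which stretches the large scales of the space in a way that exploits the sub-Euclidean volume decay; this will let me show that the conformally deformed space has $(n-1)$-Uryson width small compared to its scale on each large ball. Applying \cite{PapasogluGAFA} then yields, on each scale, an open cover of multiplicity $\le n$ by relatively compact pieces of controlled diameter. Glueing these scale-adapted covers along a radial exhaustion, with small overlaps, produces a single locally finite cover of $X$ of multiplicity $\le n$, with each cover element having diameter small relative to its distance from $p$.

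In both cases the final step is the amenability at infinity of the cover. Since the diameter of each $U_k$ is small compared with any fixed scale, and the Ricci lower bound is zero (which, under rescaling, is preserved), the local Margulis lemma applies to each $U_k$ sufficiently far from $p$: in the smooth case this is the Kapovitch--Wilking theorem from \cite{KapovitchWilking}, and in the $\RCD$ setting it is the (non-effective) local Margulis lemma for $\RCD(K,N)$ spaces established in Appendix \ref{appendixA}. Either version implies that the image of $\pi_1(U_k)$ in $\pi_1(X)$ contains a nilpotent subgroup of finite index, hence is virtually nilpotent and thus amenable. The main obstacle I anticipate is the non-Euclidean branch: making the conformal deformation quantitatively compatible with both Papasoglu's width theorem and the Margulis scale, so that the cover elements are simultaneously of multiplicity $\le n$, of small diameter on the \emph{original} metric, and fall in the regime where the Margulis lemma applies.
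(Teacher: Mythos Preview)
Your proposal is correct and follows essentially the same approach as the paper: split by volume growth, use annular covers together with Theorem \ref{T|ZhouAnelli} in the Euclidean case, and the conformal rescaling plus Papasoglu's width theorem plus the local Margulis lemma of Appendix \ref{appendixA} in the sub-Euclidean case, then feed the resulting cover into Theorem \ref{T|Frig}. The only omission is the compact case, which the paper dispatches in one line via the virtual abelianity of $\pi_1(X)$ from \cite{MondinoWei} and Gromov's amenability criterion.
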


Theorem \ref{thm:simpvolintro} clearly follows from Theorem \ref{thm:simplicialvolthm}, since smooth manifolds are triangulable by \cite{Whiteheadtriang}.

\begin{remark}
The assumption that $X$ is triangulable is related to the presence of the same assumption in \cite[Corollary 11]{FrigerioMoraschini} on which our proof depends. As remarked in \cite{FrigerioMoraschini} it seems possible that such assumption is not necessary.  
\end{remark}

The closest available result in the literature is due to J.~Lott and Z.~Shen \cite[Corollary 1.1]{LottShen}. They show that a complete Riemannian manifold $(M^n,g)$ with quadratic curvature decay and such that 
\begin{equation}
    \liminf_{r\to\infty}\frac{\mathrm{vol}(B_r(p))}{r^n}=0
\end{equation}
for some $p\in M$ satisfies $||M||=0$.
\medskip

For the proof of Theorem \ref{thm:simplicialvolthm} we are going to distinguish two cases: the Euclidean volume growth case, i.e., the case when
\begin{equation}
    \limsup_{r\to \infty}\frac{\haus^n(B_r(p))}{r^n}>0\, ,
\end{equation}
and the non-Euclidean volume growth case, when 
\begin{equation}
        \limsup_{r\to \infty}\frac{\haus^n(B_r(p))}{r^n}=0\, ,
\end{equation}
discussed in Section \ref{sec:vanish_Eucl} and Section \ref{sec:vanish_nonEucl} respectively. 
In both cases, the key technical tool is provided by \cite[Corollary 11]{FrigerioMoraschini}, which we state below as Theorem \ref{T|Frig}. We will need some background and terminology which we proceed to recall.

\begin{definition}[Amenable subsets of a topological space]
    Let $X$ be a path-connected topological space, $Y$ be a subset of $X$ and $i:Y\to X$ be the inclusion map. We say that $Y$ is amenable (in $X$) if for every path-connected component $Y'$ of $Y$ the image of $i_*: \pi_1(Y') \to \pi_1(X)$ is an amenable subgroup of $\pi_1(X)$.
\end{definition}

\begin{definition}[Covers and multiplicity]
Let $X$ be a path-connected topological space, and
    let $\{U_i\}_{i\in I}$ be a cover of $X$, i.e.~$U_i\subset X$ for every $i\in I$ and $X=\bigcup_{i\in I} U_i$. We say that the cover is open if $U_i$ is open in $X$ for every $i\in I$, and amenable if each if $U_i$ is amenable in $X$ for every $i\in I$. The multiplicity of $\{U_i\}_{i\in I}$ is defined as
    \begin{align*}
    \mathrm{mult}  \big(\{U_i\}_{i\in I}\big) := \sup \{n\in \N \,:\, \exists \, i_1,\dots,i_n \in I\, ,\, i_h \neq i_k \\
    \text{ for }h\neq k,\, U_{i_1} \cap \dots \cap U_{i_n} \neq \emptyset\} \in \N\cup \{\infty\}\, .
    \end{align*}
\end{definition}

\begin{definition}\label{def:amenableatinfinity}
    A sequence of subsets $\{U_i\}_{i\in \N}$ of a path-connected non-compact topological space $X$ is said to be amenable at infinity if there exists a sequence $\{W_i\}_{i\in \N}$ of open subsets of $X$ such that the following conditions hold:
    \begin{itemize}
        \item[(i)] each $W_i$ is large, i.e.~its complement in $X$ is relatively compact;
        \item[(ii)] the family $\{W_i\}_{i\in \N}$ is locally finite;
        \item[(iii)] $U_i\subset W_i$ for every $i\in \N$;
        \item[(iv)] there exists $\bar i \in \N$ such that $U_i$ is an amenable subset of $W_i$ for every $i\geq \bar i$.
    \end{itemize}
\end{definition}

The following statement corresponds to \cite[Corollary 11]{FrigerioMoraschini}.

\begin{theorem} \label{T|Frig}
    Let $M^n$ be an oriented, open, and triangulable manifold of dimension $n$. Suppose that $\{U_i\}_{i\in \N}$ is an amenable open cover of $M$ such that each $U_i$ is relatively compact in $M$, the sequence $\{U_i\}_{i\in \N}$ is amenable at infinity and $\mathrm{mult}\big(\{U_i\}_{i\in \N}\big) \leq n$. Then $\norm{M}=0$.
\end{theorem}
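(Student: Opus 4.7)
The plan is to prove Theorem \ref{T|Frig} via Gromov's duality between simplicial volume and bounded cohomology, adapted to the non-compact setting where the relevant fundamental class lives in locally finite homology. Conceptually, the argument has three ingredients: (a) produce a representative of the locally finite fundamental class as a weighted sum of simplices, each supported in one of the $U_i$; (b) use the amenability of each $U_i$ in $X$ (respectively in $W_i$, near infinity) to conclude that bounded cocycles restrict trivially on such pieces via Gromov's mapping theorem; and (c) use the multiplicity bound $\mathrm{mult}(\{U_i\}) \leq n$ to kill classes in top degree by a dimension count on the nerve.

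More concretely, I would begin from the duality identity $\lVert M \rVert = \lVert [M]_{\mathrm{lf}} \rVert_{1}$, where $[M]_{\mathrm{lf}} \in H_n^{\mathrm{lf}}(M;\R)$ is the locally finite fundamental class; equivalently, $\lVert M \rVert > 0$ precisely when the comparison map $H_b^{n}(M;\R) \to H^{n}(M;\R)$ is non-trivial on the cohomology class Poincaré dual to $[M]_{\mathrm{lf}}$, where $H_b^{n}$ here denotes the appropriate bounded cohomology paired with locally finite chains (bounded cochains of ``finite type'' in the terminology of Frigerio--Moraschini). Thus it suffices to show that every such bounded cocycle of degree $n$ can be represented by a coboundary after a controlled correction.

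The key geometric step is the construction of a classifying map $f: M \to |N|$ to the geometric realization of the nerve $N = N(\{U_i\})$, built from a partition of unity subordinate to $\{U_i\}$ so that each point of $U_{i_{0}} \cap \cdots \cap U_{i_{k}}$ is sent into the open star of the corresponding $k$-simplex. The multiplicity hypothesis forces $\dim |N| \leq n-1$, so $H_b^{n}(|N|;\R) = 0$ and every bounded cocycle of degree $n$ pulled back from $|N|$ is a coboundary. The amenability of each $U_i$ in $X$, combined with Gromov's mapping theorem identifying bounded cohomology with that of the classifying space modulo amenable subgroups, allows one to realize the restriction of any bounded class on $X$ to $U_i$ as pulled back via $f$, up to a coboundary supported in $U_i$.

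The main obstacle, and the step where the hypothesis ``amenable at infinity'' becomes essential, is the passage from this local picture to a global statement compatible with locally finite supports. The enveloping sequence $\{W_i\}$ with $X \setminus W_i$ relatively compact and $\{W_i\}$ locally finite provides a controlled exhaustion: on each $W_i$ one produces, via the previous amenability argument, a primitive $\beta_i$ for the given bounded $n$-cocycle, and local finiteness together with relative compactness of the $U_i$ ensures that the global correction $\beta = \sum \psi_i \beta_i$ is a well-defined bounded cochain of the finite type allowed by the pairing with $[M]_{\mathrm{lf}}$. The technically delicate point is to carry out this gluing so that the bounds and supports are uniformly controlled, yielding that the dual class of $[M]_{\mathrm{lf}}$ vanishes and hence $\lVert M \rVert = 0$; this is the heart of the Frigerio--Moraschini refinement of Gromov's original vanishing theorem.
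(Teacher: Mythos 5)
First, a point of reference: the paper does not prove this statement at all — Theorem \ref{T|Frig} is quoted verbatim as \cite[Corollary 11]{FrigerioMoraschini}, so there is no internal proof to compare with; your sketch has to stand on its own as a proof of the Frigerio--Moraschini vanishing theorem, and as written it does not.

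There are two genuine gaps. The first is the assertion that $\mathrm{mult}(\{U_i\})\le n$ forces $\dim|N|\le n-1$ and hence $H_b^n(|N|;\R)=0$: bounded cohomology is \emph{not} killed by topological dimension (a wedge of two circles is $1$-dimensional but has enormous $H_b^2$ and $H_b^3$, since bounded cohomology depends only on $\pi_1$). What the multiplicity bound gives you is vanishing of \emph{ordinary} cohomology $H^n(|N|;\R)$; the entire difficulty of Gromov's vanishing theorem, even in the closed case, is to exploit this while retaining norm control, which is done via the amenable-cover machinery (Ivanov-type relative bounded cohomology, or Gromov's multicomplexes and diffusion of chains), not by the dimension count you invoke. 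The second gap is the final gluing step $\beta=\sum_i\psi_i\beta_i$: multiplying singular cochains by a partition of unity is not a chain-level operation with controlled coboundary (you would generate uncontrolled error terms from $\delta(\psi_i\beta_i)$, and in any case $\psi_i\beta_i$ is not a cochain in the usual sense), and this is exactly where the non-compact case is hard. The duality $\|M\|=\|[M]_{\mathrm{lf}}\|_1$ against bounded cochains also fails naively for locally finite chains of infinite $\ell^1$-norm; Frigerio and Moraschini handle both issues through Gromov's finite-type conditions and the multicomplex formalism, which your sketch defers to precisely at the point where the proof has to happen. In short, the outline reproduces the correct headlines (nerve, amenability, amenability at infinity, multiplicity $\le n$) but the two mechanisms you propose to convert them into vanishing — dimension-based vanishing of $H_b^n$ of the nerve, and partition-of-unity gluing of bounded primitives — are respectively false and undefined, so the argument does not go through as stated.
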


\subsection{Proof of Theorem \ref{thm:simplicialvolthm}: Euclidean volume growth case}\label{sec:vanish_Eucl}

The goal of this section is to prove Theorem \ref{thm:simplicialvolthm} under the assumption that the space has maximal volume growth. Thanks to Theorem \ref{T|Frig}, it is sufficient to prove Theorem \ref{T|2overlap} below.

\begin{theorem} \label{T|2overlap}
    Let $(X,\sd,\haus^n,p)$ be an $\RCD(0,n)$ space. If $X$ has maximal volume growth, 
    then it admits a cover $\{ U_i\}_{i\in \N}$ which is amenable, open, amenable at infinity and has $\mathrm{mult}\big(\{ U_i\}_{i\in \N}\big) \leq 2$.
    \end{theorem}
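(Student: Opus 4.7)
The plan is to construct the cover from concentric metric annuli around a fixed basepoint $p\in X$ with geometrically growing radii. Set $R_i:=2^i$ for $i\in\setN$ and define
\[
U_0:=B_{R_1}(p)\, ,\qquad U_i:=B_{R_{i+1}}(p)\setminus \overline{B}_{R_{i-1}}(p)\quad\text{for } i\ge 1\, .
\]
Each $U_i$ is open and relatively compact in $X$, the family $\{U_i\}_{i\in\setN}$ covers $X$, and by construction $U_i\cap U_{i+2}=\emptyset$ (their defining inequalities are strict and $R_{i+1}$ is common), so the multiplicity of the cover is at most $2$.

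For amenability of each $U_i$ in $X$, we use that $\pi_1(X)$ is amenable under our hypotheses: by the local Margulis lemma for $\RCD(K,N)$ spaces established in Appendix~\ref{appendixA}, applied with $K=0$, $\pi_1(X)$ is virtually nilpotent, hence amenable. Consequently every subgroup of $\pi_1(X)$ is amenable, and in particular the image of $\pi_1(U_i)\to\pi_1(X)$ is amenable for every $i$.

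For amenability at infinity, define
\[
W_i:=X\setminus \overline{B}_{R_{i-2}}(p)\quad\text{for } i\ge 3\, ,\qquad W_i:=X\quad\text{for } i\le 2\, .
\]
Each $W_i$ is open with relatively compact complement, the family $\{W_i\}$ is locally finite because any point $x\in X$ lies in $W_i$ only for those indices $i$ with $R_{i-2}<\sd(p,x)$, and $U_i\subset W_i$ by construction. The essential condition is that $U_i$ be amenable in $W_i$ for $i$ large, i.e.~that the image of $\pi_1(U_i)\to\pi_1(W_i)$ be amenable. This is where Theorem~\ref{T|ZhouAnelli} enters: in the maximal volume growth regime, the image of $\pi_1$ of a large metric annulus inside a slightly larger one is finite. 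Setting $V_i:=B_{R_{i+2}}(p)\setminus \overline{B}_{R_{i-2}}(p)$ one has $U_i\subset V_i\subset W_i$, and applying Theorem~\ref{T|ZhouAnelli} to the pair $(U_i,V_i)$ for $i$ large yields that the image of $\pi_1(U_i)\to\pi_1(V_i)$ is finite. Since the map $\pi_1(U_i)\to\pi_1(W_i)$ factors through $\pi_1(V_i)$, its image is finite as well and hence amenable.

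The main obstacle is the proof of Theorem~\ref{T|ZhouAnelli} itself, which proceeds by a contradiction argument based on Cheeger-Colding theory. If loops in $U_{i_k}$ remained nontrivial in $V_{i_k}$ for a sequence $i_k\to\infty$, rescaling by $R_{i_k}$ would produce, after passing to a subsequence, a blow-down of $X$ together with a nontrivial limit loop. By the Euclidean volume growth assumption combined with \cite{DePhilippisGiglicone,DephilGigli} after \cite[Theorem~7.6]{ChCo1}, every blow-down of $X$ is a metric cone $C(Y)$ with $\RCD(n-2,n-1)$ cross section $Y$, whose fundamental group is finite by the Bonnet-Myers-type theorem for $\RCD$ spaces. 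A topological stability argument in the spirit of \cite{KapovitchWilking}, adapted to the $\RCD$ setting, then forces the corresponding triviality at the approximating scales, yielding the desired contradiction. Once Theorem~\ref{T|ZhouAnelli} is in place, the cover $\{U_i\}$ meets all the hypotheses of Theorem~\ref{T|Frig}, and we conclude that $\norm{X}=0$.
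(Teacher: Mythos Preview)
Your approach coincides with the paper's: concentric annuli for the cover, Theorem~\ref{T|ZhouAnelli} for amenability at infinity. However, the application of Theorem~\ref{T|ZhouAnelli} as written has a gap. That theorem is stated for the specific ratios $1{:}2{:}3{:}4{:}5{:}6$, controlling $\pi_1(C_r)\to\pi_1(B_{6r}\setminus B_r)$ with $C_r\subset B_{5r}\setminus B_{2r}$; your annuli $U_i,V_i$ have outer/inner ratios $4$ and $16$, and there is no single $r$ for which $U_i\subset C_r$ and $B_{6r}\setminus B_r\subset V_i$. Covering $U_i$ by several $C_{r_j}$'s does not help either, since a group generated by finite subgroups can be infinite. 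The fix---and presumably what the paper means by ``immediate consequence''---is that the \emph{proof} of Theorem~\ref{T|ZhouAnelli} goes through for any fixed ratios (with the constant then depending on them); choosing ratios with $a_4/a_3>(a_5/a_2)^{1/2}$ makes a multiplicity-$2$ cover by the path-connected sets $C_r$ available. Using the $C_r$'s themselves also resolves the side issue that your annuli $U_i$ need not be path-connected, whereas the amenability definitions require every component to be controlled.

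Two minor points. Your justification for amenability in $X$ is miscited: Appendix~\ref{appendixA} bounds the image of $\pi_1(B_\varepsilon(p))\to\pi_1(B_{10}(p))$, not $\pi_1(X)$; what you actually need is that under Euclidean volume growth $\pi_1(X)$ is finite, by \cite[Theorem~1.6]{MondinoWei}. And the final paragraph re-sketching the proof of Theorem~\ref{T|ZhouAnelli} is superfluous, since that result is available as a black box.
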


Theorem \ref{T|2overlap}, in turn, is an immediate consequence of Theorem \ref{T|ZhouAnelli} below, which is the focus of this section.

    \begin{theorem} \label{T|ZhouAnelli}
    Let $n \geq 3$, and let $(X,\sd,\haus^n,p)$ be an $\RCD(0,n)$ space with maximal volume growth
    \begin{equation}
        \lim_{r \to + \infty} \frac{\aH^n(B_r(p))}{r^n} \geq v>0.
    \end{equation} 
    Then, for every $r$ sufficiently large, $B_{4r}(p) \setminus B_{3r}(p)$ is contained in a path-connected component $C_r$ of $B_{5r}(p) \setminus B_{2r}(p)$. Moreover, there exists a constant $C(n,v)>0$ such that
    the image through the push-forward of the inclusion map
    \begin{equation}\label{eq:inclusion-cc}
        i_*:\pi_1(C_r,c) \to \pi_1(B_{6r}(p) \setminus B_r(p),c)
    \end{equation}
    has cardinality at most $C(n,v)$ for every sufficiently large $r>0$ and every $c \in C_r$.  
    \end{theorem}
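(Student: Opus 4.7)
The proof is by contradiction in both parts, rescaling at scale $r$ and exploiting the cone structure of tangent cones at infinity under maximal volume growth.

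\textbf{Part 1.} Suppose for contradiction that there exist $r_i\to\infty$ and points $x_i,y_i\in B_{4r_i}(p)\setminus B_{3r_i}(p)$ in distinct path-components of $B_{5r_i}(p)\setminus B_{2r_i}(p)$. Rescale distances by $r_i^{-1}$; by the maximal volume growth hypothesis together with \cite{DePhilippisGiglicone, DephilGigli} after \cite{ChCo1}, a subsequence of the rescaled pointed spaces converges in pmGH to a metric cone $(C(Z),o)$ with positive asymptotic volume ratio, over an $\RCD(n-2,n-1)$ space $Z$. The rescaled points $\tilde{x}_i,\tilde{y}_i$ subsequentially converge to $x_\infty,y_\infty$ in the closed annulus $\overline{B_4(o)}\setminus B_3(o)$. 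Since $Z$ is path-connected as a length space and $n\ge 3$, the open annulus $B_5(o)\setminus\overline{B_2(o)}$, which is homeomorphic to $(2,5)\times Z$, is path-connected; hence $x_\infty$ and $y_\infty$ can be joined by a continuous path contained in $B_{5-\eta}(o)\setminus\overline{B_{2+\eta}(o)}$ for some $\eta>0$. Approximating this path by a $\delta$-chain with $\delta\ll\eta$ and lifting consecutive nodes to the rescaled $\tilde{X}_i$ via the GH approximations, I join them by short geodesics in $\tilde{X}_i$. For $i$ large enough the resulting curve lies entirely in $B_5\setminus\overline{B_2}$ of $\tilde{X}_i$, connecting $\tilde{x}_i$ to $\tilde{y}_i$ and contradicting the assumption.

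\textbf{Part 2.} Assume by contradiction that there exist $r_i\to\infty$ and $c_i\in C_{r_i}$ with $N_i:=\#i_*\bigl(\pi_1(C_{r_i},c_i)\bigr)\to\infty$. Rescale and extract as above a pmGH limit $(C(Z),o)$. The cross-section $Z$ is an $\RCD(n-2,n-1)$ space with $n\ge 3$, hence has finite diameter by the synthetic Bonnet-Myers theorem; by the existence of universal covers of $\RCD$ spaces and stability of the $\RCD$ condition under universal covering, $\pi_1(Z)$ is finite. A uniform bound $|\pi_1(Z)|\le m(n,v)$ is obtained from the uniform diameter bound on $Z$ together with the uniform volume lower bound inherited from the asymptotic volume ratio $v$ of $X$. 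Both model annuli $B_5(o)\setminus\overline{B_2(o)}$ and $B_6(o)\setminus\overline{B_1(o)}$ deformation retract radially onto a single copy of $Z$, so the inclusion of the smaller into the larger induces an isomorphism on fundamental groups; in particular the \emph{image in the limit} has cardinality exactly $|\pi_1(Z)|$.

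The main obstacle is to transfer this topological finiteness from the cone limit back to the approximating spaces: I need to show that, for $i$ large enough, any two based loops in $C_{r_i}$ whose rescalings are close in GH sense to homotopic loops in $B_6(o)\setminus\overline{B_1(o)}$ are themselves homotopic inside $B_{6r_i}(p)\setminus\overline{B_{r_i}(p)}$. My plan is to fix once and for all a finite triangulation $\mathcal{T}$ of $B_{5.5}(o)\setminus\overline{B_{1.5}(o)}$ which carries $\pi_1$; by pmGH convergence, for $i$ large the $0$-skeleton of $\mathcal{T}$ lifts to a net in the corresponding rescaled annulus of $X$ and the $1$-skeleton lifts to short geodesic arcs in $\tilde{X}_i$. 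Any based loop in (the rescaled) $C_{r_i}$ is pushed, inside a slightly larger annulus in $\tilde{X}_i$, onto this lifted $1$-skeleton by short-path projection, using local contractibility of small balls in $\RCD(0,n)$ spaces. Two lifted simplicial loops representing homotopic loops in the cone annulus admit a chain of $2$-cells of $\mathcal{T}$ witnessing the homotopy in $C(Z)$; lifting each such $2$-cell to a small filling disk of controlled geometry in $\tilde{X}_i$ is the delicate step, and relies crucially on maximal volume growth and the resulting codimension-$2$ bound on the singular stratum of the cone, which rules out topological obstructions to filling. This yields a bound on $N_i$ by $m(n,v)$ up to a combinatorial factor depending only on $\mathcal{T}$, hence only on $(n,v)$, contradicting $N_i\to\infty$.
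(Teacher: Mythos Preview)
Your Part 1 is essentially the paper's argument and is correct.

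Your Part 2 has a genuine gap at exactly the point you flag as ``the delicate step'': lifting the $2$-cells of a fixed triangulation of the cone annulus to filling disks inside the approximating annuli $B_{6r_i}(p)\setminus B_{r_i}(p)$. Gromov--Hausdorff convergence does not, by itself, let you push homotopies (i.e.\ maps of disks) from the limit back to the approximants. Your appeals to ``local contractibility of small balls in $\RCD(0,n)$ spaces'' and to the ``codimension-$2$ bound on the singular stratum of the cone'' do not supply a mechanism: local contractibility of balls is not known for general $\RCD$ spaces (only semi-local simple connectedness is, and that fills small loops in $X$, not in a prescribed annulus), and regularity of the \emph{limit} cone says nothing about filling disks in the \emph{approximating} annuli. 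Without this step you have no way to conclude that loops in $C_{r_i}$ whose images in the cone are homotopic must already be homotopic in $B_{6r_i}(p)\setminus B_{r_i}(p)$, so the bound on $N_i$ does not follow.

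The paper avoids this homotopy-stability problem entirely by working on covers with volume, in the spirit of Anderson and Sormani--Wei. One passes to the universal cover $\tilde{A}_i$ of $A_i=B_{6r_i}(p)\setminus\bar B_{r_i}(p)$ with the lifted length metric, isolates the piece $\tilde{A}'_i$ covering $C_{r_i}$ together with its deck group $G_i$ (which contains the image of $i_*$), and takes an \emph{equivariant} GH limit $(\tilde{A}'_i,G_i)\to (Z,G)$ with $Z/G$ isometric to the limit annulus. A Bishop--Gromov count on the cover (Lemmas \ref{L|boundgenerateor} and \ref{L|Zhou}) bounds, uniformly in $i$, the number of short deck transformations; this forces $\#G\le C(n,v)$ via a three-step argument (bound the set $\Gamma_\varepsilon$ of elements with a nearby almost-fixed point; bound the normal subgroup generated by fixers via Schur's theorem; show the quotient acts properly discontinuously, hence embeds in $\pi_1$ of the limit annulus, which is finite by Lemma \ref{L|variantionzhou}). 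Finally compactness of $Z$ together with the bound on almost-stabilisers forces $\#G_i\le C(n,v)$ for large $i$, giving the contradiction. No disk-filling in the approximants is ever needed; the whole argument is volume and group theory on covers.
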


    The proof of the first part of the statement of Theorem \ref{T|ZhouAnelli} is straightforward.
    To prove \eqref{eq:inclusion-cc} we argue by contradiction. Hence, we find a sequence of annuli $A_i:=B_{6r_i}(p) \setminus B_{r_i}(p)$ such that the images of the maps as in \eqref{eq:inclusion-cc} have large cardinality.
    Then we consider their universal coverings $\tilde{A}_i$, and suitable subsets $\tilde{A}'_i \subset \tilde{A}_i$ which are coverings of the corresponding path-connected components $C_{r_i}$.
    Thanks to our assumption by contradiction, the coverings $\tilde{A}_i'$ are non-compact. At the same time, scaling down distances, the sets $C_{r_i}$ converge  to a metric space $Y$ which is isometric to an annulus in a metric cone over an $\RCD(n-2,n-1)$ space with volume bounded below by a constant depending on $v$. Such annulus is homotopically equivalent to the cross-section. Thus, in particular, if $n\ge 3$ then $\#\pi_1(Y)<C(n,v)$ as an application of \cite[Theorem 1.2]{MondinoWei}.
    Similarly, the coverings $\tilde{A}'_i$ equipped with their deck transformations converge in equivariant Gromov-Hausdorff sense to a pair $(\tilde{Y},H)$ with $H< \mathrm{Iso}(\tilde{Y})$ such that $\tilde{Y}/H \cong Y$ isometrically. 
    
    To finish the proof, we are going to combine the statement that $\# \pi_1(Y)<C(n,v)$ with some ideas originally developed in \cite{Anderson,Andersonshort,zbMATH00444969} to deduce that $H$ has also cardinality bounded by $C(n,v)$. This then leads to a contradiction.
    \medskip

Making completely rigorous the first part of the plan sketched above involves a number of steps. The proofs of the next two lemmas  follow the lines of \cite[Lemma 3.3]{zbMATH00444969}.

\begin{lemma} \label{L|boundgenerateor}
    Let $(X,\sd,\haus^n,p)$ be an $\RCD(K,n)$ space with 
    \begin{equation}\label{eq:AVRfg}
        \aH^n(B_1(p)) \geq v>0.
    \end{equation} 
    Then there exists $k(v,n,K)\in \bb{N}$ such that the image through the push-forward of the inclusion map
    \begin{equation}
    i_*:\pi_1(B_{1}(p),p) \to\pi_1(B_{4}(p),p) 
    \end{equation}
    contains at most $k$ elements that can be represented by loops of length less than or equal to $2$ centered in $p$.
    \begin{proof}
    We denote the image of $i_{*}:\pi_1(B_{1}(p),p) \to \pi_1(B_{4}(p),p)$ by $G$.
        Let $\pi:Y \to B_{4}(p)$ be the universal covering of $B_{4}(p)$. Consider on $Y$ the distance $\tilde{\sd}$ obtained by lifting the length structure of $B_{4}(p)$.
        Let $\tilde{\haus}^n$ denote
        the corresponding Hausdorff measure. 
        Let $\tilde{p} \in \pi^{-1}(p)$, and let $F \subset Y$ be the set containing $\tilde{p}$ constructed in Lemma \ref{L|fundamentaldomain}. 
        Call $G_{2} \subset G$  the set of elements that can be represented by loops of length at most $2$ centered in $p$. Let
        \begin{equation}
            B:=\bigcup_{g \in G_{2}} g(F \cap B^{\tilde{\sd}}_{1}(\tilde{p}))\subset Y\, .
        \end{equation}
        We claim that $B \subset B^{\tilde{\sd}}_{3}(\tilde{p})$. Indeed, if $g \in G_{2}$, we can join $\tilde{p}$ to $g(\tilde{p})$ with a curve of length at most $2$ and we can join $g(\tilde{p})$ to any element of $g(F \cap B^{\tilde{\sd}}_{1}(\tilde{p}))$ by a curve of length at most $1$. Hence, $B \subset B^{\tilde{\sd}}_{3}(\tilde{p})$ by triangle inequality. 
        
        Combining such inclusion with Lemma \ref{L|fundamentaldomain} we get
        \begin{align}
            (\# G_{2} ) \cdot \haus^{n}(B_1(p)) 
            =&\,  \sum_{g \in G_{2}} \tilde{\haus}^n\big( g(F \cap B^{\tilde{\sd}}_{1}(\tilde{p}))
            \big)\\
            \leq &\, \tilde{\haus}^n(B)\leq 
           \tilde{\haus}^n(B^{\tilde{\sd}}_{3}(\tilde{p}))\, .
        \end{align}
       Thus, by Lemma \ref{L|lift}, it holds
        \begin{equation}
            (\# G_{2} ) \cdot \haus^{n}(B_1(p))  \leq  v_{K,n}(3),
        \end{equation}
        where $v_{K,N}(r)$ denotes the volume of the ball of radius $r$ in the model space for the $\CD(K,N)$ condition.
        Hence, combining with \eqref{eq:AVRfg}, we obtain that
        \begin{equation}
            \# G_{2} \leq  v_{K,n}(3)/v\, ,
        \end{equation}
        thus concluding the proof.
    \end{proof}
\end{lemma}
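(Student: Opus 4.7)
The plan is to count volume in a suitable covering of $B_4(p)$ and then use Bishop--Gromov to bound that volume by a constant depending only on $K,n,v$. Call $G$ the image of $i_*:\pi_1(B_1(p),p)\to\pi_1(B_4(p),p)$ and $G_2\subset G$ the subset of classes representable by loops of length $\le 2$ based at $p$. I would like to associate to each $g\in G_2$ a ``disjoint copy'' of a set of volume $\ge v$ inside a ball of controlled radius in some cover, so that the count is forced.

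First I would pass to the universal cover $\pi:Y\to B_4(p)$ and equip $Y$ with the lifted length distance $\tilde\sd$ and Hausdorff measure $\tilde\Haus^n$. Because $\pi$ is a local isometry, $(Y,\tilde\sd,\tilde\Haus^n)$ is locally isomorphic to $(X,\sd,\Haus^n)$, hence is $\RCD(K,n)$ by a local-to-global argument (this is the content of the lifting lemma referenced as Lemma \ref{L|lift}, used to invoke Bishop--Gromov downstairs). Pick $\tilde p\in\pi^{-1}(p)$ and let $F\subset Y$ be a Borel fundamental domain for the deck group $\pi_1(B_4(p),p)$ containing $\tilde p$ provided by Lemma \ref{L|fundamentaldomain}. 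Each deck transformation acts as a measure-preserving isometry on $(Y,\tilde\sd,\tilde\Haus^n)$, and its translates of $F$ tile $Y$ up to a measure-zero set.

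Next I would use the representability hypothesis: for $g\in G_2$, choose a loop $\gamma_g$ of length $\le 2$ based at $p$ representing $g$; its unique lift starting at $\tilde p$ ends at $g\cdot \tilde p$, so $\tilde\sd(\tilde p,g\cdot\tilde p)\le 2$. By the triangle inequality, for every $\tilde x\in F\cap B^{\tilde\sd}_1(\tilde p)$ we get $\tilde\sd(\tilde p,g\cdot\tilde x)\le\tilde\sd(\tilde p,g\cdot\tilde p)+\tilde\sd(g\cdot\tilde p,g\cdot\tilde x)\le 3$. Hence the union
\begin{equation}
B\,:=\,\bigcup_{g\in G_2}g\bigl(F\cap B^{\tilde\sd}_1(\tilde p)\bigr)
\end{equation}
is contained in $B^{\tilde\sd}_3(\tilde p)$. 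Since the pieces are essentially disjoint and isometric images, and since $\pi$ maps $F\cap B^{\tilde\sd}_1(\tilde p)$ bijectively (modulo a set of measure zero) onto $B_1(p)$, we obtain
\begin{equation}
(\#G_2)\cdot\Haus^n(B_1(p))\,=\,\sum_{g\in G_2}\tilde\Haus^n\bigl(g(F\cap B^{\tilde\sd}_1(\tilde p))\bigr)\,\le\,\tilde\Haus^n\bigl(B^{\tilde\sd}_3(\tilde p)\bigr).
\end{equation}

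The final step is to bound $\tilde\Haus^n(B^{\tilde\sd}_3(\tilde p))$ from above by the $\CD(K,n)$ model volume $v_{K,n}(3)$, which is exactly Bishop--Gromov in $Y$; combined with $\Haus^n(B_1(p))\ge v$ this yields $\#G_2\le v_{K,n}(3)/v$, and $k:=\lfloor v_{K,n}(3)/v\rfloor$ gives the claim. The only non-routine point is justifying the Bishop--Gromov estimate in $Y$, which rests on the fact that $\pi$ is a local isometry of metric measure spaces and hence transfers the synthetic curvature-dimension condition to $Y$; once that is in place the rest is a direct volume-packing computation, and the constant depends only on $K$, $n$, and $v$ as required.
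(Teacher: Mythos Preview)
Your proof is correct and follows essentially the same approach as the paper's: pass to the universal cover of $B_4(p)$, use the fundamental domain $F$ to produce $\#G_2$ essentially disjoint isometric copies of $F\cap B^{\tilde\sd}_1(\tilde p)$ inside $B^{\tilde\sd}_3(\tilde p)$, and conclude via Bishop--Gromov upstairs. One small inaccuracy: you describe Lemma \ref{L|lift} as transferring the $\RCD(K,n)$ condition to $Y$ by local-to-global, but $Y$ is typically not complete (it covers an open ball), so the paper instead proves the Bishop--Gromov inequality on $Y$ directly via needle decomposition; since you correctly invoke that lemma for exactly the bound $\tilde\Haus^n(B^{\tilde\sd}_3(\tilde p))\le v_{K,n}(3)$, this does not affect the argument.
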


\begin{lemma} \label{L|Zhou}
    Let $(X,\sd,\haus^n,p)$ be an $\RCD(0,n)$ space with 
    \begin{equation}\label{eq:FGAVR2}
        \lim_{r \to + \infty} \frac{\aH^n(B_r(p))}{r^n} \geq v>0 \, .
    \end{equation}
    Then there exist $\epsilon(n,v)>0$, $C(n,v) \in \bb{N}$ such that the image through the push-forward of the inclusion map
    \begin{equation}
    i_*:\pi_1(B_{\epsilon}(p),p) \to \pi_1(B_{1}(p),p) 
    \end{equation}
    has cardinality at most $C$.
    \begin{proof}
        Let $m_0 := \lceil 1+\omega_n 3^n/v \rceil$, and choose $\epsilon(n,v):=(10m_0)^{-1}$. We denote the image of $i_*:\pi_1(B_{\epsilon}(p),p) \to\pi_1(B_{1}(p),p)$ by $G$. It is a standard observation that $G$ can be generated by loops of length at most $2\epsilon$. Thus, by Lemma \ref{L|boundgenerateor}, $G$ is generated by at most $k(n,v)$ elements $\{\gamma_1,\cdots,\gamma_k\}$ of length at most $2\epsilon$ centered in $p$. For every $m \in \bb{N}$, let $U(m)$ be the ball of radius $m$ in $G$ centered in the identity w.r.t.\ the word metric and the set of generators considered above. 
        Let $\pi:Y \to B_{1}(p)$ be the universal covering of $B_{1}(p)$ and let $\tilde{p}$ be a preimage of $p$. 
        Endow $Y$ with the distance $\tilde{\sd}$ induced by the lift of the length structure of $B_{1}(p)$ (cf. with Remark \ref{rmk:lengthdistanceoncovering}) and the corresponding Hausdorff measure $\tilde{\haus}^n$. Let $F \subset Y$ be the set containing $\tilde{p}$ as in Lemma \ref{L|fundamentaldomain}. 
        \medskip
        
        We define
        \begin{equation}
            B:=\bigcup_{g \in U(m_0)} g(F \cap B^{\tilde{\sd}}_{m_0  \epsilon}(\tilde p))\subset Y\, .
        \end{equation}
        We claim that if $x \in B$ then $x$ is joined to $\tilde{p}$ by a curve of length at most $3m_0 \epsilon$. Indeed, if 
        \begin{equation}
        x \in g(F \cap B^{\tilde{\sd}}_{m_0 \epsilon }(\tilde p))\quad  \text{for some}\quad  g \in U(m_0)\, , 
        \end{equation}
        then there is a curve from $\tilde{p}$ to $g(\tilde{p})$ of length at most $2m_0 \epsilon$, and another curve of length at most $m_0 \epsilon$ joining $g(\tilde{p})$ and $x$.
        \medskip

        Since $B \subset B^{\tilde{\sd}}_{3m_0 \epsilon}(\tilde{p})$, Lemma \ref{L|lift} implies that
        \begin{equation} \label{E|lemmazhou1}
            \tilde{\haus}^n(B) \leq \omega_n(3m_0 \epsilon)^n\, .
        \end{equation}
        We claim that $\# G \leq  \# U(m_0)$. Note that if the claim holds then the statement immediately follows.
 
        Assume by contradiction that
        \begin{equation} 
            \# G > \# U(m_0)\, .
        \end{equation}
        Since $\# U(m+1)>\# U(m)$ unless $U(m)=G$ for every $m \in \bb{N}$, it follows that $m_0 \leq \#U(m_0)$. Hence, we obtain that
        \begin{equation}
            m_0 \cdot \haus^{n}(B_{m_0 \epsilon}(p)) \leq \#U(m_0)\cdot \tilde{\haus}^n(F \cap B^{\tilde{\sd}}_{m_0 \epsilon }(\tilde p))=\tilde{\haus}^n(B)\, .
        \end{equation}
        Combining with \eqref{E|lemmazhou1}, we deduce that
        \begin{equation}
            m_0 \haus^{n}(B_{m_0 \epsilon}(p)) \leq \omega_n(3 m_0 \epsilon)^n\, .
        \end{equation}
        Exploiting \eqref{eq:FGAVR2}, we obtain that
        \begin{equation}
            m_0 \leq \omega_n (3m_0 \epsilon )^n/\haus^{n}(B_{m_0 \epsilon}(p)) \leq \omega_n 3^n/v\, ,
        \end{equation}
        thus contradicting our choice of $m_0$.
    \end{proof}
\end{lemma}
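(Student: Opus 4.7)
The plan is to combine Lemma~\ref{L|boundgenerateor} with a volume-comparison argument on the universal cover of $B_1(p)$, closed off by a word-metric pigeonhole. Denote the image of $i_*$ by $G$. A standard short-generator argument, together with Lemma~\ref{L|boundgenerateor} applied after rescaling by $1/\epsilon$ (valid because maximal volume growth gives the rescaled AVR hypothesis $\haus^n(B_\epsilon(p))/\epsilon^n \ge v$ for free), produces a generating set $\gamma_1,\dots,\gamma_k$ of $G$ of cardinality $k \le k(n,v)$, each $\gamma_j$ represented by a based loop at $p$ of length at most $2\epsilon$. Equip $G$ with the word metric relative to these generators and write $U(m)$ for the word-ball of radius $m$.

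\textbf{Uniform word-ball bound.} Lift to the universal cover $\pi:(Y,\tilde\sd) \to B_1(p)$ equipped with the lifted length distance, fix $\tilde p \in \pi^{-1}(p)$, and take a fundamental domain $F \ni \tilde p$ as in Lemma~\ref{L|fundamentaldomain}. Each generator moves $\tilde p$ by at most $2\epsilon$, so any $g \in U(m)$ moves it by at most $2m\epsilon$; hence the essentially disjoint union
\[
B := \bigcup_{g \in U(m)} g\bigl(F \cap B^{\tilde\sd}_{m\epsilon}(\tilde p)\bigr)
\]
is contained in $B^{\tilde\sd}_{3m\epsilon}(\tilde p)$ by the triangle inequality. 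Each translate $g(F \cap B^{\tilde\sd}_{m\epsilon}(\tilde p))$ carries $\tilde\haus^n$-mass equal to $\haus^n(B_{m\epsilon}(p))$, while Bishop--Gromov on $Y$ (Lemma~\ref{L|lift}) gives $\tilde\haus^n(B^{\tilde\sd}_{3m\epsilon}(\tilde p)) \le \omega_n(3m\epsilon)^n$. Since Bishop--Gromov monotonicity together with the asymptotic hypothesis forces $\haus^n(B_r(p)) \ge v r^n$ for every $r>0$, combining yields
\[
\#U(m) \,\le\, \frac{\omega_n (3m\epsilon)^n}{\haus^n(B_{m\epsilon}(p))} \,\le\, \omega_n 3^n /v \qquad \text{for every } m \in \bb{N}.
\]

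\textbf{Pigeonhole and conclusion.} Set $m_0 := \lceil 1 + \omega_n 3^n/v \rceil$ and $\epsilon(n,v) := 1/(10 m_0)$. If $U(m_0) \subsetneq G$, then $U(m) \subsetneq U(m+1)$ for every $m < m_0$, which forces $\#U(m_0) \ge m_0 > \omega_n 3^n/v$ and contradicts the bound above. Hence $U(m_0) = G$, giving $\#G \le \omega_n 3^n/v =: C(n,v)$.

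\textbf{Main obstacle.} The decisive point is that the word-ball bound is \emph{uniform in $m$}: this is what enables the pigeonhole step to bootstrap a bound on a single $U(m_0)$ into a bound on the whole group $G$. The uniformity relies on Bishop--Gromov monotonicity propagating the asymptotic Euclidean volume ratio down to every finite scale; under merely polynomial volume growth the bound would degrade in $m$ and the argument would fail. A secondary technicality is the rescaling needed to apply Lemma~\ref{L|boundgenerateor} at scale $\epsilon$ rather than at scale $1$, but non-negative Ricci is preserved under rescaling and the maximal-volume-growth hypothesis rescales correctly, so this step is harmless.
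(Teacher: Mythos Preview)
Your proposal is correct and follows essentially the same approach as the paper's proof: lift to the universal cover of $B_1(p)$, compare the measure of the union of translates of a fundamental-domain piece to the Bishop--Gromov bound via Lemma~\ref{L|lift}, and conclude by a word-metric pigeonhole. The only cosmetic difference is that you derive the bound $\#U(m)\le \omega_n 3^n/v$ for general $m$ first and then choose $m_0$, whereas the paper fixes $m_0$ and $\epsilon$ at the outset and argues by contradiction at that single scale.

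One small caveat: your claim ``for every $m\in\bb{N}$'' is slightly overstated, since Lemma~\ref{L|lift} (Bishop--Gromov on the cover) only applies for radii below $\sd(p,X\setminus B_1(p))=1$, so you need $3m\epsilon<1$. With your choice $\epsilon=1/(10m_0)$ this holds at $m=m_0$ (where $3m_0\epsilon=3/10$), which is all that is actually used; but the order of exposition should fix $\epsilon$ before stating the bound, as the paper does.
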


The following lemma is well-known (cf. with \cite{ZamoraAnderson}), and it can be obtained arguing as for the proof of the analogous result in the smooth setting, see \cite{Andersonshort}. We discuss the proof for the sake of completeness.

\begin{lemma} \label{L|variantionzhou}
    Let $K>0$ and let $(X,\sd,\haus^n)$ be an $\RCD(K,n)$ space with 
    \begin{equation}
        \haus^n(X) \geq v>0\, .
    \end{equation}
    Then, $\pi_1(X)$ has cardinality at most $C(n,v,K)$.
    \begin{proof}
        Let $(\tilde{X},\tilde{\sd},\tilde{\aH}^n)$ be the universal covering of $X$, which is simply connected and is an $\RCD(K,n)$ space by \cite{RCDsemilocally} and \cite{MondinoWei}. By Bonnet-Myers \cite[Corollary 2.6]{S2}, it holds $\mathrm{diam}(\tilde{X}) \leq C(n,K)$, so that Bishop-Gromov's inequality \cite[Theorem 2.3]{S2} implies that ${\aH}^n(\tilde{X}) \leq C(n,K)$. Consider now a fundamental domain $F \subset \tilde{X}$ as given by Lemma \ref{L|fundamentaldomain}. For every $g \in \pi_1(X)$, it holds $g \cdot F \cap F=\emptyset$ and $\tilde{\aH}^n(g \cdot F)=\aH^n(X)$. Hence,
        \begin{equation}
            \# \pi_1(X) \, v \leq \# \pi_1(X) \,\aH^n(X) \leq \tilde{\aH}^n(\tilde{X}) \leq C(n,K)\, ,
        \end{equation}
        concluding the proof.
    \end{proof}
    \end{lemma}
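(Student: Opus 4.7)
The plan is to lift to the universal cover and count via a fundamental-domain argument. First, by the recent lifting theorems for $\RCD$ spaces (see \cite{RCDsemilocally} for semi-local simple connectedness and \cite{MondinoWei} for the inheritance of synthetic curvature bounds along universal covers), the universal cover $(\tilde{X},\tilde{\sd},\tilde{\aH}^n)$ is a simply connected $\RCD(K,n)$ space on which $\pi_1(X)$ acts properly discontinuously, freely, and by measure-preserving isometries, with quotient metric measure space $(X,\sd,\aH^n)$.

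Next, I would exploit that $K>0$ in an essential way. The synthetic Bonnet--Myers theorem for $\RCD(K,n)$ spaces yields the diameter bound $\mathrm{diam}(\tilde{X})\leq \pi\sqrt{(n-1)/K}$, so in particular $\tilde{X}$ is compact. Combining with Bishop--Gromov applied on $\tilde{X}$, one obtains an upper bound $\tilde{\aH}^n(\tilde{X})\leq V(n,K)$ depending only on $n$ and $K$.

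To conclude, I would invoke Lemma \ref{L|fundamentaldomain} to produce a fundamental domain $F\subset \tilde{X}$ for the deck-transformation action of $\pi_1(X)$ whose translates $\{g\cdot F\}_{g\in \pi_1(X)}$ are pairwise disjoint up to $\tilde{\aH}^n$-null sets, and which satisfies $\tilde{\aH}^n(F)=\aH^n(X)\geq v$. Summing the measures of the translates,
\[
(\#\pi_1(X))\cdot v \;\leq\; (\#\pi_1(X))\cdot \aH^n(X)\;=\;\tilde{\aH}^n\!\Big(\bigsqcup_{g\in \pi_1(X)} g\cdot F\Big)\;\leq\; \tilde{\aH}^n(\tilde{X})\;\leq\; V(n,K),
\]
so $\#\pi_1(X)\leq V(n,K)/v =: C(n,v,K)$, as claimed.

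There is essentially no obstacle: the argument is a direct synthetic transplantation of the classical Anderson-type counting from the smooth Riemannian setting, and both nontrivial ingredients---the $\RCD$ structure on the universal cover and the existence of an essentially disjoint fundamental domain with the correct measure---are already in place in the paper. If a quantitative value of $C(n,v,K)$ were desired one would simply read off the Bishop--Gromov bound $V(n,K)$ from the model space, but for the qualitative finiteness statement this is unnecessary.
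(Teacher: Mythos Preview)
Your proposal is correct and follows essentially the same approach as the paper's proof: lift to the universal cover (an $\RCD(K,n)$ space by \cite{RCDsemilocally} and \cite{MondinoWei}), apply Bonnet--Myers and Bishop--Gromov to bound its total measure, and then use the fundamental domain from Lemma~\ref{L|fundamentaldomain} to count deck transformations. The only cosmetic difference is that you phrase the disjointness of translates as ``up to null sets'' whereas Lemma~\ref{L|fundamentaldomain}(2) actually gives $g\cdot F\cap F=\emptyset$ exactly.
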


The next two lemmas contain technical results concerning sequences of metric spaces converging in the pGH sense to metric cones.

\begin{lemma}\label{lem:distanceannuli}
    Let $Z=C(Y)$ be the metric cone defined over some metric space $(Y,d_Y)$ with $\diam(Y)\leq \pi$ and let $\sd$ be the associated cone distance. Calling $\bar p$ its tip, for every $\varepsilon \in (0,1/2)$, we define the annuli 
    \begin{equation}
        A:= B_{6}(\bar p) \setminus \bar B_{1}(\bar p)\, , \quad  A_\varepsilon :=  \bar B_{6 - \varepsilon}(\bar p) \setminus  B_{1 + \varepsilon}(\bar p)\, , \quad A^\varepsilon:= \bar B_{6 + \varepsilon}(\bar p) \setminus B_{1 - \varepsilon}(\bar p)\, .
    \end{equation}
    Denote by $\hat\sd$, $\hat\sd_\varepsilon$ and $\hat\sd^\varepsilon$ the length distances induced by $\sd$ on the annuli $A$, $A_\varepsilon$ and $A^\varepsilon$, respectively. Then, for every $\varepsilon\in (0,1/2)$ we have that
    \begin{equation}\label{eq:lengthcones}
        \frac{1}{1+\varepsilon}\hat\sd_\varepsilon (p,q) \leq \hat\sd (p,q)\leq \frac{1}{1-\varepsilon}\hat\sd^\varepsilon (p,q)\, ,
    \end{equation}
    for every $p,q\in \tilde A:=\bar{B}_{5}(\bar p) \setminus B_{2}(\bar p)$.
\end{lemma}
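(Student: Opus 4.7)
The plan is to exploit the radial structure of the cone $C(Y)$ to construct, for each of the two desired inequalities, an explicit Lipschitz retraction between the relevant annuli that fixes $\tilde A$ pointwise. Since $p,q\in\tilde A$, any such retraction will send a near-minimizing curve joining $p,q$ in the source annulus to a curve joining the same pair in the target annulus, whose length is multiplied by at most the Lipschitz constant.

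Concretely, first I would define a piecewise-linear bijection $g:[1,6]\to[1+\varepsilon,6-\varepsilon]$ equal to the identity on $[2,5]$, linear on $[1,2]$ with $g(1)=1+\varepsilon$, and linear on $[5,6]$ with $g(6)=6-\varepsilon$, and set $\Phi(r,y):=(g(r),y)$, which maps $A$ into $A_\varepsilon$. Analogously I would build $h:[1-\varepsilon,6+\varepsilon]\to[1,6]$ fixing $[2,5]$, linear on $[1-\varepsilon,2]$ with $h(1-\varepsilon)=1$, and linear on $[5,6+\varepsilon]$ with $h(6+\varepsilon)=6$, and set $\Psi(r,y):=(h(r),y)$, which maps $A^\varepsilon$ into $A$. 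Both maps fix $\tilde A=\{r\in[2,5]\}$ pointwise by construction.

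The main analytic step is to verify that any map of the form $(r,y)\mapsto(f(r),y)$ on the metric cone $C(Y)$ multiplies the length of every rectifiable curve by at most $\sup_r\max(|f'(r)|,f(r)/r)$. This follows from the standard length formula $\mathrm{length}(\gamma)=\int\sqrt{r'(s)^2+r(s)^2|y'(s)|^2}\,ds$ for absolutely continuous curves in $C(Y)$, where $|y'|$ denotes the metric speed in $Y$, applied after an arc-length reparameterization. A short direct computation then yields $\mathrm{Lip}(\Phi)\le1+\varepsilon$ (the bound being attained by $g(r)/r$ at $r=1$, while the radial derivatives contribute only $1-\varepsilon$) and $\mathrm{Lip}(\Psi)\le1/(1-\varepsilon)$ (attained by $h(r)/r$ at $r=1-\varepsilon$, with radial derivatives contributing $1/(1+\varepsilon)$).

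Assembling the pieces, for $p,q\in\tilde A$ and a curve $\gamma$ in $A^\varepsilon$ joining them with length arbitrarily close to $\hat\sd^\varepsilon(p,q)$, the curve $\Psi\circ\gamma$ lies in $A$ and still joins $p$ to $q$ (since $\Psi|_{\tilde A}=\mathrm{id}$), with length at most $(1-\varepsilon)^{-1}\mathrm{length}(\gamma)$; passing to the infimum over $\gamma$ gives $\hat\sd(p,q)\le(1-\varepsilon)^{-1}\hat\sd^\varepsilon(p,q)$. Applying the same argument with $\Phi$ and a near-minimizer in $A$ yields $\hat\sd_\varepsilon(p,q)\le(1+\varepsilon)\hat\sd(p,q)$, which rearranges to the other required inequality. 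The only mildly delicate point, which I expect to be the main obstacle, is justifying the formula for $\mathrm{Lip}$ of the radial-preserving map on a possibly singular cross-section $Y$; this is a standard fact that follows by approximating rectifiable curves by absolutely continuous ones and invoking the cone length functional directly.
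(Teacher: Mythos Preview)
Your proposal is correct and follows essentially the same idea as the paper: construct a radial retraction of one annulus onto the other that fixes $\tilde A$ and has Lipschitz constant $1+\varepsilon$ (respectively $1/(1-\varepsilon)$), then push a near-minimizing curve through it. The paper uses the even simpler clipping map $r\mapsto (1+\varepsilon)\vee r\wedge(6-\varepsilon)$ instead of your piecewise-linear bijection, and---more to the point---it avoids the ``mildly delicate'' issue you flag by never invoking the length formula on $C(Y)$: it bounds $\sd(\tilde\gamma(s),\tilde\gamma(t))\le(1+\varepsilon)\,\sd(\gamma(s),\gamma(t))$ directly from the cone distance formula $\sd^2=(r_1-r_2)^2+2r_1r_2\bigl(1-\cos d_Y(y_1,y_2)\bigr)$, which immediately gives the length comparison. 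Your map $\Phi$ satisfies the same pointwise inequality (since $|g'|\le 1$ and $g(r)/r\le 1+\varepsilon$), so you can eliminate your delicate point the same way.
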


\begin{proof}
    Recall that $Z=C(Y)$ is defined as
\begin{equation}
    Z = (Y \times [0, \infty)) / \sim \, ,
\end{equation}
where the equivalence relation $\sim$ identifies all points of the form $(y,0)$ for all $y \in Y$, and the cone distance $\sd$ is defined by
\begin{equation}
    \sd\big((x, r), (y, s)\big) = \sqrt{r^2 + s^2 - 2rs\, \cos( d_Y(x, y))}, \quad \text{where }x, y \in Y, \, r, s \ge 0\, .
\end{equation}
We are only going to prove the first inequality in \eqref{eq:lengthcones}, as the proof of the second is analogous. Given any two points $p,q\in \tilde A$, for every sufficiently small $\delta>0$ there exists a curve $\gamma:[0,1]\to A$ such that $\gamma(0)=p$, $\gamma(1)=q$ and $\length^{\sd}(\gamma)< \hat \sd(p,q) + \delta$. If $\gamma(t)=(y(t),r(t))$, with $y:[0,1]\to Y$ and $r:[0,1]\to (1,6)$, we define $\tilde\gamma:[0,1]\to A_\varepsilon$ as 
\begin{equation}
    \tilde \gamma(t):=(y(t),\tilde r(t)):= \big(y(t),  (1+\varepsilon) \vee r(t)\wedge (6-\varepsilon)\big)\, .
\end{equation}
Since $\varepsilon\in(0,1/2)$, $\tilde\gamma(0)=p$ and $\tilde\gamma(1)=q$.
Then, we observe that, for every $s,t \in [0,1]$, there holds
\begin{align}
        \sd (\tilde\gamma(s),\tilde\gamma(t))^2 
        =& (\tilde r(s)-\tilde r(t))^2 +2\tilde r(s)\tilde r(t)\,\big(1- \cos( d_Y(y(s), y(t)))\big)\\
        \leq& (r(s)-r(t))^2 + 2(1+\varepsilon)^2  r(s) r(t)\,\big(1- \cos( d_Y(y(s), y(t))\big)\\
        \leq &(1+\varepsilon)^2 \sd (\gamma(s),\gamma(t))^2\, .
\end{align}
Thus, we conclude that 
\begin{equation}\label{eq:almostlenght1}
\hat\sd_\varepsilon (p,q) \leq  \length^{\sd}(\tilde \gamma) \leq (1+\varepsilon) \length^{\sd}(\gamma)<(1+\varepsilon) (\hat\sd (p,q)+\delta)\, . 
\end{equation}
Since \eqref{eq:almostlenght1} holds for every $\delta>0$, the first inequality in \eqref{eq:lengthcones} follows.
\end{proof}

\begin{lemma} \label{L|convanelli}
    Let $(X,\sd,\haus^{n},\bar p)$ be an $\RCD(0,n)$ space with \begin{equation}
        \lim_{r \to + \infty} \frac{\haus^n(B_r(\bar p))}{r^n} \geq v>0\, .
    \end{equation}
    Let $r_i \uparrow + \infty$ be a sequence such that $(X,\sd_i:=\sd/r_i,\bar p)$ converges in the pGH sense to a metric cone $(C(Y),\sd_\infty, p_\infty)$. For every $i\in \N$, set $A_i:= B_{6r_i}(\bar p) \setminus \bar B_{r_i}(\bar p)$ and $\tilde A_i:=\bar{B}_{5r_i}(\bar p) \setminus B_{2r_i}(\bar p)$. Moreover, we define $A:=B^{\sd_\infty}_{6}(p_\infty) \setminus \bar B^{\sd_\infty}_{1}(p_\infty)$ and $\tilde A:=\bar{B}^{\sd_\infty}_{5}(p_\infty) \setminus B^{\sd_\infty}_{2}(p_\infty)$. For every $i\in \N$, we call $\hat\sd_i$ the length distance induced by $\sd_i$ on $A_i$ and, similarly, we call $\hat\sd_\infty$ the length distance induced by $\sd_\infty$ on $A$. Then, $(\tilde A_i,\hat\sd_i)$ converges in the GH sense to $(\tilde A,\hat\sd_\infty)$. 
\end{lemma}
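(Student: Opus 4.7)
I would deduce the GH convergence of the length-metric spaces $(\tilde A_i, \hat\sd_i) \to (\tilde A, \hat\sd_\infty)$ from a sequential distance-convergence statement: given any $p_i, q_i \in \tilde A_i$ converging in the ambient pGH sense to $p, q \in \tilde A$, I would show $\hat\sd_i(p_i, q_i) \to \hat\sd_\infty(p, q)$. This, combined with the Hausdorff convergence of the underlying sets $\tilde A_i \to \tilde A$ in the $\sd_i$-pGH realization (immediate since they are spherical shells of fixed radii), yields the claim. Throughout, I fix GH-approximations $\phi_i$ between $B^{\sd_i}_{10}(\bar p)$ and $B^{\sd_\infty}_{10}(p_\infty)$ with errors $\nu_i \downarrow 0$.

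\textbf{Upper bound $\limsup_i \hat\sd_i(p_i, q_i) \leq \hat\sd_\infty(p, q)$.} Fix small $\varepsilon, \eta > 0$. By Lemma~\ref{lem:distanceannuli} applied in $C(Y)$, there is a curve $\gamma^\varepsilon \subset A_\varepsilon$ from $p$ to $q$ with $\length_{\sd_\infty}(\gamma^\varepsilon) \leq (1+\varepsilon)\hat\sd_\infty(p, q) + \eta$. I would discretize $\gamma^\varepsilon$ into points $p = x_0, x_1, \dots, x_m = q$ with $\max_j \sd_\infty(x_{j-1}, x_j) < \sigma \ll \varepsilon$ and $\sum_j \sd_\infty(x_{j-1}, x_j) \leq \length(\gamma^\varepsilon) + \eta$, lift to $x_j^i := \phi_i^{-1}(x_j) \in X$, and connect consecutive $x_j^i$ by $\sd_i$-geodesics. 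Since $x_j \in A_\varepsilon$ gives $\sd_\infty(p_\infty, x_j) \in [1+\varepsilon, 6-\varepsilon]$, for $i$ large $\sd_i(\bar p, x_j^i) \in [1+\varepsilon/2, 6-\varepsilon/2]$; the triangle inequality applied along each $\sd_i$-geodesic (of length $\leq \sigma + o(1)$) then forces every point to have $\sd_i$-distance from $\bar p$ in $(1,6)$, so the concatenated piecewise-geodesic curve lies in $A_i$. Joining it to $p_i$ and $q_i$ by short geodesics (which also stay in $A_i$ as $p_i, q_i \in \tilde A_i$ are uniformly interior) yields a competitor of total length at most $(1+\varepsilon)\hat\sd_\infty(p, q) + O(\eta) + o(1)$. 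Letting first $i \to \infty$, then $\eta, \varepsilon \to 0$ concludes.

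\textbf{Lower bound $\liminf_i \hat\sd_i(p_i, q_i) \geq \hat\sd_\infty(p, q)$.} Take competitors $\gamma_i \subset A_i$ from $p_i$ to $q_i$ with $\length_{\sd_i}(\gamma_i) \leq \hat\sd_i(p_i, q_i) + 1/i$; these are uniformly bounded by the upper bound. Parametrized proportional to arc-length, the $\gamma_i$ are equi-Lipschitz on $[0,1]$ with values in a compact ball, so a standard Arzel\`a--Ascoli argument under pGH convergence produces a subsequential limit curve $\gamma_\infty: [0,1] \to \bar A$ from $p$ to $q$, and by lower semicontinuity of length under pGH convergence $\length_{\sd_\infty}(\gamma_\infty) \leq \liminf_i \length_{\sd_i}(\gamma_i)$. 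For every $\varepsilon > 0$, since $\gamma_\infty \subset \bar A \subset A^\varepsilon$, Lemma~\ref{lem:distanceannuli} gives
\begin{equation}
\length_{\sd_\infty}(\gamma_\infty) \geq \hat\sd^\varepsilon_\infty(p, q) \geq (1 - \varepsilon)\, \hat\sd_\infty(p, q)\, ,
\end{equation}
and sending $\varepsilon \to 0$ closes the estimate.

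\textbf{Main obstacle.} The principal difficulty is ensuring that the approximating curves in the upper bound remain in the open annulus $A_i$, rather than simply in $X$. Lemma~\ref{lem:distanceannuli} is the decisive tool: retreating into the compactly interior annulus $A_\varepsilon$ costs only a factor $(1+\varepsilon)$ in length while producing a uniform buffer from $\partial A$, which survives the small perturbations introduced by the GH-approximation and the geodesic interpolation in $X$. Without such a nested-annulus comparison, one has no a priori control on the position of interpolating geodesics relative to $\partial A_i$.
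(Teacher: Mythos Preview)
Your proposal is correct and follows the same overall architecture as the paper's proof: both hinge on Lemma~\ref{lem:distanceannuli} to retreat into the nested annuli $A_\varepsilon$ (for the upper bound) and $A^\varepsilon$ (for the lower bound), and both verify that the ambient pGH maps become $\hat\sd$-almost-isometries on $\tilde A$.

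The one methodological difference is in the lower bound. The paper discretizes the competitor curves $\eta_i\subset A_i$ into finitely many points, transfers them back to $C(Y)$ via the GH maps $g_i$, and sums the resulting $\sd_\infty$-gaps (showing each equals the corresponding $\hat\sd^\varepsilon$-gap because consecutive points are close and stay in $A^\varepsilon$). You instead pass to a limit curve via Arzel\`a--Ascoli under pGH convergence and invoke lower semicontinuity of length. Your route is slightly cleaner and avoids the bookkeeping with $M$ partition points and the $\varepsilon_i(M+4)$ error terms; the paper's route is more elementary in that it never appeals to Arzel\`a--Ascoli under pGH (which, while standard, is an extra black box). Both are valid, and both ultimately feed the limit curve into the inequality $\hat\sd^\varepsilon_\infty(p,q)\ge(1-\varepsilon)\hat\sd_\infty(p,q)$ from Lemma~\ref{lem:distanceannuli}.

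One small point to make explicit in your write-up: when you say Hausdorff convergence $\tilde A_i\to\tilde A$ in the $\sd_i$-realization ``yields the claim'', you are implicitly using that $\sd_i$ and $\hat\sd_i$ agree on small balls centered in $\tilde A_i$ (because such balls lie in $A_i$), so a $\sd_i$-net is also a $\hat\sd_i$-net; the paper spells this out at the end of its proof.
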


\begin{proof}
Since $(X,\sd_i,\bar p)$ converges in the pGH sense to $(C(Y),\sd_\infty, p_\infty)$, there exists a sequence $(\varepsilon_i)_{i\in \N}\to 0$ such that for every $i\in \N$ there are $\varepsilon_i$-GH maps $f_i : B^{\sd_\infty}_{1/\varepsilon_i}(p_\infty) \to B^{\sd_i}_{1/\varepsilon_i}(\bar p)$ and $g_i : B^{\sd_i}_{1/\varepsilon_i}(\bar p)\to B^{\sd_\infty}_{1/\varepsilon_i}(p_\infty)$ with $f_i(\bar p)=p_\infty$ and $g_i(p_\infty)=\bar p$. In particular, $f_i\big(B^{\sd_\infty}_{1/\varepsilon_i}(p_\infty)\big)$ is an $\varepsilon_i$-net in $B^{\sd_i}_{1/\varepsilon_i}(\bar p)$, $g_i\big(B^{\sd_i}_{1/\varepsilon_i}(\bar p)\big)$ is an $\varepsilon_i$-net in $B^{\sd_\infty}_{1/\varepsilon_i}(p_\infty)$ and
\begin{equation}
\begin{split}
     |\sd_i (f_i(x_\infty),f_i(y_\infty))- \sd_\infty (x_\infty,y_\infty)| <\varepsilon_i\, , \quad &\forall\,  x_\infty,y_\infty\in B^{\sd_\infty}_{1/\varepsilon_i}(p_\infty)\, ,\\
      |\sd_\infty (g_i(x_i),g_i(y_i))- \sd_i (x_i,y_i)| <\varepsilon_i\, , \quad &\forall \, x_i,y_i\in B^{\sd_i}_{1/\varepsilon_i}(\bar p)\, .
\end{split}
\end{equation}
Moreover, we can construct such $f_i$ and $g_i$ so that
\begin{equation}\label{eq:almostinverse}
    \sd_\infty (g_i (f_i (x_\infty) , x_\infty)) < 2 \varepsilon_i, \quad \forall x_\infty\in B^{\sd_\infty}_{1/\varepsilon_i}(p_\infty)\, .
\end{equation}
In the following we are implicitly going to consider only indexes $i$ sufficiently large so that $\varepsilon_i<1/7$ and thus $f_i$ and $g_i$ are defined on $B^{\sd_\infty}_{7}(p_\infty)$ and $B^{\sd_i}_{7}(\bar p)$, respectively.
\medskip

Fix any two points $p,q \in \tilde{A}$ and $\varepsilon > 0$ sufficiently small. Consider the annulus
\begin{equation}
A_\varepsilon := \bar B^{\sd_\infty}_{6 - \varepsilon}(p_\infty) \setminus  B^{\sd_\infty}_{1 + \varepsilon}(p_\infty) \subset C(Y)\, .
\end{equation}
According to Lemma \ref{lem:distanceannuli}, take a constant-speed curve $\gamma_\varepsilon : [0,1] \to A_\varepsilon$ such that $\gamma_\varepsilon(0)=p$, $\gamma_\varepsilon(1)=q$ and $\length^{\sd_\infty}(\gamma_\varepsilon) \le (1 + \varepsilon)\hat{\sd}_\infty(p,q)$. Partition the interval $[0,1]$ with times $t_0 = 0< t_1< \cdots< t_N = 1$ such that $|t_{j+1}-t_j|< \varepsilon (8 \hat{\sd}_\infty(p,q))^{-1}$ for every $j=0, \dots, N-1$. Setting $x_j = \gamma_\varepsilon(t_j)$ for $j=0, \dots, N$, we have that 
\begin{equation}
\sd_\infty(x_j, x_{j+1}) < \varepsilon/4\, , \quad \text{for }j=0, \dots, N-1\, .
\end{equation}
Moreover,
\begin{equation}
\sum_{j=0}^{N-1} \sd_\infty(x_j, x_{j+1}) \le \length^{\sd_\infty}(\gamma_\varepsilon) \le (1 + \varepsilon)\hat{\sd}_\infty(p,q)\, .
\end{equation}
Take any $i\in \N$ sufficiently large such that $\varepsilon_i< \varepsilon/8$. Setting $x^i_j = f_i(x_j)$ and recalling that $f_i$ is a $\varepsilon_i$-GH map, on the one hand we have that
\begin{equation}\label{eq:approx1}
    \sd_i(x_j^i, x_{j+1}^i) <  \varepsilon_i + \sd_\infty(x_j, x_{j+1}) < \varepsilon/2\, , \quad \text{for }j=0, \dots, N-1\, .
\end{equation}
On the other hand, recalling that $f_i(\bar p)=p_\infty$, we get that 
\begin{equation}\label{eq:approx2}
     x_j^i \in \bar B^{\sd_i}_{6 - 3\varepsilon/4}(\bar p) \setminus B^{\sd_i}_{1 + 3\varepsilon/4}(\bar p) \subset X_i\, , \quad \text{for }j=0, \dots, N\, .
\end{equation}
Combining \eqref{eq:approx1} and \eqref{eq:approx2}, we deduce that $\hat \sd_i (x_j^i, x_{j+1}^i)= \sd_i (x_j^i, x_{j+1}^i)$ for $j=0, \dots, N-1$. Thus, introducing the simplified notations $p_i=f_i(p)=f_i(x_0)= x_0^i$ and $q_i=f_i(q)=f_i(x_N)=x_N^i$, we obtain that 
\begin{align}\label{eq:upperbound}
\hat{\sd}_i(p_i, q_i)=& \hat{\sd}_i(x^i_0, x^i_N)\le \sum_{j=0}^{N-1} \hat{\sd}_i(x^i_j, x^i_{j+1})\\ 
=& \sum_{j=0}^{N-1} \sd_i(x^i_j, x^i_{j+1}) 
\le \sum_{j=0}^{N-1} \sd_\infty(x_j, x_{j+1}) + \varepsilon_i N \\
\le& (1+\varepsilon)\hat{\sd}_\infty(p,q) + \varepsilon_i N\, .
\end{align}

Consider a constant-speed curve $\eta_i : [0,1] \to A_i$ such that $\eta_i(0) = p_i$, $\eta_i(1) = q_i$
and $\length^{\sd_i}(\eta_i) = \hat{\sd}_i(p_i, q_i)$. Partition the interval $[0,1]$ with times $s_0 = 0< s_1< \cdots< s_M = 1$ such that $|s_{j+1}-s_j|< \varepsilon (16 \hat{\sd}_\infty(p,q) + \varepsilon N)^{-1}$. Then, calling $y^i_j = \eta_i(s_j)$ for $j=0,\dots, M$, the inequality \eqref{eq:upperbound} guarantees that
\begin{equation}
\sd_i(y^i_j, y^i_{j+1}) < \varepsilon/8\, .
\end{equation}
Define $z^i_j := g_i(y^i_j)\in C(Y)$ for $j=0,\dots, M$, and observe that by construction (see \eqref{eq:almostinverse}) we have that
\begin{equation}
\sd_\infty(z^i_0, p) < 2\varepsilon_i \quad \text{and}\quad \sd_\infty(z^i_M, q) < 2\varepsilon_i\, .
\end{equation}
 On the other hand, since $g_i$ is a $\varepsilon_i$-GH map, 
\begin{equation}\label{eq:approx11}
    \sd_\infty(z_j^i, z_{j+1}^i) <  \varepsilon_i + \sd_\infty(y^i_j, y^i_{j+1}) < \varepsilon/2\, , \quad \text{for }j=0, \dots, M-1\, .
\end{equation}
Moreover, recalling that $g_i(p_\infty)=\bar p$, we get that 
\begin{equation}\label{eq:approx22}
     z_j^i \in \bar B^{\sd_\infty}_{6 + \varepsilon/4}(p_\infty) \setminus B^{\sd_\infty}_{1 -\varepsilon/4}(p_\infty) \subset C(Y)\, , \quad \text{for }j=0, \dots, M\, .
\end{equation}
Therefore
\begin{multline}\label{eq:estimategi}
\hat{\sd}_i(p_i, q_i) \ge \sum_{j=0}^{M-1} \sd_i(y^i_j, y^i_{j+1}) \geq \sum_{j=0}^{M-1} \sd_\infty(z_j^i, z_{j+1}^i) - \varepsilon_i M \\ \geq \sd_\infty(z^i_0,p) + \sum_{j=0}^{M-1} \sd_\infty(z_j^i, z_{j+1}^i) +\sd_\infty(z^i_M, q)  - \varepsilon_i (M+4)\, .
\end{multline}
Let $\hat{\sd}^\varepsilon$ denote the length distance on
$\bar B^{\sd_\infty}_{6 + \varepsilon}(p_\infty) \setminus B^{\sd_\infty}_{1 - \varepsilon}(p_\infty)$. 
Combining \eqref{eq:approx11} and \eqref{eq:approx22}, we know that $\hat \sd^\varepsilon (z_j^i, z_{j+1}^i)= \sd_\infty (z_j^i, z_{j+1}^i)$ for $j=0, \dots, M-1$, and similarly $\hat\sd^\varepsilon(z^i_0,p)= \sd_\infty(z^i_0,p)$ and $\hat\sd^\varepsilon(z^i_M, q)=\sd_\infty(z^i_M, q)$. Then, the right-hand side of \eqref{eq:estimategi} is equal to 
\begin{align*}
& \hat\sd^\varepsilon(z^i_0,p) + \sum_{j=0}^{M-1} \hat\sd^\varepsilon(z_j^i, z_{j+1}^i) +\hat\sd^\varepsilon(z^i_M, q)  - \varepsilon_i (M+4) \\
& \geq 
\hat{\sd}^\varepsilon(p,q) - \varepsilon_i (M+4) \ge (1-\varepsilon)\hat{\sd}_\infty(p,q) - \varepsilon_i(M+4)\, ,
\end{align*}
where we applied Lemma \ref{lem:distanceannuli} for the last inequality.
In particular,
\begin{equation}\label{eq:epsGH2}
    \hat{\sd}_i(p_i, q_i) \geq (1-\varepsilon)\hat{\sd}_\infty(p,q) - \varepsilon_i(M+4)\, .
\end{equation}
Since $p$, $q$ and $\varepsilon$ were chosen arbitrarily and $\hat\sd_\infty$ is clearly bounded on $\tilde A \times \tilde A$, the combination of \eqref{eq:upperbound} and \eqref{eq:epsGH2} implies that given any $\epsilon>0$ there exists $N(\epsilon)$ such that for every $i\geq N(\epsilon)$ we have
\begin{equation}
    \big|\hat{\sd}_i(f_i(p), f_i(q)) - \hat{\sd}_\infty(p,q) \big| \leq \epsilon/3\, , \quad \forall p,q\in \tilde A\, .
\end{equation}
Consider the closest point projection $\pi:C(Y) \to \bar B^{\sd_\infty}_{5 - \epsilon/3}(p_\infty) \setminus B^{\sd_\infty}_{2 + \epsilon/3}(p_\infty)$ and observe that
\begin{equation}
    \hat{\sd}_\infty(\pi(p),p) \leq \frac \epsilon 3\, , \quad \forall p\in \tilde A\, .
\end{equation}
Thus for every $p,q \in \tilde A$ and every $i\geq N(\epsilon)$ we have that
\begin{multline}
    \big|\hat{\sd}_i(f_i(\pi(p)), f_i(\pi(q))) - \hat{\sd}_\infty(p,q) \big| \\
    \leq \big|\hat{\sd}_i(f_i(\pi(p)), f_i(\pi(q))) - \hat{\sd}_\infty(\pi(p),\pi(q)) \big| + \big|\hat{\sd}_\infty(\pi(p),\pi(q))  - \hat{\sd}_\infty(p,q) \big| \\
    <\frac \epsilon 3 +
    \big|\hat{\sd}_\infty(\pi(p),\pi(q))- \hat{\sd}_\infty(\pi(p),q) \big| + \big|\hat{\sd}_\infty(\pi(p),q)  - \hat{\sd}_\infty(p,q) \big| < \epsilon\, .
\end{multline}
On the other hand, since $f_i$ is an $\varepsilon_i$-GH map,  
\begin{equation}
    \sd_i \big(x_i , f_i \big(\bar B^{\sd_\infty}_{5 + 2\varepsilon_i }(p_\infty) \setminus B^{\sd_\infty}_{2 -2 \varepsilon_i}(p_\infty)\big)\big) \leq\varepsilon_i\, , \quad \text{for every }x_i \in \tilde A_i\, .
\end{equation}
Therefore, noting that 
\begin{equation}
f_i\big(\bar B^{\sd_\infty}_{5 - \epsilon/3}(p_\infty) \setminus B^{\sd_\infty}_{2 + \epsilon/3}(p_\infty)\big)\subset \tilde A_i 
\end{equation}
for every $i$ sufficiently large, we have that 
\begin{equation}
f_i(\pi(\tilde A))=f_i\big(\bar B^{\sd_\infty}_{5 - \epsilon/3}(p_\infty) \setminus B^{\sd_\infty}_{2 + \epsilon/3}(p_\infty)\big) 
\end{equation}
is a $(4 \varepsilon_i + \epsilon/3)$-net in $\tilde A_i$, with respect to $\sd_i$. Thus, up to taking a bigger $N(\epsilon)$, for every $i\geq N(\epsilon)$ we have that $f_i(\pi(\tilde A))$ is a $\epsilon$-net in $\tilde A_i$, with respect to $\sd_i$. Since the distances $\sd_i$ and $\hat\sd_i$ coincide when restricted to balls of radius $\frac 12$ centered in $\tilde A_i$, for $\epsilon$ sufficiently small we conclude that $f_i(\pi(\tilde A))$ is an $\epsilon$-net in $\tilde A_i$ with respect to $\hat\sd_i$.
This argument proves that for every $i\geq N(\epsilon)$ the map $f_i \circ \pi:(\tilde A,\hat \sd) \to (\tilde A_i,\hat \sd_i)$ is an $\epsilon$-GH map. The conclusion follows from the arbitrariness of $\epsilon$.
\end{proof}

    \begin{proof}[Proof of Theorem \ref{T|ZhouAnelli}]
    
    We first show that, for every $r$ sufficiently large, $B_{4r}(p) \setminus B_{3r}(p)$ is contained in a path-connected component $C_r$ of $B_{5r}(p) \setminus B_{2r}(p)$.

    Assume that this is not the case, so that there exists a sequence $r_i \uparrow + \infty$ violating the statement.
    Hence, for every $i \in \bb{N}$, there are two points $x_i,y_i \in B_{4r_i}(p) \setminus B_{3r_i}(p)$ such that no path in $B_{5r_i}(p) \setminus B_{2r_i}(p)$ connects them.
     By \cite{DePhilippisGiglicone,DephilGigli} after \cite[Theorem 7.6]{ChCo1}, up to passing to a subsequence, the spaces $(X,\sd/r_i)$ converge in pGH sense to a metric cone $C(Z)$ centered in its tip, where $(Z,\sd_Z)$ is a path-connected metric space. Hence, there are limit points $x_i \to x_\infty$, $y_i \to y_\infty$ in $\bar{B}^{C(Z)}_{4}(p) \setminus {B}^{C(Z)}_{3}(p)$. Consider a collection $\{z_j\}_{j=1}^k \subset \bar{B}^{C(Z)}_{4}(p) \setminus {B}^{C(Z)}_{3}(p)$ for some $k \in \bb{N}$ such that $z_1=x_\infty$, $z_k=y_\infty$ and $\sd^{C(Z)}(z_j,z_{j+1}) \leq 1/2$, for every $j \in \{1,\cdots,k-1\}$. For every $i \in \bb{N}$ and every $j \in \{2,\cdots,k-1\}$ consider points $x^i_j \in B_{5r_i}(p) \setminus B_{2r_i}(p)$ such that $x^i_j \to z_j$, and set $x^i_1:=x_i$ and $x^i_k:=y_i$. Consider a curve obtained by concatenating geodesics connecting $z^i_j$ and $z^{i}_{j+1}$ as $j$ varies between $1$ and $k-1$. For $i$ large enough, this curve is contained in $B_{5r_i}(p) \setminus B_{2r_i}(p)$, a contradiction.
    
\medskip

        Let $L>0$ be large to be chosen later, and assume by contradiction that the images of the maps in \eqref{eq:inclusion-cc} have cardinality greater than $L$ for some sequence $r_i \uparrow + \infty$. Let $A_i:=B_{6r_i}(p) \setminus \bar{B}_{r_i}(p)$, and let $\bar{\sd}_i$ be the induced length distance on $A_i$.
        Since $A_i$ is an open set of an $\RCD(0,n)$ space, it is semi-locally simply connected by \cite{RCDsemilocally}. Hence, there exists a simply connected universal covering $\tilde{A}_i$ of $A_i$.
         On $\tilde{A}_i$, we consider the distance $\tilde{\sd}_i$ obtained by lifting the length structure of $A_i$ as in Remark \ref{rmk:lengthdistanceoncovering}. The deck transformations of this covering form of a group of isometries of $(\tilde{A}_i,\tilde{\sd}_i)$ isomorphic to $\pi_1(A_i)$, and the corresponding quotient space is isometric to $(A_i,\bar{\sd_i})$.
\medskip

        Let $p_i:\tilde{A}_i \to A_i$ be the covering map, and let $\tilde{A}'_i$ be a path-connected component of $p_i^{-1}(C_{r_i})$. 
        Let $G_i < \pi_1(A_i)$ be the subgroup of deck transformations of $p_i:\tilde{A}_i \to A_i$ sending $\tilde{A}'_i$ into itself.
        Then, since $G_i$ acts freely on $\tilde{A}_i$, it can be naturally identified with a group of isometries of $(\tilde{A}'_i,\tilde{\sd}_i)$. Moreover, since $\pi_1(A_i)$ acts transitively on the fibers of $p_i:\tilde{A}_i \to A_i$, it follows that $G_i$ acts transitively on the fibers of the restriction 
        \begin{equation}
        {p_i}|_{\tilde{A}'_i}:\tilde{A}'_i \to  C_{r_i}\, . 
        \end{equation}
        Thus, $(\tilde{A}'_i,\tilde{\sd}_i)/G_i \cong (C_{r_i},\bar{\sd}_i)$ isometrically. 
        In addition, since loops contained in $C_{r_i}$ can be lifted to paths contained in $\tilde{A}'_i$, each group $G_i$ contains an isomorphic copy of the image of $i_*:\pi_1(C_{r_i},\bar{c}_i) \to \pi_1(A_i,\bar{c}_i)$ where $\bar{c}_i \in C_{r_i}$.
\medskip

        The spaces $(\tilde{A}'_i,\tilde{\sd}_i/r_i)$ are locally uniformly doubling by Lemma \ref{L|lift}. Hence, modulo passing to a subsequence, one has the equivariant GH convergence 
        \begin{equation}
        (\tilde{A}'_i,\tilde{\sd}_i/r_i,G_i) \to (Z,\sd_z,G)\, .
        \end{equation} Observe now that by an argument similar to the one used at the beginning of the proof, for every $\epsilon>0$, each $(C_{r_i},\bar{\sd}_i/r_i)$ is $\epsilon$-close to $(B_{5r_i}(p) \setminus B_{2r_i}(p),\bar{\sd}_i/r_i)$ in GH sense for $i$ large enough. Combining with Lemma \ref{L|convanelli}, we deduce that
        the spaces $(C_{r_i},\bar{\sd}_i/r_i)$ converge to an annulus $(C_\infty,\bar{\sd}_\infty)$ of a tangent cone at infinity  $C(Y)$ of $X$, where $Y$ is a non-collapsed $\RCD(n-2,n-1)$ space, and $\bar{\sd}_\infty$ is the length distance induced by $C(Y)$ on $C_\infty$.
        By \cite[Theorem 1.2]{DephilGigli} (after \cite{ColdingConv}), $\aH^{n-1}(Y)$ is bounded from below by a constant depending on $n$ and $v$, so that $\# \pi_1(C_\infty) < C(n,v)$ by Lemma \ref{L|variantionzhou}.
         Note that here we are using the assumption that $n\ge 3$. In the remainder of the proof, we denote by $C(n,v)$ a generic constant depending on $n$ and $v$ only, without renaming it at every step.
        Since $\tilde{A}'_i/G_i \cong C_{r_i}$, by Theorem \ref{T|EquivaraintConvergence}, we have that 
        \begin{equation}
            (Z,\sd_z)/G \cong (C_{\infty},\bar{\sd}_\infty)\, .
        \end{equation}
        We now claim that $\# G \leq  C(n,v)$. We divide the proof of this fact in steps.
        \medskip 

        \textbf{Step 1}: We claim that $\Gamma_\epsilon:=\{g \in G: g \cdot x \in B_\epsilon(x) \text{ for some } x \in Z\}$ has cardinality at most $C(n,v)$, where $\epsilon:=\epsilon'/10$ and $\epsilon'(n,v)$ is given by Lemma \ref{L|Zhou}. 

        Assume that there are $N>0$ distinct elements $\{g_k\}_{k =1}^N \subset \Gamma_\epsilon$ and (possibly repeated) points $x_k \in Z$ such that $g_k \cdot x_k \in B_\epsilon(x_k)$.
        For every $k \in \{1,\dots,N\}$, choose points $x_{i,k} \in \tilde{A}'_i$ converging to $x_k$, and isometries $g_{i,k} \in G_i$ converging to $g_k$. If $k \neq k'$, since $g_k \neq g_{k'}$, we have that $g_{i,k} \neq g_{i,k'}$ for $i$ large enough.

        Consider the correspondence between deck transformations of $\tilde{A}_i$ and $\pi_1(A_i)$. Each $g_{i,k}$ corresponds to a loop $\gamma_{i,k}$ of $\pi_1(A_i)$ centered in $p_i(x_{i,k}) \in A_i$ obtained by projecting a curve of $\tilde{A}_i$ from $x_{i,k}$ to $g_{i,k} \cdot x_{i,k}$. In particular, such curve can be chosen to satisfy 
        \begin{equation}
            \mathrm{length}^{\bar{\sd}_i}(\gamma_{i,k})\leq 2\tilde{\sd}_i(x_{i,k},g_{i,k} \cdot x_{i,k})\, .
        \end{equation}
        Since $x_{i,k} \to x_k$ and $g_{i,k}\cdot x_{i,k} \to g_k\cdot x_k \in B^{\sd_z}_\epsilon(x_k)$ as
        $(\tilde{A}'_i,\tilde{\sd}_i/r_i) \to (Z,\sd_z)$, it follows that  
        \begin{equation} \label{E|simplvol1}
            \limsup_{i \to + \infty} \, \mathrm{length}^{\bar \sd_i/r_i}(\gamma_{i,k}) \leq 2
            \limsup_{i \to + \infty}
            \frac{\tilde{\sd}_i(x_{i,k},g_{i,k} \cdot x_{i,k})}{r_i} \leq  2 \epsilon\, .
        \end{equation}
        Observe now that $C_\infty \cong Z/G$ is $C(n)$-doubling and has diameter bounded above by a possibly different $C(n)$. As a consequence, we deduce that there exists $T(N,n)>0$ (with the property that $T(N,n) \uparrow + \infty$ as $N \uparrow + \infty$) such that, if $C_\infty$ contains a collection of points  $\{c_l\}_{l=1}^N$, then there are at least $T$ points of such collection contained in a ball $B^{\bar{\sd}_\infty}_{\epsilon/2}(c)$.
        Using this principle with the collection $\{G \cdot x_k\}_{k=1}^N \subset C_\infty$, we find $T$ different indexes $k_1,\dots,k_T$ such that $\bar \sd_\infty (G\cdot x_{k_j}, G\cdot x_{\infty})<\epsilon/2$ for all $j=1,\dots,T$ and some $G \cdot x_\infty \in Z/G$.
        By Theorem \ref{T|EquivaraintConvergence}, the projections $G_i \cdot x_{i,k_j} \in (C_{r_i},\bar{\sd}_i/r_i)$ converge to $G \cdot x_{k_j}$ in the space realizing the convergence $(C_{r_i},\bar{\sd}_i/r_i) \to (C_\infty,\bar{\sd}_\infty)$, for all $j=1,\dots,T$.
        Moreover, in the same space, there are points $a_i \in C_{r_i}$ such that $a_i \to G \cdot x_\infty$.
        Hence, for $i$ large enough
\begin{equation}
    G_i \cdot x_{i,k_j}\in B^{\bar{\sd}_i/r_i}_{\epsilon}(a_i) \subset C_{r_i}
\end{equation}
     for all $j=1,\dots,T$.   
        
        Observe that for $\epsilon$ small enough we have
        $B^{\bar{\sd}_i/r_i}_{\epsilon}(a_i) = B^{\sd/r_i}_{\epsilon}(a_i)$. Moreover, for every $k$ and every $i$, it holds that $G_i \cdot x_{i,k} \in C_{r_i}$ is identified with $p_{i}(x_{i,k}) \in A_i$ via the inclusion $C_{r_i} \to A_i$.
        Hence, combining with \eqref{E|simplvol1}, the balls
        $B^{\sd}_{3r_i\epsilon}(a_i) \subset X$ contain $T$ loops $\gamma_{i,k}$ corresponding to different elements of $\pi_1(A_i)$. 
        By Lemma \ref{L|Zhou}, it follows that $T \leq C(n,v)$, so that $N \leq C(n,v)$ as well.
 \medskip    

        We record two important consequences of Step 1:
        \begin{itemize}
        \item[i)] the set $\mathrm{Fix}(G):=\{g \in G: g\cdot x =x \text{ for some } x \in Z\}$ has cardinality at most $C(n,v)$; \item[ii)] for every $z \in Z$, there is $\epsilon>0$ such that the almost stabilizer $\Gamma_\epsilon(z):=\{g \in G:g \cdot z \in B_\epsilon(z)\}$ has cardinality at most $C(n,v)$.
        \end{itemize}
        \medskip

        \textbf{Step 2}: We claim that $H:=\langle\mathrm{Fix}(G)\rangle$ is normal and has cardinality at most $C(n,v)$.

        Let $h \in \mathrm{Fix}(G)$ be such that $h \cdot z=z$ for some $z \in Z$, and let $g \in G$. Then $g^{-1}hg$ fixes $g^{-1}(z)$, so that $g^{-1}hg \in \mathrm{Fix}(G)$.
        Let now $h \in H$, so that $h=g_1 \cdots g_k$ for some $\{g_j\}_{j=1}^{k} \subset \mathrm{Fix}(G)$. For every $g \in G$, we have that
        \begin{equation}
            g^{-1}hg=g^{-1}g_1g \, g^{-1}g_2 \cdots g_{k-1}g \, g^{-1}g_kg\, .
        \end{equation}
        Since $g^{-1}g_jg$ belongs to $\mathrm{Fix}(G)$ for every $j$, it follows that $g^{-1}hg \in H$, proving that $H$ is normal.

        To prove that $H$ has cardinality at most $C(n,v)$, we first claim that $H/Z(H)$ has cardinality at most $C(n,v)$, where $Z(H)<H$ is the center of $H$. 
        Let $T:H \to \mathrm{Sym}(\mathrm{Fix}(G))$ be the homomorphism defined so that $T(h)$ is the permutation of $\mathrm{Fix}(G)$ given by $T(h)(g):=h^{-1}gh$. By Step 1, $H/\mathrm{Ker}(T)$ has cardinality at most $C(n,v)$. Since $\mathrm{Ker}(T)$ is in the center of $H$ it follows that $H/Z(H)$ has cardinality at most $C(n,v)$, as claimed.

        By Schur's Theorem, the derived group $H' \leq H$ has cardinality at most $C(n,v)$. Therefore it suffices to show that $H/H'$ has cardinality at most $C(n,v)$ as well. The group $H/H'$ is abelian, and it is generated by at most $C(n,v)$ elements of order at most $C(n,v)$ (i.e.\ the classes of the elements in $\mathrm{Fix}(G)$). Hence, $H/H'$ has cardinality at most $C(n,v)$, concluding the proof of Step 2. 
        \medskip

        \textbf{Step 3:} We claim that $G/H$ acts properly discontinuously on $Z/H$.

        We first show that $G/H$ acts freely. Indeed if $(g+H)\cdot x=H\cdot x$ for some $H\cdot x \in Z/H$ and some $g+H \in G/H$ then there exists $h \in H$ such that $(g+h)\cdot x=x$, so that $g+h \in H$ and $g+H=H$. Hence, the action is free.

        To conclude, it suffices to show that if $H \cdot x \in Z/H$ and $g_i+H \in G/H$ are such that $(g_i+H) \cdot x \to H \cdot x$, then $g_i+H=H$ for $i$ large enough. To this aim, observe that there are elements $h_i \in H$ such that $(g_i+h_i) \cdot x \to x$. By Step 1, it follows that $g_i+h_i$ fixes $x$ for $i$ large enough, so that $g_i \in H$ because $G/H$ acts freely.
        \medskip

        To complete the proof that $\# G \leq C(n,v)$, note that since $G/H$ acts properly discontinuously on $Z/H$ by Step 3, $G/H$ is a subgroup of $\pi_1(C_\infty)$. Thus it has cardinality at most $C(n,v)$. Combining with Step 2, we deduce that $G$ has cardinality at most $C(n,v)$ as well. 
        
        We now conclude the proof of the theorem. Since $Z/G$ is compact, we deduce that $Z$ is compact.
         Let $a_i \in \tilde{A}_i'$ be elements such that $a_i \to z \in Z$ as $(\tilde{A}_i',\tilde{\sd}_i/r_i) \to (Z,\sd_z)$. 
         Let $\epsilon(n,v)>0$ be given by Lemma \ref{L|Zhou}, so that the almost stabilizers $\Gamma_{\epsilon,a_i}:=\{g \in G_i:\tilde{\sd}_i(a_i,g \cdot a_i)< \epsilon r_i \}$ all have cardinality at most $C(n,v)$. Since we are assuming by contradiction that $\# G_i \geq L$, the index of $\Gamma_{\epsilon,a_i}$ in $G_i$ is at least $S:=L/C(n,v)$. Consider elements $\{g^j_i\}_{j=1}^S \subset G_i$ belonging to different left cosets of $\Gamma_{\epsilon,a_i}$ in $G_i$. Since $Z$ is compact, for each $j \in \{1,\cdots S\}$, there exists $g^j \in G$ such that $g^j_i \to g^j$.

         We claim that the elements $\{g^j\}_{j=1}^S \subset G$ are all distinct. If this is the case, we conclude that
         \begin{equation}
             L/C(n,v) =S \leq \# G \leq C(n,v)\, ,
         \end{equation}
         contradicting a suitably large choice of $L$.
        We show that $g^1 \neq g^2$, the other cases being analogous. If $g^1=g^2$, then $\sd_z(g^1 \cdot z,g^2 \cdot z)=0$, so that
        \begin{equation}
            \tilde{\sd}_i (g^1_i \cdot a_i,g^2_i \cdot a_i)< \epsilon r_i
        \end{equation}
        if $i$ is large enough. This implies that $g^1_i \in g^2_i + \Gamma_{\epsilon,a_i}$, contradicting that the elements $g^j_i$ belong to different left cosets of $\Gamma_{\epsilon,a_i}$ in $G_i$.
    \end{proof}

\subsection{Proof of Theorem \ref{thm:simplicialvolthm}: non-Euclidean volume growth case}\label{sec:vanish_nonEucl}

The goal of this section is to prove Theorem \ref{thm:simplicialvolthm} under the assumption that the space $X$ does not have maximal volume growth. We observe that if $X$ is compact, then $\pi_1(X)$ is virtually abelian and thus amenable by \cite[Theorem 1.3]{MondinoWei}, so that $\|X\|=0$ by \cite[Corollary C, page 248]{GromovVolbCoho}.
In the following, we will assume that $X$ is non-compact. 
To prove the vanishing theorem in this case, thanks to Theorem \ref{T|Frig}, it is sufficient to prove the following:

\begin{theorem} \label{T|Noverlap}
    Let $(X,\sd,\Haus^n)$ be a non-compact $\RCD(0,n)$ space such that 
    \begin{equation}\label{eq:nonEuclvg}
        \lim_{R\to\infty} \frac{\Haus^n(B_R(p))}{R^n} = 0\, ,
    \end{equation}
    for some $p\in X$.
    Then $X$ admits a cover $\{ U_i\}_{i\in \N}$ which is amenable, open, amenable at infinity and has $\mathrm{mult}\big(\{ U_i\}_{i\in \N}\big) \leq n$.
\end{theorem}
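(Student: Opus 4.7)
The plan is to build the cover by combining three ingredients in succession: a conformal deformation of $(X,\sd)$ tuned to the slow volume growth \eqref{eq:nonEuclvg}, Papasoglu's theorem \cite{PapasogluGAFA} bounding the $(n-1)$-Uryson width in terms of volume, and the local Margulis lemma for $\RCD(K,N)$ spaces from Appendix \ref{appendixA}. The rough strategy mirrors the one sketched in the introduction.

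First, using the distance function $\sd(\cdot,p)$ as a proxy, I would conformally rescale the metric on concentric annular regions to a sequence of scales $R_k \uparrow \infty$, chosen so that $\Haus^n(B_{R_{k+1}}(p))/R_{k+1}^n$ decays sufficiently fast. The goal is that on each annulus $A_k := B_{R_{k+1}}(p)\setminus \bar B_{R_k}(p)$, after rescaling, the volume of unit balls is below the smallness threshold in Papasoglu's width estimate. This yields a continuous map $\phi_k: A_k \to P_k$ to an $(n-1)$-dimensional simplicial complex whose fibers have uniformly bounded (rescaled) diameter; an intermediate technical point is that the $\RCD(0,n)$ condition is stable under such conformal/annular rescalings in the form needed to invoke Papasoglu.

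Second, from $\phi_k$ I would extract an open cover of $A_k$ by pulling back the open stars of a sufficiently fine subdivision of $P_k$. Since $\dim P_k \leq n-1$, every such cover automatically has multiplicity at most $n$, and its members are relatively compact. Patching these annular covers across consecutive $k$ via a locally finite partition on the transition shells $\{R_{k}-1 \leq \sd(\cdot,p)\leq R_k+1\}$, and preserving the multiplicity bound by a standard coloring/grouping argument, produces the global cover $\{U_i\}_{i\in\N}$ with the required covering-theoretic properties.

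Third, amenability and amenability at infinity. To each $U_i$ coming from the $k$-th annulus I associate a slightly thickened open set $W_i$ (of diameter comparable to the Margulis radius given by Appendix \ref{appendixA} applied at the base-point scale $R_k$) and a large open complement to a fixed compact set. By the local Margulis lemma, the image of $\pi_1(U_i,\cdot)\to \pi_1(W_i,\cdot)$ is virtually nilpotent, hence amenable. As $i \to \infty$ we have $k \to \infty$ and the $W_i$ escape every compact set while remaining locally finite, giving the conditions in Definition \ref{def:amenableatinfinity}. The main obstacle will be Step~1: the conformal deformation has to be calibrated to simultaneously trigger the volume-smallness hypothesis of Papasoglu, preserve enough $\RCD$ regularity for the Margulis lemma in Appendix \ref{appendixA} to apply at the relevant scales, and allow the annular width maps to be patched without inflating the multiplicity beyond $n$. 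Running the argument at the \emph{intrinsic} scale $R_k$ of each annulus (rather than globally) and exploiting the scale invariance of both Papasoglu's estimate and the Margulis lemma is what makes all three conditions compatible.
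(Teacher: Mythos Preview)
Your outline captures the three essential ingredients (conformal rescaling, Papasoglu, Margulis), but there are two substantive gaps and one point where you have made the argument harder than necessary.

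\textbf{Global vs.\ annular rescaling.} The paper does \emph{not} work annulus by annulus. It introduces a single global conformal distance $\tilde\sd$ on $X$ with conformal factor $1/(\sd(p,\cdot)\vee 1)$ (Lemma~\ref{lem:modifieddistance}), and then applies Papasoglu \emph{once} to $(X\setminus B_{T(\delta)}(p),\tilde\sd)$ (Proposition~\ref{prop:margulis}). This yields a single proper map $\pi$ to a locally finite $(n-1)$-complex $Y$, and the cover is obtained by pulling back a refinement of $\{V_y\}_{y\in Y}$. Because the multiplicity bound comes from the topological dimension of the single target $Y$, no patching is needed. In your scheme, the ``standard coloring/grouping'' step that glues covers of overlapping annuli while keeping multiplicity $\le n$ is not standard at all: two covers of multiplicity $\le n$ on overlapping regions can have union of multiplicity up to $2n$, and there is no mechanism in your sketch to prevent this.

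\textbf{No $\RCD$ stability under the conformal change is needed.} You flag as the main obstacle that the conformal deformation must ``preserve enough $\RCD$ regularity'' for Papasoglu and for the Margulis lemma. Neither is required. Papasoglu's theorem is a pure metric statement about Hausdorff content of boundaries of balls; the paper verifies its hypothesis by comparing $\tilde{\Haus}^{n-1}$ on spheres to $\Haus^{n-1}$ via the bilipschitz comparison of $\sd$ and $\tilde\sd$ on balls (Lemma~\ref{lem:modifieddistance}) together with the coarea inequality and \eqref{eq:nonEuclvg}. The Margulis lemma (Corollary~\ref{rmk:margulis}) is applied in the \emph{original} metric $\sd$: one simply uses that a $\tilde\sd$-ball of radius $2\delta$ around $q$ is contained in a $\sd$-ball of radius $4\delta\,\sd(p,q)$, so choosing $\delta<\varepsilon(n)/8$ puts every $U_i$ inside the Margulis ball at the scale $R\sim\sd(p,p_i)$.

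\textbf{The sets $W_i$.} For amenability at infinity the $W_i$ must be \emph{large}, i.e.\ complements of compact sets, not ``slightly thickened open sets''. The paper takes $W_i=X\setminus B_{\frac12\sd(p,p_i)}(p)$; amenability of $U_i$ in $W_i$ then follows because $U_i\subset B_{4\delta\,\sd(p,p_i)}(p_i)$ and $B_{4\delta\,\sd(p,p_i)/\varepsilon}(p_i)\subset W_i$, so the Margulis lemma applies at scale $\sd(p,p_i)$ inside $W_i$.
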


As said before, Theorem \ref{T|Noverlap} generalizes some results due to Gromov in \cite{GromovVolbCoho}, where he assumed a uniform small upper bound on the volume of unit balls. 
The rough idea behind the proof of Theorem \ref{T|Noverlap} is the following. In view of \eqref{eq:nonEuclvg}, \cite[Theorem 1.2]{DephilGigli} implies that that all the blow-downs of $X$ have Hausdorff dimension $\leq n-1$. In particular, they have topological dimension at most $n-1$ and thus they admit arbitrarily small open covers with multiplicity at most $n$. Ideally, one would like to combine the existence of such covers with small multiplicity with the Margulis lemma from \cite{KapovitchWilking} (and the subsequent partial generalization to $\RCD$ spaces in \cite{DengSantosZamoraZhao}) to construct the sought cover of $X$.
There are several difficulties when trying to implement this strategy. The most severe ones are that:
\begin{itemize}
    \item[i)] in general, it is not clear whether the pointed GH-approximations from the (rescaled)-space to its blow-downs can be chosen to be continuous, due to the possibly poor topological behaviour of blow-downs in the collapsed case;
    \item[ii)] the blow-down of $X$ need not be unique and need not be invariant under rescaling. Thus it is hard to turn the local information obtained via a blow-down argument into a global one;
    \item[iii)] a local version of the Margulis lemma analogous to the one obtained in \cite{KapovitchWilking} in the smooth case is not available yet for $\RCD$ spaces, although a more global Margulis lemma was recently obtained in such context in \cite{KapovitchWilking}.
\end{itemize}

The key tool to circumvent the difficulties in i) and ii) will be a recent result by Papasoglu stated below as Theorem \ref{thm:papasoglu}. 
Using Theorem \ref{thm:papasoglu} we show in Proposition \ref{prop:margulis}
that a metric space with volume growth of polynomial degree less than $n$, after a suitable conformal rescaling of the metric, has small $(n-1)$-Urysohn width (see Definition \ref{D|width}) outside of sufficiently large compact sets. 
To complete the proof of Theorem \ref{T|Noverlap} we are going to combine such control on the $(n-1)$-width with Theorem \ref{thm:margulis}, where we obtain a weaker version of the Margulis lemma from \cite{KapovitchWilking} with no quantitative bound on the index of the nilpotent subgroup.

\begin{definition} \label{D|width}
   Let $q\in\N$. We say that a metric space $(X,\sd)$ has $q$-Uryson width less than or equal to $W$, and write $UW_q(X,\sd)\leq W$ if for every $\epsilon>0$ there exist a $q$-dimensional simplicial complex $Y$ and a continuous map $\pi: X \to Y$ such that every fiber $\pi^{-1}(y)$, $y\in Y$, has diameter $\leq W+\epsilon$.
\end{definition}
In the following, we will denote by $\haus_\infty^{n}$ the $n$-dimensional Hausdorff content for $n\ge 0$. The next statement corresponds to \cite[Theorem 3.3]{PapasogluGAFA}.

\begin{theorem}\label{thm:papasoglu}
    For every $n\in \N$ there exists $\varepsilon_n>0$ with the following property. Let $(X,\sd)$ be a proper metric space and $R>0$ be such that for every $x\in X$ there is an open set $U_x$ with the properties:
    \begin{itemize}
        \item[(i)] $B_R(x)\subset U_x \subset B_{10 R}(x)$
        \item[(ii)]$\haus_\infty^{n-1}(\partial U_x) \leq  \varepsilon_n R^{n-1}$.
    \end{itemize}
    Then for every $\epsilon>0$ there exist a \emph{locally finite} $(n-1)$-dimensional simplicial complex $Y$ and a \emph{proper} continuous map $\pi: X \to Y$ such that every fiber $\pi^{-1}(y)$, $y\in Y$, has diameter $\leq R+\epsilon$. In particular,
    $UW_{n-1}(X)\leq R$. 
\end{theorem}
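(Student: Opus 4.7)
The approach I propose follows the iterative cutting strategy pioneered by Guth for Urysohn width estimates, and further developed by Papasoglu. The plan is to build the target complex $Y$ by decomposing $X$ along the codimension-one sets $\partial U_x$, whose local $(n-1)$-content is small by assumption.

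First, I would extract a locally finite collection of small-boundary sets. Using properness of $X$, pick a maximal $R$-separated net $\{x_i\}_{i\in I}$. By packing and comparison with the covers $\{U_{x_i}\}$, the collection $\{B_{10R}(x_i)\}$ has locally bounded multiplicity $N=N(n)$. Set $U_i := U_{x_i}$ and $S := \bigcup_i \partial U_i$. Condition (ii) gives, for each $x\in X$,
\begin{equation}
\haus^{n-1}_\infty(S \cap B_{R}(x)) \leq N\,\varepsilon_n\, R^{n-1}\, ,
\end{equation}
so $S$ is a candidate "cut set" of locally small $(n-1)$-content. The connected components of $X\setminus S$ are contained in the corresponding $U_i$ and therefore have diameter at most $20R$; the main task is to refine this into a decomposition whose nerve is $(n-1)$-dimensional and whose fibers have diameter at most $R+\epsilon$.

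Second, I would construct a proper continuous map $\sigma: S \to Y'$ into a locally finite $(n-2)$-dimensional simplicial complex, with $\mathrm{diam}(\sigma^{-1}(y'))\leq R+\epsilon/2$ for every $y'\in Y'$. This is the inductive step: granting Theorem \ref{thm:papasoglu} at parameter $n-1$, one would need to verify that $S$ (with the induced metric) itself admits, around every point, an open set with $R$-ball-to-$10R$-ball nesting and $\haus^{n-2}_\infty$-small boundary. This is done by slicing: intersect each $U_i$ with level sets of a suitable coarea-controlled function on $S$; the Eilenberg inequality converts the bound on $\haus^{n-1}_\infty(S)$ into a bound on $\haus^{n-2}_\infty$ of generic slices, provided $\varepsilon_n$ is chosen much smaller than $\varepsilon_{n-1}$. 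The base case $n=1$ is immediate: $UW_0$ of a set of diameter $\leq R$ is $\leq R$, and the small-boundary hypothesis forces $X$ to split into components of diameter $\sim R$.

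Third, I would assemble $Y$ by attaching, to the $(n-2)$-complex $Y'$ obtained above, one top-dimensional cell for each chamber of $X\setminus S$ (after a secondary refinement that subdivides each chamber into sub-pieces of diameter $\leq R+\epsilon/2$, using the balls $B_{R/2}(x_i)$). The map $\pi: X\to Y$ is defined by gluing $\sigma$ on $S$ to the "collapse each refined chamber to a vertex" map on $X\setminus S$, using a partition of unity subordinate to a thickening of $S$ to interpolate continuously across $S$. Properness follows from local finiteness of the cover $\{U_i\}$, and the fiber-diameter bound $\leq R+\epsilon$ is preserved by the gluing since both the chambers and the slices of $S$ are controlled at scale $R$.

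The hardest step is the second one. The dimensional constant $\varepsilon_n$ must be chosen so that the slicing/coarea procedure on $S$ produces boundary sets whose $\haus^{n-2}_\infty$ is below the threshold $\varepsilon_{n-1}$ required to apply the inductive hypothesis to $S$. This forces a recursive definition $\varepsilon_n \ll \varepsilon_{n-1}/N(n)$ and requires delicate bookkeeping: one must simultaneously track local finiteness, the scale $R$ (which may need small conformal adjustment at each inductive level), and properness of the projection, all while ensuring that the attaching maps in the final complex remain continuous. Maintaining these properties through the induction, rather than the individual geometric constructions at each step, is the technical heart of the argument.
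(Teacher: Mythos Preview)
The paper does not supply a proof of this theorem; it is quoted from \cite[Theorem 3.3]{PapasogluGAFA}, with the sole additional remark that local finiteness of $Y$ and properness of $\pi$ are visible from Papasoglu's construction. So there is no ``paper's own proof'' to compare against beyond Papasoglu's.

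Your sketch has the right high-level shape---induction on $n$, with the boundaries $\partial U_i$ playing the role of a codimension-one cut set---but two of the steps do not go through as written.

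First, the multiplicity claim. You assert that for an $R$-separated net $\{x_i\}$ in a proper metric space the family $\{B_{10R}(x_i)\}$ has multiplicity bounded by a dimensional constant $N(n)$. This is false without a doubling hypothesis, and no such hypothesis is present in the statement. Consequently $S=\bigcup_i\partial U_i$ need not have locally bounded $(n-1)$-content, and the displayed estimate $\haus^{n-1}_\infty(S\cap B_R(x))\le N\varepsilon_n R^{n-1}$ is unjustified.

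Second, the inductive step. You propose to apply the theorem at parameter $n-1$ to $S$ with its induced metric. For that you would need, for each $s\in S$, an open $V_s\subset S$ with the nesting $B_R^S(s)\subset V_s\subset B_{10R}^S(s)$ and $\haus^{n-2}_\infty(\partial_S V_s)\le\varepsilon_{n-1}R^{n-2}$. Coarea/Eilenberg controls the content of generic level sets of a Lipschitz function on a set of small $\haus^{n-1}_\infty$, but it does not manufacture sets with the required metric nesting in the \emph{intrinsic} geometry of $S$, which can be wildly different from the ambient one (e.g.\ $S$ need not even be path-connected). So the hypotheses of the $(n-1)$-case are not verified.

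Papasoglu's actual argument sidesteps both issues: he never passes to the intrinsic metric on a cut set and does not rely on multiplicity bounds for a net. The induction is driven by a self-improving covering step carried out entirely in $X$---a set of small $\haus^{n-1}_\infty$ is absorbed into a slightly larger set, still of small $\haus^{n-1}_\infty$, whose boundary has small $\haus^{n-2}_\infty$, by iteratively applying hypothesis (i)--(ii) at appropriately chosen centers. The map to the complex is then assembled from this nested sequence of cuts. Your proposal identifies the correct inductive parameter but not the mechanism that makes the induction close.
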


\noindent The statement of \cite[Theorem 3.3]{PapasogluGAFA} does not mention that, in the previous theorem, the simplicial complex $Y$ and the map $\pi$ can be taken to be locally finite and proper, respectively. However, this is a consequence of the proof in \cite{PapasogluGAFA}.
\medskip

The following Coarea Inequality follows from \cite[Theorem 1.1]{coarea}.

\begin{proposition}[Coarea inequality] \label{prop:coarea}
    Let $(X,\sd)$ be a proper metric space and $n>1$. Then for every $x\in X$ and $0<\alpha<\beta< \infty$ we have that
    \begin{equation}
        \int^\beta_{\alpha} \haus^{n-1}(\partial B_s(x)) d s \leq C_n \, \big( \haus^n(B_\beta(x)) - \haus^n(B_\alpha(x))\big)\, ,
    \end{equation}
    where $C_n$ is a dimensional constant.
\end{proposition}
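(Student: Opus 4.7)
The strategy is to reduce the statement to a direct application of the generalized Eilenberg coarea inequality in proper metric spaces (which is precisely the content of \cite[Theorem 1.1]{coarea}).

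The plan is to apply the metric coarea inequality to the $1$-Lipschitz function
\begin{equation}
f: X \to \mathbb{R}, \qquad f(y) := \sd(x,y).
\end{equation}
Since $f$ is $1$-Lipschitz, \cite[Theorem 1.1]{coarea} yields a dimensional constant $C_n>0$ such that for every Borel set $A \subset X$ one has
\begin{equation}
\int_{\mathbb{R}} \haus^{n-1}(A \cap f^{-1}(s)) \, ds \leq C_n \haus^n(A).
\end{equation}
I would then specialize to the open annulus $A := B_\beta(x) \setminus \bar B_\alpha(x)$. On the one hand, $f^{-1}(s)=\{y\in X:\sd(x,y)=s\}\subseteq \partial B_s(x)$, and indeed $f^{-1}(s)\cap A=\partial B_s(x)$ for $\Leb^1$-a.e.\ $s\in(\alpha,\beta)$ (one has equality whenever the distance sphere is non-empty and the ball is open, using $\alpha<s<\beta$). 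On the other hand, since $B_\alpha(x)\subset B_\beta(x)$ and both are Borel,
\begin{equation}
\haus^n(A) = \haus^n(B_\beta(x)) - \haus^n(B_\alpha(x)\cup \partial B_\alpha(x)) \leq \haus^n(B_\beta(x))-\haus^n(B_\alpha(x)).
\end{equation}

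Combining these two remarks gives
\begin{equation}
\int_\alpha^\beta \haus^{n-1}(\partial B_s(x)) \, ds \leq \int_{\mathbb{R}} \haus^{n-1}(A \cap f^{-1}(s))\, ds \leq C_n\, \haus^n(A),
\end{equation}
which is the desired inequality.

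The only conceptual point requiring a bit of care is the identification $f^{-1}(s)\cap A=\partial B_s(x)$ for a.e.\ $s$, but this is immediate once one notes that balls are open: for $\alpha<s<\beta$ every point of $\partial B_s(x)$ lies in $A$. Thus there is no genuine obstacle, and the whole statement follows by invoking the Eilenberg-type coarea inequality from \cite{coarea} as a black box.
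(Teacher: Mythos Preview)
Your approach is exactly the paper's: the proposition is stated there as a direct consequence of \cite[Theorem 1.1]{coarea}, and you carry out precisely that deduction by applying the Eilenberg-type coarea inequality to the distance function on the annulus.

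One small slip: you write $f^{-1}(s)\subseteq \partial B_s(x)$, but in a general proper metric space the inclusion goes the other way, $\partial B_s(x)\subseteq f^{-1}(s)$ (a point at distance exactly $s$ need not lie in the closure of the open ball). Fortunately this is the direction you actually need for your final chain of inequalities, so the argument is unaffected; just correct the prose. The measure identity $\haus^n(A)=\haus^n(B_\beta(x))-\haus^n(\bar B_\alpha(x))$ also tacitly assumes $\haus^n(B_\beta(x))<\infty$, but this is harmless since otherwise the stated inequality is vacuous.
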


To prove Theorem \ref{T|Noverlap} the key idea is to apply Theorem \ref{thm:papasoglu} to a conformal rescaling of $(X,\dist)$. Lemma \ref{lem:modifieddistance} below introduces the relevant conformal rescaling and establishes its basic properties. The forthcoming Proposition \ref{prop:margulis} establishes a the sought estimate for the Urysohn width of the conformal rescaling away from a bounded set.

\begin{lemma}\label{lem:modifieddistance}
    Let $(X,\sd,p)$ be a complete, separable and geodesic pointed metric space. Given any curve $\gamma \in AC ([0,1], (X,\sd))$ we set
    \begin{equation}
        \tilde{L}(\gamma):= \int_0^1 \frac{|\dot\gamma|(t)}{\sd (p,\gamma(t)) \vee 1} dt\, ,
    \end{equation}
    where $|\dot\gamma|$ denotes the metric derivative. Accordingly, we define the function $\tilde \sd:X\times X\to \R_+$ as 
    \begin{equation}
        \tilde \sd (x,y) := \inf \{\tilde L(\gamma)\,:\, \gamma \in AC ([0,1], (X,\sd)), \gamma(0)=x,\,\gamma(1)=y\}\, .  
    \end{equation}
    Then $\tilde\sd$ is a distance on $X$. Moreover, there exists $0<\delta_0\leq \frac 16$ such that for all $\delta\in(0, \delta_0]$ and every $q\in X$ with $\sd(p,q)>10$, we have that
    \begin{equation}\label{eq:inclusionsconfresc}
        \tilde B_\delta (q) \subset B_{2 \delta \sd(p,q)} (q) \subset B_{3 \delta \sd(p,q)} (q) \subset\tilde B_{6\delta} (q)\, ,
    \end{equation}
    where we denote by $\tilde B$ balls with respect to the distance $\tilde \sd$. 
\end{lemma}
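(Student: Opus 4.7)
The proof splits naturally into two parts: verifying that $\tilde\sd$ defines a distance, and establishing the three inclusions in \eqref{eq:inclusionsconfresc}.

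For the metric axioms, I would check that symmetry follows by reversing curve parameterizations, the triangle inequality by concatenating admissible curves, and $\tilde\sd(x,x)=0$ by using the constant curve. The only subtle point is non-degeneracy. Given $x\neq y$, setting $r:=\sd(x,y)/2>0$, any $\gamma\in AC([0,1],X)$ joining $x$ to $y$ admits a first exit time $t_0$ from the open ball $B_r(x)$; the restriction $\gamma|_{[0,t_0]}$ lies inside $\overline{B}_{\sd(p,x)+r}(p)$, where the weight satisfies $1/(\sd(p,\cdot)\vee 1)\geq M^{-1}$ for $M:=(\sd(p,x)+r)\vee 1$. Hence $\tilde L(\gamma)\geq L(\gamma|_{[0,t_0]})/M\geq r/M>0$, yielding $\tilde\sd(x,y)\geq r/M>0$.

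For the inclusions, I would set $\delta_0:=1/6$ and fix $q\in X$ with $R:=\sd(p,q)>10$ and $\delta\in(0,\delta_0]$. The middle inclusion is trivial. The key comparison is between $\tilde L$ and $L$ on the annulus $\mathcal{A}:=\{y\in X: R/2\leq \sd(p,y)\leq 3R/2\}$, on which (using $R>10$) the weight satisfies $\sd(p,\cdot)\vee 1=\sd(p,\cdot)\in[R/2,3R/2]$. For the first inclusion $\tilde B_\delta(q)\subset B_{2\delta R}(q)$, I would take $x\in\tilde B_\delta(q)$ and a curve $\gamma:[0,1]\to X$ from $q$ to $x$ with $\tilde L(\gamma)<\delta$, then run a stopping-time argument: letting $t^\ast\in(0,1]$ be maximal with $\gamma([0,t^\ast])\subset\mathcal{A}$ (well defined and positive by continuity, since $q$ is interior to $\mathcal{A}$), on this range
\begin{equation}
L(\gamma|_{[0,t^\ast]})\leq \tfrac{3R}{2}\,\tilde L(\gamma|_{[0,t^\ast]})<\tfrac{3R\delta}{2}\leq \tfrac{R}{4}.
\end{equation}
If $t^\ast<1$, closedness of $\mathcal{A}$ forces $\gamma(t^\ast)\in\partial\mathcal{A}$, giving $\sd(q,\gamma(t^\ast))\geq R/2$, which contradicts $\sd(q,\gamma(t^\ast))\leq L(\gamma|_{[0,t^\ast]})<R/4$. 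Hence $t^\ast=1$ and $\sd(x,q)\leq L(\gamma)<3R\delta/2<2\delta R$. For the third inclusion $B_{3\delta R}(q)\subset \tilde B_{6\delta}(q)$, I would take $x\in B_{3\delta R}(q)$: since $\delta\leq 1/6$, $\sd(x,q)<R/2$, so any geodesic $\gamma$ from $q$ to $x$ (whose existence is guaranteed by the geodesicity of $X$) stays inside $\overline{B}_{R/2}(q)\subset\mathcal{A}$. Consequently,
\begin{equation}
\tilde\sd(x,q)\leq\tilde L(\gamma)\leq \frac{L(\gamma)}{R/2}=\frac{2\sd(x,q)}{R}<6\delta.
\end{equation}

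The main obstacle is the stopping-time argument underlying the first inclusion, which must exclude the possibility that a curve of small modified length nonetheless escapes the annulus $\mathcal{A}$ by moving rapidly through regions where the weight $1/(\sd(p,\cdot)\vee 1)$ is very small. The calibration $\delta_0\leq 1/6$ is chosen precisely so that the modified-length bound $\tilde L<\delta$ translates into an original-length bound strictly less than the annulus thickness $R/2$ measured from $q$, ruling out such escape.
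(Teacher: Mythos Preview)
Your proof is correct and follows the same overall strategy as the paper: an exit-time argument for non-degeneracy, and weight bounds on a suitable region around $q$ for the ball inclusions. The one notable difference is in the first inclusion $\tilde B_\delta(q)\subset B_{2\delta\sd(p,q)}(q)$. The paper argues in two stages: first it shows separately, via a stopping argument near $p$, that any near-optimal curve $\gamma$ (of constant speed $L$) stays outside $B_2(p)$; then it uses the crude upper bound $\sd(p,\gamma(t))\leq\sd(p,q)+Lt$ to compute
\[
\tfrac32\delta \geq \int_0^1\frac{L}{\sd(p,q)+Lt}\,dt=\log\Bigl(1+\tfrac{L}{\sd(p,q)}\Bigr),
\]
and finally chooses $\delta_0\leq 1/6$ small enough that this logarithmic inequality forces $L\leq 2\delta\,\sd(p,q)$. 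Your single annulus stopping-time argument is more direct: by working in $\mathcal A=\{R/2\leq\sd(p,\cdot)\leq 3R/2\}$ you handle both the ``curve drifts toward $p$'' and ``curve drifts away from $p$'' scenarios simultaneously, and $\delta_0=1/6$ suffices without any logarithm. Both arguments rest on the same idea---that the conformal weight is bounded above by $3R/2$ on a thick shell around $q$, so small $\tilde L$ forces small $L$ before the curve can escape---but yours packages it more cleanly and gives the sharper constant $\sd(x,q)<\tfrac{3}{2}\delta R$ along the way.
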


\begin{proof}
    It is straightforward to check that $\tilde\sd$ is symmetric, finite, and satisfies the triangle inequality. Thus, to prove that $\tilde \sd$ is a distance we need only to check positivity. To this aim, consider $a,b\in X$ with $\tilde \sd(a,b)=0$ and assume by contradiction that $\delta:=\sd(a,b)>0$. Let $\gamma \in AC ([0,1], (X,\sd))$ be any curve connecting $a$ and $b$ and call $t_0\in [0,1]$ the first intersection time of $\gamma$ with $\partial B_{\delta/2}(a)$. Then we can estimate
    \begin{align}
        \int_0^1 \frac{|\dot\gamma|(t)}{\sd (p,\gamma(t)) \vee 1} dt \geq & \int_0^{t_0} \frac{|\dot\gamma|(t)}{\sd (p,\gamma(t)) \vee 1} dt \\
         \geq & \int_0^{t_0} \frac{|\dot\gamma|(t)}{\sd (p,a) + \delta/2 + 1} dt\\
        \geq &\frac{\delta/2}{\sd (p,a) + \delta/2 + 1} >0\, .
    \end{align}
    Since $\gamma$ was arbitrary, we obtain the desired contradiction. 
\medskip

     Until the end of the proof we fix $\delta_0 \leq 1/6$ with the following property: for every $\delta\in(0, \delta_0]$, if $x>0$ is such that $\log(1+x)\leq \frac 32 \delta$, then $x\leq 2 \delta$. 
\medskip
     
 We start by proving the third inclusion in \eqref{eq:inclusionsconfresc}. Take $q\in X$ with $\sd(p,q)>10$, any $\delta\in(0, \delta_0]$ and consider any $a\in B_{3 \delta \sd(p,q)} (q)$. Let $\gamma \in AC ([0,1], (X,\sd))$ be a constant speed geodesic (with respect to $\sd$), joining $a$ to $q$. In particular, we have that 
 \begin{equation}
 |\dot \gamma|\equiv \sd(a,q)\leq 3 \delta \sd(p,q)\, ,\quad \gamma([0,1]) \subset B_{\sd(p,q)/2}(q) \subset X \setminus B_{\sd(p,q)/2}(p)\, . 
 \end{equation}
 Thus, for every $t \in [0,1]$ we have that $\sd (p,\gamma(t)) \vee 1 \geq \frac 12 \sd(p,q)$ and therefore
    \begin{equation}
        \tilde \sd(a,q) \leq \tilde L(\gamma) = \int_0^1 \frac{|\dot\gamma|(t)}{\sd (p,\gamma(t)) \vee 1} dt \leq \int_0^1 \frac{3 \delta \sd(p,q)}{\frac 12 \sd (p,q)} dt = 6 \delta\, .
    \end{equation}
    Since $a$ was arbitrary, we deduce that $ B_{3 \delta \sd(p,q)} (q) \subset\tilde B_{6\delta} (q)$.
\medskip

    For the first inclusion in \eqref{eq:inclusionsconfresc}, take any $b\in\tilde B_{\delta} (q)$ and consider a curve $\gamma \in AC ([0,1], (X,\sd))$ joining $q$ to $b$ such that $\tilde L(\gamma)\leq \frac 32 \delta$. Since $\tilde L$ is invariant by reparametrisation, we can assume that $\gamma$ has constant speed with respect to the distance $\sd$, i.e.~$|\dot \gamma|\equiv L$ for some $L>0$. We claim that 
    \begin{equation}\label{eq:curveincl}
    \gamma([0,1])\subset X\setminus B_2(p)\, . 
    \end{equation}
    Arguing by contradiction, let $t_1 \in [0,1]$ be the first intersection time of $\gamma$ with $\partial B_2(p)$ and let $t_0 \in [0,1]$ be defined as
    \begin{equation}
        t_0 := \sup \{t<t_1 \,:\, \gamma(t)\not \in B_4(p)\}\, .
    \end{equation}
    With such choices we can estimate
    \begin{equation}
        \frac 32 \delta \geq \int_{t_0}^{t_1} \frac{|\dot\gamma|(t)}{\sd (p,\gamma(t)) \vee 1} dt = \int_{t_0}^{t_1} \frac{|\dot\gamma|(t)}{\sd (p,\gamma(t))} dt \geq \int_{t_0}^{t_1} \frac{|\dot\gamma|(t)}{4} dt \geq \frac 12\, ,
    \end{equation}
    thus yielding the desired contradiction, since $\delta\leq \delta_0\leq 1/6$.
    On the other hand, we have that 
    \begin{equation}
        \sd(p,\gamma(t)) \leq \sd(p,q) + \sd(q,\gamma(t)) \leq \sd(p,q) + Lt\, .
    \end{equation}
    Thus, taking into account \eqref{eq:curveincl}, we deduce that 
    \begin{equation}
        \frac 32 \delta \geq \int_{0}^{1} \frac{|\dot\gamma|(t)}{\sd (p,\gamma(t))} dt \geq \int_{0}^{1} \frac{L}{\sd (p,q)+ Lt} dt = \log \bigg( 1 + \frac{L}{\sd(p,q)}\bigg)\, .
    \end{equation}
    Then, our choice of $\delta_0$ implies that $L/\sd(p,q) \leq 2\delta$. We can conclude by observing that $\sd(q,b) \leq L \leq 2\delta \sd(p,q)$. Since $b \in\tilde B_{\delta} (q)$ was arbitrary,  we proved that $\tilde B_{\delta} (q)\subset B_{2 \delta \sd(p,q)} (q)$. \end{proof}

\begin{proposition}\label{prop:margulis}
    Fix $n\in\N$. Let $(X,\sd,p)$ be a complete, separable, proper, non-compact and geodesic pointed metric space. Let $\tilde \sd$ and $\delta_0$ be the distance and the parameter given by Lemma \ref{lem:modifieddistance}. Assume that 
    \begin{equation}\label{eq:NOTmaximalvolume}
        \lim_{R\to\infty} \frac{\Haus^n(B_R(p))}{R^n} = 0\, .
    \end{equation}
    Then, for every $\delta\in(0, \delta_0]$ there exists $T(\delta)>20$ such that the following holds. There exist a locally finite $(n-1)$-dimensional simplicial complex $Y$ and a proper continuous map $\pi:(X\setminus B_{T(\delta)}(p), \tilde \sd) \to Y$ with fibers of diameter less than $\delta$. In particular, 
    \begin{equation}
    UW_{n-1} (X\setminus B_{T(\delta)}(p), \tilde \sd)\leq \delta\, .
    \end{equation}
\end{proposition}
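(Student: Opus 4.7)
The plan is to apply Papasoglu's Theorem~\ref{thm:papasoglu} to a suitable proper subspace of $(X, \tilde{\sd})$ with scale parameter $R = \delta$. The task is to produce, for every point $q$ at $\sd$-distance at least $T(\delta)$ from $p$, an open set $U_q$ with $\tilde{B}_{\delta}(q) \subset U_q \subset \tilde{B}_{10\delta}(q)$ and $\tilde{\haus}_{\infty}^{n-1}(\partial U_q) \leq \varepsilon_n \delta^{n-1}$, where $\varepsilon_n$ is the constant from Theorem~\ref{thm:papasoglu}.

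For $q$ with $r := \sd(p,q) > 10$, Lemma~\ref{lem:modifieddistance} yields the chain of inclusions $\tilde{B}_{\delta}(q) \subset B_{2\delta r}(q) \subset B_{3\delta r}(q) \subset \tilde{B}_{6\delta}(q)$, so any set of the form $U_q = B_{s_q}(q)$ with $s_q \in (2\delta r, 3\delta r)$ automatically satisfies the inclusion requirement of Papasoglu's theorem. To choose $s_q$ with a controlled boundary, I would apply the coarea inequality of Proposition~\ref{prop:coarea} in the original metric $(X, \sd)$ and average over this interval:
\begin{equation*}
\int_{2\delta r}^{3\delta r} \haus^{n-1}(\partial B_{s}(q))\, ds \leq C_n \haus^n(B_{3\delta r}(q)) \leq C_n \haus^n(B_{2r}(p))\,,
\end{equation*}
where the second inequality uses $B_{3\delta r}(q) \subset B_{2r}(p)$, valid since $\delta \leq 1/6$. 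Dividing by the length $\delta r$ of the interval produces a radius $s_q$ satisfying $\haus^{n-1}(\partial B_{s_q}(q)) \leq C_n \haus^n(B_{2r}(p))/(\delta r)$.

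The key step is the comparison between Hausdorff contents in $\sd$ and $\tilde{\sd}$. Since $B_{3\delta r}(q) \subset X \setminus B_{r/2}(p)$, on this region the conformal weight $1/(\sd(p,\cdot) \vee 1)$ is bounded above by $2/r$, hence $\tilde{\sd} \leq (2/r)\,\sd$ there. The usual change-of-metric inequality for the Hausdorff content then yields
\begin{equation*}
\tilde{\haus}_{\infty}^{n-1}(\partial B_{s_q}(q)) \leq \left(\frac{2}{r}\right)^{n-1}\haus^{n-1}(\partial B_{s_q}(q)) \leq \frac{2^{n-1} C_n}{\delta}\cdot\frac{\haus^n(B_{2r}(p))}{r^n}\,.
\end{equation*}
The volume-growth assumption~\eqref{eq:NOTmaximalvolume} forces the last factor to vanish as $r \to \infty$, so for every admissible $\delta$ there exists $T(\delta) > 20$ such that the right-hand side is $\leq \varepsilon_n \delta^{n-1}$ whenever $r \geq T(\delta)$.

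Finally, I would apply Theorem~\ref{thm:papasoglu} to the proper subspace of $(X, \tilde{\sd})$ obtained by removing a ball of radius slightly smaller than $T(\delta)$, chosen so that the $\tilde{B}_{10\delta}$-neighborhood of every point of $X \setminus B_{T(\delta)}(p)$ is contained in it. This produces a locally finite $(n-1)$-dimensional simplicial complex $Y$ together with a proper continuous map to $Y$ whose fibers have $\tilde{\sd}$-diameter at most $\delta$, and restricting this map to $X \setminus B_{T(\delta)}(p)$ yields the desired $\pi$. The main technical subtlety will be the careful bookkeeping required to set up the subspace correctly---so that the Papasoglu hypothesis holds uniformly and the restriction to $X \setminus B_{T(\delta)}(p)$ remains proper---together with the change-of-metric estimate for the Hausdorff content on an enlargement of $B_{3\delta r}(q)$ that comfortably contains all the sets $U_q$ needed.
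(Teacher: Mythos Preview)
Your proposal is correct and follows essentially the same route as the paper: choose $U_q$ to be a $\sd$-ball of radius between $2\delta r$ and $3\delta r$ (so that Lemma~\ref{lem:modifieddistance} gives the required $\tilde\sd$-inclusions), pick the radius via the coarea inequality to control $\haus^{n-1}$ of the boundary, then transfer to $\tilde\haus_\infty^{n-1}$ via a diameter comparison and invoke the volume-growth hypothesis before applying Theorem~\ref{thm:papasoglu}. The only point to tighten is the line ``the conformal weight is bounded above by $2/r$, hence $\tilde\sd\leq(2/r)\sd$ there'': the bound on $\tilde\sd$ requires a curve that stays in the region where the weight is small, so one should either note that $\sd$-geodesics between nearby points of $\partial B_{s_q}(q)$ remain outside $B_{r/4}(p)$, or (as the paper does) rerun Lemma~\ref{lem:modifieddistance} at a nearby basepoint to get $\diam_{\tilde\sd}(A)\le (8/r)\diam_\sd(A)$ for small sets; either way the constant becomes a harmless $c_n/r$ rather than $2/r$.
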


    \begin{proof}
Our goal is to show that $(X\setminus B_{T(\delta)}(p), \tilde \sd)$ satisfies the assumptions of Theorem \ref{thm:papasoglu}. 
    
    Fix any $\delta\in(0, \delta_0]$ and take any $q\in X$ such that $\sd(p,q)>20$. Given any $\eta\in [0,\delta]$, observe that for every $q' \in \partial B_{(2 \delta + \eta) \sd(p,q)}(q)$ we have
    \begin{equation}\label{eq:qprime}
        \sd(p,q') \geq \sd(p,q) - (2 \delta + \eta) \sd(p,q) \geq \frac 12 \sd(p,q)\, .
    \end{equation}
    Suppose now that $A\subset \partial B_{(2 \delta + \eta) \sd(p,q)}(q)$ has $\diam_{\tilde \sd}(A) =\rho \ll \delta_0$, where $\diam_{\tilde \sd}$ denotes the diameter computed with respect to the distance $\tilde \sd$. In particular, there exist $a,b\in A$ such that $\tilde \sd(a,b)> \frac \rho 2$, thus $b\not\in \tilde B_{\rho/2}(a)$. Taking into account \eqref{eq:qprime} (which also ensures that $\sd(p,a)>10$) and applying Lemma \ref{lem:modifieddistance}, we obtain
    \begin{equation}
        B_{\rho \sd(p,q)/8} (a) \subset B_{\rho \sd(p,a)/4} (a) \subset \tilde B_{\rho/2} (a)\, .
    \end{equation}
    Thus, $b \not\in B_{\rho \sd(p,q)/8} (a)$ and, denoting by $\diam_{\sd}$ the diameter computed with respect to the distance $\sd$, we conclude that 
    \begin{equation}\label{eq:diamestimate}
        \diam_\sd (A) \geq \frac{\sd(p,q)}{8} \rho = \frac{\sd(p,q)}{8} \, \diam_{\tilde \sd} (A)\, .
    \end{equation}

    In the following, for every $d,\zeta>0$, we denote by $\Haus_\zeta^d$ and $\tilde\Haus_\zeta^d$ the $\zeta$-approximate $d$-dimensional Hausdorff measures with respect to $\sd$ and $\tilde \sd$, respectively. Accordingly, we denote by $\Haus^d$ and $\tilde\Haus^d$ the respective $d$-dimensional Hausdorff measures.
    Thanks to \eqref{eq:diamestimate}, for every $\rho \ll \delta_0$ we obtain that
    \begin{equation}
    \begin{split}
        & \tilde \Haus_\rho^{n-1} \big(\partial B_{(2 \delta + \eta) \sd(p,q)} (q)\big)\\
        = \inf &\left\{ \sum_{i\in \N} \diam_{\tilde \sd}(A_i)^{n-1} \,:\, \bigcup_{i\in \N} A_i = \partial B_{(2 \delta + \eta) \sd(p,q)}(q), \, \diam_{\tilde \sd}(A_i) \leq \rho \right\}\\
        \leq \inf& \Bigg\{ \frac{8^{n-1}}{\sd(p,q)^{n-1}}\sum_{i\in \N} \diam_{\sd}(A_i)^{n-1} \,:\, \bigcup_{i\in \N} A_i = \partial B_{(2 \delta + \eta) \sd(p,q)} (q)\, ,\,\\
        &\diam_{\sd}(A_i) \leq \rho\, \frac{\sd(p,q)}{8} \Bigg\}
        =\frac{8^{n-1}}{\sd(p,q)^{n-1}} \,\, \Haus_{\rho\, \frac{\sd(p,q)}{8}}^{n-1} \big(\partial B_{(2 \delta + \eta) \sd(p,q)}(q)\big)\, .
    \end{split}
    \end{equation}
    Taking the limit as $\rho\to 0$, we conclude that for every $q\in X$ with $\sd(p,q)>20$ and every $\eta\in [0,\delta]$ there holds
    \begin{equation}\label{eq:Hequiv}
        \tilde \Haus^{n-1} \big(\partial B_{(2 \delta + \eta) \sd(p,q)}(q)\big)\leq \frac{c}{\sd(p,q)^{n-1}} \,\, \Haus^{n-1} \big(\partial B_{(2 \delta + \eta) \sd(p,q)}(q)\big)\, ,
    \end{equation}
    where, for notational simplicity, we put $c=8^{n-1}$.
\medskip

    Applying Proposition \ref{prop:coarea} with $x=q$, $\alpha=2\delta\sd(p,q)$ and $\beta=3\delta\sd(p,q)$ we obtain 
    \begin{equation}
        \fint^{3\delta\sd(p,q)}_{2\delta\sd(p,q)} \Haus^{n-1}(\partial B_s(q)) d s \leq C_n \, \frac{ \Haus^n(B_{3\delta\sd(p,q)}(q)) - \Haus^n(B_{2\delta\sd(p,q)}(q))}{\delta \sd(p,q)}\, .
    \end{equation}
    Thus there exists $\eta(q)\in (0,\delta)$ such that 
    \begin{equation}\label{eq:lessthanaverage}
        \Haus^{n-1}(\partial B_{(2 \delta + \eta (q)) \sd(p,q)}(q)) \leq C_n \, \frac{ \Haus^n(B_{3\delta\sd(p,q)}(q)) - \Haus^n(B_{2\delta\sd(p,q)}(q))}{\delta \sd(p,q)}\, .
    \end{equation}
    The combination of \eqref{eq:Hequiv} and \eqref{eq:lessthanaverage} yields 
    \begin{align}
        \tilde \Haus^{n-1} \big(\partial B_{(2 \delta + \eta(q)) \sd(p,q)}(q)\big) &\leq c \,C_n\frac{\Haus^n(B_{3\delta\sd(p,q)}(q))}{\delta \sd(p,q)^n} \\
        &\leq c \, C_n \frac{\Haus^n(B_{2 \sd(p,q)}(p))}{\delta \sd(p,q)^n}\, . 
    \end{align}
    Thanks to \eqref{eq:NOTmaximalvolume} there exists $T(\delta)>20$ such that 
    \begin{equation}
        \tilde \Haus^{n-1} \big(\partial B_{(2 \delta + \eta(q)) \sd(p,q)}(q)\big)\leq \varepsilon_n  \delta^{n-1}\, , \qquad \forall q \in X\setminus B_{T(\delta)}(p)\, ,
    \end{equation}
    where $\varepsilon_N$ is the constant given by Theorem \ref{thm:papasoglu}. 
    Finally, we deduce that 
    \begin{align}
        \tilde \Haus_\infty^{n-1} \big(\partial B_{(2 \delta + \eta(q)) \sd(p,q)} (q) \cap (X\setminus B_{T(\delta)}(p))\big) 
        \leq &\tilde \Haus_\infty^{n-1} \big(\partial B_{(2 \delta + \eta(q)) \sd(p,q)}(q)\big) \\
        \leq& \tilde \Haus^{n-1} \big(\partial B_{(2 \delta + \eta(q)) \sd(p,q)}(q)\big)\\
        \leq & \varepsilon_n  \delta^{n-1}\, .
    \end{align}
    Thanks to Lemma \ref{lem:modifieddistance} we can apply Theorem \ref{thm:papasoglu} to the metric space $(X\setminus B_{T(\delta)}(p),\tilde \sd)$ and conclude the proof.
    \end{proof}

    \begin{proof}[Proof of Theorem \ref{T|Noverlap}]
    We fix $\delta < \min \{\delta_0/2, \varepsilon/8\}$, where $\delta_0$ is the parameter identified in Lemma \ref{lem:modifieddistance} and $\varepsilon=\varepsilon(n)$ is the constant appearing in Corollary \ref{rmk:margulis}.
\medskip

    By Proposition \ref{prop:margulis} there exist a locally finite $(n-1)$-dimensional simplicial complex $Y$ and a proper continuous map $\pi: X\setminus B_{T(\delta)}(p) \to Y$ such that every fiber $\pi^{-1}(y)$, $y\in Y$, has diameter $\leq \delta$.
    Thus, every $y\in Y$ has an precompact open neighborhood $V_y$ such that $\diam_{\tilde \sd}(\pi^{-1}(V_y))\leq 2\delta$. Indeed, if this is not the case, then we can find $y\in Y$ and a sequence of decreasing open neighborhoods $\{V_y^i\}_{i\in \N}$ of $y$ such that 
    \begin{equation}
    \diam_{\tilde \sd}(\pi^{-1}(V_y^i))> 2\delta\, ,\quad 
    \cap_{i\in \N}\,\bar V_y^i= \{y\}\, . 
    \end{equation}
    Thus, for every $i$ we can find $x_i\in \pi^{-1}(V_y^i)$ such that $\tilde \sd (x_i,\pi^{-1}(y)) > \frac \delta 2$. Since $\pi$ is proper, the sequence $\{x_i\}_{i\in \N}$ is contained in a compact set and thus (up to subsequences) it converge to a limit $\bar x\in X$. 
    By passing to the limit in the previous conditions, on the one hand we have $\tilde \sd (\bar x,\pi^{-1}(y)) \geq \frac \delta 2$. On the other hand, we have that 
    \begin{equation}
    \bar x \in \cap_{i\in \N} \, \pi^{-1}(\bar V^i_y)= \pi^{-1}(y)\, , 
    \end{equation}
    resulting in a contradiction.
\medskip

    Consider the covering $\{V_y\}_{y\in Y}$ of $Y$. Since $Y$ is proper, because it is a locally finite simplicial complex, and has topological dimension $n-1$, there exists a \emph{locally finite} refinement $\{\hat V_\alpha\}_{\alpha\in \N}$ of $\{V_y\}_{y\in Y}$ with multiplicity at most $n$. After discarding all sets $\hat V_\alpha$ such that $\hat V_\alpha \cap \pi(X\setminus B_{T(\delta)}(p)) =\emptyset$, we obtain a locally finite covering of $\pi(X\setminus B_{T(\delta)}(p))$, still denoted as $\{\hat V_\alpha \}_{\alpha \in \bb{N}}$, with multiplicity at most $n$.
    For every $\alpha \in \N$ we define $\hat U_\alpha:= \pi^{-1}(\hat V_\alpha)$. Observe that $\{\hat U_\alpha\}_{\alpha \in \N}$ is a locally finite open cover of $X\setminus B_{T(\delta)}(p)$ with $\mathrm{mult}\big(\{\hat U_\alpha\}_{\alpha\in \N}\big) \leq n$. Set
    \begin{equation}
         A := \{\alpha \in \N\,:\, \hat U_\alpha \cap B_{2 T(\delta)}(p) = \emptyset\}\, .
    \end{equation}
    Consider an enumeration $m:\N\setminus\{0\} \to A$
    and define the open cover $\{ U_i\}_{i\in \N}$ of $X$ as
    \begin{equation}
          U_0 := B_{2 T(\delta)}(p) \cup \bigcup_{\alpha\in \N \setminus A} \hat U_\alpha\, , \qquad U_i = \hat U_{m(i)}, \quad  i\in \N\, .
    \end{equation}

    By construction, $\{U_i\}_{i \in \N}$ is a locally finite open cover of $X$ such that $\mathrm{mult}\big(\{U_i\}_{i\in \N}\big) \leq n$. 
     Moreover, Corollary \ref{rmk:margulis} guarantees that for every bounded (open) set $U\subset X$, and $u \in U$, the image of the natural map induced by inclusion $i_*: \pi_1(U,u) \to \pi_1(X,u)$ is a virtually nilpotent and thus amenable subgroup. 
    Therefore, the cover $\{ U_i\}_{i\in \N}$ is amenable. For the sake of clarity we note that the main result of \cite{DengSantosZamoraZhao} would be sufficient to check the last claim.
\medskip

   For the application of Theorem \ref{T|Frig}, 
    we are left to prove that $\{ U_i\}_{i\in \N}$ is amenable at infinity. For every $\alpha \in \N$ the set $\hat V_\alpha$ is contained in $V_{y(\alpha)}$ for some $y(\alpha)\in Y$. Thus $\diam_{\tilde \sd}( U_i)\leq 2 \delta$ for every $i\in \N \setminus\{0\}$. In particular, for every $i\in \N \setminus\{0\}$, taking any $p_i \in U_i \subset X \setminus B_{T(\delta)}(p)$, Lemma \ref{lem:modifieddistance} yields that
    \begin{equation} \label{eq:inclusionamenability}
         U_i \subset \tilde B_{2 \delta} (p_i) \subset B_{4 \delta \sd(p,p_i)} (p_i)\, ,
    \end{equation}
    where we used that $\delta \leq \delta_0/2$. We introduce sequence $\{ W_i\}_{i\in \N}$ by setting
    \begin{equation}
        W_0:= X\, , \qquad  W_i := X \setminus B_{\frac 12 \sd(p,p_i)} (p)\, , \quad i\in \N \setminus\{0\}\, .
    \end{equation}
    We claim that conditions (i)-(iv) in Definition \ref{def:amenableatinfinity} holds. Condition (i) is trivial since $X$ is proper, and condition (iii) follows from \eqref{eq:inclusionamenability}. Moreover, since $\{U_i\}_{i\in \N}$ is locally finite, we deduce that $\sd(p,p_i)\to \infty$ as $i\to \infty$ and therefore (ii) holds. To check (iv), we apply Corollary \ref{rmk:margulis} to obtain that for every $i\in \N \setminus\{0\}$ the image of the inclusion map
    \begin{equation}
        \pi_1\big(B_{4 \delta \sd(p,p_i)} (p_i),p_i \big) \to \pi_1\big(B_{4 \delta \sd(p,p_i) / \varepsilon}(p_i),p_i \big)
    \end{equation}
    is virtually nilpotent. Our choice of $\delta$ guarantees that $B_{4 \delta \sd(p,p_i) / \varepsilon}(p_i)\subset  W_i$. Thus $U_i$ is amenable in $W_i$ for every $i\in \N \setminus\{0\}$. By Theorem \ref{T|Frig} we conclude that $||X||=0$.
\end{proof}

\section{Classification of \texorpdfstring{$\RCD(0,3)$}{RCD(0,3)} spaces with empty boundary}

The goal of this section is to prove Theorem \ref{thm:classRCD03intro}. 
Theorem \ref{T|classification} then follows by combining Theorem \ref{thm:classRCD03intro} with the results from \cite{Brenaorientability,QinDenRCD} as shown below.

\begin{proof}[Proof of Theorem \ref{T|classification} assuming Theorem \ref{thm:classRCD03intro}]
If $X$ is a non-compact topological manifold, the result follows by Theorem \ref{thm:classRCD03intro}. If $X$ is a compact topological manifold, we consider two separate cases. If $\pi_1(X)$ is infinite, then the universal covering $\tilde{X}$ of $X$ splits off a line by \cite[Theorem 1.3]{MondinoWei} (combined with \cite{RCDsemilocally}), so that $\tilde{X}$ is an Alexandrov space of non-negative sectional curvature by \cite{LytchakStadler}. Being a quotient of an Alexandrov space of non-negative sectional curvature by a discrete group, also $X$ is then an Alexandrov space of non-negative sectional curvature by, for instance, \cite[Corollary 8.7]{zbMATH07802912}. If $\pi_1(X)$ is finite, then $X$ is a spherical manifold by Perelman's solution of the Geometrization Conjecture.

Hence, we assume that $X$ is not a topological manifold. By \cite[Theorem 1.8]{BruePigSem}, at least one point of $X$ has all the tangent cones homeomorphic to $C(\bb{RP}^2)$. By \cite[Theorem 1.1 point 3]{QinDenRCD}, $X$ is non-orientable in the sense of \cite[Definition 1.6]{Brenaorientability}. Hence, by \cite[Theorem 3.1]{Brenaorientability}, $X$ admits a ramified double cover $(\tilde{X},\tilde{\sd})$, equipped with an involutive isometry $\Gamma:\tilde{X} \to \tilde{X}$ such that $\tilde{X}/\langle \Gamma \rangle \cong X$ isometrically. By \cite[Theorem 1.1 and Lemma 4.8]{QinDenRCD}, $(\tilde{X},\tilde{\sd},\aH^3)$ is a $\RCD(0,3)$ space without boundary and a topological manifold, and the set of fixed points of $\Gamma$ is locally finite and non-empty. 

Assume now that $X$ is compact. In this case, $\tilde{X}$ is compact as well. By the previous part of the proof, either $\tilde{X}$ is an Alexandrov space of non-negative sectional curvature, or it has finite fundamental group. In the former case, $X$ is an Alexandrov space with non-negative sectional curvature as well. Hence, we assume that $\tilde{X}$ has finite fundamental group. In this case, $X$ is a spherical suspension over $\bb{RP}^2$ by the same proof of \cite[Corollary 4.16]{QinDenRCD} (they only use the assumption $K>0$ to obtain that $\tilde{X}$ has finite fundamental group).

Assume now that $X$ is non-compact. In this case, $\tilde{X}$ is non-compact as well. By Theorem \ref{thm:classRCD03intro}, $\tilde{X}$ is either an Alexandrov space with non-negative sectional curvature, or it is homeomorphic to $\bb{R}^3$. In the former case, $X$ has non-negative sectional curvature as well, so that we may assume that $\tilde{X}$ is homeomorphic to $\bb{R}^3$. Consider the compactification at infinity of $\tilde{X}$, which, using the obvious identification, we denote $\bb{R}^3 \cup \{\infty\}$. Consider the homeomorphism $\Gamma_\infty:\bb{R}^3 \cup \{\infty\} \to \bb{R}^3 \cup \{\infty\}$ induced by $\Gamma:\tilde{X} \to \tilde{X}$, and observe that $\Gamma_\infty(\infty)=\infty$. By \cite[Theorem 4]{zbMATH03035170}, $\Gamma_\infty$ has exactly two fixed points, so that $(\bb{R}^3 \cup \{\infty\})/\langle \Gamma_\infty \rangle $ is homeomorphic to a spherical suspension over $\bb{RP}^2$ by \cite[Theorem 1.1]{zbMATH03148107}. Hence, $X \cong \tilde{X}/\langle \Gamma \rangle $ is homeomorphic to $C(\bb{RP}^2)$, concluding the proof.
\end{proof}

We turn to the proof of Theorem \ref{thm:classRCD03intro}.
The starting point, similarly to \cite{SchoenYau,Liu3d}, is the following:

\begin{lemma}\label{lemma:trivialpi_2}
Let $(X,\dist,\haus^3)$ be an $\RCD(0,3)$ space which is a topological manifold. Then $\pi_2(X)=\{0\}$ unless the universal cover of $X$ splits isometrically as $\R\times Z$ where $(Z,\dist_Z)$ is an Alexandrov space with nonnegative curvature homeomorphic to $S^2$.  
\end{lemma}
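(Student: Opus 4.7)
The plan is to adapt the topological backbone of the Schoen--Yau argument in \cite{SchoenYau} to the $\RCD$ setting, substituting the stable minimal sphere of their original proof with the sphere theorem in three-dimensional topology combined with the $\RCD$ splitting theorem.

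First I would pass to the universal cover $\bar X\to X$, which inherits a natural $\RCD(0,3)$ structure (with $\haus^3$ as reference measure) by \cite{RCDsemilocally} and which is a topological $3$-manifold, being a cover of one. If $\bar X$ were compact, Perelman's solution of the Poincar\'e conjecture would force $\bar X\cong S^3$, contradicting $\pi_2(X)=\pi_2(\bar X)\neq 0$; hence $\bar X$ is noncompact. Since $\bar X$ is a simply connected (and therefore orientable) $3$-manifold with $\pi_2(\bar X)\neq 0$, Papakyriakopoulos's sphere theorem produces an embedded topological $2$-sphere $\Sigma\subset\bar X$ representing a nontrivial class in $\pi_2(\bar X)$; in particular $\Sigma$ does not bound a $3$-ball on either side.

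The key topological step is then to show that $\Sigma$ separates $\bar X$ into two \emph{unbounded} components. Separation follows from $H_1(\bar X)=0$; call the two open complementary components $U_1,U_2$. A Van Kampen computation applied to a collar neighbourhood of $\Sigma$, using $\pi_1(\Sigma)=1$, yields $\pi_1(\bar U_1)=\pi_1(\bar U_2)=1$. If some $\bar U_i$ were compact it would be a compact simply connected $3$-manifold with spherical boundary; doubling along $\Sigma$ would produce a closed simply connected $3$-manifold, hence homeomorphic to $S^3$ by Perelman, and the irreducibility of $S^3$ would force $\bar U_i\cong D^3$, contradicting the essentiality of $\Sigma$. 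Hence both $U_i$ are unbounded, so $\bar X$ has at least two ends; a standard diagonal argument with rays escaping to each end produces an isometric line in $\bar X$, and Gigli's splitting theorem \cite{gigli2013splittingtheoremnonsmoothcontext} delivers a metric measure isomorphism $\bar X\cong\R\times Z$ for some $\RCD(0,2)$ space $(Z,\sd_Z)$.

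Finally I would identify $Z$. Since $\bar X$ is a topological $3$-manifold without boundary and a metric product is a topological product, $Z$ is a topological $2$-manifold without boundary. Being additionally $\RCD(0,2)$, $Z$ is then an Alexandrov space of nonnegative curvature by the classification of two-dimensional $\RCD$ spaces on topological surfaces (cf.\ \cite{LytchakStadler}). Simple connectedness of $\bar X$ transfers to $Z$, and $\pi_2(Z)\cong\pi_2(\R\times Z)=\pi_2(\bar X)\neq 0$ rules out $Z\cong\R^2$, the only simply connected noncompact surface; by the classification of closed simply connected surfaces, $Z$ is homeomorphic to $S^2$. The main obstacle is the second step: upgrading the existence of an essential embedded $2$-sphere to the existence of two ends of $\bar X$ rests on the Poincar\'e conjecture for compact simply connected $3$-manifolds with spherical boundary. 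With that in hand, every other ingredient is either standard $3$-manifold topology or an already established $\RCD$ result.
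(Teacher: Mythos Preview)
Your proof is correct and follows the same overall architecture as the paper's: pass to the universal cover, invoke the sphere theorem to produce an essential embedded $S^2$, argue that the sphere separates into two unbounded pieces, deduce two ends, apply Gigli's splitting, and identify the cross-section.

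The difference lies in how you and the paper justify two of the intermediate claims. For noncompactness of $\bar X$, the paper observes that Hurewicz and Poincar\'e duality give $H_2(\bar X;\Z)\cong H_1(\bar X;\Z)=0$ in the compact case, contradicting $\pi_2(\bar X)\neq 0$; you instead appeal to the Poincar\'e conjecture. For unboundedness of the complementary pieces, the paper argues directly: if one side were compact then $\Sigma$ would bound, hence be homologically trivial, hence (via Hurewicz again, since $\bar X$ is simply connected) homotopically trivial, a contradiction. Your route through Van Kampen, doubling, Perelman, and Alexander's theorem reaches the same conclusion but is considerably heavier. The paper's argument thus keeps this lemma independent of the Poincar\'e conjecture, which is a minor advantage; your version is logically fine since Perelman's theorem is invoked elsewhere in the classification anyway. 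Your final identification of $Z$ via $\pi_2(Z)\neq 0$ ruling out $\R^2$ is slightly more explicit than the paper's, which simply asserts compactness of $Z$.
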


The proof follows from the verbatim arguments as in the smooth case discussed in \cite[Lemma 2]{SchoenYau}. We discuss the details for the sake of completeness.

\begin{proof}[Proof of Lemma \ref{lemma:trivialpi_2}]
Assume that $\pi_2(X)\neq 0$. Denoting by $(\overline{X},\overline{\dist},\haus^3)$ the universal cover of $X$ endowed with its natural $\RCD(0,3)$ structure (see \cite[Theorem 1.1]{MondinoWei} and \cite{RCDsemilocally}), $\pi_2(\overline{X})\neq 0$ as well, and $\overline{X}$ is simply connected. 
By Hurewicz's theorem $H_2(\overline{X},\Z)\equiv \pi_2(\overline{X})$. Thus $H_2(\overline{X},\Z)\neq 0$ and in particular $\overline{X}$ must be noncompact, otherwise by Poincar\'e duality and the universal coefficients theorem we would have $H_2(\overline{X},\Z)\equiv H_1(\overline{X},\Z)=0$, since $\overline{X}$ is simply connected.
By the sphere theorem \cite{Papakyriakopoulos} there exists an embedded $S^2\subset \overline{X}$ which is homotopically nontrivial. 
We claim that $\overline{X}\setminus S^2$ has at least two connected components. If not, then there exists a loop in $\overline{X}$ intersecting the $S^2$ transversally exactly once (note that we can endow the $3$-manifold $X$ with the unique smooth structure compatible with its topology). Thus its linking number with such $S^2$ would be $\pm 1$, a contradiction since the linking number is homotopically invariant and $\overline{X}$ is simply connected. Next we claim that no connected component of $\overline{X}\setminus S^2$ can be compact. Indeed, if one of them is compact, then the $S^2$ would bound a compact submanifold of $\overline{X}$. Therefore it would be homologically, and hence homotopically, trivial, a contradiction. Since $\overline{X}\setminus S^2$ has (at least) two connected components, $\overline{X}$ has two ends. Thus it splits a line, by \cite{gigli2013splittingtheoremnonsmoothcontext}. More precisely, $\overline{X}=\R\times Z$ as metric measure spaces, where $(Z,\dist_Z,\haus^2)$ is a compact $\RCD(0,2)$ space. By \cite[Theorem 1.1]{LytchakStadler}, $(Z,\dist_Z)$ is an Alexandrov space with nonnegative curvature. Since we are assuming that $X$ is a topological manifold (with empty boundary), $Z$ must be a topological surface (with empty boundary) as well. Since $\overline{X}$ is simply connected, $Z$ is simply connected as well. Thus $Z$ is homeomorphic to $S^2$, as we claimed. 
\end{proof}

We now distinguish two cases, the case when $X$ is simply connected, and the case when it is not. In the first case, the universal cover must split a line. We remark, that although the statement is completely analogous to the smooth, our proof departs from \cite{Liu3d} in the last step, where we rely on Theorem \ref{thm:Bettirigid} instead of minimal surfaces methods. 

\begin{lemma}\label{lemma:notsimplyconn}
Let $(X,\dist,\haus^3)$ be a noncompact $\RCD(0,3)$ space such that $X$ is a topological $3$-manifold. If $X$ is not simply connected, then the universal cover $(\overline{X},\overline{\dist})$ splits a line and $(X,\dist)$ is an Alexandrov space with nonnegative curvature.
\end{lemma}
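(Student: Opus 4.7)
The argument I would give combines Lemma \ref{lemma:trivialpi_2} with Theorem \ref{thm:Bettirigid} applied at $n=3$, together with a topological argument specific to dimension three that rules out torsion in $\pi_1(X)$. The rough scheme is: reduce to $\pi_2(X)=0$, then to $\pi_1(X)$ torsion-free, extract a $\Z$-subgroup, apply Betti number rigidity, and finally transfer the Alexandrov property from $\overline{X}$ to $X$.

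First I would apply Lemma \ref{lemma:trivialpi_2}: if $\pi_2(X)\neq 0$ it directly yields the desired splitting of $\overline{X}$, so one may assume $\pi_2(X)=0$. Since $X$ is a noncompact topological $3$-manifold, the universal cover $\overline{X}$ is a noncompact simply connected topological $3$-manifold, and $\pi_2(\overline{X})=\pi_2(X)=0$. From Hurewicz one obtains $H_2(\overline{X},\Z)=0$; noncompactness gives $H_3(\overline{X},\Z)=0$; and $H_k(\overline{X},\Z)=0$ for $k\ge 4$ for dimensional reasons. Iterating Hurewicz produces $\pi_k(\overline{X})=0$ for every $k\ge 1$. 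Since by Moise's theorem every topological $3$-manifold admits a CW structure, Whitehead's theorem gives that $\overline{X}$ is contractible.

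The point is then that if $\pi_1(X)$ contained a nontrivial finite cyclic subgroup $\Z/n\Z$, the intermediate cover $\overline{X}/(\Z/n\Z)$ would be a topological $3$-manifold that is a $K(\Z/n\Z,1)$; but this contradicts $H^k(\Z/n\Z,\Z/n\Z)\neq 0$ for every $k\ge 0$, since a $3$-manifold has vanishing cohomology in degrees $\ge 4$. Hence $\pi_1(X)$ is torsion-free, and since it is nontrivial by assumption it contains a subgroup isomorphic to $\Z$. Theorem \ref{thm:Bettirigid} applied with $n=3$ then produces an isometric splitting $\overline{X}\cong \R\times Z$ as metric measure spaces, with $(Z,\dist_Z,\haus^2)$ an $\RCD(0,2)$ space. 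Because $\overline{X}$ splits a line, it is an Alexandrov space of nonnegative sectional curvature by \cite{LytchakStadler} (as invoked in the proof of Theorem \ref{T|classification}), and the quotient $X=\overline{X}/\pi_1(X)$ by the free isometric action is therefore itself an Alexandrov space of nonnegative sectional curvature, e.g.~by \cite[Corollary 8.7]{zbMATH07802912}.

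The main obstacle I expect is the torsion-free step: Theorem \ref{thm:Bettirigid} requires a $\Z$-subgroup, so we really need to exclude the case where $\pi_1(X)$ consists entirely of torsion elements. This is where the proof genuinely uses dimension three and the openness of $X$, via contractibility of $\overline{X}$ and the infinite cohomological dimension of a nontrivial finite cyclic group.
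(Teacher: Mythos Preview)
Your proof is correct and follows essentially the same route as the paper: reduce via Lemma~\ref{lemma:trivialpi_2} to the case $\pi_2(\overline{X})=0$, show $\overline{X}$ is contractible, deduce that $\pi_1(X)$ is torsion-free, extract a $\Z$-subgroup, apply Theorem~\ref{thm:Bettirigid}, and pass Alexandrov curvature from $\overline{X}$ to $X$. The only cosmetic difference is that the paper cites ``Smith theory'' for the torsion-free step, whereas you spell out the equivalent cohomological-dimension argument for $K(\Z/n\Z,1)$.
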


\begin{proof}
   Thanks to Lemma \ref{lemma:trivialpi_2}, either $\pi_2(\overline{X})=0$, or $(\overline{X},\overline{\dist})$ is an Alexandrov space with nonnegative curvature, since it is a product of $\R$ with an Alexandrov space with nonnegative curvature. In the second case, $(X,\dist)$ is an Alexandrov space with nonnegative curvature as well. Thus we can assume that $\pi_2(\overline{X})=0$. Under such assumption $\overline{X}$ is contractible. Indeed, since $X$ is a noncompact $3$-manifold, $H_3(\overline{X},\Z)=0$. Since $\pi_1(\overline{X})=\pi_2(\overline{X})=0$, by Hurewicz's theorem, $\pi_3(\overline{X})=0$ as well. Hence by Whitehead's theorem $\overline{X}$ is homotopically equivalent to a point, i.e., it is contractible. By Smith theory, $\pi_1(X)$ must be torsion-free. Therefore, if $X$ is not simply connected, then $\pi_1(X)$ has a subgroup isomorphic to $\Z$. By Theorem \ref{thm:Bettirigid}, $\overline{X}$ splits a line, i.e., it is isomorphic to $\R\times Z$ where $(Z,\dist_Z,\haus^2)$ is an $\RCD(0,2)$ space. By \cite[Theorem 1.1]{LytchakStadler}, $(Z,\dist_Z)$ is an Alexandrov space with nonnegative curvature. Therefore $(\overline{X},\overline{\dist})$ and $(X,\dist)$ are Alexandrov spaces with nonnegative curvature as well. 
\end{proof}

We are ready to complete the proof of Theorem \ref{thm:classRCD03intro}.

\begin{proof}[Proof of Theorem \ref{thm:classRCD03intro}]
 If $\pi_2(X)\neq 0$ or $\pi_1(X)\neq 0$ then by Lemma \ref{lemma:trivialpi_2} and Lemma \ref{lemma:notsimplyconn} respectively $(X,\dist)$ is an Alexandrov space with nonnegative curvature whose universal cover splits a line isometrically. The classification of $3$-dimensional noncompact Alexandrov spaces with nonnegative curvature obtained in \cite[Part 2]{ShioyaYamaguchi} implies that any such space admits a smooth complete metric with nonnegative sectional curvature inducing the same topology. Thus we can assume that $\pi_1(X)=\pi_2(X)=0$. By the same argument as in the proof of Lemma \ref{lemma:notsimplyconn} above, $X$ is contractible under these assumptions. Since any $3$-manifold is triangulable, by  Theorem \ref{thm:simplicialvolthm} we have $\|X\|=0$. By \cite[Theorem 1]{BargagnatiFrigerio}, $X$ is homeomorphic to $\R^3$. 
\end{proof}

\begin{remark}
    It follows from the proof of Theorem \ref{thm:classRCD03intro} that the following holds: if $(X,\dist,\haus^3)$ is an $\RCD(0,3)$ space homeomorphic to a $3$-manifold and with Euclidean volume growth, i.e., 
    \begin{equation}
        \lim_{r\to \infty}\frac{\haus^3(B_r(p))}{r^3}>0\, 
    \end{equation}
    for some (and thus all) $p\in X$, then $X$ is homeomorphic to $\setR^3$. Indeed, $\pi_2(X)=0$ by Lemma \ref{lemma:trivialpi_2} and the Euclidean volume growth assumption. Thus the universal cover $(\overline{X},\overline{\dist},\haus^3)$ is contractible and therefore $\pi_1(X)$ must be torsion free as observed in the proof of Lemma \ref{lemma:notsimplyconn}. On the other hand, any $\RCD(0,n)$ space with Euclidean volume growth has finite $\pi_1$, as proved in \cite[Theorem 1.6]{MondinoWei} (note that by \cite{RCDsemilocally} the revised fundamental group coincides with the usual fundamental group). Thus $\pi_1(X)=0$ and the conclusion follows.

    We remark that the same conclusion was obtained with a different method recently in \cite[Theorem 1.9]{BruePigSem}.
\end{remark}

\section{Further applications}

In this section, we provide new topological restrictions for manifolds with non-negative Ricci curvature and $\RCD(0,n)$ spaces. First, we exploit the vanishing Theorem \ref{thm:simpvolintro} for the simplicial volume to obtain new restrictions on the topology of noncompact manifolds admitting a complete metric with $\mathrm{Ric}\ge0$ in dimension $4$.
Then, relying on Theorem \ref{T|ZhouAnelli}, we obtain restrictions for manifolds with nonnegative Ricci curvature (and $\RCD(0,n)$ spaces) and maximal volume growth in any dimension. The two main results are Propositions \ref{prop:applsimplvol} and \ref{P|toprestr2} below.

\begin{proposition}\label{prop:applsimplvol}
    There exists an infinite family of noncompact, one-ended, simply connected $4$-manifolds which do not admit any complete metric with nonnegative Ricci curvature. 
\end{proposition}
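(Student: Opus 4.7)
The strategy is to construct the required $4$-manifolds as the smooth interiors of simply connected, compact, oriented, smooth $4$-manifolds-with-boundary whose boundary is a closed hyperbolic $3$-manifold. Since the oriented smooth cobordism group $\Omega_3^{\mathrm{SO}}$ vanishes (Rohlin-Thom), every closed oriented smooth $3$-manifold $N$ bounds a compact oriented smooth $4$-manifold $V$; performing finitely many $2$-handle attachments along generators of $\pi_1(V)$ kills the fundamental group without modifying the boundary, producing a compact, smooth, simply connected, oriented $4$-manifold $W$ with $\partial W = N$. Taking $N$ from an infinite family $\{N_i\}_{i\in\N}$ of pairwise non-homeomorphic closed oriented hyperbolic $3$-manifolds (for example, with pairwise non-isomorphic fundamental groups), we obtain $W_i$ and set $M_i := \mathrm{int}(W_i)$. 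Each $M_i$ is then noncompact, smooth, oriented, one-ended (since $N_i$ is connected), and simply connected (since $M_i$ is homotopy equivalent to $W_i$ via the collar neighborhood theorem).

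In view of Theorem \ref{thm:simpvolintro}, it suffices to check that $\|M_i\| > 0$, which follows from the chain of inequalities
\[
0 \,<\, \|N_i\| \,\leq\, 5\,\|W_i, \partial W_i\| \,\leq\, C\,\|M_i\|\, ,
\]
where the lower bound $\|N_i\| > 0$ is the Gromov-Thurston inequality for closed hyperbolic $3$-manifolds; the first bound is obtained by applying the boundary operator to a representative of the relative fundamental class of $(W_i,\partial W_i)$ with $\ell^1$-norm close to $\|W_i, \partial W_i\|$ (a $4$-simplex has $5$ codimension-one faces); and the second bound is a classical truncation and push-in argument, in which a locally finite fundamental cycle of $M_i$ is restricted to a large compact subset of $W_i$ and the boundary of the truncation is projected into $\partial W_i$ along a collar neighborhood retraction, producing a relative singular cycle of $(W_i,\partial W_i)$ whose $\ell^1$-norm is controlled by that of the original locally finite cycle.

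The $M_i$ are pairwise non-homeomorphic because a neighborhood of the unique end of $M_i$ is diffeomorphic to $N_i \times [0,\infty)$, so the pro-system of fundamental groups of complements of compact subsets of $M_i$ is stably isomorphic to $\pi_1(N_i)$, and these are pairwise non-isomorphic by construction. The main technical obstacle is the last comparison $\|W_i, \partial W_i\| \leq C\,\|M_i\|$ between singular relative and locally finite simplicial volumes: although the truncation-plus-push-in argument sketched above is standard in the theory of simplicial volume for open manifolds (cf.\ \cite{GromovVolbCoho, FrigerioMoraschini}), the two norms can behave very differently in general, and one must exploit the compactness of $W_i$ and the product structure of the collar neighborhood of $\partial W_i$ in an essential way to control the norm under truncation.
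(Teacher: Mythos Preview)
Your proof is correct and follows essentially the same construction as the paper: build simply connected compact oriented $4$-manifolds bounding closed hyperbolic $3$-manifolds and take interiors. The paper packages the key step as Lemma~\ref{lemma:simplv0graph}, citing Gromov's inequality $\|\partial W\|\leq (n+1)\|\mathrm{int}(W)\|$ as a black box and then invoking Perelman's geometrization plus Soma's theorem to conclude that $\|\partial W\|=0$ forces $\partial W$ to be a graph manifold; you instead spell out the same inequality chain directly and use only the Gromov--Thurston proportionality $\|N_i\|>0$ for hyperbolic $N_i$, which is more elementary and avoids geometrization entirely. Two minor remarks: the operation you call ``$2$-handle attachment'' should be phrased as surgery on embedded circles in the interior (handle attachment along the boundary would alter $\partial V$); and your truncation/push-in step is exactly the classical argument behind Gromov's p.~17 example, so your expressed concern about it is unwarranted---it works with $C=1$.
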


\begin{proposition} \label{P|toprestr2}
   For every $n\geq 4$, the manifold $S^{n-2} \times \bb{R}^2$ admits no complete metric of non-negative Ricci curvature and Euclidean volume growth.
\end{proposition}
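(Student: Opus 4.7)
I argue by contradiction. Suppose $M:=S^{n-2}\times\bb R^2$ admits a smooth complete metric $g$ with $\Ric_g\ge 0$ and $v:=\lim_{r\to\infty}\mathrm{vol}_g(B_r(p))/r^n>0$ for some $p\in M$. Since $n\ge 4$ the manifold $M$ is simply connected and has a unique end; fix the compact core $K:=S^{n-2}\times \bar B_1^{\bb R^2}(0)$, so that $M\setminus K\cong S^{n-2}\times S^1\times[0,\infty)$ as smooth manifolds and $\pi_1(M\setminus K)\cong\bb Z$, with generator the loop $\{s_0\}\times\partial B_1^{\bb R^2}(0)$. Applying Theorem \ref{T|ZhouAnelli} to the $\RCD(0,n)$ space $(M,\sd_g,\mathscr H^n)$ yields $R_0\ge 0$ and $C=C(n,v)\in\bb N$ such that for all $r\ge R_0$ the annulus $B_{4r}(p)\setminus B_{3r}(p)$ lies in a single connected component $C_r$ of $B_{5r}(p)\setminus B_{2r}(p)$ and the image of $\pi_1(C_r,c)\to\pi_1(B_{6r}(p)\setminus B_r(p),c)$ has cardinality at most $C$ for every $c\in C_r$. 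Enlarging $R_0$ so that $K\subset B_{R_0}(p)$, the composition
\[
\pi_1(C_r,c)\longrightarrow\pi_1(B_{6r}\setminus B_r,c)\longrightarrow\pi_1(M\setminus K,c)\cong\bb Z
\]
has image of cardinality at most $C$; since $\bb Z$ is torsion-free the image is trivial for every $r\ge R_0$, i.e.\ every loop supported in $C_r$ is null-homotopic in $M\setminus K$.

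The plan is then to construct, for $r$ large, a loop in $C_r$ whose class in $\pi_1(M\setminus K)\cong\bb Z$ is nonzero. To this end I exploit the second projection $h=\pi_2:M\to\bb R^2$: its fibers $S^{n-2}\times\{v\}$ are compact and $(M,g)$ is complete, so by Hopf-Rinow $h$ is proper. Hence the sets $D_R:=h(\bar B_R(p))\subset\bb R^2$ are compact and exhaust $\bb R^2$ as $R\to\infty$, which guarantees that for $r$ large enough one can find $0<a<b$ such that $D_{2r}\subset B_a^{\bb R^2}(0)$ and $\bar B_b^{\bb R^2}(0)\subset \mathrm{int}\,D_{5r}$. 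The slice $\Sigma_{s_0}:=\{s_0\}\times\bb R^2\subset M$ is a properly embedded $2$-plane on which $h$ restricts to the identity, and so $\Sigma_{s_0}\cap(B_{5r}(p)\setminus B_{2r}(p))$ is a planar set containing the round annulus $\{s_0\}\times(\bar B_b^{\bb R^2}\setminus B_a^{\bb R^2})$. Any loop $\gamma\subset \Sigma_{s_0}\cap(B_{5r}\setminus B_{2r})$ whose $h$-image winds once around $0\in\bb R^2$ represents the generator of $\pi_1(M\setminus K)\cong\bb Z$, since under the retraction $M\setminus K\to S^{n-2}\times S^1$ it maps onto a generator of $\pi_1(S^1)$.

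\textbf{Main obstacle.} What remains, and is the hard part, is to place such a generator-loop inside the specific component $C_r$ rather than in some other component of $B_{5r}(p)\setminus B_{2r}(p)$. I would resolve this through Cheeger-Colding tangent cone theory. Along any blow-down sequence $r_i\to\infty$, $(M,g/r_i^2,p)$ subconverges in pGH to a metric cone $(C(Z),o)$ over a non-collapsed $\RCD(n-2,n-1)$ space $Z$, with $Z$ connected and of positive $\mathscr H^{n-1}$-measure by volume convergence. Under rescaling the annulus $B_{5r}(p)\setminus B_{2r}(p)$ converges to $(2,5)\times Z\subset C(Z)$, which is connected, and the component $C_{r_i}/r_i$ exhausts this limit annulus. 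Stability of the non-collapsed convergence then implies that for all $i$ sufficiently large any continuum in $B_{5r_i}(p)\setminus B_{2r_i}(p)$ which meets the middle annulus $B_{4r_i}(p)\setminus B_{3r_i}(p)$ must be entirely contained in $C_{r_i}$. In particular the connected set $\Sigma_{s_0}\cap(B_{5r_i}(p)\setminus B_{2r_i}(p))$ (which meets the middle annulus as soon as $r_i$ is large enough, by properness of $h$ combined with monotonicity of $D_R$) lies in $C_{r_i}$, and the winding-one loop constructed above gives the desired contradiction. The principal technical step is therefore the transfer of the connectedness of the tangent-cone cross section $Z$ back to connectedness of the component $C_r$ at large but finite scales; this is where I expect the bulk of the work to lie, and the same Cheeger-Colding stability inputs used in the proof of Theorem \ref{T|ZhouAnelli} are what drive it.
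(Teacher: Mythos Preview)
Your reduction is correct: once you know the image of $\pi_1(C_r)\to\pi_1(M\setminus K)\cong\bb Z$ is finite (hence trivial), it suffices to place a single loop in $C_r$ which is nontrivial in $\pi_1(M\setminus K)$. But the construction you propose to do this, and the blow-down argument meant to resolve the ``main obstacle'', both have gaps.

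First, the sentence ``the connected set $\Sigma_{s_0}\cap(B_{5r_i}(p)\setminus B_{2r_i}(p))$'' is doing work it cannot do. The restriction $f(v):=\sd_g(p,(s_0,v))$ is merely Lipschitz and proper; there is no reason for $\{2r<f<5r\}\subset\bb R^2$ to be connected, nor for it to contain a round annulus $\bar B_b\setminus B_a$ (your choice of $b$ uses $\bar B_b\subset\mathrm{int}\,D_{5r}=h(\bar B_{5r}(p))$, which only gives \emph{some} preimage $(s,v)$ with $|v|\le b$ in $B_{5r}(p)$, not the specific $(s_0,v)$). The blow-down argument shows the \emph{limit} annulus $(2,5)\times Z$ is connected, but it says nothing about the planar slice $\Sigma_{s_0}$ at finite scales: the slice has codimension $n-2\ge 2$ and simply disappears under rescaling. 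So the Cheeger--Colding input you invoke does not touch the real difficulty. (A genuine fix along your lines does exist: approximate $f$ by a smooth proper $\tilde f$, pick a regular value $c\in(3r,4r)$, and observe that one of the Jordan curves in $\tilde f^{-1}(c)$ must separate $0$ from $\infty$ and hence wind once around $0$; that circle lies in $B_{4r}(p)\setminus B_{3r}(p)\subset C_r$ automatically. But this is not the resolution you wrote.)

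The paper bypasses the loop-construction entirely and proves the more general statement that if $X\setminus K\cong Y\times(0,\infty)$ with $Y$ path-connected, then $|\pi_1(Y)|\le C(n,v)$. The mechanism is Seifert--Van Kampen: one writes $X\setminus K=A\cup B$ with $A=(C\cup B_{4r})\setminus K$, $B=C\cup(X\setminus\bar B_{3r})$, and $A\cap B=C$; both $\pi_1(A)$ and $\pi_1(B)$ surject onto $\pi_1(X\setminus K)$, while the image of $\pi_1(C)$ in $\pi_1(X\setminus K)$ is small by Theorem~\ref{T|ZhouAnelli}. The normal form for amalgamated products then gives the contradiction with no need to exhibit an explicit loop. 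This is both cleaner and applies to any $Y$ with infinite $\pi_1$, not just $S^{n-2}\times S^1$.
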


\begin{remark}
 It is well-known that $S^{n-3}\times\R^3$ admits a complete metric with nonnegative Ricci curvature and Euclidean volume growth for every $n\ge 5$. Such metrics can be constructed with a doubly warped product ansatz by viewing $S^{n-3}\times\R^3$ as $\R_+\times S^2\times S^{n-3}$ away from a central $S^{n-3}$. 
\end{remark}

For the proof of Proposition \ref{prop:applsimplvol} we will rely on the following:

\begin{lemma}\label{lemma:simplv0graph}
    Let $\overline{M}^4$ be a smooth compact compact oriented manifold with boundary of dimension $4$. If $\|\overline{M}\setminus{\partial{\overline{M}}}\|=0$ then all the connected components of $\partial{\overline{M}}$ are graph manifolds. 
\end{lemma}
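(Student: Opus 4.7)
The plan is to deduce, from the hypothesis $\|\overline{M}\setminus\partial\overline{M}\|=0$, the vanishing of the simplicial volume of each connected component of $\partial\overline{M}$, and then invoke the classical characterization of closed orientable $3$-manifolds with vanishing simplicial volume as graph manifolds.

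First, I would appeal to Gromov's comparison between the simplicial volume of the interior of a compact oriented manifold with boundary and the relative simplicial volume of the corresponding pair (see \cite{GromovVolbCoho}, and for a detailed discussion \cite{FrigerioMoraschini}): there is an inequality
\begin{equation}
    \|\overline{M},\partial\overline{M}\| \le \|\overline{M}\setminus\partial\overline{M}\|\, ,
\end{equation}
from which the vanishing of $\|\overline{M},\partial\overline{M}\|$ follows at once. Next, I would invoke Gromov's boundary inequality, which asserts the existence of a dimensional constant $C_n$ such that for every compact oriented $n$-manifold $W$ with boundary
\begin{equation}
    \|\partial W\| \le C_n\cdot \|W,\partial W\|\, ,
\end{equation}
again due to \cite{GromovVolbCoho}. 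Combining the two yields $\|\partial\overline{M}\|=0$. Since $\overline{M}$ is compact and oriented, each connected component $N$ of $\partial\overline{M}$ is a closed orientable $3$-manifold, and by additivity of the simplicial volume over disjoint unions each such $N$ satisfies $\|N\|=0$.

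Finally, I would invoke the classical theorem of Soma (building on earlier work of Gromov and Thurston) according to which a closed orientable $3$-manifold has vanishing Gromov simplicial volume if and only if it is a graph manifold. Applied to each connected component $N\subset\partial\overline{M}$, this concludes the proof.

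The main obstacle here is of a bibliographic rather than a conceptual nature, and consists in isolating the precise form of the two Gromov inequalities above. Note that the sharper identity $\|\mathrm{int}(\overline{M})\|=\|\overline{M},\partial\overline{M}\|$---which would in general require additional hypotheses, such as amenability of the fundamental groups of the boundary components or passing to Lipschitz simplicial volume---is not needed for our purposes, since only a one-sided inequality is used in each step.
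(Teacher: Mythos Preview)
Your proposal is correct and follows essentially the same approach as the paper: the paper cites Gromov's Example on p.~17 of \cite{GromovVolbCoho} (together with L\"oh's work) directly for the implication $\|\overline{M}\setminus\partial\overline{M}\|=0\Rightarrow\|\partial\overline{M}\|=0$, which is exactly what your two inequalities combine to give, and then invokes Soma's theorem for the graph-manifold conclusion. One minor bibliographic point: the paper explicitly invokes Perelman's resolution of geometrization alongside Soma, since the characterization of closed orientable $3$-manifolds with vanishing simplicial volume as graph manifolds requires geometrization for \emph{all} $3$-manifolds, not only the Haken case covered by Thurston's earlier work.
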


\begin{proof}
    By \cite[Example, p. 17]{GromovVolbCoho}, $\|\partial{\overline{M}}\|=0$, see also \cite[Theorem 6.4]{LohMunstJourn} and the discussion following it. Thus all connected components of the boundary of $\overline{M}$ have vanishing simplicial volume. By Perelman's resolution of the geometrization conjecture and \cite{Soma} (see also \cite[Theorem 2.7]{AndersonPaternain}) a closed orientable $3$-manifold with vanishing simplicial volume is a graph manifold. 
\end{proof}

\begin{proof}[Proof of Proposition \ref{prop:applsimplvol}]
 Let $N$ be any oriented closed hyperbolic $3$-manifold. By \cite[Theorem 2, Chapter VII]{KirbyTop4}, there exists a smooth compact connected and oriented $4$-manifold $\overline{M}$ with boundary $\partial{\overline{M}}$ diffeomorphic to $N$. By performing surgeries iteratively on a family of disjoint loops generating $\pi_1(\overline{M})$ we can assume that $\overline{M}\setminus \partial{\overline{M}}$ is simply connected. Moreover, it is clearly one-ended. We claim that any such manifold does not admit a complete Riemannian metric with $\Ric\ge 0$. Indeed, $N$ is not a graph manifold, hence by Lemma \ref{lemma:simplv0graph}, $\|\overline{M}\setminus\partial{\overline{M}}\|>0$. The conclusion follows by Theorem \ref{thm:simpvolintro}. 
\end{proof}

\begin{remark}
The very same argument, relying on the more general Theorem \ref{thm:simplicialvolthm}, rules out also the existence of $\RCD(0,4)$ structures on the class of manifolds considered in Proposition \ref{prop:applsimplvol}.   
\end{remark}

Proposition \ref{P|toprestr2} is a consequence of the following more general result and its subsequent corollary.

\begin{proposition}
    Let $n \geq 3$, and let $(X,\sd,\aH^n,p)$ be an $\RCD(0,n)$ space with Euclidean volume growth
    \begin{equation}
        \lim_{r \to + \infty}\frac{\aH^n(B_r(p))}{r^n} \geq v >0.
    \end{equation}
    Assume that there exists a compact set $K \subset X$ such that $X \setminus K$ is homeomorphic to $Y \times (0,+\infty)$ for a path-connected topological space $Y$. Then, there exists $C=C(n,v)>0$ such that $\pi_1(Y)$ has cardinality at most $C(n,v)$.
    \begin{proof}
         Let $C(n,v)$ be the constant given by \ref{T|ZhouAnelli}. Assume by contradiction that $\pi_1(Y)$ has cardinality (strictly) bigger than $2C(n,v)$.
         Let $x \in K$, and let $r>0$ be large enough so that $K \subset B_r(x)$, and the conclusion of Theorem \ref{T|ZhouAnelli} holds. Let $C' \subset X$ be the interior of the path-connected component of $B_{5r}(x) \setminus B_{2r}(x)$ containing $B_{4r}(x) \setminus B_{3r}(x)$, which exists by Theorem \ref{T|ZhouAnelli}. Let then $C \subset X$ be the path-connected component of $C'$ containing $B_{4r}(x) \setminus B_{3r}(x)$. One can check that this exists by the same method used in Theorem \ref{T|ZhouAnelli}. In particular, $C$ is an open path-connected set containing $B_{4r}(x) \setminus B_{3r}(x)$.
         Set 
    \begin{equation}
A:=\big(C \cup B_{4r}(x) \big) \setminus K\, , \quad B:=C \cup \big(X \setminus \bar{B}_{3r}(x) \big)\, .
        \end{equation}
        Observe that since $Y$ is path connected these sets are path-connected. In addition, their intersection $A \cap B$ is equal to the path-connected set $C$. Moreover, $A \cup B=X \setminus K$, and each element of $\pi_1(X \setminus K)$ admits representatives in $A$ and $B$. Consider a point $q\in A\cap B$ and the push-forward maps induced by the natural inclusions:
        \begin{equation}
            i_*:\pi_1(A \cap B,q) \to \pi_1(X \setminus K,q)\, , \quad 
            j_*:\pi_1(A \cap B,q) \to \pi_1(A,q)\, ,
        \end{equation}
        and
        \begin{equation}
            h_*:\pi_1(A \cap B,q) \to \pi_1(B,q)\, .
        \end{equation}
        By Theorem
        \ref{T|ZhouAnelli}, $i_*(\pi_1(A \cap B,q))$ has cardinality at most $C(n,v)$. Since $\pi_1(A)$ and $\pi_1(B)$ contain copies of $\pi_1(X \setminus K)$, which has cardinality bigger than $2C(n,v)$, there exists $g \in \pi_1(X \setminus K)$ with representatives $g_a \in \pi_1(A,q) \setminus j_*(\pi_1(A \cap B,q)) $ and $g_b \in \pi_1(B,q) \setminus h_*(\pi_1(A \cap B,q)) $.
        By the Seifert-Van Kampen theorem, the natural map
        \begin{equation}
            k_*:\pi_1(A,q) *_{\pi_1(A \cap B,q)} \pi_1(B,q) \to \pi_1(U)
        \end{equation}
        is an isomorphism.  In particular, since $k_*(g_a g_b^{-1})=1$, we deduce that $g_a g_b^{-1}=1$ in the amalgamated product, thus contradicting \cite[Theorem 2.6, page 187]{zbMATH01554175}.
    \end{proof}
\end{proposition}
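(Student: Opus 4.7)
\smallskip
\noindent
\textbf{Proof plan.} Since $X\setminus K$ is homeomorphic to $Y\times (0,+\infty)$, it deformation retracts onto $Y\times\{t_0\}$, and so $\pi_1(X\setminus K)\cong\pi_1(Y)$; in particular it is enough to bound the cardinality of $\pi_1(X\setminus K)$. Fix $x\in X$ and let $r>0$ be so large that $K\subset B_r(x)$ and the conclusion of Theorem~\ref{T|ZhouAnelli} holds at $x$. This gives a path-connected component $C_r$ of $B_{5r}(x)\setminus B_{2r}(x)$ containing $B_{4r}(x)\setminus B_{3r}(x)$, together with a universal bound $C(n,v)$ on the cardinality of the image of
\[
i_*:\pi_1(C_r,q)\longrightarrow\pi_1(B_{6r}(x)\setminus B_r(x),q)\longrightarrow\pi_1(X\setminus K,q),
\]
for any basepoint $q\in C_r$, where the second arrow is induced by inclusion (which makes sense because $K\subset B_r(x)$).

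\smallskip
\noindent
Next I would set up a Seifert-Van Kampen decomposition whose amalgamating subgroup is controlled by Theorem~\ref{T|ZhouAnelli}. Define
\[
A:=\bigl(C_r\cup B_{4r}(x)\bigr)\setminus K,\qquad B:=C_r\cup\bigl(X\setminus\bar B_{3r}(x)\bigr),
\]
which are open subsets of $X\setminus K$. Their intersection $A\cap B$ coincides with $C_r$, and their union is $X\setminus K$. Using the product structure $X\setminus K\cong Y\times(0,+\infty)$ (so that $X\setminus K$ is path-connected and one-ended), together with the fact that $C_r$ is a path-connected ``equator'' separating the inner region near $K$ from the end at infinity, one verifies that $A$, $B$ and $A\cap B=C_r$ are all path-connected for $r$ large enough. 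Choosing $q\in C_r$, the Seifert-Van Kampen theorem then yields an isomorphism
\[
k_*:\pi_1(A,q)\,*_{\pi_1(C_r,q)}\,\pi_1(B,q)\;\xrightarrow{\;\cong\;}\;\pi_1(X\setminus K,q).
\]

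\smallskip
\noindent
The bound follows by contradiction. Assume $|\pi_1(X\setminus K,q)|>2C(n,v)$. Since both $\pi_1(A,q)\to\pi_1(X\setminus K,q)$ and $\pi_1(B,q)\to\pi_1(X\setminus K,q)$ are surjective (because $A$ and $B$ each contain a large part of the one-ended space $X\setminus K$, so every loop is homotopic into either $A$ or $B$), every class $g\in\pi_1(X\setminus K,q)$ admits representatives $g_A\in\pi_1(A,q)$ and $g_B\in\pi_1(B,q)$. The image of the amalgamating subgroup $\pi_1(C_r,q)$ inside $\pi_1(X\setminus K,q)$ has cardinality at most $C(n,v)$ by Theorem~\ref{T|ZhouAnelli}; hence the image of $j_*:\pi_1(C_r,q)\to\pi_1(A,q)$ (respectively $h_*:\pi_1(C_r,q)\to\pi_1(B,q)$) projects to a subset of size $\le C(n,v)$ in $\pi_1(X\setminus K,q)$. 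Therefore, by the pigeonhole principle, there must exist $g\in\pi_1(X\setminus K,q)$ with representatives $g_A\notin j_*(\pi_1(C_r,q))$ and $g_B\notin h_*(\pi_1(C_r,q))$. The element $g_Ag_B^{-1}$ is then a reduced word of length two in the amalgamated product with each letter outside the amalgamating subgroup, which by the normal form theorem is non-trivial; however $k_*(g_Ag_B^{-1})=gg^{-1}=1$ in $\pi_1(X\setminus K,q)$, contradicting the injectivity of $k_*$.

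\smallskip
\noindent
\textbf{Main obstacle.} The nontrivial technical point is verifying the path-connectedness of $A$, $B$, $C_r$ and the identity $A\cup B=X\setminus K$, together with the surjectivity of the inclusions $\pi_1(A,q),\pi_1(B,q)\twoheadrightarrow\pi_1(X\setminus K,q)$. These rely on combining the metric information coming from Theorem~\ref{T|ZhouAnelli} (which arranges the annular component $C_r$ to separate a large inner ball from the complement of a slightly smaller ball) with the fact that $X\setminus K\cong Y\times(0,+\infty)$ is path-connected and has a unique end. The rest of the argument is essentially a formal consequence of the normal form theorem for amalgamated free products.
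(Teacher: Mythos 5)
Your proposal is correct and follows essentially the same route as the paper: the same open decomposition $A=(C_r\cup B_{4r}(x))\setminus K$, $B=C_r\cup(X\setminus\bar B_{3r}(x))$, the same use of Theorem \ref{T|ZhouAnelli} to bound the image of the amalgamating subgroup $\pi_1(C_r)$ in $\pi_1(X\setminus K)$, and the same Seifert--Van Kampen plus normal-form-for-amalgamated-products contradiction. The only (harmless) difference is that you make the identification $\pi_1(X\setminus K)\cong\pi_1(Y)$ explicit at the outset, which the paper uses implicitly, and your treatment of the connectedness/surjectivity verifications is at the same level of detail as the paper's.
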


\begin{corollary} 
    Let $n \geq 3$, and let $(M^n,g)$ be a smooth compact manifold with non-empty connected boundary. If $M \setminus \partial M$ admits a non-collapsed $\RCD(0,n)$ structure with Euclidean volume growth, then $\pi_1(\partial M)$ is finite.
\end{corollary}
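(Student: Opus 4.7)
The plan is to reduce the corollary directly to the preceding proposition by exhibiting a product end in $M \setminus \partial M$ of the required form. Since $M$ is a smooth compact manifold with non-empty boundary, the collar neighborhood theorem provides an open neighborhood $U$ of $\partial M$ in $M$ together with a diffeomorphism $\phi : U \to \partial M \times [0,1)$ restricting to the identity on $\partial M = \partial M \times \{0\}$. In particular, $U \setminus \partial M$ is homeomorphic to $\partial M \times (0,1)$, and hence to $\partial M \times (0, +\infty)$.

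Next I would choose the compact set $K := M \setminus \phi^{-1}(\partial M \times [0,1/2))$, which is a closed subset of $M$ disjoint from $\partial M$, and therefore a compact subset of the open manifold $M \setminus \partial M$. By construction,
\begin{equation}
(M \setminus \partial M) \setminus K = \phi^{-1}\bigl(\partial M \times (0,1/2)\bigr) \cong \partial M \times (0, +\infty).
\end{equation}
Since $\partial M$ is a non-empty connected topological manifold, it is path-connected, so the setup of the preceding proposition is satisfied with $X = M \setminus \partial M$ (non-compact thanks to the Euclidean volume growth hypothesis) and $Y = \partial M$.

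Applying that proposition yields a constant $C = C(n,v)$ with $\#\pi_1(\partial M) \leq C(n,v) < +\infty$, which is the desired conclusion. Given that the preceding proposition does the real work, there is no substantive obstacle here; the only thing to verify is the existence of a collar, which is immediate in the smooth category, and the path-connectedness of $\partial M$, which follows from the connectedness assumption.
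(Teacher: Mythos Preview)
Your proposal is correct and is exactly the intended argument: the paper states the corollary without proof, as it is meant to follow immediately from the preceding proposition once one observes (via the collar neighborhood theorem) that $M\setminus\partial M$ has an end homeomorphic to $\partial M\times(0,+\infty)$. One minor remark: the non-compactness of $M\setminus\partial M$ already follows from $\partial M\neq\emptyset$, so you need not invoke the volume growth for that point.
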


\appendix

\section{A Margulis lemma for \texorpdfstring{$\CD(K,N)$}{CD(K,N)} spaces}\label{appendixA}

In the proof of the vanishing theorem for the simplicial volume (Theorem \ref{thm:simplicialvolthm}) we relied on the following Margulis lemma for $\RCD$ spaces:

\begin{theorem}\label{thm:margulis}
    For every $K\leq 0$ and $1\le N<\infty$ there exists a constant $\varepsilon:=\varepsilon(K,N)>0$ such that for every $\RCD(K,N)$ space $(X,\sd,\m)$ and $p\in X$, the image through the push-forward of the inclusion map
    \begin{equation}\label{eq:inclusionpi1}
        i_*:\pi_1(B_\varepsilon(p),p) \to \pi_1(B_{10}(p),p)
    \end{equation}
    is virtually nilpotent.
\end{theorem}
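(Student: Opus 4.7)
The plan is to argue by contradiction via an analysis on universal covers, combined with the theorem of Breuillard--Green--Tao on approximate groups. Suppose that for some fixed $K \leq 0$ and $N \geq 1$ the conclusion fails: then there exist sequences $\varepsilon_i \downarrow 0$, $\RCD(K,N)$ spaces $(X_i, \sd_i, \m_i)$, and base points $p_i \in X_i$ such that $H_i := i_{*}(\pi_1(B_{\varepsilon_i}(p_i), p_i))$ is not virtually nilpotent for every $i \in \N$.

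First I would appeal to the semi-local simple connectedness of $\RCD(K,N)$ spaces from \cite{RCDsemilocally} to construct, for each $i$, the universal covering $\pi_i : \tilde{B}_{10}(p_i) \to B_{10}(p_i)$. Equip $\tilde{B}_{10}(p_i)$ with the length distance $\tilde{\sd}_i$ obtained by lifting that of $B_{10}(p_i)$ (cf.~Remark \ref{rmk:lengthdistanceoncovering}), fix a lift $\tilde{p}_i$ of $p_i$, and endow $\tilde{B}_{10}(p_i)$ with the measure $\tilde{\m}_i$ making $\pi_i$ locally measure-preserving. The fundamental group $\Gamma_i := \pi_1(B_{10}(p_i), p_i)$ acts freely, isometrically, and measure-preservingly on $(\tilde{B}_{10}(p_i), \tilde{\sd}_i, \tilde{\m}_i)$ by deck transformations, and by Lemma \ref{L|lift} the cover is locally $\RCD(K,N)$, hence satisfies Bishop--Gromov volume comparison with constants depending only on $K$ and $N$. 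Since every loop in $B_{\varepsilon_i}(p_i)$ based at $p_i$ admits a parametrisation of length at most $2\varepsilon_i$, which lifts to a path of the same length in $\tilde{B}_{10}(p_i)$ starting at $\tilde{p}_i$, the image $H_i$ coincides with the subgroup generated by
\begin{equation}
    S_i := \{g \in \Gamma_i : \tilde{\sd}_i(\tilde{p}_i, g \cdot \tilde{p}_i) \leq 2\varepsilon_i\}\, .
\end{equation}

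The crux of the argument is to show that $S_i$ is a $C$-approximate subgroup of $\Gamma_i$ for some uniform constant $C = C(K,N)$. By the triangle inequality, $S_i \cdot S_i$ is contained in the analogous displacement set at scale $4\varepsilon_i$, so it suffices to cover the latter by boundedly many left-translates of $S_i$. A classical Gromov-style packing argument supplies the desired bound: selecting a maximal $2\varepsilon_i$-separated subset of the orbit $\Gamma_i \cdot \tilde{p}_i$ inside $B_{4\varepsilon_i}(\tilde{p}_i)$, the associated disjoint $\varepsilon_i$-balls all have the same $\tilde{\m}_i$-measure by the isometric and measure-preserving action of $\Gamma_i$, and Bishop--Gromov comparison bounds their total number by a constant depending only on $K$ and $N$. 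Once the approximate-group estimate is in place, the theorem of Breuillard, Green, and Tao \cite{BreuillardGreenTao} guarantees that $\langle S_i \rangle$ contains a nilpotent subgroup of finite index, with index and nilpotency step controlled by $C$ alone. Since $H_i = \langle S_i \rangle$, this contradicts our standing assumption and completes the argument.

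The main obstacle lies in verifying the uniform approximate-group bound for $S_i$: although the strategy is classical, the non-smooth setting requires one to rely solely on the synthetic curvature-dimension structure to ensure both the uniform doubling and the existence of a $\Gamma_i$-invariant reference measure on the universal cover. A secondary point is that the Breuillard--Green--Tao bound is intrinsically non-effective, which is the source of the non-quantitative index bound in our statement and is why this result, while sufficient for our application to the simplicial volume, falls short of recovering the Kapovitch--Wilking Margulis lemma in the smooth setting.
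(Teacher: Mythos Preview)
There is a genuine gap at the point where you invoke Breuillard--Green--Tao. The structure theorem for approximate groups does \emph{not} assert that if $S$ is a $C$-approximate subgroup of $\Gamma$ then $\langle S\rangle$ is virtually nilpotent: in the free group $F_2$ on generators $a,b$, the set $S=\{e,a^{\pm 1},b^{\pm 1}\}$ is a $5$-approximate group, yet $\langle S\rangle=F_2$. What the structure theorem gives is a coset nilprogression $P\subset S^4$ of bounded rank and step with $S$ covered by $O_C(1)$ translates of $P$; nothing prevents $\langle S\rangle$ from being much larger than the virtually nilpotent group underlying $P$. Your packing argument establishes only that $S_i$ is a $C$-approximate group at the single scale $\varepsilon_i$, which is insufficient.

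The paper uses instead \cite[Corollary~11.17]{BreuillardGreenTao}, their Margulis-type lemma, whose hypothesis is a packing bound on the orbit valid at \emph{all} scales (every ball of radius $4$ in $(\pi^{-1}(p),\tilde\sd)$ is covered by $C(K,N)$ balls of radius $1$), and whose conclusion is directly that for some $\varepsilon=\varepsilon(C)>0$ the almost-stabiliser $\langle\{g:\tilde\sd(g\cdot\tilde p,\tilde p)<3\varepsilon\}\rangle$ is virtually nilpotent. The multi-scale packing is precisely what Bishop--Gromov on the cover (Lemma~\ref{L|lift}) supplies, so the ingredients you assemble are the right ones; you are feeding them into the wrong statement from \cite{BreuillardGreenTao}. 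Once one uses Corollary~11.17 the contradiction framework with $\varepsilon_i\to 0$ also becomes unnecessary, since that corollary already outputs a specific $\varepsilon$.

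A smaller error: loops in $B_\varepsilon(p)$ do not ``admit a parametrisation of length at most $2\varepsilon$''; they can be arbitrarily long. The correct (and standard) statement, used in the paper, is that any such loop is homotopic to a concatenation of loops each of length less than $3\varepsilon$, and this is what identifies the image of $i_*$ with the subgroup generated by short-displacement deck transformations.
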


By scaling, Theorem \ref{thm:margulis} yields the following:

\begin{corollary}\label{rmk:margulis}
 For every $1\le N<\infty$ there exists a constant $\varepsilon:=\varepsilon(N)>0$ such that for every $\RCD(0,N)$ space $(X,\sd,\m)$, $p\in X$ and $R>0$ the image of the inclusion map
    \begin{equation}
        \pi_1(B_{\varepsilon R}(p),p) \to \pi_1(B_{ R}(p),p)
    \end{equation}
    is virtually nilpotent.
\end{corollary}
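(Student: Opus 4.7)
The plan is to derive Corollary~\ref{rmk:margulis} from Theorem~\ref{thm:margulis} by a direct rescaling argument, exploiting the fact that the $\RCD(0,N)$ condition is scale invariant (since the lower curvature bound is zero).

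Concretely, let $\varepsilon(0,N)>0$ be the constant provided by Theorem~\ref{thm:margulis} with $K=0$, and set $\varepsilon:=\varepsilon(0,N)/10$. Given an $\RCD(0,N)$ space $(X,\sd,\m)$, a point $p\in X$, and $R>0$, I consider the rescaled metric measure space $(X,\sd_R,\m_R)$ where $\sd_R:=(10/R)\sd$ and $\m_R$ is any renormalization of $\m$ for which $(X,\sd_R,\m_R)$ is an $\RCD(0,N)$ space. Since the $\CD(0,N)$ condition (and hence the $\RCD(0,N)$ condition) is invariant under scaling of the distance and under multiplying the measure by a positive constant, $(X,\sd_R,\m_R)$ remains an $\RCD(0,N)$ space. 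Under this rescaling, balls transform as
\begin{equation}
    B^{\sd_R}_t(p)=B^{\sd}_{tR/10}(p)\, ,
\end{equation}
so in particular $B^{\sd_R}_{\varepsilon(0,N)}(p)=B^{\sd}_{\varepsilon R}(p)$ and $B^{\sd_R}_{10}(p)=B^{\sd}_R(p)$.

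Applying Theorem~\ref{thm:margulis} to $(X,\sd_R,\m_R)$ yields that the image of the inclusion-induced map
\begin{equation}
    i_*:\pi_1\bigl(B^{\sd_R}_{\varepsilon(0,N)}(p),p\bigr)\longrightarrow\pi_1\bigl(B^{\sd_R}_{10}(p),p\bigr)
\end{equation}
is virtually nilpotent. Since the balls on the two sides coincide with $B^{\sd}_{\varepsilon R}(p)$ and $B^{\sd}_R(p)$ respectively as subsets of $X$ (the fundamental group depends only on the underlying topology, which is unchanged by rescaling the distance), this is precisely the conclusion of Corollary~\ref{rmk:margulis}.

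There is no serious obstacle here: the only thing to verify is the scale invariance of $\RCD(0,N)$ under $\sd\mapsto \lambda\sd$, which is standard (optimal transport geodesics rescale correspondingly, and the entropy-convexity inequality defining $\CD(0,N)$ is homogeneous in the distance because $K=0$). The topological content of the statement is entirely inherited from Theorem~\ref{thm:margulis}.
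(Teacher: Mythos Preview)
Your proof is correct and is exactly the scaling argument the paper has in mind: the paper simply writes ``By scaling, Theorem~\ref{thm:margulis} yields the following'' with no further details. Your choice of $\varepsilon=\varepsilon(0,N)/10$ and the rescaling $\sd_R=(10/R)\sd$ make this precise, and the scale invariance of $\RCD(0,N)$ together with the topological nature of $\pi_1$ complete the argument.
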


\begin{remark}
    With the very same strategy as in the proof of Theorem \ref{thm:margulis} one can prove a version of Margulis lemma for non-branching and semi-locally simply connected $\CD(K,N)$ spaces.
\end{remark}

For smooth Riemannian manifolds, Theorem \ref{thm:margulis} follows from a stronger result due to V.~Kapovitch and B.~Wilking in \cite{KapovitchWilking}. They proved that there exist constants $0<\varepsilon(K,N)<1$ and $C(K,N)>0$ such that if $(M^N,g)$ is a complete Riemannian manifold with $\mathrm{Ric}\ge K$ then the image of 
    \begin{equation}
        i_*:\pi_1(B_\varepsilon(p),p) \to \pi_1(B_{10}(p),p)
    \end{equation}
has a nilpotent subgroup whose index is bounded above by $C(K,N)$. In the recent \cite{DengSantosZamoraZhao} the same effective virtual nilpotency statement was obtained for $\RCD(K,N)$ spaces $(X,\dist,\meas,p)$ for the image of 
   \begin{equation}
        i_*:\pi_1(B_\varepsilon(p),p) \to \pi_1(X,p)\, .
    \end{equation}
It is expected that fueling the strategy of \cite{KapovitchWilking} with the ideas introduced to deal with the synthetic framework in \cite{DengSantosZamoraZhao} should lead to a complete generalization of the Margulis lemma from \cite{KapovitchWilking} to $\RCD(K,N)$ spaces. Here we obtain a weaker but local version of the Margulis lemma where the index of the nilpotent subgroup is not controlled effectively. While such statement seems non-optimal, it is sufficient for the purposes of the present paper. The proof follows a different strategy with respect to \cite{KapovitchWilking,DengSantosZamoraZhao}, and it is based on \cite{BreuillardGreenTao}.

The proof of Theorem \ref{thm:margulis} is deferred to the end of the appendix. Besides \cite{BreuillardGreenTao}, the main technical tool will be a local Bishop-Gromov inequality valid for universal covers of open subsets of $\RCD(K,N)$ spaces endowed with their natural length structure.

\begin{remark}\label{rmk:lengthdistanceoncovering}
Let $(X,\sd)$ be a length metric space (not necessarily complete). Let $\tilde{X}$ be a path connected topological space, and let $\pi:\tilde{X} \to X$ be a covering map.  We define a distance $\tilde{\sd}$ on $\tilde{X}$ as
\begin{equation}
    \tilde{\sd}(a,b):= \inf \{\length^{\sd}(\pi \circ \gamma):\gamma \in C([0,1],\tilde{X})\, \, \,  \gamma(0)=a, \gamma(1)=b\}\, .
\end{equation}
With this definition, it is clear that $\pi:(\tilde{X},\tilde{\sd}) \to (X,\sd)$ is a local isometry. Precisely, for every $p\in \tilde X$ there is a neighborhood $N_p\ni p$ such that $\pi|_{N_p}$ is an isometry.
It follows that for every $\gamma \in C([0,1],\tilde{X})$ we have 
\begin{equation}
\length^{\tilde{\sd}}(\gamma)=\length^{\sd}(\pi \circ \gamma)\, , 
\end{equation}
so that $\gamma \in AC([0,1],\tilde{X})$ if and only if $\pi \circ \gamma \in AC([0,1],X)$. Moreover, $(\tilde{X},\tilde{\sd})$ is a length metric space.

Finally, if $\m$ is a measure on $X$, there is a unique measure $\tilde{\m}$ on $\tilde{X}$ such that $\pi_\#(\tilde \m\mres{N_p})= \m\mres{\pi(N_{p})}$ for every $N_p$ as above. 
\end{remark}

We denote by $v_{K,N}(r)$ the volume of the ball of radius $r$ in the model space for the $\CD(K,N)$ condition.

\begin{lemma} \label{L|lift}
    Let $(X,\sd,\m)$ be an $\RCD(K,N)$ space with $K \leq 0$ and $1\le N<\infty$. 
    Let $O \subset X$ be an open, bounded, and path-connected set and call $\bar \sd$ the length distance induced by $\sd$ on $O$. Given a path-connected covering $\pi:\tilde{O} \to O$, endow $\tilde{O}$ with the length distance $\tilde{\sd}$ and the measure $\tilde{\m}$ obtained by lifting $\bar \sd$ and $\m$, cf.~Remark \ref{rmk:lengthdistanceoncovering}. For every $\tilde{p} \in \tilde{O}$, the function 
    \begin{equation}\label{eq:measuredecreasing}
        (0,\sd(\pi(\tilde{p}),X\setminus O)) \ni r \mapsto \frac{\tilde\m \big(B^{\tilde \sd}_r(\tilde{p})\big)}{v_{K,N}(r)}\, ,
    \end{equation}
    is non-increasing. 
    Moreover, if $\m =\Haus^N$ then the function in \eqref{eq:measuredecreasing} is bounded from above by $1$.
\end{lemma}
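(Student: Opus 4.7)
The plan is to reduce the statement to the classical Bishop--Gromov inequality on the universal cover of $X$. First observe that, from the very definitions of $\tilde \sd$ and $\tilde \m$ (see Remark~\ref{rmk:lengthdistanceoncovering}), the projection $\pi \colon (\tilde O, \tilde \sd, \tilde \m) \to (O, \bar \sd, \m \mres O)$ is a local isometry and is locally measure-preserving. By the semi-local simple connectedness of $\RCD$ spaces established in \cite{RCDsemilocally} together with the Mondino--Wei lifting theorem \cite{MondinoWei}, the universal cover $(\tilde X, \tilde \sd_X, \tilde \m_X)$ of $X$ is itself an $\RCD(K,N)$ space. In particular, Bishop--Gromov holds globally on $\tilde X$: for every $\tilde p' \in \tilde X$, the map $r \mapsto \tilde \m_X(B^{\tilde \sd_X}_r(\tilde p'))/v_{K,N}(r)$ is non-increasing on $(0,\infty)$, and is bounded above by $1$ whenever $\tilde \m_X = \Haus^N$, thanks to the non-collapsed density estimates of DePhilippis--Gigli \cite{DephilGigli}.

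The technical heart of the proof will be the construction, for $r < R$, of a measure-preserving bijection (up to null sets)
\begin{equation}
F \colon B^{\tilde \sd_X}_r(\tilde p') \longrightarrow B^{\tilde \sd}_r(\tilde p),
\end{equation}
where $\tilde p'$ denotes any lift of $\pi(\tilde p)$ in $\tilde X$. The key observations are that $B^{\sd}_r(\pi(\tilde p))$ lies entirely in $O$ (since $r < R$), so that any $X$-geodesic issuing from $\pi(\tilde p)$ of length less than $R$ stays inside $O$ and hence lifts uniquely both to $\tilde X$ starting at $\tilde p'$ and to $\tilde O$ starting at $\tilde p$. By essential non-branching, the $\tilde X$-geodesic from $\tilde p'$ is unique for $\tilde \m_X$-a.e.\ endpoint, so one may define $F(\tilde q')$, for such $\tilde q' \in B^{\tilde \sd_X}_r(\tilde p')$, to be the endpoint in $\tilde O$ of the lift starting at $\tilde p$ of the projected $X$-geodesic from $\pi(\tilde p)$ to $\pi_X(\tilde q')$. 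Since $F$ is a composition of local isometries and measure-preserving maps, it is itself measure-preserving; once its bijectivity onto $B^{\tilde \sd}_r(\tilde p)$ is established, the identity $\tilde \m(B^{\tilde \sd}_r(\tilde p)) = \tilde \m_X(B^{\tilde \sd_X}_r(\tilde p'))$ will transfer both conclusions of the lemma from Bishop--Gromov on $\tilde X$.

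The hard part will be verifying the bijectivity of $F$. Injectivity is relatively clean: two distinct lifts $\tilde q'_1 \ne \tilde q'_2$ in $\tilde X$ of the same point in $X$ differ by a non-trivial element of $\pi_1(X)$; their projected paths in $X$ concatenate to a loop in $O$ whose image in $\pi_1(X)$ is non-trivial, hence whose class in $\pi_1(O)$ is also non-trivial (via the morphism induced by inclusion), so the two lifts to $\tilde O$ remain distinct. Surjectivity seems more delicate and will require a geodesic-homotopy argument inside $\tilde X$: given $\tilde q \in B^{\tilde \sd}_r(\tilde p)$, a $\tilde O$-geodesic from $\tilde p$ to $\tilde q$ projects to a curve in $B^{\sd}_r(\pi(\tilde p)) \subset O$, which lifts uniquely to $\tilde X$ starting at $\tilde p'$ to produce a candidate $\tilde q' \in B^{\tilde \sd_X}_r(\tilde p')$. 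To show $F(\tilde q') = \tilde q$, one must argue that the $\tilde X$-geodesic from $\tilde p'$ to $\tilde q'$ and the lifted $\tilde O$-geodesic project to curves in the same $\pi_1(O)$-class of paths from $\pi(\tilde p)$ to $\pi(\tilde q)$; this can be achieved by interpolating between these two curves inside the ball $B^{\tilde \sd_X}_{r'}(\tilde p') \subset \pi_X^{-1}(O)$ for some $r' \in (r,R)$, so that the interpolating homotopy in $\tilde X$ descends to a genuine homotopy in $O$ rather than merely in $X$.
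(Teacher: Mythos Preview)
Your approach has a genuine gap: the claimed measure-preserving bijection $F\colon B^{\tilde\sd_X}_r(\tilde p')\to B^{\tilde\sd}_r(\tilde p)$ does not exist in general, and the error is precisely in your injectivity argument. From ``the loop is non-trivial in $\pi_1(O)$'' you conclude ``the two lifts to $\tilde O$ remain distinct'', but this requires $\tilde O$ to be the \emph{universal} cover of $O$. The lemma only assumes $\tilde O$ is some path-connected covering, so it corresponds to a subgroup $H\le\pi_1(O)$; the two lifts of a loop coincide in $\tilde O$ exactly when its class lies in $H$, which says nothing about its image in $\pi_1(X)$.

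For a concrete failure, take $X=S^1_\epsilon\times\R$ (a flat cylinder of small circumference $\epsilon$), $O=S^1_\epsilon\times(-5,5)$, and $\tilde O=O$ the trivial cover. Then $\tilde X=\R^2$, and for any $\epsilon/2<r<5$ one has
\[
\tilde\m\big(B^{\tilde\sd}_r(\tilde p)\big)=\m\big(B^{\sd}_r(p)\big)<\pi r^2=\tilde\m_X\big(B^{\tilde\sd_X}_r(\tilde p')\big),
\]
so the identity you are trying to prove is false. (The lemma's conclusion is still true here---it reduces to Bishop--Gromov on $X$---but your route to it collapses.) The same obstruction arises for any non-universal $\tilde O$ whenever $i_*(H)\ne\{e\}$ in $\pi_1(X)$.

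The paper avoids comparing with any global cover and instead works intrinsically via $1$D localization in the sense of Cavalletti--Milman. One disintegrates $\tilde\m$ along the transport rays of $\tilde u=\tilde\sd(\tilde p,\cdot)\wedge r_0$ on $\tilde O$. Near each point $\tilde q$, the local isometry $\pi$ identifies this disintegration with a disintegration of $\m$ on $X$ with respect to a $1$-Lipschitz extension of $\tilde u\circ\pi_{\tilde q}^{-1}$; since $(X,\sd,\m)$ is $\RCD(K,N)$, the conditional densities there satisfy the $\CD(K,N)$ differential inequality, and by uniqueness of consistent disintegrations this transfers to the rays in $\tilde O$ on each local patch. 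A finite cover of $\bar B^{\tilde\sd}_{r_1}(\tilde p)$ by such patches then gives the $\CD(K,N)$ density property along each full ray, and Bishop--Gromov follows by integrating. This argument uses only that $\pi$ is a local isometry and that $X$ itself is $\RCD(K,N)$; it is insensitive to which covering $\tilde O$ one takes.
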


\begin{proof}
    Observe that $(O,\sd)$ and $(O,\bar\sd)$ are locally isometric, meaning that for every $p\in O$ there is a neighborhood $N_p\ni p$ such that $\sd$ and $\bar \sd$ agree on $N_p\times N_p$.
    In particular, since $(X,\dist)$ is non-branching by \cite{zbMATH08038284}, the length space $(O,\bar\sd)$ is non-branching as well.
    Moreover, since $\pi:(\tilde O,\tilde \sd)\to (O,\bar\sd)$ is a local isometry, $(\tilde O,\tilde\sd)$ is non-branching as well.
\medskip

    \textbf{Claim 1:} every pair $\tilde p,\tilde q\in \tilde O$ such that $\tilde \sd (\tilde p,\tilde q)< \sd(\pi(\tilde{p}),X\setminus O)$ is connected by a geodesic in $(\tilde O,\tilde \sd)$. 
\medskip

    To prove Claim 1 observe that, since $(\tilde O, \tilde \sd)$ is a length space, there is a sequence of curves $(\tilde \gamma_n) \subset  AC([0,1],\tilde O)$ such that 
    \begin{equation}
    \length^{\tilde \sd}(\tilde \gamma_n)\leq \tilde \sd (\tilde p,\tilde q) +\frac 1n 
    \end{equation}
    for every $n\in \N$. Consider the sequence $(\gamma_n) \subset  AC([0,1],O)$ defined as $\gamma_n:=\pi (\tilde \gamma_n)$ for every $n$. Eventually, these curves will be contained in a compact subset of $O$. Thus, up to not relabelled subsequences, $\gamma_n\to \gamma_\infty \in AC([0,1],O)$ uniformly. Moreover, by what we observed before, 
    \begin{equation}
    \length^{\bar\sd}(\gamma_n)=\length^{\tilde \sd}(\tilde \gamma_n)\leq \tilde \sd (\tilde p,\tilde q) +\frac 1n 
    \end{equation}
    for every $n$ and therefore $\length^{\bar \sd}(\gamma_\infty)\leq \sd (\tilde p,\tilde q)$ by lower semicontinuity of the length. Let $\tilde \gamma_\infty$ be the lifting of $\gamma_\infty$ starting from $\tilde{p}$. 
    On the one hand, we have that 
    \begin{equation}
    \length^{\tilde \sd}(\tilde \gamma_\infty)=\length^{\bar \sd}( \gamma_\infty)\leq \tilde \sd (\tilde p,\tilde q)\, . 
    \end{equation}
    On the other hand, since $O$ is semi-locally simply connected, the curves $\gamma_n$ are eventually homotopically equivalent to $\gamma_\infty$. Therefore $\tilde \gamma_\infty(1)=\tilde q$. This concludes the proof of the claim.

\medskip
    
    Take any $\tilde p \in \tilde O$ and any $r_0< \sd(\pi(\tilde{p}),X\setminus O)$.
    Consider the $1$-Lipschitz function $\tilde u:\tilde O\to \R$, defined as $\tilde u(q):=\tilde \sd (\tilde p,q) \wedge r_0$. We are going to consider the disintegration of the measure $\tilde\m$ with respect to $\tilde u$, following \cite[Section 7]{CavallettiMilman}. In particular, we define the transport relation  
    \begin{equation}
        \tilde R_{\tilde u} := \{ (\tilde x,\tilde y) \in \tilde O\times \tilde O\, :\, \tilde x\neq \tilde y \,\text{ and } \,|\tilde u (\tilde x)-\tilde u(\tilde y)|=\sd(\tilde x,\tilde y)\}
    \end{equation}
    and the associated transport set $\tilde T_{\tilde u}:= \mathtt p_1(\tilde R_{\tilde u})$, where $\mathtt p_1$ denotes the projection on the first factor. By what we observed before, 
    \begin{equation}
    \tilde T_{\tilde u}= \bar B^{\tilde \sd}_{r_0} (\tilde p)\, .
    \end{equation}
    We introduce also the set of branching points
    \begin{equation}
        \tilde A:= \{x\in \tilde T_{\tilde u} \,:\, \exists z,w \in \tilde X \text{ with } (x,z),(x,w)\in \tilde R_{\tilde u}, (z,w)\not\in \tilde R_{\tilde u}\}\, .
    \end{equation}
    Accordingly, we consider the non-branched transport set $\tilde T_{\tilde u}^b :=\tilde T_{\tilde u} \setminus \tilde A$ and the non-branched transport relation 
    \begin{equation}
    \tilde R_{\tilde u}^b:= \tilde R_{\tilde u} \cap (\tilde T_{\tilde u}^b\times \tilde T_{\tilde u}^b)\, .
    \end{equation}
    Since $(\tilde O,\tilde \sd)$ is non-branching, we have that 
    \begin{equation}
    \tilde T^b_{\tilde u}= \bar B^{\tilde \sd}_{r_0} (\tilde p) \setminus (\mathrm{Cut}(\tilde p) \cup \{\tilde p\})\, . 
    \end{equation}
    With our choice of $\tilde u$, all the defined sets are easily seen to be Borel. 
    \medskip
    
    Note that $\tilde R_{\tilde u}^b$ is an equivalence relation on $\tilde T_{\tilde u}^b$. Denoting by $\tilde Q\subset X$ the set of its equivalence classes, every $\alpha \in \tilde Q$ corresponds to a set $\tilde X_\alpha \subset \tilde O$ which is a geodesic and thus isometric to an interval $\tilde I_\alpha \subset \R$. We equip $\tilde Q$ with the $\sigma$-algebra induced by the equivalence projection $\mathfrak Q: \tilde T^b_{\tilde u} \to \tilde Q$ and with the measure $\tilde{\mathfrak q}:= \mathfrak Q_\# \big(\tilde \m\mres{\tilde T_{\tilde u}^b }\big)$.     
    The Disintegration Theorem (see for instance \cite[Theorem 6.19]{CavallettiMilman}) guarantees the existence of a (essentially unique) \emph{consistent} (see \cite[Definition 6.18]{CavallettiMilman}) disintegration, i.e.~a measurable map $\tilde Q \ni \alpha \mapsto \tilde\m_\alpha\in \mathcal{P}(\tilde O)$ such that 
    \begin{equation}
        \tilde\m\mres{\tilde T^b_{\tilde u}} = \int_{\tilde Q} \tilde \m_\alpha \,\tilde{\mathfrak q }(d\alpha)\, .
    \end{equation}
 \medskip
    
    Consider now any point $\tilde q\in B^{\tilde \sd}_{r_0} (\tilde p)$. By construction, there is an open neighborhood $U_{\tilde q}$ of $\tilde q$ such that the map 
    \begin{equation}
    \pi_{\tilde q} :=\pi|_{U_{\tilde q}}: (U_{\tilde q},\tilde \sd) \to (O,\bar \sd) 
    \end{equation}
    is an isometry. Up to taking a smaller $U_{\tilde q}$, we can assume that $\pi_{\tilde q} :(U_{\tilde q},\tilde \sd) \to (\pi(U_{\tilde q}), \sd)$ is a bijective isometry. 
    Then, consider a $1$-Lipschitz function $u:X\to \R$ such that $u(x):= \tilde \sd (\tilde p, \pi_{\tilde q}^{-1}(x))$ for every $x\in \pi(U_{\tilde q})$.
    Similarly as before, we can get a disintegration of the measure $\m$ on $(X, \sd)$, with respect to $u$ (see \cite[Section 3]{CM18} for the adaptation of the discussion above to $\sigma$-finite measures). In particular, similarly as before we can define the objects $R_u$, $T_u$, $R_u^b$, $T_u^b$, $Q$, $\{X_\beta\}_{\beta \in Q}$ and $\mathfrak q$, and according to \cite[Theorem 3.4]{CM18} we find a disintegration
    \begin{equation}\label{eq:disintegrationbelow}
        \m\mres{T_u^b} = \int_Q  \m_\beta \,{\mathfrak q }(d\beta)\, .
    \end{equation}
    Such disintegration is \emph{strongly consistent} (see \cite[Definition 6.18]{CavallettiMilman}), meaning that for $\mathfrak q$-a.e.\ $\beta\in Q$ the measure $ \m_\beta$ is concentrated on $ X_\beta$. Moreover, since $(X,\sd,\m)$ is an $\RCD(K,N)$ space, \cite[Theorem 3.6]{CM18} ensures that for $\mathfrak q$-a.e.~$\beta\in Q$ the space $(  X_\beta,   \sd,  \m _\beta)$ is isomorphic (as metric measure space) to $(  I_\beta,|\cdot|,  h_\beta \Leb^1)$, for an interval $  I_\beta \subset \R$ and a measurable function $ h_\beta$ which is a $\CD(K,N)$ density. 
    Thus, by \cite[Appendix A]{CavallettiMilman}, for $\mathfrak q$-a.e.~$\beta \in Q$ the differential inequality 
    \begin{equation}\label{eq:convexontransportrays}
        \big( h^{1/(N-1)}_\beta\big)'' + \frac{K}{N-1}\, h^{1/(N-1)}_\beta \leq 0 
    \end{equation}
    holds in a weak sense on $I_\beta$. Finally, \cite[Lemma 3.5]{CM18} guarantees $\m(T_u\setminus T_u^b)=0$.
\medskip

    Define the sets 
    \begin{equation}
        \tilde Q^{\tilde q}:= \{\alpha \in \tilde Q \,:\, \tilde X_\alpha \cap U_{\tilde q} \neq \emptyset \} \qquad Q^{\tilde q}:= \{\beta \in Q \,:\, X_\beta \cap \pi_{\tilde q}(U_{\tilde q}) \neq \emptyset\}\, .
    \end{equation}
    Both sets are easily seen to be measurable.
\medskip
    
    \textbf{Claim 2:} there exists a measurable bijective map $\Pi: \tilde Q^{\tilde q} \to Q^{\tilde q}$ such that  
    \begin{equation} \label{eq:Piandpi}
        \pi_{\tilde q}(\tilde X_\alpha \cap U_{\tilde q}) = X_{\Pi(\alpha)} \cap \pi_{\tilde q}(U_{\tilde q})\, . 
    \end{equation}
    Such map is defined as follows. Given $\alpha \in \tilde{Q}^{\tilde q}$, take any $x \in X_\alpha \cap U_{\tilde{q}}$ and define $\Pi(\alpha)$ as the equivalence class of $\pi_{\tilde q}(x)$.
    To show that such definition is well posed, given any $\tilde q_1,\tilde q_2 \in U_{\tilde q}$ such that $(\tilde q_1,\tilde q_2) \in \tilde R_{\tilde u}^b$, we observe that by definition
    \begin{equation}
        |u(\pi_{\tilde q}(\tilde q_1)) - u(\pi_{\tilde q}(\tilde q_2))| = \sd(\pi_{\tilde q}(\tilde q_1),\pi_{\tilde q}(\tilde q_2))\, ,
    \end{equation}
    thus $\big(\pi_{\tilde q}(\tilde q_1),\pi_{\tilde q}(\tilde q_2)\big)\in R_u$. Suppose that $\big(\pi_{\tilde q}(\tilde q_1),\pi_{\tilde q}(\tilde q_2)\big)\in R_u\setminus R_u^b$. Then w.l.o.g.~there exists $x\in X$ such that $(\pi_{\tilde q}(\tilde q_1), x)\in R_u$ and $(\pi_{\tilde q}(\tilde q_2), x)\not\in R_u$. Up to replacing $x$ with a suitable point on the geodesic connecting $\pi_{\tilde q}(\tilde q_1)$ and $x$ inside $\pi_{\tilde q}(U_{\tilde q})$, we can assume that $x\in \pi_{\tilde q}(U_{\tilde q})$. Setting $\tilde x := \pi_{\tilde q}^{-1}(x)$, we obtain that $(\tilde q_1, \tilde x)\in \tilde R_{\tilde u}$ and $(\tilde q_2, \tilde x)\not\in \tilde R_{\tilde u}$, a contradiction. This proves that the map $\Pi$ is well defined. Reverting the same strategy, one shows that $\Pi$ is bijective and that \eqref{eq:Piandpi} holds. The measurability of $\Pi$ simply follows from \eqref{eq:Piandpi}. 
    \medskip

    Recalling that $\m(T_u\setminus T_u^b)=0$, an argument similar to the one discussed above shows that $\tilde \m(\tilde T_{\tilde u}\setminus \tilde T_{\tilde u}^b)=0$. In particular,
    \begin{equation}
        \tilde\m\mres{U_{\tilde q}} = \int_{\tilde Q^{\tilde q}} \tilde \m_\alpha\mres{U_{\tilde q}} \,\,\tilde{\mathfrak q }(d\alpha) \quad \text{ and } \quad \m\mres{\pi_{\tilde q}(U_{\tilde q})} = \int_{Q^{\tilde q}}  \m_\beta\mres {\pi_{\tilde q}(U_{\tilde q})} \,\,{\mathfrak q }(d\beta)\, .
    \end{equation}

    By definition of $\tilde \m$, we know that $(\pi_{\tilde q})_\#(\tilde \m\mres{U_{\tilde q}})= \m\mres{\pi_{\tilde q}(U_{\tilde q})}$. Thus
    \begin{align}
        \tilde\m\mres{U_{\tilde q}} =& \big( \pi_{\tilde q}^{-1}\big)_\# [\m\mres{\pi(U_{\tilde q})}] = \int_{Q^q} \big( \pi_{\tilde q}^{-1}\big)_\# [\m_\beta\mres{\pi_{\tilde q}(U_{\tilde q})}] \,{\mathfrak q }(d\beta) \\
        =& \int_{\tilde Q^{\tilde q}} \big( \pi_{\tilde q}^{-1}\big)_\# [\m_{(\Pi(\alpha))}\mres{\pi_{\tilde q}(U_{\tilde q})}] \,{[ (\Pi^{-1})_\#\mathfrak q] }(d\alpha)\, .
    \end{align}
    This gives a disintegration of $\tilde\m\mres{U_{\tilde q}}$ which is (strongly) consistent because \eqref{eq:disintegrationbelow} is strongly consistent. By essential uniqueness of consistent disintegrations, see \cite[Theorem 6.19]{CavallettiMilman}, we deduce that 
    \begin{equation}
        \tilde \m_\alpha\mres{U_{\tilde q}} = \big( \pi_{\tilde q}^{-1}\big)_\# (\m_{\Pi(\alpha)}\mres{\pi(U_{\tilde q})})\, , \qquad \text{for }\tilde{\mathfrak q}\text{-a.e. }\alpha \in \tilde Q^{\tilde q}\, .
    \end{equation}
    In particular, $\tilde \m_\alpha|_{U_{\tilde{q}}}$ is concentrated on $\tilde X_\alpha$, which is isometric to a closed interval $\tilde I_\alpha$. Moreover, by \eqref{eq:convexontransportrays}, for $\tilde{\mathfrak q}$-a.e. $\alpha\in \tilde Q^{\tilde q}$ the metric measure space $(\tilde X_\alpha \cap U_{\tilde q}, \tilde \sd, \tilde \m_\alpha)$ is isomorphic to $(  \tilde I_\alpha \cap U_{\tilde q},|\cdot|,  \tilde h_\alpha \Leb^1)$, where $\tilde h_\alpha$ is such that
    \begin{equation}\label{eq:CDdiff}
        \big( \tilde h^{1/(N-1)}_\alpha\big)'' + \frac{K}{N-1}\, \tilde h^{1/(N-1)}_\alpha \leq 0\, \, \quad \text{weakly on}\quad  \tilde I_\alpha \cap U_{\tilde q}\, .
    \end{equation}
    Here and in the following, with a slight abuse of notation, given any open set $E\subset \tilde O$ we denote by $\tilde I_\alpha \cap E$ the subset of $\tilde I_\alpha$ that corresponds via isometry to $\tilde X_\alpha \cap E$. This last conclusion holds for every $\tilde q\in B^{\tilde \sd}_{r_0} (\tilde p)$ and for the respective neighborhood $U_{\tilde q}$. 
    \medskip
    
    Take any $r_1<r_0$. Since $\bar B^{\tilde \sd}_{r_1} (\tilde p)$ is compact we can find $\tilde q_1,\dots, \tilde q_M\in B^{\tilde \sd}_{r_0} (\tilde p)$ such that 
    \begin{equation}\label{eq:incllemmaApp}
    B^{\tilde\sd}_{r_1}(\tilde p)\subset U_{\tilde q_1}\cup\cdots\cup U_{\tilde q_M}\, . 
\end{equation}    
    We let $\tilde B$ be the $\tilde {\mathfrak q}$-null set defined as the collection of elements $\alpha \in \tilde Q$ such that \eqref{eq:CDdiff} does not hold for some $\tilde q_i$ and $U_{\tilde q_i}$, with $i=1,\dots,M$. Thus, for every $\alpha \in \tilde Q\setminus \tilde B$ the differential inequality \eqref{eq:CDdiff} holds on $\tilde I_\alpha \cap U_{\tilde q_i}$ for every $i$ such that $\tilde I_\alpha \cap U_{\tilde q_i}\neq \emptyset$. Thanks to \eqref{eq:incllemmaApp} we conclude that for $\tilde {\mathfrak q}$-a.e. $\alpha \in \tilde Q$ the inequality \eqref{eq:CDdiff} holds on $\tilde I_\alpha \cap B^{\tilde\sd}_{r_1}(\tilde p)$. Thus, for every such $\alpha$ the function $\tilde h_\alpha$ is a $\CD(K,N)$ density on $\tilde I_\alpha \cap B^{\tilde\sd}_{r_1}(\tilde p)$. Therefore by Bishop-Gromov we have that 
    \begin{equation}
        \frac{\tilde \m_\alpha \big(B^{\tilde \sd}_s(\tilde p)\big)}{v_{K,N}(s)} \geq \frac{\tilde \m_\alpha \big(B^{\tilde \sd}_t(\tilde p)\big)}{v_{K,N}(t)}\, , \quad \text{for every }s<t<r_1\, .
    \end{equation}
    Thus, for every $s<t<r_1$ we get
    \begin{align}
        \tilde \m \big(B^{\tilde \sd}_s(\tilde p)\big) & = \int_{\tilde Q} \tilde \m_\alpha\big(B^{\tilde \sd}_s(\tilde p)\big) \,\tilde{\mathfrak q}(d \alpha) \\
        & \geq \int_{\tilde Q} \frac{v_{K,N}(s)}{v_{K,N}(t)}\, \tilde \m_\alpha \big(B^{\tilde \sd}_t(\tilde p)\big) \,\tilde{\mathfrak q}(d \alpha) = \frac{v_{K,N}(s)}{v_{K,N}(t)} \, \tilde \m \big(B^{\tilde \sd}_t(\tilde p)\big)\, .
    \end{align}
    The first part of the statement follows since $\tilde p\in \tilde O$ and $r_1<r_0< \sd(\pi(\tilde{p}),X\setminus O)$ were arbitrary.
    \medskip

    Assume now that $\m=\haus^N$, so that $\tilde{\m}=\tilde{\haus}^N$ is the Hausdorff measure induced by $\tilde{\sd}$.
    To prove the final part of the statement, it is sufficient to show that 
    \begin{equation}
        \lim_{r\to 0 }\frac{\tilde{\haus}^n\big(B^{\tilde \sd}_r(\tilde p)\big)}{v_{K,N}(r)} \leq 1\, .
    \end{equation}
    To this aim we observe that for $r>0$ sufficiently small the restriction 
    \begin{equation}
    \pi|_{B^{\tilde \sd}_r(\tilde p)}:B^{\tilde \sd}_r(\tilde p) \to  B_r(p) 
    \end{equation}
    is an isometry. Thus the claim follows from \cite[Corollary 2.14]{DephilGigli}.
\end{proof}

We record below another useful technical result about coverings of balls in $\RCD$ spaces. 

\begin{lemma} \label{L|fundamentaldomain}
    Let $(X,\sd,\m,p)$ be an $\RCD(K,N)$ space for some $K\in\R$ and $1\le N<\infty$. Let $r \in (0,+\infty]$, and let $\pi:Y \to B_r(p)$ be a covering, with the convention that $B_\infty(p):=X$. Let $\tilde{\sd}$ and $\tilde{\m}$ be respectively the distance and the measure induced on $Y$ by lifting the length structure of $B_r(p)$ and the measure $\m$, cf.~Remark \ref{rmk:lengthdistanceoncovering}.     
    Let $\tilde{p} \in \pi^{-1}(p)$ be fixed, and let $F \subset Y$ be the union of the interiors of the lifts to $\tilde{p}$ of the segments of $B_r(p)$ having $p$ as an endpoint. Then the following hold:
    \begin{enumerate}
        \item \label{item|funddomain1} $\pi|_{F}:F \to B_r(p)$ is injective;
        \item \label{item|funddomain2} For every deck transformation $g:Y \to Y$, $F \cap g(F)=\emptyset$;
        \item \label{item|funddomain4} For every $s \in (0,r)$, we have that $F \cap \pi^{-1}(B_s(p)) \subset B^{\tilde{\sd}}_s(\tilde{p})$;
         \item \label{item|funddomain3}
        For every $s \in (0,r)$, we have that $\m(B_s(p))=\tilde{\m}(F \cap B^{\tilde{\sd}}_s(\tilde{p}))$.
    \end{enumerate}
    \begin{proof}
        To prove item \ref{item|funddomain1}, let $a,b \in F$ such that $\pi(a)=\pi(b)$. Let $\gamma_a,\gamma_b \subset X$ be segments having $p$ as an endpoint whose interiors contain $\pi(a)=\pi(b)$, and such that their lifts to $\tilde{p}$ contain $a$ and $b$, respectively. Since $\RCD(K,N)$ spaces are non-branching due to \cite{zbMATH08038284}, either $\gamma_a \subset \gamma_b$ or $\gamma_b \subset \gamma_a$, which implies $a=b$.
\medskip

        To prove item \ref{item|funddomain2}, assume by contradiction that $x \in F \cap g(F)$. Then there are segments $\gamma_{\tilde{p}},\gamma_{g(\tilde{p})} \subset X$ having $p$ as an endpoint, and such that the interiors of the lifts to $Y$ from $\tilde{p}$ and $g(\tilde{p})$ respectively both contain $x$. Again using that $\RCD(K,N)$ spaces are non-branching, we deduce that $\gamma_{\tilde{p}}\subset \gamma_{g(\tilde{p})}$ or $\gamma_{g(\tilde{p})} \subset \gamma_{\tilde{p}}$. Thus their common part has two different lifts starting from $x \in Y$, a contradiction.
\medskip

        To prove item \ref{item|funddomain4}, let $x \in F \cap \pi^{-1}(B_s(p))$, and let $\gamma_x \subset B_s(p)$ be the segment with endpoint $p$ whose lift to $\tilde{p}$ contains $x$. The length of $\gamma_x$ is the same as the length of its lift. Thus $x \in B^{\tilde{\sd}}_s(\tilde{p})$, concluding the proof.
\medskip

        For item \ref{item|funddomain3}, by Vitali's Covering Theorem there exists a countable family of disjoint closed balls 
        \begin{equation}
        \{C_i\}_{i \in \bb{N}} \subset \bar B_s^{\tilde{\sd}}(\tilde p) \subset Y 
        \end{equation}
        centered in points of $F \cap \bar{B}^{\tilde{\sd}}_s(\tilde{p})$ such that $\pi|_{C_i} : C_i\to \pi(C_i) \subset \bar{B}_s(p)$ is a measure preserving isometry and 
        \begin{equation}
            \tilde{\m}\Big(F \cap \bar{B}^{\tilde{\sd}}_s(\tilde{p}) \setminus \bigcup_{i \in \bb{N}}C_i \Big)=0\, .
        \end{equation}
        Vitali's theorem can be applied since $(\bar B_s^{\tilde{\sd}}(\tilde p),\tilde{\sd},\tilde{\m})$ is uniformly locally doubling by Lemma \ref{L|lift}.
        Hence, using item \ref{item|funddomain1} in the last inequality and the fact that $\tilde{\m}(\partial B^{\tilde{\sd}}_s(\tilde{p}))=0$ by Lemma \ref{L|lift}, we deduce that
        \begin{align}
            \tilde{\m}\Big(F \cap B^{\tilde{\sd}}_s(\tilde{p}) \Big) =& \sum_{i \in \bb{N}} \tilde{\m} \Big(F \cap B^{\tilde{\sd}}_s(\tilde{p}) \cap C_i \Big)\\
            =&
\sum_{i \in \bb{N}} {\m} \Big(\pi(F \cap B^{\tilde{\sd}}_s(\tilde{p}) \cap C_i) \Big) \leq \m(B_s(p))\, .
        \end{align}
        To prove the converse inequality, let $\Sigma \subset X$ be the cut-locus of $p$. We have that $\m(\Sigma)=0$ (see, for instance, \cite[Remark 7.5]{CavallettiMilman}). By the very definition of $F$, 
        \begin{equation}
            B_s(p) \setminus \Sigma \subset \pi(F \cap B^{\tilde{\sd}}_s(\tilde{p}))\, .
        \end{equation}
        Hence, using the collection $\{C_i\}_{i \in \bb{N}}$ constructed above and the fact that $\pi:Y \to B_r(p)$ sends $\tilde{\m}$-negligible sets to $\m$-negligible sets, we deduce that
        \begin{align}
            \m(B_s(p)) =&
            \m(B_s(p) \setminus \Sigma)
            \leq\m \Big (\pi(F \cap B^{\tilde{\sd}}_s(\tilde{p}))\Big)\\ 
            \leq&\sum_{i \in \bb{N}} {\m} \Big(\pi(F \cap B^{\tilde{\sd}}_s(\tilde{p}) \cap C_i) \Big) \\
            =&
            \sum_{i \in \bb{N}} \tilde{\m} \Big(F \cap B^{\tilde{\sd}}_s(\tilde{p}) \cap C_i \Big)=
            \tilde{\m} \Big( F \cap B^{\tilde{\sd}}_s(\tilde{p}) \Big)\, ,        
            \end{align}
            thus concluding the proof.
    \end{proof}
\end{lemma}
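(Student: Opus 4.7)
The plan is to exploit two facts about $\RCD(K,N)$ spaces: the non-branching property established in \cite{zbMATH08038284}, together with the length-preserving behaviour of the covering map $\pi$ recorded in Remark \ref{rmk:lengthdistanceoncovering} and the well-known fact that the cut locus $\Sigma\subset X$ of any point has $\m$-measure zero (a byproduct, e.g., of the disintegration techniques in \cite{CavallettiMilman}).

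First, I would dispatch items (1) and (2) simultaneously via non-branching. For (1), if $a,b\in F$ satisfy $\pi(a)=\pi(b)$, choose segments $\gamma_a,\gamma_b\subset B_r(p)$ emanating from $p$ whose interiors pass through $\pi(a)=\pi(b)$ and whose lifts to $\tilde p$ contain $a$ and $b$, respectively. Non-branching forces $\gamma_a\subset\gamma_b$ or vice versa, so the lifts coincide as curves starting from $\tilde p$, giving $a=b$. For (2), if $x\in F\cap g(F)$, repeat the same reasoning with segments whose lifts from $\tilde p$ and $g(\tilde p)$ pass through $x$; non-branching again produces a common initial piece that would then admit two distinct lifts with prescribed starting point, a contradiction.

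Item (3) is a direct length computation: if $x\in F\cap\pi^{-1}(B_s(p))$ lies on the lift of a segment $\gamma$ from $p$ to $\pi(x)$ of length $\dist(p,\pi(x))<s$, then the corresponding lift $\tilde\gamma$ from $\tilde p$ to $x$ satisfies $\length^{\tilde\sd}(\tilde\gamma)=\length^{\sd}(\gamma)<s$ by Remark \ref{rmk:lengthdistanceoncovering}, whence $\tilde\sd(\tilde p,x)<s$.

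The main effort lies in item (4). The strategy is to apply Vitali's covering theorem, whose applicability is guaranteed by the uniform local doubling property of $(Y,\tilde\sd,\tilde\m)$ supplied by Lemma \ref{L|lift}. This produces a countable disjoint family of closed balls $\{C_i\}\subset\bar B^{\tilde\sd}_s(\tilde p)$ centered at points of $F\cap\bar B^{\tilde\sd}_s(\tilde p)$, small enough that each $\pi|_{C_i}$ is a measure-preserving isometry onto its image in $\bar B_s(p)$, and covering $F\cap\bar B^{\tilde\sd}_s(\tilde p)$ up to a $\tilde\m$-null set. Using (1) to see that the projected balls $\pi(C_i)$ remain pairwise disjoint, and Lemma \ref{L|lift} to discard boundary spheres, one gets the inequality $\tilde\m(F\cap B^{\tilde\sd}_s(\tilde p))\le\m(B_s(p))$. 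For the reverse inequality, I would invoke the $\m$-negligibility of the cut locus $\Sigma$: since $B_s(p)\setminus\Sigma\subset\pi(F\cap B^{\tilde\sd}_s(\tilde p))$, the same Vitali family yields $\m(B_s(p))=\m(B_s(p)\setminus\Sigma)\le\tilde\m(F\cap B^{\tilde\sd}_s(\tilde p))$. The hardest piece is ensuring that both inequalities can be harvested from a single Vitali family by combining local isometry of $\pi$ on each $C_i$ with the negligibility of $\Sigma$; everything else is bookkeeping.
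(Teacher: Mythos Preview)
Your proposal is correct and follows essentially the same route as the paper: non-branching for items (1)--(2), length-preservation of lifts for item (3), and Vitali's theorem combined with the $\m$-negligibility of the cut locus for item (4). One small caveat: item (1) gives injectivity of $\pi$ only on $F$, so it is the sets $\pi(F\cap C_i)$---not the full projected balls $\pi(C_i)$---that are pairwise disjoint, and this is what the argument actually needs.
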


\begin{proof}[Proof of Theorem \ref{thm:margulis}]
    Following the notation of Lemma \ref{L|lift},  let $\bar \sd$ denote the length distance induced by $\sd$ on $B_{10}(p)$. Endow the universal cover $\pi:\tilde X\to B_{10}(p)$ with the length distance $\tilde{\sd}$ and the measure $\tilde{\m}$ obtained by lifting $\bar \sd$ and $\m$.
    Consider the metric space $(Y,\rho)$, where $Y:=\pi^{-1}(p)$ and $\rho$ is the restriction of $\tilde \sd$ to $Y$. 
 \medskip
 
    \textbf{Claim}: There exists a constant $C(K,N)$ such that $(Y,\rho)$ has \textit{packing constant} $C(K,N)$, meaning that every ball of radius $4$ in $(Y,\rho)$ can be covered by at most $C(K,N)$ balls of radius $1$.   
    
    \medskip
     
     To prove the claim observe that by Lemma \ref{L|lift} there exists a constant $C(K,N)$ such that for every $\tilde p \in \pi^{-1}(p)$ we have
    \begin{equation}\label{eq:9doubling}
        \tilde \m \big(B^{\tilde \sd}_{9}(\tilde p)\big) \leq C(K,N)\, \tilde \m \big(B^{\tilde \sd}_{1/2}(\tilde p)\big)\, .
    \end{equation}
    Fix any $\tilde p \in \pi^{-1}(p)$ and suppose that there are $M$ disjoint balls of radius $1/2$ with centers in $\pi^{-1}(p) \cap B^{\tilde \sd}_4(\tilde p)$, i.e. we find $p_1,\dots,p_M \in \pi^{-1}(p) \cap B^{\tilde \sd}_4(\tilde p)$ such that $B^{\tilde \sd}_{1/2}(p_i) \cap B^{\tilde \sd}_{1/2}(p_j)=\emptyset$ whenever $i\neq j$. Using \eqref{eq:9doubling} we deduce that
    \begin{align*}
        \tilde \m \big(B^{\tilde \sd}_{9/2}(\tilde p)\big) & \geq \sum_{i=1}^M \tilde \m \big(B^{\tilde \sd}_{1/2}(p_i)\big) \\
        & \geq \frac 1{C(K,N)} \sum_{i=1}^M \tilde \m \big(B^{\tilde \sd}_{9}(p_i)\big) \geq \frac M{C(K,N)} \tilde \m \big(B^{\tilde \sd}_{9/2}(\tilde p)\big)\, ,
    \end{align*}
    thus $M\leq C(K,N)$. In particular for any $\tilde p \in Y$ there are at most $C(K,N)$ disjoint balls in $(Y,\rho)$ of radius $1/2$ with centers in $B^\rho_4(\tilde p)$.

    Consider any $p_0\in Y$. If $B^\rho_4(p_0)\setminus B^\rho_1(p_0)\neq\emptyset$ take $p_1\in B^\rho_4(p_0)\setminus B^\rho_1(p_0)$. Iterate this process by selecting at step $i$ a point $p_i \in B^\rho_4(p_0)\setminus \big(B^\rho_1(p_0) \cup \cdots \cup B^\rho_1(p_{i-1})\big)$ if $B^\rho_4(p_0)\setminus \big(B^\rho_1(p_0) \cup \cdots \cup B^\rho_1(p_{i-1})\big)\neq\emptyset$. Observe that at each step $i$ the balls $B^\rho_{1/2}(p_0), \dots , B^\rho_{1/2}(p_i)$ are pairwise disjoint by construction. Therefore this process has to stop within step $C(K,N)-1$, thus producing a cover of $B^\rho_4(p_0)$ with at most $C(K,N)$ balls of radius $1$. 
    
    \medskip 

    Since $X$ is semi-locally simply connected by \cite{RCDsemilocally}, $B_{10}(p)$ is semi-locally simply connected as well. Thus we can
   identify $\pi_1(B_{10}(p),p)$ with the deck transformations of the universal cover $\pi:\tilde X \to B_{10}(p)$. By restricting any such deck transformation to $Y=\pi^{-1}(p)$ we get that $\pi_1(B_{10}(p),p) \leq \text{Iso}(Y,\rho)$, with a slight abuse of notation. By \cite[Corollary 11.17]{BreuillardGreenTao} there exists a constant $\varepsilon=\varepsilon(K,N)>0$ such that for every $\tilde p\in Y$ the \emph{almost stabiliser} $\Gamma_{3\varepsilon}(\tilde p):= \langle S_{3\varepsilon}(x)\rangle$, where $S_{3\varepsilon}(x):=\{\gamma\in \pi_1(B_{10}(p),p)\,:\, \rho(\gamma\cdot \tilde p, \tilde p)<3 \varepsilon\}$, is virtually nilpotent. On the other hand, it is a standard observation that every loop in $\pi_1(B_{\varepsilon}(p),p)$ is homotopically equivalent to a concatenation of loops of length less than $3\varepsilon$. In particular the image of the map \eqref{eq:inclusionpi1} is isomorphic to a subgroup of the almost stabiliser $\Gamma_{3\varepsilon}(\tilde p)$. Therefore it is virtually nilpotent.
\end{proof}

\printbibliography

\end{document}